\title{Efficient uncertainty quantification for mechanical properties of randomly perturbed elastic rods}
\author{Patrick Dondl\footnote{pwd@math.uni-freiburg.de, Abteilung für angewandte Mathematik, Universität Freiburg, 79104 Freiburg, Germany}, Yongming Luo\footnote{luo.yongming@smbu.edu.cn, Faculty of Computational Mathematics and Cybernetics, Shenzhen MSU-BIT University, China}
, Stefan Neukamm\footnote{stefan.neukamm@tu-dresden.de, Fakultät Mathematik, Technische Universität Dresden, 01062 Dresden, Germany}, Steve Wolff-Vorbeck\footnote{steve.wolff-vorbeck@mathematik.uni-freiburg.de, Abteilung für angewandte Mathematik, Universität Freiburg, 79104 Freiburg, Germany}}
\date{}
\numberwithin{equation}{section}
\newtheorem{theorem}{Theorem}[section]
\newtheorem{lemma}[theorem]{Lemma}
\newtheorem{definition}[theorem]{Definition}
\newtheorem{proposition}[theorem]{Proposition}
\newtheorem{corollary}[theorem]{Corollary}
\newtheorem{assumption}[theorem]{Assumption}
\newtheorem{remark}[theorem]{Remark}
\newtheorem{example}[theorem]{Example}
\newsavebox\myboxA
\newsavebox\myboxB
\newlength\mylenA
\newcommand*\yoverline[2][0.75]{%
    \sbox{\myboxA}{$\m@th#2$}%
    \setbox\myboxB\null
    \ht\myboxB=\ht\myboxA%
    \dp\myboxB=\dp\myboxA%
    \wd\myboxB=#1\wd\myboxA
    \sbox\myboxB{$\m@th\overline{\copy\myboxB}$}
    \setlength\mylenA{\the\wd\myboxA}
    \addtolength\mylenA{-\the\wd\myboxB}%
    \ifdim\wd\myboxB<\wd\myboxA%
       \rlap{\hskip -0.3\mylenA\usebox\myboxB}{\usebox\myboxA}%
    \else
        \hskip -0.5\mylenA\rlap{\usebox\myboxA}{\hskip 0.5\mylenA\usebox\myboxB}%
    \fi}
\newcommand*\xoverline[2][0.5]{%
    \sbox{\myboxA}{$\m@th#2$}%
    \setbox\myboxB\null
    \ht\myboxB=\ht\myboxA%
    \dp\myboxB=\dp\myboxA%
    \wd\myboxB=#1\wd\myboxA
    \sbox\myboxB{$\m@th\overline{\copy\myboxB}$}
    \setlength\mylenA{\the\wd\myboxA}
    \addtolength\mylenA{-\the\wd\myboxB}%
    \ifdim\wd\myboxB<\wd\myboxA%
       \rlap{\hskip 0.5\mylenA\usebox\myboxB}{\usebox\myboxA}%
    \else
        \hskip -0.5\mylenA\rlap{\usebox\myboxA}{\hskip 0.5\mylenA\usebox\myboxB}%
    \fi}
\newcommand*\zoverline[2][0.5]{%
    \sbox{\myboxA}{$\m@th#2$}%
    \setbox\myboxB\null
    \ht\myboxB=\ht\myboxA%
    \dp\myboxB=\dp\myboxA%
    \wd\myboxB=#1\wd\myboxA
    \sbox\myboxB{$\m@th\overline{\copy\myboxB}$}
    \setlength\mylenA{\the\wd\myboxA}
    \addtolength\mylenA{-\the\wd\myboxB}%
    \ifdim\wd\myboxB<\wd\myboxA%
       \rlap{\hskip 0.3\mylenA\usebox\myboxB}{\usebox\myboxA}%
    \else
        \hskip -0.5\mylenA\rlap{\usebox\myboxA}{\hskip 0.5\mylenA\usebox\myboxB}%
    \fi}
\begin{document}

\maketitle
\begin{abstract}

Motivated by an application involving additively manufactured bioresorbable polymer scaffolds supporting bone tissue regeneration, we investigate the impact of uncertain geometry perturbations on the effective mechanical properties of elastic rods. To be more precise, we consider elastic rods modeled as three-dimensional linearly elastic bodies occupying randomly perturbed domains. Our focus is on a model where the cross-section of the rod is shifted along the longitudinal axis with stationary increments. To efficiently obtain accurate estimates on the resulting uncertainty of the effective elastic moduli, we use a combination of analytical and numerical methods. Specifically, we rigorously derive a one-dimensional surrogate model by analyzing the slender-rod $\Gamma$-limit. Additionally, we establish qualitative and quantitative stochastic homogenization results for the one-dimensional surrogate model. To compare the fluctuations of the surrogate with the original three-dimensional model, we perform numerical simulations by means of finite element analysis and Monte Carlo methods.\\

  \textbf{Keywords:} Stochastic homogenization, elastic rods, uncertainty quantification.
  \vspace{1mm}
  
  \textbf{MSC2020:} 74Q05, 74B05,  65C05, 65N30
  
\end{abstract}



\def\Ab{\mathbf{A}}
\def\Bb{\mathbf{B}}
\def\Phib{\boldsymbol{\Phi}}
\def\Psib{\boldsymbol{\Psi}}

\newcommand{\step}[1]{
  \smallskip

  \noindent
  \textbf{Step~#1:}}
\newcommand{\sD}{{\mathscr D}}
\newcommand{\sA}{{\mathscr A}}
\newcommand{\diver}{\operatorname{div}}
\newcommand{\lin}{\operatorname{Lin}}
\newcommand{\curl}{\operatorname{curl}}
\newcommand{\ran}{\operatorname{Ran}}
\newcommand{\kernel}{\operatorname{Ker}}
\newcommand{\la}{\langle}
\newcommand{\ra}{\rangle}
\newcommand{\N}{\mathbb{N}}
\newcommand{\R}{\mathbb{R}}
\newcommand{\C}{\mathbb{C}}
\newcommand{\ld}{\lambda}
\newcommand{\fai}{\varphi}
\newcommand{\n}{\mathbf{n}}
\newcommand{\uu}{{\boldsymbol{\mathrm{u}}}}
\newcommand{\UU}{{\boldsymbol{\mathrm{U}}}}
\newcommand{\buu}{\bar{{\boldsymbol{\mathrm{u}}}}}
\newcommand{\huu}{\hat{{\boldsymbol{\mathrm{u}}}}}
\newcommand{\hro}{\hat{{\boldsymbol{\mathrm{r}}}}}
\newcommand{\ten}{\\[4pt]}
\newcommand{\six}{\\[4pt]}
\newcommand{\nb}{\nonumber}
\newcommand{\hgamma}{H_{\Gamma}^1(\OO)}
\newcommand{\opert}{O_{\varepsilon,h}}
\newcommand{\barx}{\bar{x}}
\newcommand{\barf}{\bar{f}}
\newcommand{\hatf}{\hat{f}}
\newcommand{\xoneeps}{x_1^{\varepsilon}}
\newcommand{\xh}{x_h}
\newcommand{\scaled}{\nabla_{h}}
\newcommand{\scaledb}{\widehat{\nabla}_{1,\gamma}}
\newcommand{\vare}{\varepsilon}
\newcommand{\A}{{\bf{A}}}
\newcommand{\boldb}{{\bf{b}}}
\newcommand{\RR}{{\bf{R}}}
\newcommand{\B}{{\bf{B}}}
\newcommand{\CC}{{\bf{C}}}
\newcommand{\D}{{\bf{D}}}
\newcommand{\K}{{\bf{K}}}
\newcommand{\oo}{{\bf{o}}}
\newcommand{\id}{{\bf{Id}}}
\newcommand{\E}{\mathcal{E}}
\newcommand{\ii}{\mathcal{I}}
\newcommand{\sym}{\mathrm{sym\,}}
\newcommand{\lt}{\left}
\newcommand{\rt}{\right}
\newcommand{\ro}{{\bf{r}}}
\newcommand{\so}{{\bf{s}}}
\newcommand{\e}{{\bf{e}}}
\newcommand{\ww}{{\boldsymbol{\mathrm{w}}}}
\newcommand{\vv}{{\boldsymbol{\mathrm{v}}}}
\newcommand{\xxi}{{\boldsymbol{\xi}}}
\newcommand{\zz}{{\boldsymbol{\mathrm{z}}}}
\newcommand{\U}{{\boldsymbol{\mathrm{U}}}}
\newcommand{\G}{{\boldsymbol{\mathrm{G}}}}
\newcommand{\VV}{{\boldsymbol{\mathrm{V}}}}
\newcommand{\WW}{{\boldsymbol{\mathrm{W}}}}
\newcommand{\T}{{\boldsymbol{\mathrm{T}}}}
\newcommand{\II}{{\boldsymbol{\mathrm{I}}}}
\newcommand{\ZZ}{{\boldsymbol{\mathrm{Z}}}}
\newcommand{\hKK}{{{\bf{K}}}}
\newcommand{\f}{{\bf{f}}}
\newcommand{\g}{{\bf{g}}}
\newcommand{\lkk}{{\bf{k}}}
\newcommand{\tkk}{{{\bf{K}}}}
\newcommand{\W}{{\boldsymbol{\mathrm{W}}}}
\newcommand{\hW}{\widehat{{\boldsymbol{\mathrm{W}}}}}
\newcommand{\Y}{{\boldsymbol{\mathrm{Y}}}}
\newcommand{\EE}{{\boldsymbol{\mathrm{E}}}}
\newcommand{\F}{{\bf{F}}}
\newcommand{\spacev}{\mathcal{V}}
\newcommand{\spacevg}{\mathcal{W}^{\gamma}}
\newcommand{\spacevb}{\bar{\mathcal{V}}^{\gamma}(\Omega\times S)}
\newcommand{\spaces}{\mathcal{G}}
\newcommand{\spacesg}{\mathcal{G}^{\gamma}}
\newcommand{\spacesb}{\bar{\mathcal{S}}^{\gamma}(\Omega\times S)}
\newcommand{\skews}{H^1_{\barx,\mathrm{skew}}}
\newcommand{\kk}{\mathcal{K}}
\newcommand{\OO}{O}
\newcommand{\bhe}{{\bf{B}}_{\vare,h}}
\newcommand{\pp}{{\mathbb{P}}}
\newcommand{\ff}{{\mathcal{F}}}
\newcommand{\mWk}{{\mathcal{W}}^{k,2}(\Omega)}
\newcommand{\mWa}{{\mathcal{W}}^{1,2}(\Omega)}
\newcommand{\mWb}{{\mathcal{W}}^{2,2}(\Omega)}
\newcommand{\twos}{\xrightharpoonup{2}}
\newcommand{\twoss}{\xrightarrow{2}}
\newcommand{\bw}{\bar{w}}
\newcommand{\br}{\bar{{\bf{r}}}}
\newcommand{\bz}{\bar{{\bf{z}}}}
\newcommand{\tw}{{W}}
\newcommand{\tr}{{{\bf{R}}}}
\newcommand{\tz}{{{\bf{Z}}}}
\newcommand{\lo}{{{\bf{o}}}}
\newcommand{\hoo}{H^1_{00}(0,L)}
\newcommand{\ho}{H^1_{0}(0,L)}
\newcommand{\hotwo}{H^1_{0}(0,L;\R^2)}
\newcommand{\hooo}{H^1_{00}(0,L;\R^2)}
\newcommand{\hhooo}{H^1_{00}(0,1;\R^2)}
\newcommand{\dsp}{d_{S}^{\bot}(\barx)}
\newcommand{\LB}{{\bf{\Lambda}}}
\newcommand{\LL}{\mathbb{L}}
\newcommand{\mL}{\mathcal{L}}
\newcommand{\mhL}{\widehat{\mathcal{L}}}
\newcommand{\loc}{\mathrm{loc}}
\newcommand{\tqq}{\mathcal{Q}^{*}}
\newcommand{\tii}{\mathcal{I}^{*}}
\newcommand{\Mts}{\mathbb{M}}
\newcommand{\pot}{\mathrm{pot}}
\newcommand{\tU}{{\widehat{\bf{U}}}}
\newcommand{\tVV}{{\widehat{\bf{V}}}}
\newcommand{\pt}{\partial}
\newcommand{\bg}{\Big}
\newcommand{\hA}{\widehat{{\bf{A}}}}
\newcommand{\hB}{\widehat{{\bf{B}}}}
\newcommand{\hCC}{\widehat{{\bf{C}}}}
\newcommand{\hD}{\widehat{{\bf{D}}}}
\newcommand{\fder}{\partial^{\mathrm{MD}}}
\newcommand{\Var}{\mathrm{Var}}
\newcommand{\pta}{\partial^{0\bot}}
\newcommand{\ptaj}{(\partial^{0\bot})^*}
\newcommand{\ptb}{\partial^{1\bot}}
\newcommand{\ptbj}{(\partial^{1\bot})^*}
\newcommand{\geg}{\Lambda_\vare}
\newcommand{\tpta}{\tilde{\partial}^{0\bot}}
\newcommand{\tptb}{\tilde{\partial}^{1\bot}}
\newcommand{\vphi}{{\xi}}
\newcommand{\hmac}{\mathbb{H}_{\mathrm{macro}}}
\newcommand{\hcor}{\mathbb{H}_{\mathrm{cor}}}
\newcommand{\hbb}{\mathbb{H}}
\newcommand{\pbb}{\mathbb{P}}
\newcommand{\pbba}{\mathbb{P}_0}
\newcommand{\pbbc}{\mathbb{P}^\bot_0}
\newcommand{\lc}{{\boldsymbol{\mathrm{b}}_{1234}}}
\newcommand{\trace}{\mathrm{tr\,}}
\newcommand{\summ}{\mathrm{sum}}
\newcommand{\energy}{\mathcal{E}}
\newcommand{\mB}{\mathcal{B}}
\newcommand{\mT}{\mathcal{T}}
\newcommand{\mW}{\mathcal{W}}
\newcommand{\mE}{\widehat{\mathcal{E}}}
\newcommand{\Qel}{Q_{\mathrm{el}}}
\newcommand{\Qres}{Q_{\mathrm{res}}}
\newcommand{\hcorg}{\mathbb{H}_{\gamma,\mathrm{cor}}}
\newcommand{\Qelg}{Q_{\gamma,\mathrm{el}}}
\newcommand{\Qresg}{Q_{\gamma,\mathrm{res}}}

\tableofcontents

\section{Introduction}  
For a period of time now, additive manufacturing  has been a widespread method in a huge variety of engineering applications. Overall, its nearly limitless design freedom has enabled the practice of trial and error approaches in many fields like bio-engineering or civil-engineering, see for instance~\cite{teo2015novel,goh2015novel,schuckert2009mandibular,schantz2006cranioplasty} or~\cite{paolini2019additive}. In this context, an important field of application is the construction of rod-shaped elastic solids where one considers three-dimensional design structures $O_{h} = (0,L) \times hS \subset \mathbb{R}^3$. Here, $0<h \ll L$, denotes the thickness and $S\subset \mathbb{R}^2$ the cross-section of the rod. In general, additively manufactured rods need to maintain the structural integrity under mechanical loading conditions subject to further constraints on the shape and porosity of the structure. This leads to competing optimization goals that can be addressed by finite element analysis using computer-aided design models as input. However, the reliability of the finite element analysis can be impeded by the anomalies introduced during the fabrication process leading to marked differences between the mechanical properties of the optimal design $O_{h}$ and the printed object.

In~\cite{valainis2019integrated}, for instance, the authors established a workflow for melt-extrusion based 3D-printing to produce personalized (rod-shaped) bone scaffolds with triply periodic minimal surface architecture. Moreover, they conducted numerical and mechanical experiments that showed a significant variability in the effective Young's modulus of the printed objects leading to uncertainty in the slope of the stress-strain curve in a tensile test. In the context of additive manufacturing, small-scale variations of the material properties (e.g. density fluctuations as experimentally observed in~\cite{craft2018material}) and mesoscopic geometric deviations as, for instance, observed in~\cite{poh2018optimizing} and~\cite{khanzadeh2018quantifying} can be considered as main sources of uncertainty. These errors introduced during the printing process lead to a marked demand of uncertainty quantification in the pre-production process~\cite{moller2000fuzzy,owhadi2013optimal}.

In the present paper we consider a rod with geometric uncertainties. Our goal is to quantify the impact of those uncertainties on effective mechanical properties of the rod by combining analytical and numerical methods.
In our analysis we model the rod $O^h$ as a linearly elastic body and thus consider elastic energy functionals of the form
\begin{equation}\label{general_average_energy}
  \E(\mathbf{v};O^h):=\frac{1}{h^2}\frac1L\int_{O^h}Q(\nabla \mathbf{v}),
\end{equation}
where $\mathbf{v}:O^h\to\R^3$ denotes the displacement and $Q(\mathbf{F})=\mathbb L\mathbf{F}:\mathbf{F}$ the elastic energy density with $\mathbb L$ the elasticity tensor. In particular, we are interested in effective mechanical properties that are determined by minimizing the elastic energy subject to constraints on the displacement $\mathbf{v}$. A prototypical example of such a property is the effective Young's modulus which can be determined by minimizing $\E(\mathbf{v};O^h)$ over displacements with compression boundary conditions, i.e.
\begin{equation}\label{3D_min_prob}
  E(O^h) =\{\inf_{\mathbf{v}} \E(\mathbf{v};O^h): \ \mathbf{v}\in H^1(O^h;\R^3),\mathbf{v}(0,\cdot)=0, \mathbf{v}(L,\cdot)=\mathbf{e}_1\},
\end{equation}
which being a decisive measure in the analysis of mechanical stability of the rod. Moreover, motivated by the layerwise additive manufacturing~\cite{valainis2019integrated}, we focus on a statistical model for geometry perturbations that are caused by shifts of the $x_1$-layers $O^h\cap\{x_1\}\times\R^2$ by a translation of the form
\begin{equation}\label{eq:randomshift}
  \bg(0,\frac{h}{L}\int_0^{x_1}\Phi^\vare(t)\,dt\bg),
\end{equation}
where the translation-increment $\Phi^\vare=\Phi^\vare(\omega,t)$ is a stationary and ergodic, bounded, $\R^2$-valued random field on $\R$ that rapidly decorrelates on lengths larger than $\vare$ and that is defined on a probability space $(\Omega,\mathcal F,\mathbb P)$; we refer to Section~\ref{three_dimensional_model}, where we explain the model for geometry perturbations in detail. In this context, we also denote the randomly perturbed domain by $O^{\vare,h}(\omega)$ with $\omega$ a random sample from $(\Omega,\mathcal F,\mathbb P)$. One of our goals is to obtain sufficiently accurate estimates on the threshold probabilities
\begin{align}\label{eq:uq_est_intr}
    \mathbb{P}\left(|E(O_{h})-E(O^{\vare,h}(\omega))| \geq a \right)
\end{align}
for the effective Young's modulus $E$ with given threshold error $a>0$. The quantification of a threshold probability~\eqref{eq:uq_est_intr} is crucial in several applications, such as in the usage of additively manufactured porous bone tissue scaffolds made from bioresorbable polymer (e.g. polycarprolactone)~\cite{teo2015novel,goh2015novel,schuckert2009mandibular,schantz2006cranioplasty,valainis2019integrated}.

For solving the minimization problem that underlies the definition of $E$, established numerical techniques based on finite element methods for 3D solids involving the tetrahedralization of $O^{\vare,h}(\omega)$ combined with Monte Carlo simulations can be used, see for example~\cite{krumscheid2020quantifying,cliffe2011multilevel}. Unfortunately, such methods usually exhibit extremely high computational costs as they involve discretizations of three-dimensional structures and, therefore, often turn out to be impractical in practice. This high computational effort is a special limiting factor, as besides uncertainties of aleatoric type (following a well-characterized probability distribution) some errors introduced in the printing process are more difficult to quantify and thus fall in the epistemic category, where for example only bounds on probabilities can be established~\cite{mahnken2017variational,DRIESCHNER2020107106,FREITAG202081,kastian2020two}. In our case, epistemic uncertainties indicate a lack of knowledge on the specific distribution of the translation-increment $\Phi^\vare$. This may be due to fuzziness in the parameters of the distribution of $\Phi^\vare$ leading to fuzzy probability based random variables~\cite{puri1993fuzzy,moller2000fuzzy,fina2020polymorphic,mahnken2017variational}.

Thus there is a great demand for highly simplified surrogate models, i.e., an approximation of the problem in~\eqref{3D_min_prob} by a reduced problem that can be solved in a numerically efficient and less time consuming way. In Section~\ref{eff_uq}, we therefore derive a one-dimensional approximation of~\eqref{general_average_energy} by the usage of dimension reduction. More precisely, as an application of our main analytical results stated in Section~\ref{section:main results}, we prove that the elastic energy functional $\E(O^{\vare,h})$ in the limit of vanishing thickness $h\to 0$ $\Gamma$-converges to a one-dimensional energy functional of the form
\begin{equation*}
  \E^{\vare}(\omega,\bar{u},\ro)=\fint_0^L
  Q^{{\rm rod}}\Big(\begin{pmatrix}
    \pt_1\bar u+\frac{\ro_3\Phi^\vare_{1}-\ro_2\Phi^\vare_{2}}{L}\\
    \pt_1\ro
  \end{pmatrix}
  \Big)\,dx_1,
\end{equation*}
where $Q^{\rm rod}$ is a quadratic form determined from $Q$, and the two components of the translational increment $\Phi^{\vare}$ from \eqref{eq:randomshift}. The kinematic variables are given by $\bar u:(0,L)\to\R$ that describes the longitudinal extension or compression,
and by $\ro=(\ro_1,\ro_2,\ro_3):(0,L)\to\R^3$, which describes the in-plane torsional $\ro_1$ and flexional displacements $(\ro_2,\ro_3)$, respectively. For more details we refer to Sections~\ref{eff_uq} and~\ref{section:main results}.

Based on this, we obtain a one-dimensional surrogate model for the effective Young's modulus $E(O^{\vare,h}(\omega))$ in form of the minimization problem
\begin{equation}\label{1D_surr_SN}
  \begin{aligned}
    E^{\vare}(\omega)=\min\bg\{ \E^\vare(\omega,\bar u,\ro): \bar u \in \frac{x_1}{L}+H_0^1(0,L), \ro\in H_0^1(0,L;\R^3), \fint_0^L(\ro_2,\ro_3)\,dx_1=0\bg\}.
\end{aligned}
\end{equation}
Solving~\eqref{1D_surr_SN} requires only the solution of an ordinary differential system instead of a partial differential equation in three space dimensions and thus leads to a marked reduction of computational effort.
The practicability of the surrogate model is demonstrated in Section~\ref{1D_surrogate_pract}, where an explicit system of ordinary differential equations is depicted and a comparison of the computational effort for simulating the 3D model and its corresponding 1D surrogate model is performed. We further numerically study the convergence $E(O^{\vare,h}(\omega))\to E^\vare(\omega)$ as $h\to 0$ and deduce that the 1D surrogate substitute quantifies the fluctuation of $E(O^{\vare,h}(\omega))$ around its mean. From this, we infer a multi-fidelity approach for the computation of  $E^{\vare}(\omega)$ that combines the accuracy of the 3D model with the efficiency of the 1D surrogate model leading to a decent approximation of the effective Young's modulus for modest computational effort. Comparable surrogate models for the computation of effective mechanical properties of randomly perturbed slender elastic rods have not existed to that date and thus the present study noticeably contributes to the field of efficient uncertainty quantification.

The setting described so far is only a special example of a more general model class that we consider in the paper.
In particular, in Sections~\ref{eff_uq} and \ref{section:main results} we consider more general boundary conditions including also torsion and flexion of the rod and material properties that randomly oscillate in $x_1$-direction. In this setting we prove the $\Gamma$-convergence of the functionals $\E(O^{\vare,h}(\omega))$ as $\varepsilon,h\to 0$, where the limits $h\to0$ and $\varepsilon\to 0$ correspond to dimension reduction and homogenization respectively. Rigorous derivations of effective models for dimension reduction \cite{FJM2002,AcerbiButtazzoPercivale1991,MoraMueller2003,MoraMueller2004,FJM2006,Scardia2006} and homogenization \cite{Tartar1977,AlainLionsPapa1978,Nguetseng1989,Allaire1992,PapaVaradhan,Kozlov1979,CioranescuBook,JikovKozlovOleinik,ArmstrongBook,ShenBook,Neukamm2018lecturenotes} problems have been nowadays intensively studied. Following the theories introduced in the previously mentioned works, we consider in this paper both sequential limits
$$\lim_{\varepsilon\to 0}\lim_{h\to 0}\E(O^{\vare,h}(\omega))\quad\text{and}\quad\lim_{h\to 0}\lim_{\varepsilon\to 0}\E(O^{\vare,h}(\omega)),$$
which correspond to homogenization after dimension reduction and vice versa.

Moreover, apart from the qualitative convergence results we also establish quantitative convergence results for the 1D surrogate substitute $E^{\vare}(\omega)$ of $E(O^{\vare,h}(\omega))$ to a deterministic proxy $E^0$ as $\vare\to 0$ based on the spectral gap assumption (Assumption \ref{sg assumption}). It is worth noting that a major obstacle for deriving quantitative convergence results in stochastic homogenization lies in the fact that the stochastic correctors generally possess much wilder behavior than the periodic ones, hence applications of certain advanced and technical large-scale regularity theories (\cite{GNOInvent,GloriaNeukammOttoMIlan}) often become necessary. Interestingly, thanks to the one-dimensional nature of the model we are able to give a simplified proof for the quantitative results based on the precise form of the correctors and no large-scale regularity theories are needed. For further details, we refer to Section \ref{sec:quant} below.

In practical applications, one often encounters the interesting situations where homogenization and dimension reduction take place simultaneously. To be mathematically more precise, we may also consider the $\Gamma$-limits of $\E(O^{\vare,h}(\omega))$ in the case where $\vare=\vare(h)$ is a parameter of $h$ satisfying
\begin{align*}
    \lim_{h\to 0}\vare(h)=0,\quad\lim_{h\to 0}\frac{h}{\vare(h)}=\gamma\in[0,\infty].
\end{align*}
We address this in a forthcoming paper.

\subsubsection*{Outline of the paper}
The paper is organized as follows: In Section~\ref{eff_uq} we introduce a general, linear elastic three-dimensional model for the computation of effective mechanical properties of a thin elastic rod and demonstrate the feasibility of the surrogate model showing the marked reduction in computational effort. In Section \ref{section:main results} we then formulate the main analytical results of the paper. The proofs of the main analytical results are finally given in Section~\ref{section:proof of the main resuls}. For the reader's convenience, a self-contained introduction on the probabilistic framework invoked in this paper will be given in Appendix~\ref{appendix: two scale conv}.

\section{Efficient uncertainty modelling}\label{eff_uq}
\subsection{The three-dimensional model}\label{three_dimensional_model}
We introduce our three-dimensional model of a rod with randomly perturbed geometry. The unperturbed reference domain (with upscaled cross-section) is denoted by
\begin{equation*}
  O:=(0,L)\times S,
\end{equation*}
where $L>0$ denotes the length of the rod and $S\subset\R^2$ denotes the cross-section of the rod. We use the notation $x=(x_1,x_2,x_3)=(x_1,\bar x)$ for the components of $x$. Throughout the paper we suppose that
\begin{equation}\label{cancelation}
  S\subset\R^2\text{ is a bounded Lipschitz domain with }
  \int_S x_2\,d\barx=\int_S x_3\,d\barx=\int_S x_2 x_3\,d\barx =0.
\end{equation}
Note that this symmetry property is not a restriction, since it can always be achieved by rotating and translating  $S$. We consider linearly elastic (possibly heterogeneous) materials:
\begin{definition}[Material class]
  For $0<\alpha_1\leq\alpha_2$ we denote by $\mathcal Q(\alpha_1,\alpha_2)$ the set of all quadratic forms $Q:\R^{3\times 3}\to\R$ such that
  \begin{align}
    \alpha_1|\sym \F|^2\leq Q(\F)\leq \alpha_2|\sym \F|^2\label{assump on Q}
  \end{align}
  for all $\F\in\R^{3\times 3}$. The unique symmetric fourth order tensor $\mathbb L$ satisfying $Q(\F)=\mathbb L\F:\F$ for all $\F\in\R^{3\times 3}$
  is called the elasticity tensor associated with $Q$. We call a fourth order tensor $\mathbb L$ an elasticity tensor of class $\mathcal Q(\alpha_1,\alpha_2)$, if it is associated to some $Q\in\mathcal Q(\alpha_1,\alpha_2)$.
\end{definition}
We consider material heterogeneities and perturbations of $O$ that randomly oscillate in the $x_1$-direction in a stationary and ergodic way. For the precise definition we introduce the following functional analytic setting:

\begin{assumption}\label{assumption of prob}
  Let $(\Omega,\mathcal{F},\mathbb{P})$ be a separable probability space, and $\tau:\Omega\times\R\to\Omega$ be a one-dimensional shift group satisfying:
  \begin{itemize}
\item[(P1)] \textit{Group property}: $\tau_0=\mathrm{id}$ and $\tau_{x+y}=\tau_x\circ\tau_y$ for all $x,y\in\R$.

\item[(P2)]\textit{Measure preservation}: $\mathbb{P}(\tau_x F)=\mathbb{P}(F)$ for all $F\in\mathcal{F}$ and all $x\in\R$.

\item[(P3)]\textit{Measurability}: $(\omega,x)\mapsto \tau_x\omega$ is $(\mathcal{F}\otimes \R,\mathcal{F})$-measurable.

\item[(P4)]\textit{Ergodicity}: For all $F\in \mathcal{F}$ satisfying $\tau_x F=F$ for all $x\in\R$ we have $\mathbb{P}(F)\in\{0,1\}$.
\end{itemize}
\end{assumption}
\begin{remark}\label{two expectation}
\normalfont
For a random variable $f:\Omega\to\R$, we use both the notations $\la f\ra$ and $\int_{\Omega}f(\omega)\,d\mathbb{P}(\omega)$ to interpret the expectation of $f$. As we shall see, the former one will be a more convenient choice for formulating our numerical results (it is much shorter), while the latter one is a more practical notation for formulating results which make use of the two-scale convergence theories (spatial domain and probability space are distinguished in a clearer way).
\end{remark}

\begin{remark}[Stationary random field and stationary extension]\label{D:stationary}
  \normalfont
  Let $(\Omega,\mathcal{F},\mathbb{P})$ satisfy Assumption \ref{assumption of prob} and let $\varphi:\Omega\to\R$ be a random variable. Then $S\varphi:\Omega\times\R\to\R$, $S\varphi(\omega,x):=\varphi(\tau_x\omega)$ defines a random field (i.e., a measurable function on $\Omega\times \R$). We call it the \textit{stationary extension} of $\varphi$, see Lemma~\ref{L:stat} for details. A \textit{stationary random field} is a random field $\psi:\Omega\times\R\to\R$ that can be represented in the form $\psi=S\varphi$ for some random variable $\varphi$. In our setting, geometry perturbations and materials properties are described with help of stationary random fields.
\end{remark}

We particularly use the structure introduced in Assumption~\ref{assumption of prob} to model randomly perturbed reference configurations $O^{\vare,h}(\omega)$, $\omega\in\Omega$; here, $h>0$ and $\vare>0$ stand for the thickness of the rod and a scaling factor for the correlation length, respectively. The randomly perturbed domain is obtained from the unperturbed domain $O^h=(0,L)\times hS$ by randomly shifting the layers $\{x_1\}\times hS$, $x_1\in(0,L)$.
Since the mechanical properties do not change when globally translating the domain, we may assume w.l.o.g. that the first layer, i.e.,  $\{0\}\times hS$, is unchanged.
Furthermore we assume that the increments form layer to layer are stationary.
This leads to a model where the layer at position $x_1$ is translated by a vector of the from $\Big(0,\tfrac{h}{L}\int_0^{x_1}\Phi(\tau_{\frac{t}{\vare}}\omega)\,dt\Big)$ where $\Phi$ denotes a $\R^2$-valued random variable.

In summary, our assumptions on the material heterogeneity and the geometry are the following:

\begin{assumption}[Material law and perturbation]\label{assumption of model}\mbox{}
  Let Assumption~\ref{assumption of prob} be satisfied and assume that the quadratic form $Q:\Omega\times\R^{d\times d}\to\R$, and the randomly perturbed domain $O^{h,\vare}$ satisfy the following properties:
  \begin{itemize}
  \item [(A1)] $Q$ is measurable and there exist $0<\alpha_1\leq \alpha_2$ such that $Q(\omega,\cdot)\in\mathcal Q(\alpha_1,\alpha_2)$ for $\mathbb P$-a.a.~$\omega\in\Omega$.
  \item[(A2)] There exists a bounded random variable $\Phi:\Omega\to\R^2$ such that
    \begin{align*}
      O^{\vare,h}(\omega)=\Psi^{\vare,h}\Big(\omega,O\Big)\qquad\text{for }\mathbb P\text{-a.a.~}\omega\in\Omega,
    \end{align*}
    where
    \begin{align}\label{random translation}
      \Psi^{\vare,h}(\omega,x)=\Big(x_1,h\big(\barx+\frac{1}{L}\int_0^{x_1}\Phi(\tau_{\frac{t}{\vare}}\omega)\,dt\big)\Big).
    \end{align}
\end{itemize}
\end{assumption}
Let us anticipate that in addition to Assumption~\ref{assumption of model} we shall assume that the random perturbation is small in the sense that
\begin{equation}\label{eq:phic}
  \|\Phi\|_{L^\infty(\Omega)}\leq c_S,
\end{equation}
where $c_S$ denotes a constant that we can choose only depending on the cross-section $S$, see Proposition~\ref{prop compact}.
\medskip

In the following, let $Q$ and $O^{h,\vare}$ be as in Assumption~\ref{assumption of model}.
The scaled elastic energy of the perturbed rod subject to a (scaled) displacement $\vv\in H^1(O^{\vare,h}(\omega);\R^3)$ is given by
\begin{align}\label{def of energy2a}
 \E^{\vare,h}(\omega,\vv)\coloneqq \frac{1}{h^{2}}\frac1L\int_{O^{\vare,h}(\omega)}Q(\tau_{\frac{x_1}{\vare}}\omega,\nabla \vv(x))\,dx.
\end{align}
We are interested in the minimization problem
\begin{align}\label{intr:eq2a}
E^{\vare,h}(\omega):=\inf_{\vv \in H_{\mathcal{B}}^1(O^{\vare,h}(\omega);\R^3)} \E^{\vare,h}(\omega,\vv),
\end{align}
which models a mechanical test of the perturbed rod. The test is specified with help of the space
$H_{\mathcal{B}}^1(O^{\vare,h}(\omega);\R^3)$, which is a closed subspace of $H^1(O^{\vare,h}(\omega);\R^3)$ and defined via a set of boundary conditions imposed on the bottom face of perturbed domain, i.e., the set
\begin{align*}
S_0^{\vare,h}(\omega):=\overline{O^{\varepsilon,h}}(\omega)\cap(\{0\}\times\R^2),
\end{align*}
and on the top face,
\begin{align*}
S_L^{\vare,h}(\omega):=\overline{O^{\varepsilon,h}}(\omega)\cap(\{L\}\times\R^2).
\end{align*}

\begin{definition}[Boundary conditions]\label{D bc 3D}
  Let $\mathbf{t}\in\R^3$ , $\A_0,\A_L\in\R^{2\times 2}$ and $\K_0,\K_L\in \R_{\rm skw}^{2\times 2}$ be given. We denote by $H_{\mathcal{B}}^1(O^{\vare,h}(\omega);\R^3)$ the space of displacements $\vv\in H^1(O^{\vare,h}(\omega);\R^3)$ satisfying
\begin{subequations}\label{bc}
\begin{alignat}{3}
\vv(x)&=\bg(0,(\A_0+\frac1h\K_0)\bar{x}\bg)^T&&\quad\text{on $S_0^{\vare,h}(\omega)$},\label{bc0}\\
\vv(x)&=\mathbf{t} +\bg(0,(\A_L+\frac1h\K_L)\bar{x}\bg)^T&&\quad\text{on $S_L^{\vare,h}(\omega)$}\label{bcL}.
\end{alignat}
\end{subequations}

\end{definition}

Mechanically, the vector ${\bf{t}}$ represents a stretch or compression while $(\A_0+\frac1h\K_0)\bar{x}$ and $(\A_L+\frac1h\K_L)\bar{x}$ describe the linearized dilation and twist at the bottom and the top face of $O^{\vare,h}(\omega)$, respectively. Note that this corresponds to a dilation of order $O(h)$ and a twist of order $O(1)$. 

A schematic description of the boundary conditions is depicted in Fig. \ref{fig:schematic_descr}.

\begin{figure}[ht]
\centering
\includegraphics[width=0.55\textwidth]{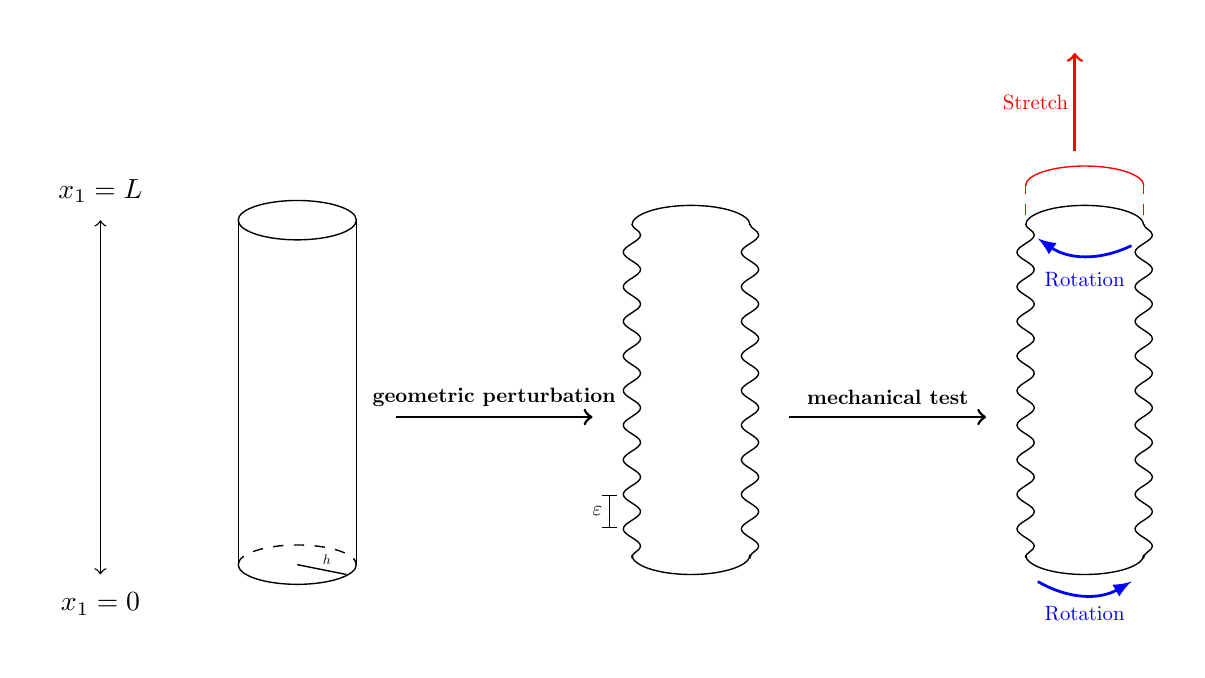}
 \caption[]{Stretching/compression and rotation at the bottom and top of a randomly perturbed cylinder. Perturbation is caused by layer shifting.}
 \label{fig:schematic_descr}
 \end{figure}

 \begin{example}
 \begin{itemize}
     \item[(a)] Tensile test: taking ${\bf{t}}=(1,0,0)$ and $\A_0=\K_0=\A_L=\K_L =0$ for the boundary conditions \eqref{bc} we describe the situation of pure tension of the rod. The tension is then proportional to the effective Young's modulus determined by \eqref{3D_min_prob}.
     \item[(b)] Coupling tension and twist: taking ${\bf{t}}=(1,0,0)$ and $\A_0=\K_0=\A_L=0$, and $\K_L=
       \left(\begin{smallmatrix}
         0&-0.5\\
         0.5&0
       \end{smallmatrix}\right)$ tension is coupled with a rotation of the top face $S_L^{\vare,h}$ of $O^{\vare,h}$, see Fig.~\ref{fig:schematic_descr}.
 \end{itemize}
 \end{example}

\subsection{The one-dimensional surrogate model}\label{1D_surrogate_pract}
We shall see and rigorously prove that in the limit $h\to 0$ the elastic energy converges to a one-dimensional linear rod model, where the configuration of the rod is described by a pair $(\bar{u},\ro)\in H^1(0,L)\times H^1(0,L;\R^3)$. Here, $\bar{u}$ describes the longitudinal displacement and $\ro=(\ro_1,\ro_2,\ro_3)$ the in-plane torsional ($\ro_1$) and flexional ($(\ro_{2},\ro_{3})$) displacements, respectively. The one-dimensional effective elastic energy is given by the following functional
\begin{align}\label{energy functional 1D}
  \E^{\vare}(\omega,\bar{u},\ro)&:=\fint_0^L
                                    Q^{\rm rod}\Big(\tau_{\frac{x_1}{\vare}}\omega,
                                    \begin{pmatrix}
                                      \pt_1\bar u+\tfrac1{L}(\ro_3\Phi_1(\tau_{\frac{x_1}{\vare}}\omega)-\ro_2\Phi_2(\tau_{\frac{x_1}{\vare}}\omega))\\
                                      \pt_1\ro
                                    \end{pmatrix}
  \Big)\,dx_1
\end{align}
where $Q^{\rm rod}(\omega,\cdot):\R^4\to\R$ is defined by the relaxtion formula
\begin{equation}\label{eq:def of Qel}
  Q^{\rm rod}(\omega,\xxi):=\inf_{\varphi\in H^1(S;\R^3)}\int_SQ\Big(\omega,
(\xxi_1\e_1+\xxi_{234}\wedge(0,\bar x))\otimes\e_1+(0,\nabla_{\bar x}\varphi)\Big)\,d\bar x
\end{equation}
and ``$\wedge$'' denotes the vector product in $\R^3$. We note that $Q^{\rm rod}(\omega,\cdot)$ is a positive definite quadratic form (see Lemma~\ref{L:quadr}) and models the effective elastic properties of the rod.  In the special case of an isotropic material the following explicit representation holds:

\begin{proposition}\label{precise q1D}
  Let $Q$ be as in  Assumptions \ref{assumption of model} and assume isotropicity, i.e.,
  \begin{equation}
    Q(\omega,\A)=2\mu(\omega)|\A|^2+\ld(\omega) (\trace \A)^2\label{2.15}
  \end{equation}
  for $\mathbb P$-a.a.~$\omega\in\Omega$, all $\A\in\R^{3\times 3}_\sym$, and Lam\'e parameters $\mu,\lambda\in L^\infty(\Omega)$.
Then $Q^{\rm rod}$ defined in \eqref{eq:def of Qel} takes the form
\begin{align}
Q^{\rm rod}(\omega,\xxi)&=\frac{\mu(\omega)(3\ld(\omega)+2\mu(\omega))}{\ld(\omega)+\mu(\omega)}\bg(|S|\xxi_1^2+\bg(\int_S x_3^2\,d\barx\bg)\xxi_3^2+\bg(\int_S x_2^2\,d\barx\bg)\xxi_4^2\bg)
\nonumber\\&+\mu(\omega)\bg(\int_S\bg((x_3-\pt_2\varphi_{\rm aff}(\barx))^2+(x_2+\pt_3\varphi_{\rm aff})^2(\barx)\bg)\,d\barx\bg)\xxi_2^2\label{form of q1D isotropic}
\end{align}
for $\xxi\in\R^4$, where $\varphi_{\rm aff}\in H^1(S)$ is a solution of
\begin{equation}\label{non-normalized}
\left\{
\begin{array}{ll}
-\Delta_{S}\varphi_{\rm aff}=0&\text{in $S$},\\
\\
(\pt_2\varphi_{\rm aff},\pt_3\varphi_{\rm aff})\cdot\nu=(x_3,-x_2)\cdot\nu&\text{on $\pt S$},
\end{array}
\right.
\end{equation}
Here, $\nu$ denotes the outer unit normal on $\partial S$. Furthermore, if $S$ is a disc, then $\varphi_{\rm aff}$ can be chosen equal zero.
\end{proposition}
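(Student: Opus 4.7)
The plan is to expand the matrix argument of $Q$ inside the infimum \eqref{eq:def of Qel}, observe that under isotropy the minimization over $\varphi=(\varphi_1,\varphi_2,\varphi_3)\in H^1(S;\R^3)$ decouples into a problem in $\varphi_1$ and one in $(\varphi_2,\varphi_3)$, and solve each explicitly. Writing out $\xxi_1\e_1+\xxi_{234}\wedge(0,\barx) = (\xxi_1+\xxi_3 x_3-\xxi_4 x_2,\,-\xxi_2 x_3,\,\xxi_2 x_2)$ and abbreviating $d_1 := \xxi_1+\xxi_3 x_3-\xxi_4 x_2$, the matrix $M$ inside $Q$ has diagonal $(d_1,\pt_2\varphi_2,\pt_3\varphi_3)$ and symmetric off-diagonal entries $\tfrac12(\pt_2\varphi_1-\xxi_2 x_3)$, $\tfrac12(\pt_3\varphi_1+\xxi_2 x_2)$, $\tfrac12(\pt_3\varphi_2+\pt_2\varphi_3)$. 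Substituting into $Q(\omega,\A)=2\mu|\sym\A|^2+\lambda(\trace\A)^2$, the $\varphi_1$-dependent terms collect to $\mu\int_S|\nabla\varphi_1-\xxi_2(x_3,-x_2)|^2\,d\barx$, while the remainder involves only $(\varphi_2,\varphi_3)$ and $d_1$.

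For the $\varphi_1$-subproblem, the linear ansatz $\varphi_1=\xxi_2\varphi_{\rm aff}$ reduces it to $\mu\xxi_2^2\int_S|\nabla\varphi_{\rm aff}-(x_3,-x_2)|^2\,d\barx$, whose Euler--Lagrange equation with natural Neumann boundary data is precisely \eqref{non-normalized}; plugging the minimizer back in produces the $\xxi_2^2$-line of \eqref{form of q1D isotropic}. For $(\varphi_2,\varphi_3)$ I would first relax to pointwise minimization over the four scalars $a=\pt_2\varphi_2,\,b=\pt_3\varphi_2,\,c=\pt_2\varphi_3,\,d=\pt_3\varphi_3$ of
\begin{equation*}
2\mu(d_1^2+a^2+d^2)+\mu(b+c)^2+\lambda(d_1+a+d)^2,
\end{equation*}
whose unconstrained minimum is $b+c=0$ and $a=d=-\tfrac{\lambda}{2(\mu+\lambda)}d_1$, yielding pointwise value $E\,d_1^2$ with $E=\mu(3\lambda+2\mu)/(\lambda+\mu)$ Young's modulus. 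Because $d_1$ is affine in $\barx$, this pointwise optimum is actually realized by quadratic $\varphi_2,\varphi_3\in H^1(S)$: integrate $a$ and $d$ to get the leading parts of $\varphi_2,\varphi_3$, and then append the corrections $-\tfrac{\lambda\xxi_4}{4(\mu+\lambda)}x_3^2$ to $\varphi_2$ and $\tfrac{\lambda\xxi_3}{4(\mu+\lambda)}x_2^2$ to $\varphi_3$ in order to enforce $\pt_3\varphi_2+\pt_2\varphi_3\equiv 0$.

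Expanding $d_1^2 = \xxi_1^2+\xxi_3^2 x_3^2+\xxi_4^2 x_2^2+2\xxi_1\xxi_3 x_3-2\xxi_1\xxi_4 x_2-2\xxi_3\xxi_4 x_2 x_3$ and integrating over $S$, the normalization \eqref{cancelation} annihilates the three cross-terms, leaving $\int_S d_1^2\,d\barx=|S|\xxi_1^2+\xxi_3^2\!\int_S x_3^2\,d\barx+\xxi_4^2\!\int_S x_2^2\,d\barx$; multiplying by $E$ yields the first line of \eqref{form of q1D isotropic}. Finally, if $S$ is a disc centred at the origin then $\nu=(x_2,x_3)/|\barx|$ on $\pt S$, whence $(x_3,-x_2)\cdot\nu\equiv 0$ and $\varphi_{\rm aff}\equiv 0$ solves \eqref{non-normalized}. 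The only non-routine point I foresee is the realizability step for $(\varphi_2,\varphi_3)$--namely, that the unconstrained pointwise minimizer actually arises from a pair of gradients in $H^1(S;\R^2)$--but once the explicit polynomial ansatz above is produced, the rest reduces to elementary algebra.
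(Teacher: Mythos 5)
Your proof is correct, and the explicit minimizers you construct for $(\varphi_2,\varphi_3)$ are, after collecting terms, precisely the quadratics the paper states. The route is genuinely different in structure, though: the paper writes out the Euler--Lagrange system (a coupled plane-strain Lam\'e system with Neumann data for $(\varphi_2,\varphi_3)$) and then presents the representative solution to be verified against both the PDE and the boundary conditions, whereas you perform an unconstrained pointwise relaxation over the gradient components and then exhibit a feasible pair $(\varphi_2,\varphi_3)\in H^1(S;\R^2)$ that attains the pointwise optimum everywhere. Because the relaxed value is automatically a lower bound for $Q^{\rm rod}$ and your polynomial pair makes it an upper bound as well, you sidestep verifying the natural boundary conditions entirely -- a modest but real simplification, and it also makes transparent why the cross-section normalization \eqref{cancelation} kills the mixed terms in $\int_S d_1^2$. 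The key point you flag yourself -- that the pointwise minimizer $a=d=-\tfrac{\lambda}{2(\mu+\lambda)}d_1$ with $b+c=0$ is integrable to gradients precisely because $d_1$ is affine in $\barx$ -- is exactly what makes the relaxation tight, and your integration constants $g_2,g_3$ reproduce the paper's $-\tfrac{\lambda\xxi_4}{4(\lambda+\mu)}x_3^2$ and $\tfrac{\lambda\xxi_3}{4(\lambda+\mu)}x_2^2$ corrections. The $\varphi_1$-subproblem and the disc remark are handled identically in both arguments.
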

See Section \ref{sec precise q1D} for the proof.
\medskip

Next, we discuss the asymptotics of $E^{\vare,h}(\omega)$ (see \eqref{intr:eq2a}) which is the main quantity of interest in our paper. For $h\to 0$, $\Gamma$-convergence of $\E^{\vare,h}(\omega,\vv)$ to the one dimensional energy $\E^{\vare}(\omega,\bar{u},\ro)$ is obtained in Section~\ref{section:main results} as a particular case of Theorem~\ref{main theorem qual}. From this we deduce the convergence of $E^{\vare,h}(\omega)$ to
\begin{align}\label{1D_surr}
    E^{\vare}(\omega):= \inf_{(\bar{u},\ro)}\E^{\vare}(\omega,(\bar{u},\ro))
\end{align}
as $h\to 0$, where the infimum runs over all $(\bar u,\ro)$ with
\begin{equation}\label{defW}
  (\bar u,\ro)\in (\bar u_{\rm aff},\ro_{\rm aff})+H^1_0(0,L)\times H^1_{00}(0,L;\R^3).
\end{equation}
The set above is an affine space that encodes the effective 1D boundary conditions that emerge from the 3D boundary conditions of Definition~\ref{D bc 3D}. It invokes the space
\begin{align}
  H_{00}^1(0,L)&:=\{\ro\in H_0^1(0,L;\R^3):\fint_0^L (\ro_2,\ro_3)(x_1)\,dx_1=0\},\label{def of Sobolev subspace*}
\end{align}
and the boundary conditions $\bar{u}_{\mathrm{aff}}$ and $\ro_{\mathrm{aff}}$ given by
 \begin{align}\label{affine function def}
\bar u_{\mathrm{aff}}:=\frac{{\bf{t}}_1 x_1}{L},\quad\ro_{\mathrm{aff}}:=(k_0+\frac{(k_L-k_0)x_1}{L},0,0),
 \end{align}
 where $\mathbf{t}_1\in\R$ is the first component of the vector $\mathbf{t}$ from~\eqref{bc0}, and $k_0,k_L\in\R$ are determined by the skew symmetric matrices $\K_0$ and $\K_L$ in~\eqref{bc0} and~\eqref{bcL} via
\begin{align}\label{def of K0 KL}
\K_0=\begin{pmatrix}
0&-k_0\\
k_0&0
\end{pmatrix},
\qquad
\K_L=\begin{pmatrix}
0&-k_L\\
k_L&0
\end{pmatrix}.
\end{align}
Solving~\eqref{1D_surr} now requires only the solution of an ordinary differential system instead of a partial differential equation in three space dimensions. As will be demonstrated in the following, this leads to a marked reduction of computational effort in approximating the effective elastic energy in~\eqref{intr:eq2a} numerically.

\subsubsection{Numerical analysis of the surrogate model for isotropic material}\label{2.2.1}
Using the explicit representation of the one-dimensional isotropic elastic energy in~\eqref{form of q1D isotropic} we derive an efficient method to compute the effective elastic energy in~\eqref{intr:eq2a} numerically.
In the following we focus on the special case of an isotropic material: Let the quadratic form $Q$, and the Lam\'e parameters $\mu,\ld$ be as in Proposition~\ref{precise q1D}; moreover, let $\Phi=(\Phi_1,\Phi_2)$ be the random field of Assumption~\ref{assumption of model}, cf.~\eqref{random translation}.

We aim to approximate the effective elastic energy $E^{\vare,h}(\omega)$ by the one-dimensional surrogate model~\eqref{infimum1}, where Proposition \ref{precise q1D} is taken into account. To that end set
\begin{align*}
  a(\omega):=\frac{\mu(\omega)(3\ld(\omega)+2\mu(\omega))}{\ld(\omega)+\mu(\omega)}.
\end{align*}
In the following we shall use the shorthand notation $f^{\vare}(\omega,x_1):=f(\tau_{x_1/\vare}\,\omega)$ for a function $f:\Omega\to\R$ and a sample $\omega\in\Omega$. For convenience, we also drop the dependence on $\omega$ in our notation. For instance, we simply write $f^\vare(x_1)$, $\Phi^\vare$, or $\E^\vare(\bar u,\ro)$ instead of $f^\vare(\omega,x_1)$, $\Phi^\vare(\omega,x_1)$, or $\E^\vare(\omega,\bar u,\ro)$. In view of Proposition~\ref{precise q1D}, $\E^{\vare}(\bar u,\ro)$ takes the explicit form
\begin{align*}
  \E^{\vare}(\bar u,\ro)&= \fint_0^L a^\vare
 \bg(|S|(\partial_1\bar{u}+\tfrac{1}{L}(\ro_3\Phi_{1}^\vare-\ro_2\Phi_{2}^\vare))^2+\bg(\int_S x_3^2d\barx\bg) (\partial_1\ro_2)^2+\bg(\int_S x_3^2d\barx\bg)(\partial_1\ro_3)^2\bg)\,dx_1 \\
 &+ \fint_0^L\mu^\vare\bg(\int_S\bg((x_3-\pt_2\varphi_{\rm aff}(\barx))^2+(x_2+\pt_3\varphi_{\rm aff})^2(\barx)\bg)\,d\barx\bg)
 (\partial_1 \ro_1)^{2}\,dx_1
\end{align*}
with $\varphi_{\rm aff}\in H^1(S)$ is a solution of \eqref{non-normalized}. As a consequence, the minimization of the 3D-energy $\E^{\vare,h}$ is reduced to the  minimization of the (still probabilistic) 1D-energy $\E^{\vare}(\bar u,\ro)$, where minimizers $(\bar u, \ro)$ in the affine space \eqref{defW} can be determined by solving the following (weak) ordinary differential system:
\begin{subequations}
\begin{align}\label{firstvar1}
  \fint_0^L a^\vare \left( \partial_1 \bar u +\tfrac{1}{L}(\Phi_{1}^\vare \ro_3-\Phi_{2}^\vare \ro_2) \right) \partial_1\bar{v}~\mathrm{d}x_{1} =&-\frac{\mathbf{t}_1}{L}\fint_0^La^\vare \partial_1 \bar{v}~\mathrm{d}x_1 ,   \\
  \fint_0^L \mu^\vare \partial_1 \mathbf{r}_1 \partial_1 \mathbf{s}_1~\mathrm{d}x_1=& -\fint_0^L\mu^\vare\frac{k_L-k_0}{L}\partial_1 \mathbf{s}_1~\mathrm{d}x_1,
\end{align}
\begin{align}
  &\big(\int_S x_2^2\,d\bar{x}\big) \fint_0^La^\vare \partial_1 \mathbf{r}_{2} \partial_1 \mathbf{s}_{2}~\mathrm{d}x_{1} - |S| \fint_0^La^\vare \tfrac{1}{L}\left(\Phi_{1}^\vare \ro_3 -\Phi_{2}^\vare\ro_2) +\partial_1 \bar{u}\right)\cdot \mathbf{s}_{2}\tfrac{1}{L}\Phi_{2}^\vare~\mathrm{d}x_{1}\\\nonumber
&\qquad= \frac{\mathbf{t}_1}{L}|S|\fint_0^L a^\vare \mathbf{s}_{2} \tfrac{1}{L}\Phi_{2}^\vare~\mathrm{d}x_{1} ,  \\
  &\big(\int_S x_3^2\,d\bar{x}\big) \fint_0^La^\vare \partial_1 \mathbf{r}_{3} \partial_1\mathbf{s}_{3}~\mathrm{d}x_{1} + |S| \fint_0^La^\vare\tfrac1L\left(\Phi_{1}^\vare \ro_3 -\Phi_{2}^\vare\ro_2)  +\partial_1 \bar{u}\right)\cdot \mathbf{s}_{3}\tfrac1L \Phi_{1}^\vare~\mathrm{d}x_{1}\\\nonumber
&\qquad= -\frac{\mathbf{t}_1}{L}|S|\fint_0^L a^\vare \mathbf{s}_{3} \tfrac1L\Phi_{1}^\vare~\mathrm{d}x_{1}
\end{align}
\end{subequations}
for all test functions $(\bar{v}, \mathbf{s}) \in H_0^1(0,L)\times H_{00}^1(0,L;\R^3)$. From~\eqref{firstvar1} we can directly deduce that for deterministic (fixed) constants $\mu$ and $\lambda$ the solution $\ro_1$ is deterministic as well and given by the affine displacement, i.e.,
\begin{align*}
    \ro_1= k_0+\frac{(k_L-k_0)x_1}{L}.
\end{align*}
Thus for $\mu$ and $\lambda$ fixed it remains to solve a system of three ordinary differential equations for $(\bar{u},\ro_2,\ro_3)$ where the boundary condition for $\bar{u}$ enters linearly and therefore the energy scales quadratically in the derivative $\partial_1 \bar{u}_{\mathrm{aff}}= \frac{{\bf{t}}_1}{L}$ of the affine displacement, see Fig.~\ref{fig:Fig01}.

Furthermore, in Theorem~\ref{thm conv rate} we derive quantitative results  for convergence of the 1D surrogate substitute $E^{\vare}(\omega)$ to a deterministic proxy $E^{0}$ as $\vare \rightarrow 0$: For the error $\|E^\vare(\cdot)-E^0\|_{L^2(\Omega)}$ we find sublinear decrease ($\sim \sqrt{\varepsilon}$) for probabilistic material constants $\mu_{\vare}^\omega(x_1),\lambda_{\vare}^\omega(x_1)$ and linear decrease ($\sim\varepsilon$) for deterministic material constants $\mu,\lambda$, cf. Theorem~\ref{thm conv rate} and Fig.~\ref{fig:Fig01}. The quantity $E^0$ is given by
\begin{align*}
E^0\coloneqq \inf\{\E^0(\bar{u},\ro)\,:\,(\bar{u},\ro)\text{ satisfying }\eqref{defW} \}
\end{align*}
with
\begin{align*}
\E^0(\bar{u},\ro)&:=\fint_0^L
                          Q^0\Big(
                          \begin{pmatrix}
                            \pt_1\bar u+\tfrac1L(\ro_3\la\Phi_{1}\ra-\ro_2\la\Phi_2\ra)\\
                            \pt_1\ro
                          \end{pmatrix}
\Big)
\,dx_1,
\end{align*}
where the function $Q^0$ is defined in \eqref{def:Q0} below. Let us anticipate that in the isotropic case we have explicitly for all $\xxi\in \R^4$,
\begin{align*}
  Q^0(\xxi)= &a^{\rm hom} \bg(|S|\xxi_1^2+\bg(\int_S x_3^2\,d\barx\bg)\xxi_3^2+\bg(\int_S x_2^2\,d\barx\bg)\xxi_4^2\bg)
\nonumber\\&+\mu^{\rm hom}\bg(\int_S\bg((x_3-\pt_2\varphi_{\rm aff}(\barx))^2+(x_2+\pt_3\varphi_{\rm aff})^2(\barx)\bg)\,d\barx\bg)\xxi_2^2,
\end{align*}
where
\begin{equation*}
  a^{\rm hom}=\langle a^{-1}\rangle^{-1},\qquad \mu^{\rm hom}=\langle\mu^{-1}\rangle^{-1}.
\end{equation*}
The value of $E^0$ can be determined by solving the deterministic system of equations
\begin{align*}
    \fint_0^L\left(\partial_1 \bar u +\tfrac1L(\langle \Phi_1 \rangle \ro_3 -\langle \Phi_2 \rangle \ro_2)\right) \partial_1 \bar v~\mathrm{d}x_{1} &=-\fint_0^L\frac{t_1}{L}\partial_1 \bar v~\mathrm{d}x_ ,  \\
     \fint_0^L\partial_1 \ro_1 \partial_1 \mathbf{s}_1   &= \fint_0^L\frac{k_L-k_0}{L}\partial_1 \mathbf{s}_1~\mathrm{d}x_1,   \\
    \bg(\fint_S x_2^2\,d\barx\bg) \fint_0^L\partial_1 \ro_{2} \partial_1 \mathbf{s}_{2}~\mathrm{d}x_{1} -  \fint_0^L\left(\tfrac1L(\langle \Phi_1 \rangle \ro_3 -\langle \Phi_2 \rangle \ro_2) +\partial_1 \bar u \right)\cdot \mathbf{s}_{2}\tfrac1L\langle \Phi_{2}\rangle~\mathrm{d}x_{1}
    &= \fint_0^L \mathbf{s}_{2} \tfrac1L\langle \Phi_{2}
       \rangle~\mathrm{d}x_{1} ,  \\
     \bg(\fint_S x_3^2\,d\barx\bg) \fint_0^L\partial_1\ro_{3} \partial_1 \mathbf{s}_{3}~\mathrm{d}x_{1} +  \fint_0^L\left(\tfrac1L(\langle \Phi_1 \rangle \ro_3 -\langle \Phi_2 \rangle \ro_2)  + \partial_1\bar u\right)\cdot \mathbf{s}_{3} \tfrac1L\langle \Phi_{1}\rangle~\mathrm{d}x_{1}
     &= -\fint_0^L  \mathbf{s}_{3} \tfrac1L\langle \Phi_{1} \rangle~\mathrm{d}x_{1}
\end{align*}
for all test functions $(\bar v, \mathbf{s}) \in H_0^1(0,L)\times H_{00}^1(0,L;\R^3)$.

\subsubsection{Numerical approximation of random fields}
In the following let $\kappa\in\{1,2,\mu,\ld\}$. Describing the stochastic geometric imperfections and material density variation numerically we consider $\Phi_{1}^\vare, \Phi_{2}^\vare$ and $\mu^{\vare},\lambda^{\vare}$ to be stationary and correlated random fields with covariance structure given by symmetric and positive semi-definite operators $C_{\vare}^{\kappa}(s,t)\colon (0,L)\times (0,L) \rightarrow \mathbb{R}^+$. Usually $C_{\vare}^{\kappa}$ is determined by
\[
C_\vare^{\kappa}(s,t)=\sigma_{\kappa}^2 C^{\kappa}\left(\frac{|s-t|}{\varepsilon}\right),
\]
for standard deviation $\sigma_{\kappa}$ and some positive function $C^{\kappa}$ with
\[
 C^{\kappa}(|t|) \rightarrow 0 \ \  \text{for $|t| \rightarrow \infty$},
\]
meaning that $\Phi_{1}^\vare(x_1),\Phi_{2}^\vare(x_1)$ and $\mu^{\vare}(x_1),\lambda^{\vare}(x_1)$ decorrelate on large distances. Modelling these random fields numerically on a discretization $(t^{m}_j)_{j=1,...,N}$ of $(0,L)$ we consider the covariance matrices
\begin{align*}
    \mathbf{C}^{\kappa}= \sigma_{\kappa}^2 \begin{pmatrix}
      C^{\kappa}_\vare(t_1^{m},t_1^{m})& \cdots & C^{\kappa}_\vare(t_1^{m},t_N^{m}) \\
                \vdots & \ddots  & \vdots \\
                C^{\kappa}_\vare(t_N^{m},t_1^{m}) & \cdots & C^{\kappa}_\vare(t_N^{m},t_N^{m})
     \end{pmatrix},
\end{align*}
where the positive semi-definite operators $C^\kappa_\vare$ are determined by
 \[
C^{\kappa}_\vare(t_i^{m},t_j^{m})=\exp\left(-\frac{|t_{i}^{m}-t_{j}^{m}|}{\varepsilon}\right).
\]
Moreover, we assume in the following that for any fixed $x_1\in (0,L)$ the random variables $\Phi_{1}^\vare(x_1)$ and $\Phi_{2}^\vare(x_1)$ are standard Gaussian whereas $\mu^{\vare}(x_1),\lambda^{\vare}(x_1)$ are log-normally distributed. Here, the log-normal distribution is chosen to model the situation of varying material parameters such that with respect to the original parameters $\mu$ and $\lambda$ we have
\[
\mu^{\vare}(x_1)\leq \mu, \quad \lambda^{\vare}(x_1) \leq \lambda
\]
and small deviations occur more frequently.

Discrete representations of $\Phi_{1}^\omega(x_1), \Phi_{2}^\omega(x_1)$ and $\mu^{\vare}(x_1),\lambda^{\vare}(x_1)$ are thus given by
\begin{align*}
    \Phi^{1}_N=L_{1}\cdot V^{1},\ \ \Phi^{2}_N=L_{2}\cdot V^{2}, \ \  \mu_{N}=L_\mu\cdot V^\mu, \ \ \lambda_{N}= L_\lambda \cdot V^\lambda
\end{align*}
for the Cholesky decompositions $\mathbf{C}^{\kappa}=L_{\kappa}L_{\kappa}^{T}$ and random samples $ V^{\kappa}$ with $V_j^{1}, V_j^{2}  \sim \mathcal{N}(0,1)$ and $V^\mu_j,V^\lambda_j \sim \mathcal{LN}(0,1)$ for $j=1,...,N$. Here, the Cholesky decomposition is preferred over the Karhunen-Lo\'{e}ve transformation as we are predominantly dealing with small correlation lengths $\varepsilon$ and thus the covariance matrices are sparse matrices.

\begin{figure}[htp!]
\centering
  \qquad
   \subfigure[\textbf{Pure tension}: $E^0=E^{\rm opt}=125.756$. ] {\includegraphics[width=0.43\textwidth]{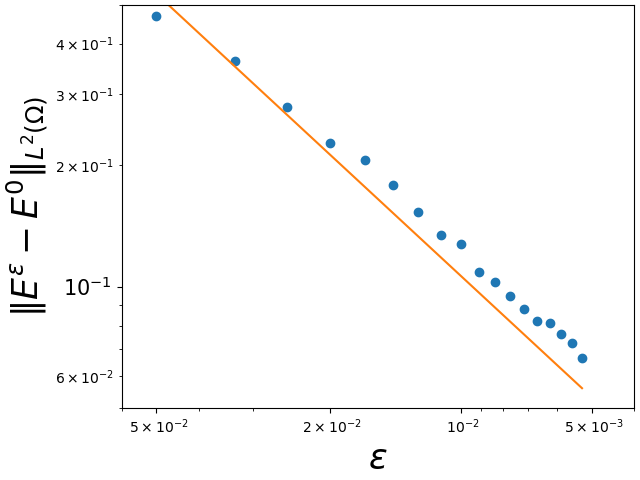}} \hspace{3em}
   \subfigure[\textbf{Pure tension}: $E^0=E^{\rm opt}=503.024$.] {\includegraphics[width=0.43\textwidth]{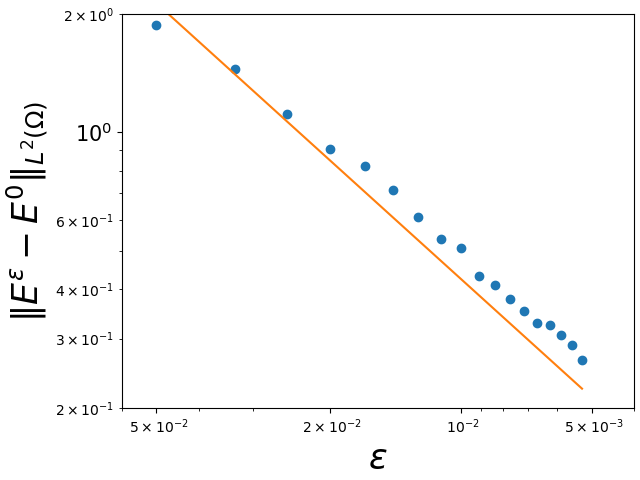}} \\
   \subfigure[\textbf{Tension and twist}: $\mu_N = \mu_{\rm opt}(0.978-L_{\mu}V^{\mu})$, $\lambda^{N}=2.16\mu^{N}$. $E^0=124.158$, $E^{\rm opt}=131.803$.] {\includegraphics[width=0.43\textwidth]{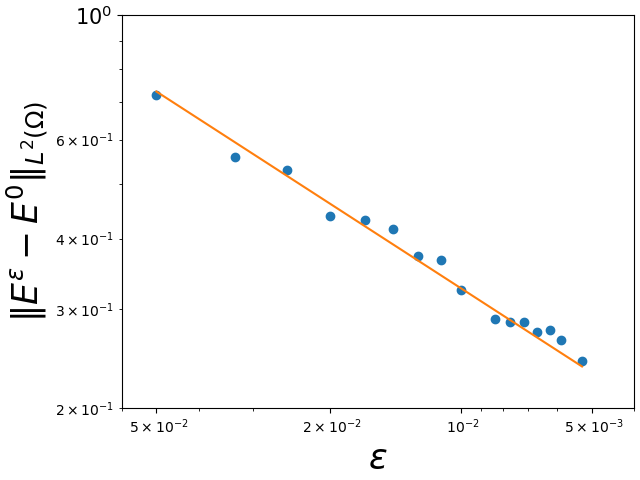}}
 \caption[]{\textbf{Trends of deviations}. Numerical evaluation of the $L^2$-error $ \|E^\vare(\omega)-E^0\|_{L^2(\Omega)}$
 by finite elements and Monte Carlo simulations for circular cross-section $S$ and different correlation lengths $\vare > 0$. We set $\sigma_{1}=\sigma_{2}=0.3$, $\sigma_\mu=0.00717$ and $\sigma_\lambda=0.0155$ where the effective elastic energy with respect to the unperturbed rod $O^h$ is denoted by $E^{\rm opt}$ and the parameters $\mu$ and $\lambda$ are fixed (deterministic) in (\textbf{a}) and (\textbf{b}) with $\mu=\mu_{\rm opt}=30.8$ and $\lambda=\lambda_{\rm opt}=66.6$ and given by random fields in (\textbf{c}). The compression is given by $\textbf{t}_1=L$ in (\textbf{a}) and (\textbf{c}) and $\textbf{t}_1=2L$ in (\textbf{b}). The remaining boundary conditions are determined by $k_0=k_L=0$ in (\textbf{a}) and (\textbf{b}), $k_0=0$, $k_l=0.5$ in (\textbf{c}). In correspondence to Theorem~\ref{thm conv rate} the interpolations (orange line) indicate linear decrease in ($\sim \varepsilon$) (\textbf{a})-(\textbf{b}) and sublinear ($\sim \sqrt{\varepsilon}$) decrease in (\textbf{c}).}
 \label{fig:Fig01}
 \end{figure}

\subsubsection{Numerical Experiments}\label{sec rate dimension reduction}
In order to demonstrate the marked reduction in computational effort using the surrogate model $E^{\vare}(\omega)$, we perform several numerical experiments and show its practicability. To do so we consider a circular cross-section $S$ and $L=1$ for different radii $h$ and compression boundary conditions at the top, i.e., we set $\mathbf{t}_1=0.5L$ and $k_0=k_L=0$. Further, we use deterministic Lam\'{e} parameters $\mu=30.8$ and $\ld=66.6$ and initially fix the correlation length to $\vare=0.05$.

Using standard finite element methods in 3D and 1D, the energies $E^{\vare,h}(\omega)$ and $E^\vare(\omega)$ are approximated by Monte Carlo simulations where the computation is performed on $4$ cores of an \textbf{Intel Core i7-4770S CPU} @ $3.1$ GHz.  The usage of the surrogate model in fact leads to a reduction of the required degrees of freedom in the computation of the discretized problem and thus reduces the computation time markedly, see Fig.~\ref{fig:Fig02}, ranging from $<10$ seconds for the surrogate model to $\sim 10^{4}$ seconds in the three-dimensional case.

Moreover, we find that the one-dimensional surrogate $E^\vare$ estimates the fluctuation $E^{\vare,h}(\omega) -\langle E^{\vare,h}(\omega) \rangle$ around the expected value and therefore, leads to
\begin{align}\label{num_approx}
E^{\vare,h}(\omega) \approx E^\vare(\omega) +\bg(\langle E^{\vare,h}(\omega)\rangle- \langle E^\vare(\omega)\rangle\bg).
\end{align}
The convergence in distribution of the fluctuations $E^{\vare,h}(\omega)- \langle E^{\vare,h}(\omega) \rangle$ is depicted in Fig.~\ref{fig:Fig02}.

Furthermore, we perform the same simulation for different choices of the standard deviations $\sigma_{1,2}$ and the correlation length $\vare$ and evaluate the systematic error
\begin{align}\label{systematic_error}
\left|\langle E^\vare(\omega)\rangle - \langle E^{\vare,h}(\omega)\rangle \right|
\end{align}
in~\eqref{num_approx}. First, we consider fixed standard deviations $\sigma_{1}=\sigma_{2}=0.00375$ and compute~\eqref{systematic_error} for different correlations lengths
\[
\vare\in \{0.03,0.05,0.07\}
\]
and different radii $h$, see~(\textbf{a}) in Fig.~\ref{fig:trend_thesis5}.~Following that, we fix the correlation length $\vare=0.05$ and compute~\eqref{systematic_error} for different standard deviations
\[
(\sigma_1=\sigma_2)\in \{0.0025,0.00375,0.005\}
\]
and different radii $h$, see~(\textbf{b}) in Fig.~\ref{fig:trend_thesis5}.~The results then suggest a quantitative estimate for the systematic error~\eqref{systematic_error} of the form
\begin{align*}
 |\langle E^{\vare,h}(\omega)\rangle-\langle E^\vare(\omega)\rangle| \leq C h^{\alpha}
\end{align*}
for $\alpha \in (0,1)$ and a constant $C$ which is independent of $h$ but monotone (increasing) in $\vare^{-1}\max\{\sigma_{1},\sigma_{2}\}$. By~\eqref{num_approx} we thus obtain that a good approximation of the effective energy $E^{\vare,h}(\omega)$ by the surrogate substitute $E^{\vare}(\omega)$ can be expected in the case where $h \ll \vare$, i.e., when the thickness of the rod is small compared to the correlation length of the geometric perturbation, or in the case where the amplitudes of the geometric perturbations which are determined by the standard deviations $\sigma_{1}$ and $\sigma_{2}$ are small and comparable to $\vare$. A mathematically rigorous proof of the estimate, however, remains open up to this point and gives an opportunity for further research.

In the opposite regime where the systematic error is large the surrogate model $E^{\vare}(\omega)$ can be used to estimate the fluctuation $E^{\vare,h}-\langle E^{\vare,h}\rangle$ and the random variable $E^{\vare,h}(\omega)$ can be approximated by means of \eqref{num_approx} where the systematic error $\langle E^{\vare,h}-E^{\vare}\rangle$ is inferred by computing $E^{\vare,h}$ and $E^{\vare}$ for the same sample of perturbations $\Phi$. Through this we obtain a multi-fidelity approach which combines the accuracy of the three-dimensional (high-fidelity) model with the efficiency of the one-dimensional (low-fidelity) surrogate model. This leads to decent approximations of the effective energy $E^{\vare,h}$ with computation time comparable to that of a sole usage of the one-dimensional surrogate model, see Fig.~\ref{fig:large_systematic}.
\begin{figure}[htp!]
\centering
  \qquad
  \subfigure[Cumulative distribution functions $F_X$.]{\includegraphics[width=0.4\textwidth]{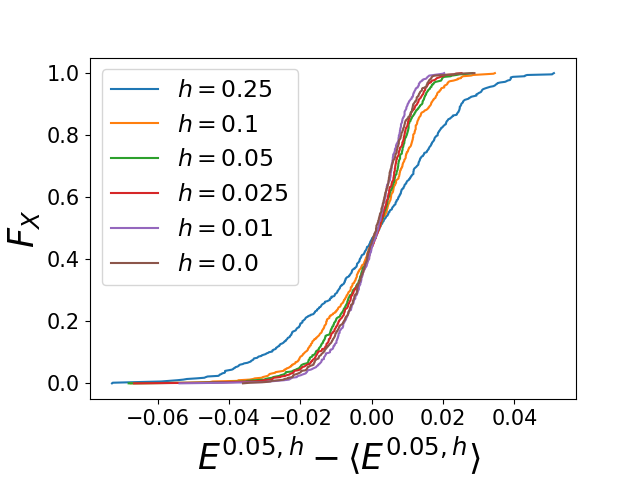}}\\
   \subfigure[$h=0.25$] {\includegraphics[width=0.29\textwidth]{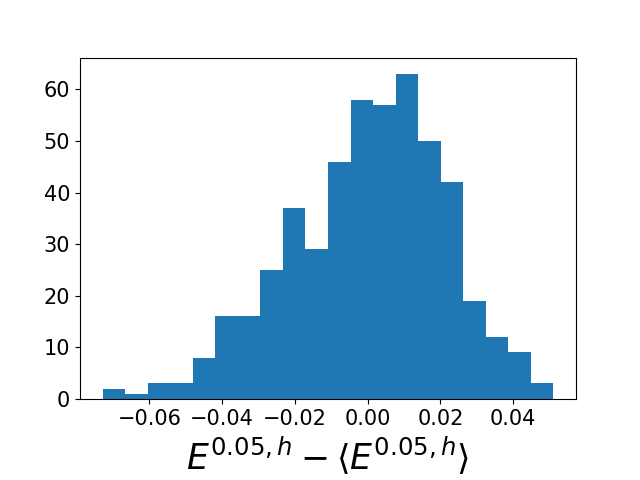}}\hspace{2em}
   \subfigure[$h=0.1$] {\includegraphics[width=0.29\textwidth]{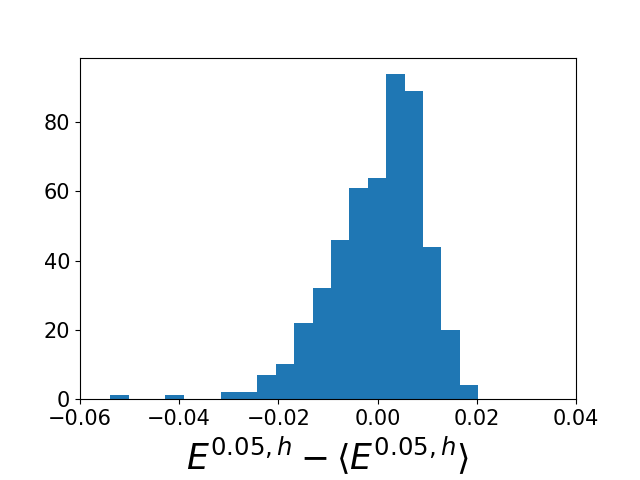}}\hspace{2em}
   \subfigure[$h=0.05$] {\includegraphics[width=0.29\textwidth]{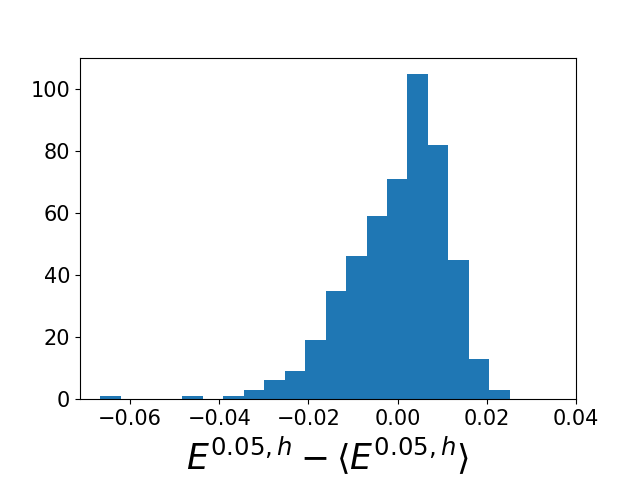}}\\
   \subfigure[$h=0.025$] {\includegraphics[width=0.29\textwidth]{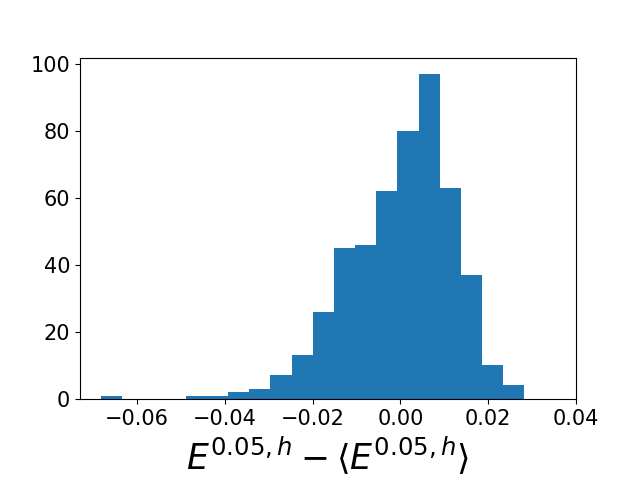}}\hspace{2em}
   \subfigure[$h=0.01$] {\includegraphics[width=0.29\textwidth]{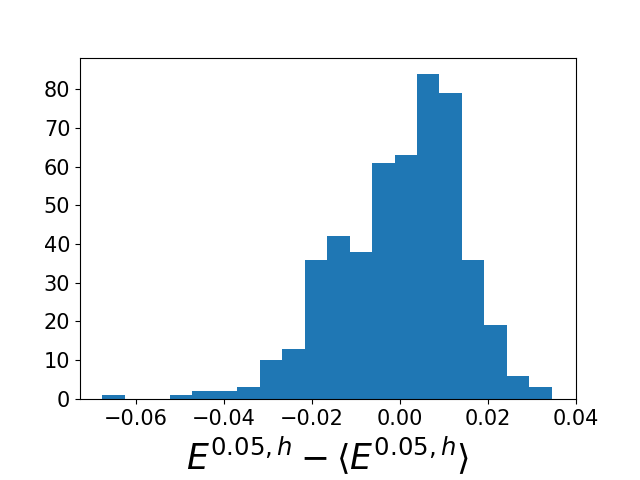}}\hspace{2em}
   \subfigure[$h=0.0$] {\includegraphics[width=0.29\textwidth]{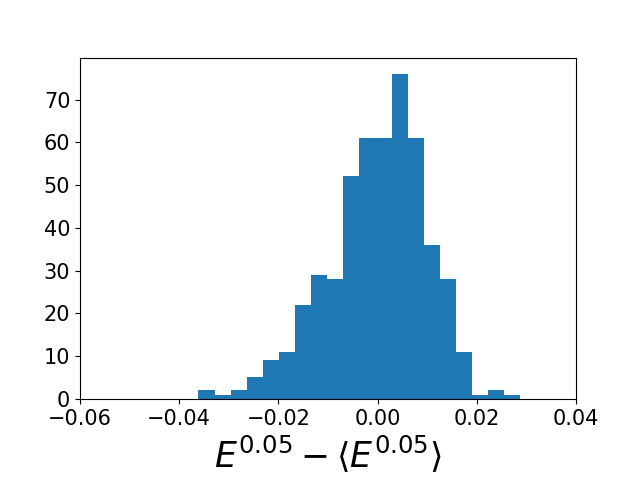}}
 \caption[]{\textbf{Numerical simulations for cylinder geometry.} Cumulative distribution functions $F_{X}$ of the fluctuations $E^{\vare,h}- \langle E^{\vare,h} \rangle$ for different radii $h$ (\textbf{a}). Distribution for 3D (\textbf{b})-(\textbf{f}) and 1D (\textbf{g}) finite element approximation for circular cross-section $S$ with $\sigma_{1}=\sigma_{2}=0.3$ and boundary conditions $t_1=-0.5L$ and $k_0=k_L=0$. The effective elastic energy of the unperturbed rod is $E^{\rm opt}=31.439$. The computation of $ E^{\vare,h}$ and $E^\vare$ is performed for $\varepsilon=0.05$ and $500$ samples of perturbations. The one dimensional surrogate model is indicated by $h=0.0$. The computation time in seconds $s$ is $\sim 10^4 s$ in (\textbf{b})-(\textbf{f}) and $\sim 1s$ in (\textbf{g}).}
 \label{fig:Fig02}
 \end{figure}

\begin{figure}[htp!]
\centering
  \qquad
\subfigure[]{\includegraphics[width=0.4\textwidth]{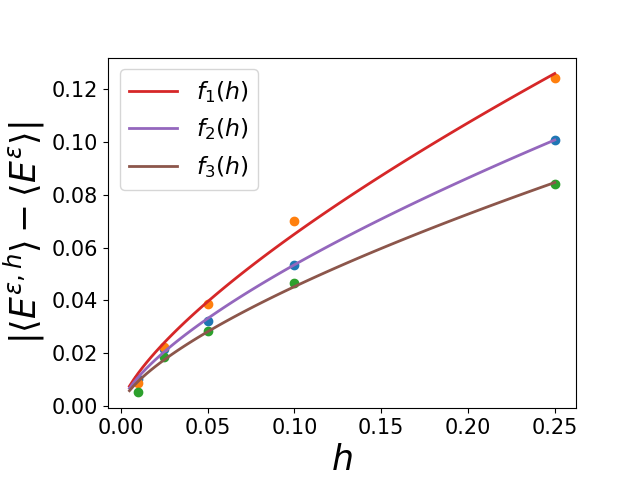}} \hspace{2em}
\subfigure[]{\includegraphics[width=0.4\textwidth]{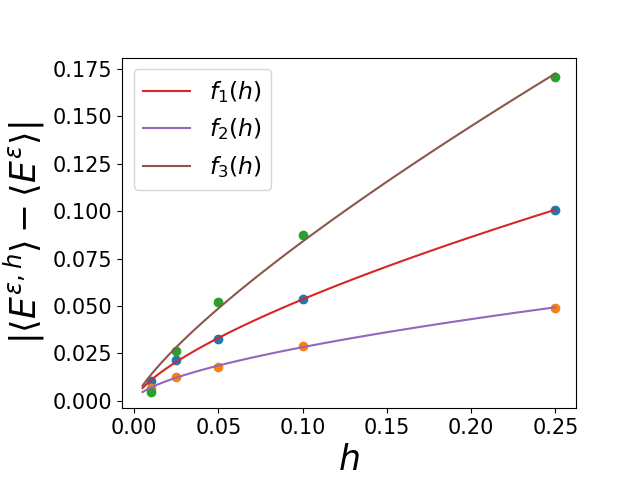}}
 \caption[]{Trends of the systematic errors $\left|\langle E^{\vare,h}\rangle-\langle E^{\vare}\rangle\right|$ for correlation lengths $\vare \in \{0.03,0.05,0.07\}$ and fixed standard deviations $\sigma_{1}=\sigma_{2}=0.3$ in (\textbf{a}) and for standard deviations $(\sigma_{1}=\sigma_2)\in \{0.0025,0.00375,0.005\}$ and fixed correlation length $\vare=0.05$ in (\textbf{b}).~The computation of $E^{\vare,h}$ and $E^{\vare}$ is performed for $250$ samples of perturbations using a Monte Carlo simulation approach.~Further, the expected errors are interpolated by the functions $f_j(h)=a_jh^{t_j}$ resulting in $a_j\in \{0.34,0.26,0.22 \}$ and $t_j\in\{0.72,0.69,0.68\}$ in (\textbf{a}) and in $a_j\in \{0.114,0.26,0.51\} $ and $t_j\in \{0.6,0.69,0.78\}$ in (\textbf{b}), for $j=1,2,3$.}
 \label{fig:trend_thesis5}
 \end{figure}

 \begin{figure}[htp!]
\centering
\subfigure[Cumulative distribution functions $F_X$]{\includegraphics[width=0.59\textwidth]{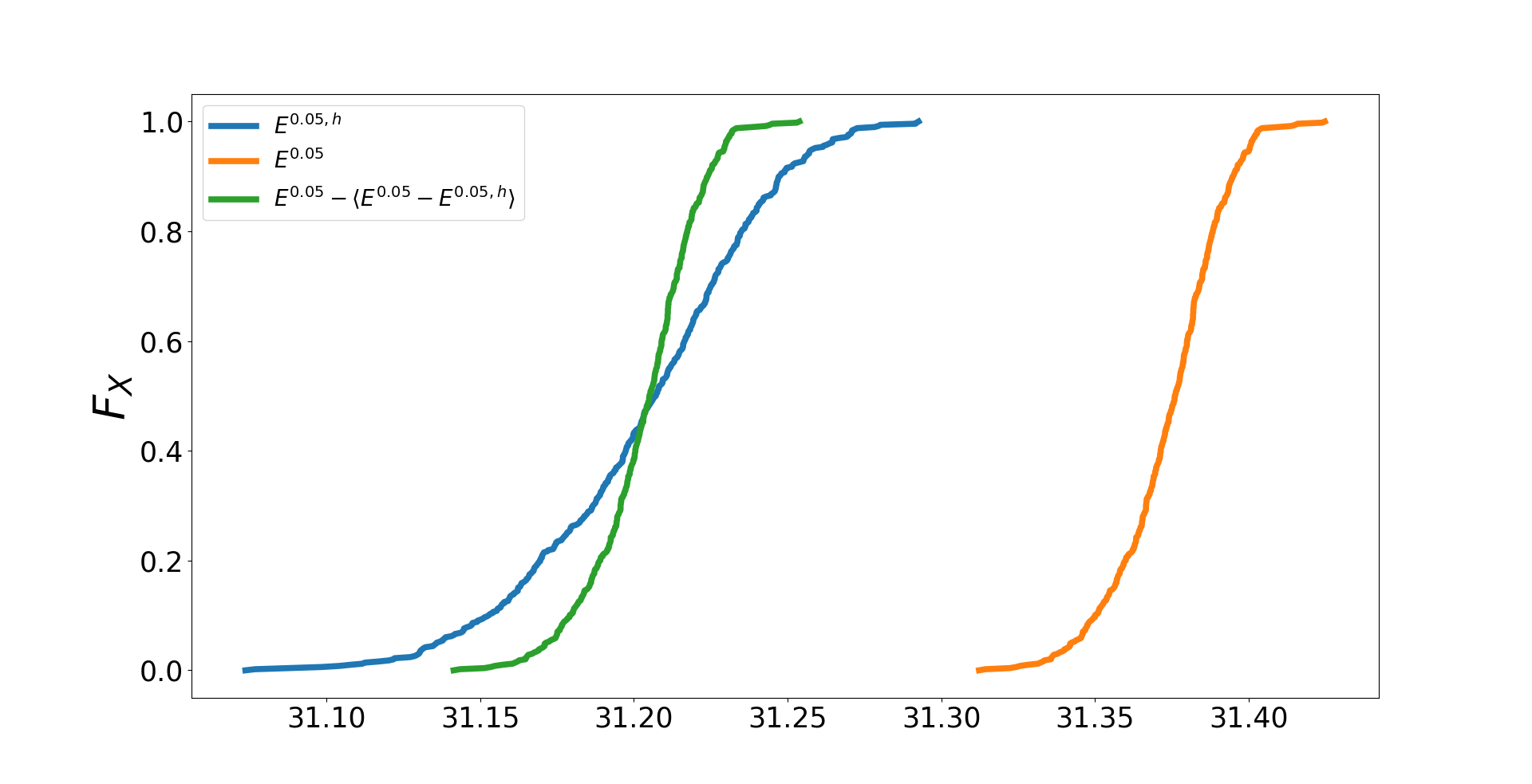}} \\
\subfigure[Computation time $\sim 10^4 s$]{\includegraphics[width=0.29\textwidth]{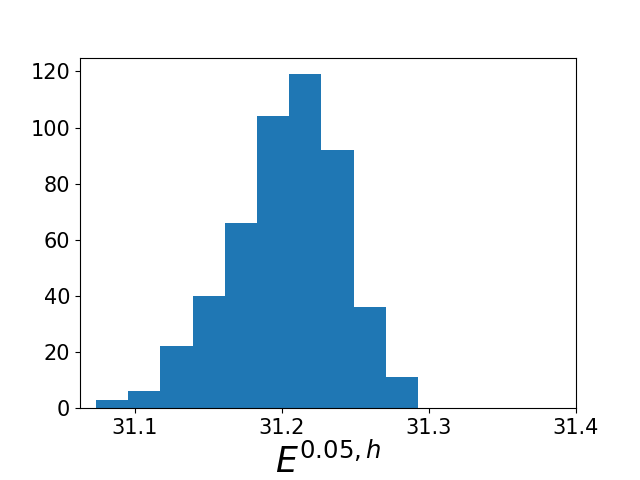}} \hspace{1em}
\subfigure[Computation time $\sim 10^0 s$]{\includegraphics[width=0.29\textwidth]{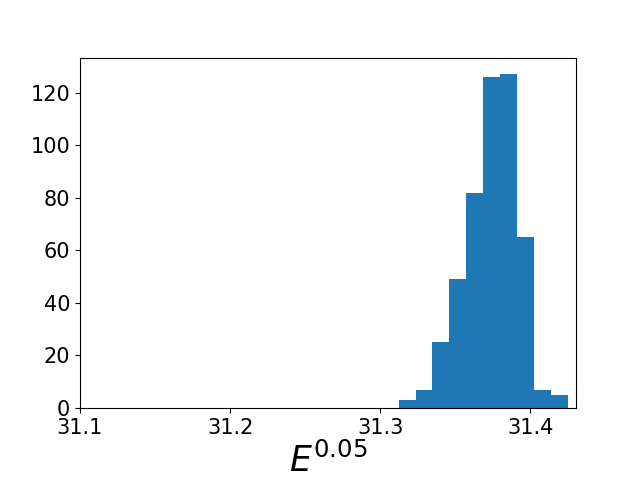}}\hspace{1em}
\subfigure[Computation time $\sim 10^1 s$]{\includegraphics[width=0.29\textwidth]{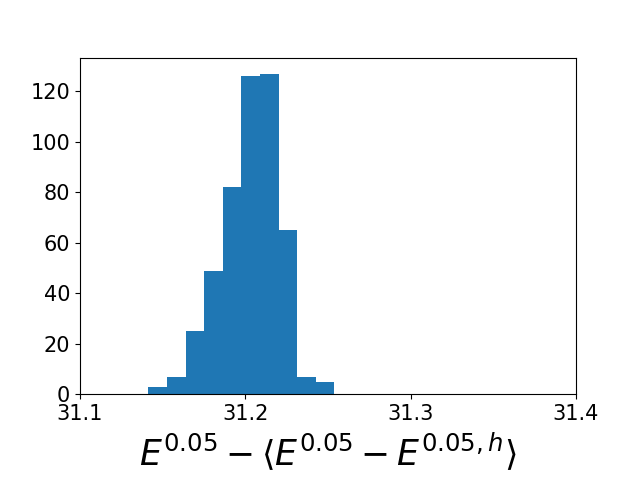}}
 \caption[]{Cumulative distribution functions $F_X$ of energies $E^{\vare,h}$, $E^{\vare}$ and $E^{\vare}+\langle E^{\vare,h}-E^{\vare}\rangle$ for circular cross-section $S$ with $\sigma_{1}=\sigma_{2}=0.4$, $\varepsilon=0.05$ and $h=0.25$ (\textbf{a}). Distribution for 3D (\textbf{b}) and 1D (\textbf{c}) finite element approximations of effective energies $E^{\vare,h}$ and $E^{\vare}$. Subfigure (\textbf{d}) shows the distribution for an approximation of $E^{\vare,h}$  by~\eqref{num_approx}. The computation of $ E^{\vare,h}$ and $E^\vare$ is performed for $500$ samples of perturbations.}
 \label{fig:large_systematic}
 \end{figure}


 \subsection{Conclusion}
Including the one-dimensional surrogate model in the approximation of mechanical properties we can significantly reduced the computational effort that is required for a finite element analysis of mechanical tests in three space dimensions. In particular, the surrogate model provides an estimation of the fluctuation of an effective mechanical property around its mean value which is fairly accurate for a wide range of randomly perturbed rod-shaped structures where perturbations are caused by correlated in-plane shifts of layers. From this we obtain a new multi-fidelity (Monte Carlo) approach, whereas the systematic error has to be inferred from a comparison of the three-dimensional (high-fidelity) model with the one-dimensional (low-fidelity) surrogate model for a single sample of perturbations. This leads to a marked reduction in computational effort compared to the sole usage of a three-dimensional model where on the other hand accuracy of the approximation is preserved compared to a sole usage of the one-dimensional surrogate model. However, the accuracy of the approximation depends strongly on the magnitudes $\sigma_1$ and $\sigma_2$ of the random shifts as well as on the ratio $\frac{h}{\vare}$ between the diameter $h$ of the rod and the correlation length $\vare$. This leads to limitations of the approach as the factor $\frac{h \max\{\sigma_1,\sigma_2\}}{\vare}$ increases and a higher number of evaluations of the three-dimensional model might be required in this case. 
 
\section{Analytical results}\label{section:main results}
In this subsection we state our main analytical results. Recall that we consider the energy functional
\begin{align}\label{def of energy2}
 \E^{\vare,h}(\omega,\vv)\coloneqq \frac{1}{h^{2}}\frac1L\int\limits_{O^{\vare,h}(\omega)}Q(\tau_{\frac{x_1}{\vare}}\omega,\nabla \vv(x))\,dx,
\end{align}
and the minimization problem
\begin{align}\label{intr:eq2}
E^{\vare,h}(\omega):=\inf_{\vv \in H_{\mathcal{B}}^1(O^{\vare,h}(\omega);\R^3)} \E^{\vare,h}(\omega,\vv),
\end{align}
defined by \eqref{def of energy2a} and \eqref{intr:eq2a} respectively. We prove rigorously, via the framework of $\Gamma$-convergence theory, that the limit of $E^{\varepsilon,h}(\omega)$ can be identified as minima of a limiting functional which precise form depends on the relative scaling $\gamma\sim\frac{h}{\vare}$. In the present paper we consider the cases
\begin{align}\label{scheme}
h\ll \vare\text{ (the case $\gamma=0$) and }\vare\ll h\text{ (the case $\gamma=\infty$).}
\end{align}
The case $\gamma\sim\frac{h}{\vare}\in(0,\infty)$ will be treated in a fourthcoming paper.
The effective elastic energies are given by the following functionals:
\begin{subequations}\label{functionals}
\begin{align}
  \E^{{\rm rod},\vare}(\omega,\bar{u},\ro)&:=\fint_0^L
                                    Q^{\rm rod}\Big(\tau_{\frac{x_1}{\vare}}\omega,
                                    \begin{pmatrix}
                                      \pt_1\bar u+\tfrac1L(\ro_3\Phi_1(\tau_{\frac{x_1}{\vare}}\omega)-\ro_2\Phi_2(\tau_{\frac{x_1}{\vare}}\omega))\\
                                      \pt_1\ro
                                    \end{pmatrix}
  \Big)\,dx_1,\label{eq:1Deps}\\
\widehat{\E}^{{\rm hom},h}(\uu)&:=\frac1L\int_O
                                    Q^{\rm hom}\Big(\scaled \uu(\id+\tfrac{h}{L}\la\B\ra)\Big)\,dx,\label{eq:3Dh}\\
\E^\gamma(\bar{u},\ro)&:=\fint_0^L
                          Q^{\gamma}\Big(
                          \begin{pmatrix}
                            \pt_1\bar u+\tfrac1L(\ro_3\la\Phi_1\ra-\ro_2\la\Phi_2\ra)\\
                            \pt_1\ro
                          \end{pmatrix}
\Big)
\,dx_1,\qquad \gamma\in\{0,\infty\}.\label{eq:1Dgamma}
\end{align}
\end{subequations}
Here, $\B$ is the random matrix field defined by \eqref{the form of B}. Recall that $Q^{\rm rod}$ is defined in \eqref{eq:def of Qel}. The quadratic form $Q^{\rm hom}$ describes the homogenized 3d material and is defined for all $\F\in \R^{3\times 3}$ by
\begin{equation}\label{def Qhom}
  Q^{\rm hom}(\F):=\inf_{\chi\in L^2_0(\Omega;\R^3)}\int_\Omega Q(\omega,\F+\chi\otimes \e_1)\,d\mathbb P(\omega).
\end{equation}
It is the standard homogenization formula of stochastic homogenization in the special case of a random laminate that oscillates in $x_1$-direction.
The quadratic forms $Q^{\gamma}:\R^4\to\R$ describe the effective properties of the homogenized 1D-rod, and are defined as follows:
\begin{subequations}
\begin{align}\label{def:Q0}
  Q^0(\xxi):=&\,\inf_{\chi\in L^2_0(\Omega;\R^4)}\int_\Omega Q^{\rm rod}(\omega,\xxi+\chi)\,d\mathbb P(\omega),\\
  \label{def:Qinfty}
  Q^\infty(\xxi):=&\,\inf_{\varphi\in H^1(S;\R^3)}\int_SQ^{\hom}\Big((\xxi_1\e_1+\xxi_{234}\wedge(0,\bar x))\otimes\e_1+(0,\nabla_{\bar x}\varphi)\Big)\,d\bar x.
\end{align}
\end{subequations}
We see that formulas are obtained by consecutively applying the formulas corresponding to dimension reduction and homogenization.

The following Lemma shows that $Q^\gamma, Q^{\rm hom}$, and $Q^{\rm rod}$ satisfy quadratic growth conditions and are sufficiently regular, so that the functionals $\E^\gamma,\widehat{\E}^{\hom,h}$, and $\E^{\vare,\rm rod}$ are indeed well-defined:

\begin{lemma}[Quadratic growth of $Q^{\rm rod}$ and $Q^{\gamma}$]\label{L:quadr}
  There exist constants $0<\beta_1\leq\beta_2<\infty$ only depending on $\alpha_1,\alpha_2$ and $S$ such that the following statement holds.
  \begin{enumerate}[label=(\alph*)]
  \item\label{lem coer a} $Q^{\rm rod}:\Omega\times\R^4\to\R$ defined by \eqref{eq:def of Qel} is measurable and for $\mathbb P$-a.a.~$\omega\in\Omega$, the function $Q^{\rm rod}(\omega,\cdot)$ is quadratic and satisfies
    \begin{equation*}
      \beta_1|\xxi|^2\leq Q^{\rm rod}(\omega,\xxi)\leq \beta_2|\xxi|^2\text{ for all }\xxi\in\R^4.
    \end{equation*}
  \item\label{lem coer b} For $\gamma\in\{0,\infty\}$ the function $Q^{\gamma}$ is quadratic and satisfies
    \begin{equation*}
      \beta_1|\xxi|^2\leq Q^{\gamma}(\xxi)\leq \beta_2|\xxi|^2\text{ for all }\xxi\in\R^4.
    \end{equation*}
  \item\label{lem coer c} If $Q$ is independent of $\omega$, then we have
    \begin{equation*}
      Q^{\rm rod}=      Q^{0}=      Q^{\infty}.
    \end{equation*}
  \end{enumerate}
\end{lemma}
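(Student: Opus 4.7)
The plan is to handle the three parts in turn, with the main work concentrated on the coercivity of $Q^{\rm rod}$ in part \ref{lem coer a}; the bounds on $Q^\gamma$ in \ref{lem coer b} then follow essentially formally, and \ref{lem coer c} reduces to Jensen's inequality applied to the two relaxation formulas.

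For \ref{lem coer a}, the upper bound is immediate by testing with $\varphi\equiv 0$ in \eqref{eq:def of Qel}: the bound $Q(\omega,\F)\leq\alpha_2|\sym \F|^2$ together with boundedness of $S$ gives $Q^{\rm rod}(\omega,\xxi)\leq \beta_2|\xxi|^2$ with $\beta_2=\beta_2(\alpha_2,S)$. The lower bound is the central point. I would apply $Q(\omega,\F)\geq\alpha_1|\sym \F|^2$ and decompose the symmetric part of the matrix inside the integrand. The $(1,1)$-entry equals $\xxi_1+\xxi_3 x_3-\xxi_4 x_2$ and is independent of $\varphi$; by the cancellation conditions \eqref{cancelation} its $L^2(S)$-norm squared equals $|S|\xxi_1^2+\bigl(\int_S x_3^2\bigr)\xxi_3^2+\bigl(\int_S x_2^2\bigr)\xxi_4^2$, which controls $\xxi_1,\xxi_3,\xxi_4$. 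For $\xxi_2$, the $(1,2)$- and $(1,3)$-entries combine to $\tfrac12(\nabla_{\bar x}\varphi_1-\xxi_2(x_3,-x_2))$, and the key geometric estimate I would prove is
\begin{equation*}
\inf_{\varphi_1\in H^1(S)}\int_S|\nabla_{\bar x}\varphi_1-\xxi_2(x_3,-x_2)|^2\,d\bar x\;\geq\; c(S)\,\xxi_2^2.
\end{equation*}
This is the main obstacle, and it relies on the observation that $(x_3,-x_2)$ has curl $-2\neq 0$ and therefore is not a gradient; a Helmholtz decomposition of $L^2(S;\R^2)$ (or, alternatively, a compactness-plus-rigidity contradiction argument) produces the quantitative constant $c(S)>0$. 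Quadraticity of $Q^{\rm rod}(\omega,\cdot)$ is the general fact that the partial minimization of a positive semi-definite quadratic form over a linear subspace is again a quadratic form. Measurability of $\omega\mapsto Q^{\rm rod}(\omega,\xxi)$ follows by restricting the infimum in \eqref{eq:def of Qel} to a countable dense subset of $H^1(S;\R^3)$, which is legitimate thanks to the coercivity just established.

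For \ref{lem coer b}, I would first show, by the standard stochastic-homogenization argument, that $\alpha_1|\sym \F|^2\leq Q^{\hom}(\F)\leq\alpha_2|\sym \F|^2$: the upper bound by testing with $\chi=0$ in \eqref{def Qhom}, and the lower bound using that $\sym(\chi\otimes\e_1)$ has zero $\mathbb P$-mean and is therefore $L^2(\Omega)$-orthogonal to the deterministic symmetric matrix $\sym \F$. Consequently $Q^{\hom}\in\mathcal Q(\alpha_1,\alpha_2)$ and the proof of \ref{lem coer a} applies verbatim to $Q^{\infty}$ by substituting $Q^{\hom}$ for $Q$. For $Q^{0}$, the upper bound comes from $\chi=0$ combined with \ref{lem coer a}, while the lower bound follows from
\begin{equation*}
\int_\Omega Q^{\rm rod}(\omega,\xxi+\chi)\,d\mathbb P(\omega)\;\geq\;\beta_1\int_\Omega|\xxi+\chi|^2\,d\mathbb P(\omega)\;\geq\;\beta_1|\xxi|^2,
\end{equation*}
where the last step uses $\int_\Omega\chi\,d\mathbb P=0$ (since $\chi\in L^2_0(\Omega;\R^4)$) and Pythagoras.

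For \ref{lem coer c}, both identities reduce to Jensen's inequality. If $Q$ is $\omega$-independent, then so is $Q^{\rm rod}$, and the convexity of $Q^{\rm rod}$ yields $\int_\Omega Q^{\rm rod}(\xxi+\chi)\,d\mathbb P\geq Q^{\rm rod}\bigl(\xxi+\int_\Omega\chi\,d\mathbb P\bigr)=Q^{\rm rod}(\xxi)$, with equality achieved at $\chi=0$; thus $Q^{0}=Q^{\rm rod}$. The same Jensen argument applied to \eqref{def Qhom} gives $Q^{\hom}=Q$ in the deterministic case, whence $Q^{\infty}$ is defined by the same formula as $Q^{\rm rod}$ and the two coincide.
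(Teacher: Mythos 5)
Your proof is correct and reaches the same conclusion, but the route to the key coercivity in part \ref{lem coer a} is genuinely different from the paper's. The paper argues by normalization and contradiction: it posits a sequence $(\xxi^n,\varphi^n)$ with $|\xxi^n|^2+\|\sym(0,\nabla_{\bar x}\varphi^n)\|^2_{L^2(S)}=1$ and vanishing energy, reads off from the entries of the matrix that $\xxi^n_{1},\xxi^n_3,\xxi^n_4\to0$ and $\sym\nabla_{\bar x}\varphi^n_{23}\to0$, and then takes distributional cross-derivatives of $\partial_2\varphi^n_1-\xxi^n_2 x_3$ and $\partial_3\varphi^n_1+\xxi^n_2 x_2$ to force $\xxi^n_2\to0$, a contradiction. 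You instead give a direct, constructive estimate: the $(1,1)$-entry and the cancellation conditions \eqref{cancelation} immediately bound $\xxi_1,\xxi_3,\xxi_4$, while $\xxi_2$ is controlled by the positive $L^2(S)$-distance of $(x_3,-x_2)$ to the closed subspace $\nabla H^1(S)$, which is strictly positive because $\operatorname{curl}(x_3,-x_2)=-2\neq0$. The curl computation is, in fact, the same observation that underlies the paper's step of taking $\partial_3$ and $\partial_2$ and subtracting, so the two arguments hinge on identical structure; yours packages it as a projection statement and produces a (notionally) explicit constant, while the paper's is a shorter soft-compactness argument that also records the slightly stronger inequality \eqref{ineq:coercive1}. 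Your treatment of \ref{lem coer b} -- establishing $Q^{\rm hom}\in\mathcal Q(\alpha_1,\alpha_2)$ via the mean-zero orthogonality of $\sym(\chi\otimes\e_1)$, then reusing \ref{lem coer a}, and handling $Q^0$ by the analogous Pythagorean estimate -- fills in details the paper compresses to ``almost identical''; your Jensen phrasing of \ref{lem coer c} is equivalent to the paper's expansion-of-the-quadratic-form argument (both use that the cross term has zero mean). You also address quadraticity and measurability explicitly, which the paper's written proof omits.
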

We refer to Section \ref{sec homogenization} for the proof.

\subsection{Qualitative convergence results for $E^{\vare,h}$}
Our first result proves convergence of the effective modulus $E^{\vare,h}(\omega)$. Depending on the scaling scheme, c.f.~\eqref{scheme}, the limit is given as the minimum of one of the energies \eqref{eq:1Deps} to \eqref{eq:1Dgamma} subject to appropriate boundary conditions, cf.~Fig.~\ref{diagram}. To be precise, we define
\begin{align}
  E^{{\rm rod},\vare}(\omega)&:=\inf\{\E^{{\rm rod},\vare}(\omega,\bar u,\ro)\,:\,(\bar{u},\ro)\in (\bar u_{\mathrm{aff}},\ro_{\mathrm{aff}})+H_0^1(0,L)\times H_{00}^1(0,L;\R^3)
                   \},\label{infimum1}\\
  \nonumber\\
  E^{{\rm hom},h}&:=\inf\Big\{\widehat{\E}^{{\rm hom},h}(\uu)\,:\,\uu\in H^1(O;\R^3)\text{ with }\uu(x_1,\cdot)=\uu^h_{\rm aff}(x_1,\cdot)\text{ for }x_1\in\{0,L\}\,\Big\},\label{infimum2}\\
  \nonumber\\
  E^\gamma&:=\inf\{\E^\gamma(\bar u,\ro):\,(\bar{u},\ro)\in (\bar u_{\mathrm{aff}},\ro_{\mathrm{aff}})+H_0^1(0,L)\times H_{00}^1(0,L;\R^3)
                   \},\qquad \gamma\in\{0,\infty\}\label{infimum3},
\end{align}
where $(\bar u_{\rm aff},\ro_{\rm aff})$ is defined in~\eqref{affine function def}, and $\uu^h_{\rm aff}:O\to\R^3$ by
\begin{align}
  \uu^h_{\rm aff}(x):=(1-\tfrac{x_1}{L})(0,(h\A_0+\K_0)\barx)+\tfrac{x_1}{L}\big(\mathbf{t}+(0,(h\A_L+\K_L)(\bar x+L\la\Phi\ra)\big).\label{3D affine function def}
\end{align}

\begin{theorem}[Convergence of the effective modulus]\label{main theorem qual}
  Let Assumption \ref{assumption of model} be satisfied. Assume that $\Phi$ satisfies the smallness condition \eqref{eq:phic} for some $c_S$ that can be chosen only depending on $S$. Then the following statements hold for $\mathbb P$-a.a.~$\omega\in\Omega$:
\begin{enumerate}[label=(\alph*)]
\item We have
\begin{alignat}{2}
E^{\vare,h}(\omega)&\xrightarrow{h\to 0} E^{{\rm rod},\vare}(\omega)&&\xrightarrow{\vare\to 0}E^0,\label{chain 1}\\
E^{\vare,h}(\omega)&\xrightarrow{\vare\to 0} E^{{\rm hom},h}&&\xrightarrow{h\to 0} E^{\infty}.\label{chain 2}
\end{alignat}
\item\label{main theorem equal 4} Assume additionally that $Q$ is independent of $\omega$. Then for $\gamma\in\{0,\infty\}$ the affine configuration  $(\bar u_{\mathrm{aff}},\ro_{\mathrm{aff}})$ is the unique minimizer of $\E^\gamma$, and
  \begin{equation*}
    E^\gamma=LQ^{0}(\pt_1(\bar u_{\mathrm{aff}},\ro_{\mathrm{aff}})).
  \end{equation*}
\end{enumerate}
\end{theorem}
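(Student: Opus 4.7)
The plan is to prove both parts via $\Gamma$-convergence combined with equi-coercivity. The smallness hypothesis \eqref{eq:phic} on $\Phi$ is used only to guarantee that $\Psi^{\vare,h}(\omega,\cdot)$ is a bi-Lipschitz diffeomorphism (uniformly in $\vare,h,\omega$) so that $\E^{\vare,h}(\omega,\cdot)$ pulls back to an energy on the fixed cylinder $O^h=(0,L)\times hS$ whose density is comparable to $|\sym\nabla \cdot|^2$.

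For the two arrows in \eqref{chain 1}, I would first fix $\vare>0$ and, for $\mathbb{P}$-a.a.\ $\omega$, apply a Mora--Müller style dimension-reduction $\Gamma$-convergence to the pulled-back 3D energy: scaled gradients are controlled by the rigidity estimate, the in-plane variables concentrate, and the limit functional is exactly $\E^{{\rm rod},\vare}(\omega,\cdot)$ with relaxation \eqref{eq:def of Qel}; the boundary data \eqref{bc0}--\eqref{bcL} pass to the affine space $(\bar u_{\rm aff},\ro_{\rm aff})+H_0^1(0,L)\times H_{00}^1(0,L;\R^3)$ via Definition~\ref{D bc 3D} and \eqref{affine function def}. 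Equi-coercivity plus uniqueness of the limit minimizer then yield $E^{\vare,h}(\omega)\to E^{{\rm rod},\vare}(\omega)$. Next, letting $\vare\to 0$, I would apply 1D stochastic homogenization to $\E^{{\rm rod},\vare}$: stationarity and ergodicity of $\tau$, together with the explicit form of 1D correctors, give $\Gamma$-convergence to $\E^0$, where the zero-order coupling $\tfrac{1}{L}(\ro_3\Phi_1-\ro_2\Phi_2)$ evaluated at $\tau_{x_1/\vare}\omega$ converges weakly to $\tfrac{1}{L}(\ro_3\la\Phi_1\ra-\ro_2\la\Phi_2\ra)$ and the density relaxes to $Q^0$ from \eqref{def:Q0}. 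The boundary data are unchanged, so $E^{{\rm rod},\vare}(\omega)\to E^0$.

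For \eqref{chain 2}, I would reverse the order: for fixed $h$ and $\mathbb{P}$-a.a.\ $\omega$, apply 3D stochastic homogenization for the laminate oscillating in $x_1$-direction. The geometric shift $\id+\tfrac{h}{L}\la\B\ra$ appearing in \eqref{eq:3Dh} is the limit of the Jacobian of $\Psi^{\vare,h}$ by the Birkhoff ergodic theorem applied to $\B$; the bulk density relaxes to $Q^{\hom}$ from \eqref{def Qhom}, and the boundary condition \eqref{3D affine function def} is stable under $\vare\to 0$. Thus $E^{\vare,h}(\omega)\to E^{{\rm hom},h}$. The second arrow, $E^{{\rm hom},h}\to E^\infty$, is a purely deterministic Mora--Müller dimension reduction applied to $\widehat{\E}^{{\rm hom},h}$, giving density $Q^\infty$ via \eqref{def:Qinfty}.

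For part (b), invoke Lemma~\ref{L:quadr}\ref{lem coer c} so that $Q^0=Q^\infty=Q^{\rm rod}$ is a single positive-definite quadratic form (Lemma~\ref{L:quadr}\ref{lem coer b}). Since $\pt_1\bar u_{\rm aff}=\mathbf{t}_1/L$ and $\pt_1\ro_{\rm aff}=((k_L-k_0)/L,0,0)$ are constants and $\ro_{{\rm aff},2}=\ro_{{\rm aff},3}=0$, the argument of $Q^\gamma$ at $(\bar u_{\rm aff},\ro_{\rm aff})$ is the constant vector $\pt_1(\bar u_{\rm aff},\ro_{\rm aff})$. For any admissible perturbation $(\tilde u,\tilde\ro)\in H_0^1(0,L)\times H_{00}^1(0,L;\R^3)$, the cross term in the quadratic expansion of $\E^\gamma(\bar u_{\rm aff}+\tilde u,\ro_{\rm aff}+\tilde\ro)$ is proportional to
\[
\fint_0^L\Bigl(\pt_1\tilde u+\tfrac{1}{L}(\tilde\ro_3\la\Phi_1\ra-\tilde\ro_2\la\Phi_2\ra),\,\pt_1\tilde\ro\Bigr)\,dx_1,
\]
which vanishes componentwise: $\fint_0^L\pt_1\tilde u=0$ and $\fint_0^L\pt_1\tilde\ro=0$ by the Dirichlet conditions in $H_0^1$, while $\fint_0^L\tilde\ro_2=\fint_0^L\tilde\ro_3=0$ is exactly the zero-average constraint built into \eqref{def of Sobolev subspace*}. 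Hence the first variation at $(\bar u_{\rm aff},\ro_{\rm aff})$ is zero, and strict convexity of $Q^\gamma$ gives uniqueness of the minimizer and the claimed value of $E^\gamma$. The principal obstacle lies in the $h\to 0$ step on the randomly perturbed domain, where careful control of the pullback $\Psi^{\vare,h}(\omega,\cdot)^{-1}$ and verification that the twist/dilation boundary data are captured by $\bar u_{\rm aff}$ and $\ro_{\rm aff}$; once this is in place, the 1D homogenization step benefits from the explicit form of 1D correctors and avoids any large-scale regularity theory.
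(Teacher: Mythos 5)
Your overall plan mirrors the paper's: pull back the energy to the fixed domain $O$ via $\Psi^{\vare,h}$, establish $\Gamma$-convergence with compatible compactness and boundary-condition handling for each of the four arrows (dimension reduction for $h\to 0$, stochastic homogenization for $\vare\to 0$), and then pass to minima -- this is precisely the chain through Propositions~\ref{thm dim reduction}--\ref{thm:hom 3D model} together with Lemma~\ref{prop finite}. Your argument for part~(b) via vanishing of the cross term plus positive definiteness of $Q^\gamma$ is a legitimate alternative to the paper's Jensen-inequality route and yields uniqueness somewhat more transparently; both proofs hinge on the same facts, namely that $\partial_1(\bar u_{\rm aff},\ro_{\rm aff})$ is constant, $\ro_{{\rm aff},2}\equiv\ro_{{\rm aff},3}\equiv 0$, and the zero-average constraint built into $H^1_{00}$.

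One point needs correction. You attribute the smallness condition \eqref{eq:phic} solely to making $\Psi^{\vare,h}(\omega,\cdot)$ a bi-Lipschitz diffeomorphism whose pullback density is pointwise comparable to $|\sym\nabla\cdot|^2$. That is not the actual mechanism. The map $\Psi^{\vare,h}$ is a shear-translation with Jacobian determinant $h^2>0$, hence bi-Lipschitz for any bounded $\Phi$, irrespective of $c_S$. Moreover, pointwise coercivity of the transformed density fails: writing $\F=\scaled\uu$, one has $|\sym(\F(\id+\tfrac hL\widehat\B))|\geq|\sym\F|-\tfrac hL\,|\F|\,|\widehat\B|$, and $|\F|$ is not controlled pointwise by $|\sym\F|$ (the skew part of $\scaled\uu$ carries the factor $\tfrac1h\ro$). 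The actual role of $c_S$, as shown in Lemma~\ref{L:smallness} and the remark after Proposition~\ref{prop compact}, is to close an $L^2$-level absorption estimate $\|\sym(\tfrac hL\scaled\uu\widehat\B)\|_{L^2(O)}\lesssim c_S\big(\|\sym\scaled\uu\|_{L^2(O)}+|\ro(0)|\big)$, obtained via Griso's decomposition (Theorem~\ref{thm:griso decomp}) and a Poincar\'e bound for $\ro$. Because the zero-order quantity $\ro$ (not only $\partial_1\ro$) enters, the boundary value $\ro(0)$ is required, which is exactly why the compactness assumption \eqref{finite assump} carries the extra $|\ro^h(0)|$ term; no purely local, pointwise density estimate can substitute for this.
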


We present the proof of the theorem at the end of Section~\ref{sec:gamma convergence}.

\begin{remark}[The case $\gamma\in(0,\infty)$]\label{remark forthcoming paper 2}\normalfont
  As already mentioned in the introduction, in a forthcoming paper, which is work in progress, we analyze the simultaneous limits $(h,\vare)\to 0$. More pecisely, we show that if $\vare=\vare(h)$ is a parameter satisfying
  \begin{align*}
    \lim_{h\to 0}\vare(h)=0,\quad\lim_{h\to 0}\frac{h}{\vare(h)}=\gamma\in[0,\infty],
  \end{align*}
  then $E^{\vare(h),h}(\omega)\xrightarrow{h\to 0} E^\gamma$ for $\mathbb P$-a.a.~$\omega\in\Omega$, where $E^\gamma$ is the minimum of the functional $\E^\gamma$ formulating in terms of some density function $Q^\gamma$ for $\gamma\in(0,\infty)$. Heuristically, a density function $Q^\gamma$ can be thought as an interpolation of $Q^0$ and $Q^\infty$. To see this, we will for instance prove that $Q^\gamma$ is continuous in the parameter $\gamma\in[0,\infty]$ and conclude as a consequence that $E^\gamma$ is a continuous function of $\gamma$. Together with Theorem~\ref{main theorem qual}, this establishes the convergence diagram in Fig.~\ref{diagram}.
The analysis for the simultaneous limit builds on the two-scale methods of \cite{NeukammPhd,NeukammARMA2012}, where simultaneous homogenization and dimension reduction for rods in the periodic case is studied.
\end{remark}

\begin{figure}[htp!]
  \centering
  \includegraphics[width=65mm]{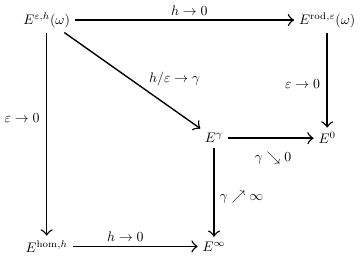}
\caption{A schematic description for the convergence statements from Theorem \ref{main theorem qual}. The convergence in the simultaneous limit $h/\vare\to\gamma$ will be established in a forthcoming paper.}\label{diagram}
\end{figure}

\begin{remark}\normalfont
  From Theorem \ref{main theorem qual} \ref{main theorem equal 4} we see that for spatially homogeneous $Q$, the geometric perturbation is not influencing the limit $E^0=E^\infty$ of the effective modulus. However, the perturbations lead to fluctuations along the limit of $E^{{\rm rod},\vare}\to E^0$.
\end{remark}

\begin{remark}\normalfont
  We note that we prove the dimension reduction result $E^{\vare,h}\to E^{{\rm rod},\vare}$ in fact in a more general (deterministic) framework which is independent of Assumption \ref{assumption of prob}, see Proposition \ref{thm dim reduction} below for details.
\end{remark}

The proof of Theorem~\ref{main theorem qual} follows from several $\Gamma$-convergence results for the energy functionals in \eqref{functionals} and $\E^{\vare,h}$, see Section~\ref{sec:gamma convergence} below. For the proof we appeal to stochastic two-scale convergence methods (see for instance \cite{Allaire1992,NeukammPhd,NeukammARMA2012}). In particular, to relate 3D-displacements and rod-configurations, we make use of a decomposition due to Griso \cite{Griso2004}, see~Definition~\ref{def of g conv} below. Furthermore, we shall see that the energy functional $\E^{\vare,h}$ can be transformed to a functional with a fixed domain. This leads to the appearance of a randomly oscillating prestrain that captures the effect of the geometry perturbations. In our convergence analysis we treat the prestrain following the method in \cite{BNS2020}, where the derivation of rods that feature a micro-heterogeneous prestrain is studied.

\subsection{Quantitative convergence results for $E^{{\rm rod},\vare}$ }
To quantify the speed of convergence of $E^{{\rm rod},\vare}$ as $\vare\to 0$, we need to replace the qualitative ergodicity assumption by a stronger quantitative one. There are different ways to quantifify ergodicity in stochastic homogenization. We use functional inequalities to quantify ergodicity, e.g., see \cite{GNOInvent,GloriaNeukammOttoMIlan,GNO2021}.

For this purpose we assume from now on, in addition to Assumption~\ref{assumption of prob}, that the probability space $\Omega$ consists of $\R^N$-valued (with $N\in\N$ fixed), locally integrable random fields on $\R$, i.e., $\Omega\subset L^1_{\loc}(\R;\R^N)$, and that $\tau$ denotes the \textit{shift}, i.e.,
\begin{equation*}
  \tau_{s}\omega(\cdot):=\omega(\cdot+s)\qquad\text{for all }s\in\R.
\end{equation*}

\begin{assumption}[Spectral gap assumption]\label{sg assumption}
  We assume that there exists a constant $\rho> 0$ such that for any random variable $F : \Omega\to\R$ we have
  \begin{align}
    \mathbb{E}\big[ |F-\mathbb{E}[F]|^{2}\big]\leq \frac{1}{\rho^2}\,\mathbb{E}\bg[\int_{\R}\bg(\int_{s-1}^{s+1}\bg|\frac{\partial F}{\partial \omega}\bg|\,\bg)^2\,ds\bg],
  \end{align}
  where the norm of the functional derivative is defined as
  \begin{align}
    \int_{s-1}^{s+1}\bg|\frac{\partial F}{\partial \omega}\bg|:=\sup\bg\{\limsup_{t\to 0}\frac{F(\omega+t\delta\omega)-F(\omega)}{t}\bg\}
  \end{align}
  and the supremum is taken over all measurable perturbations $\delta\omega:\R\to \R$ with
  $$\mathrm{supp}\,\delta\omega\subset [s-1,s+1]\quad\text{and}\quad\|\delta\omega\|_{L^\infty(s-1,s+1)}\leq 1.$$
\end{assumption}

\begin{remark}\normalfont
An admissible example for which the spectral gap assumption is satisfied can be constructed using the Malliavin calculus as follows: we assume that the random field $\omega:\R\to\R$ is a centered, stationary Gaussian random field such that the covariance function $s\mapsto\mathcal{C}(s):=\mathrm{Cov}[\omega(s),\omega(0)]$ is a bounded function on $\R$ with compact support. Then the probability space $(\Omega,\mathbb{P},\mathcal{F})$ satisfies Assumption \ref{sg assumption}. For a proof, we refer to \cite{DuerinckxOtto2020,DG20b}. We also refer to \cite{pSGDuerGloria,DG20b} for further more general examples for which the spectral gap assumption is satisfied.
\end{remark}

We shall further assume that the random coefficients in our model are \textit{1-local Lipschitz random variables}: We say that a random variable $F:\Omega\to\R$ is 1-local Lipschitz, if there exists a constant $C_F$ such that
\begin{equation}\label{locallip}
  |F(\omega)-F(\omega')|\leq C_F\|\omega-\omega'\|_{L^\infty(-1,1)}\qquad\text{for $\mathbb P$-a.a. }\omega,\omega'\in\Omega.
\end{equation}
In that case we call $C_F$ a Lipschitz constant of $F$.

\begin{remark}
\normalfont
  For a 1-local Lipschitz random variable $F$ one can easily check that
  \begin{equation*}
    \int_{s-1}^{s+1}\bg|\frac{\partial F}{\partial \omega}\bg|\leq C_F.
  \end{equation*}
  We also note that it is easy to construct 1-local Lipschitz random variables: If $\Lambda:\R\to\R$ is a Lipschitz function with Lipschitz constant $C_{\Lambda}$, and $\mu$ a measure on $(-1,1)$, then
  \begin{equation*}
    F(\omega):=\Lambda\Big(\int_{(-1,1)}\omega(s)d\mu(s)\Big)
  \end{equation*}
  is 1-local Lipschitz where the Lipshitz constant is given by $C_{\Lambda}\int_{(-1,1)}\,d\mu$.
\end{remark}

By making use of the two-scale expansion of the minimizer $(\bar u^\vare,\ro^\vare)$ we are able to prove the following quantitative convergence result:
\begin{theorem}[Convergence rate of effective energy]\label{thm conv rate}
  Let Assumption \ref{assumption of model} be satisfied and assume that $\Phi$ satisfies the smallness condition \eqref{eq:phic} with $c_S\leq\frac12$. Suppose also that Assumption~\ref{sg assumption} holds. Let $\Ab:\Omega\to\R^{4\times 4}_{\rm sym}$ be defined by the identity
    \begin{equation*}
      \Ab(\omega)\xxi\cdot\xxi:=Q^{\rm rod}(\omega,\xxi)\text{ for all }\xxi\in\R^4.
    \end{equation*}
    Assume that $\Ab$ and $\Phi$ are 1-local Lipschitz random variables with  Lipschitz constant $C_{\Ab,\Phi}$. Let $(\bar u^\vare,\ro^\vare)$ and $(\bar u^0,\ro^0)$ denote the minimizers of $\E^{{\rm rod},\vare}$ and $\E^{0}$, respectively. Then $(\bar u^0,\ro^0)=(\bar u_{\rm aff}, \ro_{\rm aff})$, and for all $L\geq1$ and $0<\vare\leq L$ we have $\mathbb P$-a.s.,
    \begin{align}\label{conv rate function}
      \fint_0^L|(\bar u^\vare,\ro^\vare)-(\bar u_{\rm aff},\ro_{\rm aff})|^2\,dx_1\leq(|{\bf{t}}_1|^2+|k_0|^2+|k_L|^2)\mathscr C^2C_{\Ab,\Phi}^2\frac{\vare}{L}\\\label{conv rate energy}
      |E^{\vare,\rm rod}-E^{0}|\leq\frac{|{\bf{t}}_1|^2+|k_0|^2+|k_L|^2}{L^2}(\mathscr C^2 C_{\Ab,\Phi}^2(\tfrac{\vare}{L})^\frac12+\mathscr C C_{\Ab,\Phi})\big(\frac{\vare}{L})^{1/2}
    \end{align}
    Above, $\mathscr C$ denotes a random variable satisfying
    \begin{equation}
      \mathbb E\Big[\exp\big(\frac{\mathscr C}{C}\big)\Big]\leq 2,\label{upper bound random constant}
    \end{equation}
    where $C$ only depends on $\rho$, $\alpha_1$ and $\alpha_2$.
    Furthermore, in the constant coefficient case, i.e., when $\Ab$ is independent of $\omega$, we have the improved estimate
    \begin{align}
      |E^{\vare,\rm rod}-E^{0}|\leq\frac{|{\bf{t}}_1|^2+|k_0|^2+|k_L|^2}{L^2}\mathscr C^2 C_{\Ab,\Phi}^2\frac{\vare}{L}.\label{conv rate energy2}
    \end{align}
  \end{theorem}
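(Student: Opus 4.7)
First I would establish that $(\bar u^0, \ro^0) = (\bar u_{\rm aff}, \ro_{\rm aff})$. A direct Lagrange-multiplier computation in \eqref{def:Q0} yields the explicit formula $Q^0(\xxi) = \xxi\cdot \la\Ab^{-1}\ra^{-1}\xxi$. Setting $\xxi^0 := \pt_1(\bar u_{\rm aff},\ro_{\rm aff}) = (\mathbf{t}_1/L,(k_L-k_0)/L,0,0)$, observe that since $\ro_{\rm aff,2} = \ro_{\rm aff,3} \equiv 0$, the coupling term $\tfrac{1}{L}(\ro_3\la\Phi_1\ra - \ro_2\la\Phi_2\ra)$ in the argument of $Q^0$ vanishes on $(\bar u_{\rm aff},\ro_{\rm aff})$. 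Checking the Euler--Lagrange equations of $\E^0$: variations in $\bar u$ and $\ro_1$ give constant integrands, while variations $(\delta\ro_2,\delta\ro_3) \in H^1_0$ constrained by the mean-zero condition of $H^1_{00}$ annihilate the only nontrivial source term arising from the coupling. Strict convexity of $Q^0$ by Lemma~\ref{L:quadr}\ref{lem coer b} then gives uniqueness, so $E^0 = L\,Q^0(\xxi^0)$.

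Second, I would characterize $(\bar u^\vare,\ro^\vare)$ by its Euler--Lagrange system. Introducing the conjugate stress $\sigma^\vare(x_1) := \Ab(\tau_{x_1/\vare}\omega)\xxi^\vare(x_1) \in \R^4$ where $\xxi^\vare = (\pt_1\bar u^\vare + \tfrac{1}{L}(\ro^\vare_3\Phi^\vare_1 - \ro^\vare_2\Phi^\vare_2),\pt_1\ro^\vare)$, the system decouples into $\pt_1\sigma^\vare_1 = \pt_1\sigma^\vare_2 = 0$, $\pt_1\sigma^\vare_3 = -\tfrac{\sigma^\vare_1}{L}\Phi^\vare_2 - \lambda^\vare_2$ and $\pt_1\sigma^\vare_4 = \tfrac{\sigma^\vare_1}{L}\Phi^\vare_1 - \lambda^\vare_3$, where $\lambda^\vare_2,\lambda^\vare_3$ are $\omega$-dependent Lagrange multipliers enforcing the mean-zero constraints of $H^1_{00}$. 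Direct integration yields an explicit representation of $\sigma^\vare$, hence of $\xxi^\vare = (\Ab^\vare)^{-1}\sigma^\vare$, in terms of $\Phi^\vare$, $(\Ab^\vare)^{-1}$, and six integration constants. The boundary conditions and the three mean-zero constraints then determine these six constants through a linear system whose coefficients are spatial averages $\fint_0^L F(\tau_{x_1/\vare}\omega)\,dx_1$ of $1$-local Lipschitz functionals $F$ built from $(\Ab^\vare)^{-1}$ and $\Phi^\vare$.

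Third, I would insert the two-scale expansion. Define the corrector $\chi^\ast(\omega) := \Ab(\omega)^{-1}\la\Ab^{-1}\ra^{-1}\xxi^0 - \xxi^0$, which is mean-zero by construction and satisfies $\Ab(\xxi^0 + \chi^\ast) = \la\Ab^{-1}\ra^{-1}\xxi^0 =: c^0$, a constant vector. Passing to the ergodic limit in the $6\times 6$ linear system of Stage~2 forces the integration constants to converge to those of the limit problem, which in turn identify $\xxi^\vare \approx \xxi^0 + \chi^\ast(\tau_{x_1/\vare}\omega)$ modulo a residual driven solely by the fluctuations of those averages. Assumption~\ref{sg assumption} applied to each $1$-local Lipschitz average gives the variance bound
\begin{equation*}
\mathbb{E}\bigl[\bigl|\fint_0^L F(\tau_{x_1/\vare}\omega)\,dx_1 - \la F\ra\bigr|^2\bigr] \leq C\,C_F^2\,\tfrac{\vare}{L},
\end{equation*}
together with Herbst-type exponential-moment refinements which produce the random constant $\mathscr C$ satisfying \eqref{upper bound random constant}. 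Integrating the ODE system in Stage~2 once more and using these concentration bounds on every average yields \eqref{conv rate function} for the displacement.

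Finally, for the energy rates I would substitute the expansion into $\E^{{\rm rod},\vare}(\bar u^\vare,\ro^\vare)$ and use $\Ab(\xxi^0 + \chi^\ast) = c^0$ to collapse the integrand; the leading piece equals $L\,Q^0(\xxi^0) = E^0$, leaving a linear-in-$\chi^\ast$ remainder of the form $c^0\cdot \fint_0^L \chi^\ast(\tau_{x_1/\vare}\omega)\,dx_1$ that is mean-zero and spectral-gap-controlled at rate $(\vare/L)^{1/2}$, plus a quadratic remainder bounded via the displacement rate from Stage~3. This gives \eqref{conv rate energy}. The hard part will be the sharpened estimate \eqref{conv rate energy2} in the constant-coefficient case: there $\chi^\ast \equiv 0$, so fluctuations enter only through the quadratic coupling $\tfrac{1}{L}(\ro_3\Phi^\vare_1 - \ro_2\Phi^\vare_2)$. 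The mean-zero constraint on $(\ro_2,\ro_3)$ cancels the linear-in-$(\Phi^\vare - \la\Phi\ra)$ contribution, so the leading fluctuation is genuinely quadratic in $\Phi^\vare - \la\Phi\ra$. Closing the rate at order $\vare/L$ then requires a careful second-order expansion of both the ODE system and the energy in $\Phi^\vare - \la\Phi\ra$, applying the spectral-gap variance inequality to the squared averages rather than the linear ones, and absorbing the cross terms using the Lipschitz bound $C_{\Ab,\Phi}$.
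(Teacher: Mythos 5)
Your approach differs substantially from the paper's. You characterize the minimizer by explicitly integrating the Euler--Lagrange ODE system (with a conjugate stress $\sigma^\vare$ and Lagrange multipliers for the mean-zero constraint) and then insert the \emph{stationary} corrector $\chi^\ast(\omega)=\Ab(\omega)^{-1}\la\Ab^{-1}\ra^{-1}\xxi^0-\xxi^0$. The paper instead stays entirely variational: it rewrites $\E^{\vare,\rm rod}$ as $\fint\Ab^\vare(\partial_s+\Bb^\vare)\vv\cdot(\partial_s+\Bb^\vare)\vv$, introduces an intermediate RVE proxy $\overline\E^\vare$ with harmonic-mean coefficient $\overline\Ab^\vare=\big(\fint(\Ab^\vare)^{-1}\big)^{-1}$, and builds the two-scale expansion $\hat\vv^\vare=\vv_{\rm bd}+\Phib^\vare\partial_s\vv_{\rm bd}-\eta c^\vare$ out of the \emph{Dirichlet} corrector $\Phib^\vare(s)=\int_0^s\big((\Ab^\vare)^{-1}\overline\Ab^\vare-\id\big)$ and an auxiliary corrector $\Psib^\vare(s)=\int_0^s(\Bb^\vare-\overline\Bb^\vare)$. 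Because of the harmonic-mean/volume-average normalization, $\Phib^\vare$ and $\Psib^\vare$ vanish at both endpoints $s\in\{0,1\}$, so $\hat\vv^\vare$ automatically lies in the admissible affine space. Your $\chi^\ast$ is the infinite-volume analogue and does not vanish at the endpoints; the boundary layer this produces is of the same $L^2$ order as the interior error, so the $\sqrt{\vare}$ rates survive, but you never say how you restore the boundary condition — that is a real gap in your outline, and the RVE/Dirichlet-corrector device is precisely what makes the paper's computation clean.

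On the constant-coefficient sharpening \eqref{conv rate energy2}, your explanation that ``the mean-zero constraint on $(\ro_2,\ro_3)$ cancels the linear-in-$(\Phi^\vare-\la\Phi\ra)$ contribution'' identifies the right phenomenon but not quite the right mechanism, and in your framework this would require re-deriving something the paper gets for free. The actual cancellation in the paper is twofold: (i) a \emph{structural} identity $\Bb^\vare\vv_{\rm bd}=\Psib^\vare\vv_{\rm bd}=0$, valid because $\ro_{\rm aff,2}=\ro_{\rm aff,3}\equiv0$, which makes $\E^\vare(\vv_{\rm bd})=\E^0(\vv_{\rm bd})=E^0$ \emph{exactly} (no bias term at all); and (ii) \emph{Galerkin orthogonality} of the minimizer, which gives the exact identity $E^0-E^{\vare,\rm rod}=\E^\vare(\vv_{\rm bd}-\vv^\vare)\geq 0$, automatically quadratic in $\vv_{\rm bd}-\vv^\vare$. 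The error then closes by estimating $\|\partial_s(\vv_{\rm bd}-\vv^\vare)\|_{L^2}\lesssim\|\Psib^\vare\|_{L^2}\|\partial_s\vv_{\rm bd}\|_\infty$ via the same variational chain (paper's Step 3 with $\Phib^\vare=0$), and $\|\Psib^\vare\|_{L^2}^2\sim\vare$ by the spectral gap. Your plan — expanding the ODE solution to second order in $\Phi^\vare-\la\Phi\ra$ and applying the spectral gap to the squared averages — is morally the same computation but would need substantial effort to reproduce the automatic quadratic structure that the variational argument delivers in two lines. So: genuinely different route, correct scalings, two soft spots (boundary matching for the stationary corrector, and the mechanism behind the $O(\vare)$ improvement).
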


\begin{remark}
\normalfont
We point out that in the case of small thickness $h\ll \vare$, a convergence rate of \eqref{eq:uq_est_intr} for a given $a>0$ follows already from Theorem~\ref{thm conv rate}, assuming that we have a good understanding in the convergence rate of the dimension reduction procedures. Indeed, using triangular inequality, Markov's inequality, \eqref{upper bound random constant} and \eqref{conv rate energy2} we obtain
\begin{align}
&\,\mathbb{P}\left(|E(O_{h})-E(O^{\vare,h}(\omega))| \geq a \right)\nonumber\\
\leq&\,
\mathbb{P}\left(|E(O_{h})-E^0| \geq \frac{a}{3} \right)
+\mathbb{P}\left(|E^{\vare,{\rm rod}}-E(O^{\vare,h}(\omega))| \geq \frac{a}{3} \right)
+\mathbb{P}\left(|E^{\vare,{\rm rod}}-E^0| \geq \frac{a}{3} \right) \nonumber\\
\leq&\,
\mathbb{P}\left(|E(O_{h})-E^0| \geq \frac{a}{3} \right)
+\mathbb{P}\left(|E^{\vare,{\rm rod}}-E(O^{\vare,h}(\omega))| \geq \frac{a}{3} \right)
+\frac{9\la|E^{\vare,{\rm rod}}-E^0|\ra}{a^2}\nonumber\\
\leq&\,\mathbb{P}\left(|E(O_{h})-E^0| \geq \frac{a}{3} \right)
+\mathbb{P}\left(|E^{\vare,{\rm rod}}-E(O^{\vare,h}(\omega))| \geq \frac{a}{3} \right)
+\frac{C\vare}{L^{3}a^2}.\label{3.20}
\end{align}
In the case $h\ll\vare$ we expect that the first two terms in \eqref{3.20} are also small (in comparison to the last term of order $\vare$), which in turn implies a convergence rate (in $\vare$) for \eqref{eq:uq_est_intr}. The smallness of the first two terms in \eqref{3.20} corresponds to the convergence rate of dimension reduction. Several related numerical simulations have been implemented in Section \ref{sec rate dimension reduction}, which particularly suggest that a convergence rate should be of polynomial growth $h^\alpha$ with $\alpha\in(0,1)$. A rigorously analytical proof nevertheless remains open. We plan therefore to tackle this problem in a forthcoming paper.
\end{remark}

 \subsection{\texorpdfstring{$\Gamma$}{Gamma}-convergence of energies and Proof of Theorem~\ref{main theorem qual}}\label{sec:gamma convergence}
 As already indicated, Theorem \ref{main theorem qual} rather directly follows from a set of $\Gamma$-convergence results for $\E^{\vare,h}(\omega)$, which we shall discuss in this section and which are of independent interest.
 Specifically, we study the following limits:
\begin{itemize}
\item (dimension reduction): $\E^{\vare,h}(\omega)$ to $\E^{\varepsilon,{\rm rod}}(\omega)$ and $\widehat{\E}^{{\rm hom},h}$ to $\E^\infty$,
\item (homogenization):  $\E^{\varepsilon,{\rm rod}}(\omega)$ to $\E^0$ and $\E^{\vare,h}(\omega)$ to $\widehat{\E}^{{\rm hom},h}$.
\end{itemize}
The starting point of our analysis is the following transformed energy, which in contrast to the original energy functional has an unperturbed and upscaled domain:
\begin{equation*}
  H^1(O;\R^3)\ni \uu\mapsto \widehat{\E}^{\vare,h}(\omega,\uu):=\E^{\vare,h}(\omega,\uu\circ\Psi_{\varepsilon,h}^{-1}).
\end{equation*}
By direct calculation one easily sees that $\widehat{\E}^{\vare,h}$ is a quadratic integral functional on the unperturbed domain $O$ and can be written in the form
\begin{align}\label{def of hat energy}
\widehat{\E}^{\vare,h}(\omega,\uu)=\frac1L\int_{O}Q\bg(\tau_{\tfrac{x_1}{\vare}}\omega,\scaled \uu\big(\id+\tfrac{h}{L}\B(\tau_{\frac{x_1}{\vare}}\omega)\big)\bg)\,dx,
\end{align}
where $\B:\Omega \to\R^{3\times 3}$ is defined as
\begin{equation}\label{the form of B}
  \B(\omega)=
  \begin{pmatrix}
    0 & 0 &0\\
    -\Phi_1(\omega) & 0 &0\\
    -\Phi_2(\omega) & 0 &0
  \end{pmatrix}.
\end{equation}
Moreover, we also see that $\vv=\uu\circ\Psi_{\varepsilon,h}^{-1}$ satisfies the boundary condition \eqref{bc} if and only if
\begin{equation}\label{bc u}
  \uu(0,\cdot)=\uu^{\vare,h}_{\rm aff}(0,\cdot)\text{ and }  \uu(L,\cdot)=\uu^{\vare,h}_{\rm aff}(L,\cdot)\text{ on }S,
\end{equation}
where
\begin{equation}\label{def aff vare h}
  \uu^{\vare,h}_{\rm aff}(x):=(1-\tfrac{x_1}{L})(0,(h\A_0+\K_0)\barx)+
                              \tfrac{x_1}{L}\big(\mathbf{t}+(0,(h\A_L+\K_L)(\bar x+\fint_0^L\Phi(\tau_{\frac{t}{\vare}}\omega)\,dt)\big).
\end{equation}
In order to formulate the $\Gamma$-convergence result for $\mE^{\vare,h}$, we need to fix a suitable notion of convergence:
\begin{definition}[Decomposition]\label{def of g conv}
For $\uu\in H^1(O;\R^3)$, we define the function $(\buu,\ro)$ by
\begin{subequations}
\begin{align}
\buu(x_1)&=\fint_{S}\uu(x_1,\barx)\,d\barx,\label{decomp 1}\\
\ro_1(x_1)&=\frac{1}{I_2+I_3}\int_{S}(0,\barx)\wedge \uu(x_1,\barx)\cdot \e_1\,d\barx,\label{decomp 2}\\
\ro_2(x_1)&=\frac{1}{I_3}\int_{S}(0,\barx)\wedge \uu(x_1,\barx)\cdot \e_2\,d\barx,\label{decomp 3}\\
\ro_3(x_1)&=\frac{1}{I_2}\int_{S}(0,\barx)\wedge \uu(x_1,\barx)\cdot \e_3\,d\barx,\label{decomp 4}
\end{align}
\end{subequations}
where $I_j=\int_{S}x_j^2\,d\barx$ for $j=1,2$. Moreover, the operator $\Pi:H^1(O;\R^3)\to H^1(0,L)\times H^1(0,L;\R^3)$ is defined by
\begin{align}\label{def of pai}
\Pi(\uu):=(\buu_1,\ro).
\end{align}
We say that a sequence $(\uu^h)_h\subset H^1(O;\R^3)$ converges to a rod configuration  $(\bar u,\ro)\in H^1(0,L)\times H^1(0,L;\R^3)$ if
\begin{align*}
  \Pi(\uu^h)\to (\bar u,\ro)\quad\text{strongly in $L^2(0,L)\times L^2(0,L;\R^3)$ as $h\to 0$}.
\end{align*}
In that case we simply write $\uu^h\stackrel{\Pi}{\to} (\bar u,\ro)$.
\end{definition}
The next proposition establishes sequential compactness for sequences with finite energy. The argument exploits the fact the matrix-field $\B$ appearing in \eqref{def of hat energy} is not arbitrary, but takes only values in
\begin{equation}\label{def of R0}
  \R^{3\times 3}_{0}:=\{\F\in \R^{3\times 3}\,:\,\F_{1j}=0\text{ for }j=1,2,3\,\}.
\end{equation}

\begin{proposition}[Compactness]\label{prop compact}
  There exists a constant $c_S>0$ only depending on $S$ such that the following holds:
  \begin{enumerate}[label=(\alph*)]
  \item \label{prop compact:a}(Compactness for $h\to 0$). Consider sequences $(\widehat\B^h)_h\subset L^\infty(O;\R^{3\times 3}_0)$ and  $(\uu^{h})_h\subset H^1(O;\R^3)$. Assume that
    \begin{align}\label{smallness of B}
      &\limsup\limits_{h\to 0}\|\widehat\B^h\|_{L^\infty(O)}\leq c_S,\\
      &\limsup\limits_{h\to 0}\,\Big(\int_O|\sym\big(\scaled \uu^h(\id+\tfrac{h}{L}\widehat\B^h)\big)|^2\,dx+|\ro^h(0)|^2)<\infty,\label{finite assump}
    \end{align}
    where $(\bar\uu^h,\ro^h)$ denotes the decomposition of $\uu^h$ according to Definition~\ref{def of g conv}.
    Then
    \begin{align}
      \limsup_{h\to0}\,(\|\partial_1\bar\uu^h_1\|_{L^2(0,L)}+\|\ro^h\|_{H^1(0,L)}+\|\sym\scaled\uu^h\|_{L^2(O)})<\infty,\label{finiteness of uh}
    \end{align}
    and there exists $(\bar u,\ro)\in H^1(0,L;\R)\times H^1(0,L;\R^3)$ such that
  \begin{equation}\label{conv of projection minus mean}
    \uu^h-\bg(\fint_0^L\bar\uu^h_1\,dx_1\bg)\e_1\stackrel{\Pi}{\to}(\bar u,\ro)\qquad\text{for a subsequence.}
  \end{equation}
  Moreover, there exists some $\mathbf{z}=(\mathbf{z}_1,\mathbf{z}_2)\in H^1(0,L;\R^2)$ such that
  \begin{align}\label{conv of hu23}
    h(\bar \uu^h_{2},\bar \uu^h_3)-\fint_0^Lh(\bar \uu^h_{2},\bar \uu^h_3)(x_1)\,dx_1\rightarrow \mathbf{z}\quad\text{strongly in $L^2(0,L;\R^2)$}
  \end{align}
as $h\to 0$ and the limit $\ro$ from \eqref{conv of projection minus mean} satisfies
\begin{align}\label{trace of r23}
\fint_0^L(\ro_2,\ro_3)(x_1)\,dx_1=(\mathbf{z}_{2}(0)-\mathbf{z}_{2}(L),\mathbf{z}_{1}(L)-\mathbf{z}_{1}(0))^T.
\end{align}
\item \label{prop compact:b}(Compactness for $\vare\to 0$). Let $0<h\leq 1$ be fixed. Consider sequences $(\widehat\B^\vare)_\vare\subseteq L^\infty(O;\R^{3\times 3}_0)$ and $(\uu^{\vare})_\vare\subset H^1(O;\R^3)$. Assume that
    \begin{align}\label{smallness of B2}
      &\limsup\limits_{\vare\to 0}\|\widehat\B^\vare\|_{L^\infty(O)}\leq c_S,\\
      &\limsup\limits_{\vare\to 0}\,\Big(\int_O|\sym\big(\scaled \uu^\vare(\id+\tfrac{h}{L}\widehat\B^\vare)\big)|^2\,dx+\|\uu^\vare(0,\cdot)\|_{H^\frac12(S)}\Big)<\infty.\label{finite assump hom2}
    \end{align}
Then
\begin{align}
\limsup_{\vare\to 0}\|\uu^\vare\|_{H^1(O)}<\infty,\label{finite assump hom 2}
\end{align}
and there exist a subsequence of $(\uu^\vare)_\vare$ and $\uu\in H^1(O;\R^3)$ such that
\begin{align}
\uu^\vare\to \uu\quad\text{strongly in $L^2(O;\R^3)$ as $\vare\to 0$}.\label{3D hom limit}
\end{align}
\end{enumerate}
\end{proposition}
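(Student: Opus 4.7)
The plan is to handle parts (a) and (b) separately, with part (a) being substantive and part (b) reducing to classical tools.

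For part (a), the main step is to upgrade the hypothesis \eqref{finite assump} on $\sym\bigl(\scaled\uu^h(\id+\tfrac{h}{L}\widehat\B^h)\bigr)$ to a uniform bound on $\sym\scaled\uu^h$. Writing $M^h := \id+\tfrac{h}{L}\widehat\B^h$ and expanding $\sym\scaled\uu^h = \sym(\scaled\uu^h M^h) - \tfrac{h}{L}\sym(\scaled\uu^h\widehat\B^h)$, the key structural observation is that $\widehat\B^h\in\R^{3\times 3}_0$ has vanishing first row, so the product $\scaled\uu^h\widehat\B^h$ only involves the transverse columns $\tfrac{1}{h}\partial_\alpha\uu^h$, $\alpha=2,3$, of $\scaled\uu^h$. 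The factor $1/h$ in these columns cancels the $h/L$ prefactor, yielding a pointwise bound of $\tfrac{h}{L}|\scaled\uu^h\widehat\B^h|$ by $c_S|\scaled\uu^h|$. I would then invoke a scaled rod Korn inequality with $h$-independent constant, of the form $\|\scaled\uu^h\|_{L^2(O)} \leq C(\|\sym\scaled\uu^h\|_{L^2(O)} + |\ro^h(0)|)$, available from Griso's decomposition framework \cite{Griso2004,BNS2020}. Choosing $c_S$ sufficiently small relative to $C$ (which depends only on $S$) allows absorption of the cross-term and proves \eqref{finiteness of uh}.

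Once uniform boundedness of $\sym\scaled\uu^h$ is established, I would apply the Griso decomposition to write $\uu^h$ as a rigid-body rod motion, parametrized exactly by $(\bar\uu^h_1,\ro^h)=\Pi(\uu^h)$ together with the transverse means $(\bar\uu^h_2,\bar\uu^h_3)$, plus a remainder whose scaled gradient is controlled by $\|\sym\scaled\uu^h\|_{L^2}$. This decomposition yields $H^1$-bounds on $\partial_1\bar\uu^h_1$, on $\ro^h$, and on $\partial_1(h\bar\uu^h_2,h\bar\uu^h_3)$. After subtracting the scalar means to handle the rigid translation ambiguity, Rellich's theorem supplies strong $L^2$-compactness, establishing \eqref{conv of projection minus mean} and \eqref{conv of hu23}. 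Finally, the identity \eqref{trace of r23} follows from applying the fundamental theorem of calculus to the Griso identities of the form $\partial_1(h\bar\uu^h_2) = -\ro^h_3+o(1)$ and $\partial_1(h\bar\uu^h_3) = \ro^h_2+o(1)$, and passing to the limit $h\to 0$.

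Part (b) proceeds via classical tools. With $h > 0$ fixed, the scaled gradient $\scaled\uu^\vare$ is comparable to the full gradient $\nabla\uu^\vare$ up to factors of $h$, and $M^\vare = \id+\tfrac{h}{L}\widehat\B^\vare$ is uniformly invertible by \eqref{smallness of B2}. The standard Korn inequality on $O$ combined with the $H^{1/2}$-trace bound on $\uu^\vare(0,\cdot)$, which suppresses the rigid-motion kernel, then yields \eqref{finite assump hom 2}, and Rellich-Kondrachov provides \eqref{3D hom limit} along a subsequence. The principal obstacle of the proposition lies in part (a), specifically in obtaining the $h$-uniform bound on $\sym\scaled\uu^h$ in the presence of the matrix perturbation $M^h$: this is exactly where the smallness threshold $c_S$ and the zero-first-row structure of $\widehat\B^h$ become essential and must be balanced against the constant in the rod Korn inequality.
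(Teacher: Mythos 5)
Your plan for part \ref{prop compact:b} is sound and essentially mirrors the paper (rescale to $O_h$, apply Korn with the $H^{1/2}$ boundary trace, then Rellich); the invertibility of $M^\vare$ and the fixed-$h$ absorption both work there. The issue is in part \ref{prop compact:a}, and it is a genuine gap, not a stylistic difference.

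The ``scaled rod Korn inequality'' you invoke, namely $\|\scaled\uu^h\|_{L^2(O)}\leq C(\|\sym\scaled\uu^h\|_{L^2(O)}+|\ro^h(0)|)$ with $h$-independent $C$, is false. Griso's decomposition (Theorem~\ref{thm:griso decomp}, Corollary~\ref{lem: griso_decomp}) controls $\partial_1\bar\uu^h_1$, $\partial_1\ro^h$, $\W^h/h$ and $\scaled\W^h$ by $\|\sym\scaled\uu^h\|_{L^2}$, but it only bounds the \emph{combinations} $\partial_1\bar\uu^h_2-\tfrac1h\ro^h_3$ and $\partial_1\bar\uu^h_3+\tfrac1h\ro^h_2$; the bending components $\partial_1\bar\uu^h_{2,3}$ themselves, and likewise the transverse columns $\tfrac1h\,\ro^h\wedge\e_\alpha$ of $\scaled(\ro^h\wedge(0,\barx))$, generically scale as $1/h$. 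So $\|\scaled\uu^h\|_{L^2}=O(1/h)$ in the worst case, and indeed the conclusion \eqref{finiteness of uh} deliberately omits $\|\scaled\uu^h\|_{L^2}$ from the list of bounded quantities. The compensating error is in your pointwise bound: since the transverse columns of $\scaled\uu^h$ are $\tfrac1h\partial_\alpha\uu^h$, the prefactor $h/L$ does \emph{not} cancel against the ``$1/h$ in those columns''--that $1/h$ is already part of $|\scaled\uu^h|$--and the correct estimate is $\tfrac hL|\scaled\uu^h\widehat\B^h|\lesssim \tfrac{c_Sh}{L}|\scaled\uu^h|$, with an extra $h/L$. Numerically the two errors cancel, so the final inequality you want is in fact true, but neither of your two stated lemmas holds, so the proof as written is incorrect.

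The paper's Lemma~\ref{L:smallness} sidesteps this entirely by substituting the Griso decomposition $\uu=\bar\uu+\ro\wedge(0,\barx)+\W$ directly into the product $\scaled\uu\,\widehat\B$ and exploiting the vanishing first row of $\widehat\B$ \emph{together} with the vanishing transverse derivatives of $\bar\uu$: the transverse derivatives of $\ro\wedge(0,\barx)$ are $O(|\ro|)$ (the $1/h$ cancels against the $h$ prefactor), and those of $\W$ contribute $h\|\scaled\W\|_{L^2}$. This gives $\|\sym(h\scaled\uu\,\widehat\B)\|_{L^2}\leq \|\widehat\B\|_\infty(\sqrt2\sqrt{|S|}\,\|\ro\|_{L^2}+h\|\scaled\W\|_{L^2})$, and then Poincar\'e for $\ro$ and Corollary~\ref{lem: griso_decomp} close the argument. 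You should replace the Korn step in your outline with this insertion of the Griso decomposition into the bad term; the rest of your outline for part \ref{prop compact:a} (Rellich after subtracting means, and the fundamental theorem of calculus applied to the weak limit $\partial_1(h\bar\uu_2^h)\rightharpoonup\ro_3$ for \eqref{trace of r23}, modulo a sign) is fine.
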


\begin{remark}
\normalfont
We briefly explain the necessity of the smallness of $c_S$, which is twofold: On the one hand, the smallness of $c_S$ guarantees that the term $\|\sym(\tfrac{h}{L}\scaled \uu^h \widehat\B)\|_{L^2(O)}$ can be absorbed by $\|\sym\scaled\uu^h\|_{L^2(O)}$; on the other hand, as will be clear in the proof of the $\Gamma$-convergence results, not only $\partial_1\ro$ (see Definition \ref{def of g conv}) but also $\ro$ itself appears in the energy functional. We then apply the Poincar\'e's inequality to control $\ro$ with help of $\partial_1\ro$ (and boundary conditions for $\ro(0)$), where the smallness of $c_S$ is invoked. Moreover, the application of the Poincar\'e's inequality also explains the appearance of the factor $L^{-1}$ in front of $\widehat\B$.
\end{remark}

Next, since the boundary condition \eqref{bc u} is involved with $\vare$, $h$ and $\omega$, it is not \textit{a priori} clear whether the equi-bounded energy conditions \eqref{finite assump} and \eqref{finite assump hom2} given in Proposition \ref{prop compact} are fulfilled for functions satisfying \eqref{bc u}. We show that this is indeed the case under the Assumption \ref{assumption of model} and the smallness condition \eqref{eq:phic}.

\begin{lemma}[Existence of equi-bounded energies]\label{prop finite}
Assume Assumption \ref{assumption of model} and the smallness condition \eqref{eq:phic} with $c_S$ as in Proposition~\ref{prop compact}. Then there exists a constant $C$ (only depending on $\alpha_1,\alpha_2$ and $O$) such that the following holds for $\mathbb P$-a.a.~$\omega\in\Omega$:
\begin{enumerate}[label=(\alph*)]
\item \label{prop finite a}For all $\vare>0$ there exists a sequence $(\uu^h)_h\subset H^1(O;\R^3)$ such that the boundary condition \eqref{bc u} is satisfied and
\begin{equation}\label{uniform norm indep of eps}
\limsup\limits_{h\to 0}\,(\widehat\E^{\vare,h}(\omega,\uu^h)+|\ro^h(0)|)\leq C.
\end{equation}
\item \label{prop finite b}For all $0<h\leq1$ there exists a sequence $(\uu^\vare)_\vare\subset H^1(O;\R^3)$ such that the boundary condition \eqref{bc u} is satisfied and
\begin{equation}\label{uniform norm indep of h}
\limsup\limits_{\vare\to 0}\,(\widehat\E^{\vare,h}(\omega,\uu^\vare)
+\|\uu^\vare(0,\cdot)\|_{H^1(S)})\leq C,
\end{equation}
\end{enumerate}
\end{lemma}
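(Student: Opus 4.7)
The plan is to take the candidate sequence to be the affine displacement $\uu^{\vare,h}_{\rm aff}$ itself, defined in \eqref{def aff vare h}, for both parts (a) and (b). By construction it satisfies the boundary condition \eqref{bc u}, so the task reduces to bounding $\widehat{\E}^{\vare,h}(\omega,\uu^{\vare,h}_{\rm aff})$ together with $|\ro^h(0)|$ (for part (a)) and $\|\uu^{\vare,h}_{\rm aff}(0,\cdot)\|_{H^1(S)}$ (for part (b)) uniformly in the parameter sent to zero. By the upper bound $Q(\omega,\F)\leq\alpha_2|\sym\F|^2$ from (A1), it is enough to control $|\sym\big(\scaled \uu^{\vare,h}_{\rm aff}(\id+\tfrac{h}{L}\B(\tau_{\cdot/\vare}\omega))\big)|$ in $L^\infty(O)$ uniformly in $\vare$ and in $h\in(0,1]$.

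The central observation is the following. A direct calculation shows that the first column of $\scaled \uu^{\vare,h}_{\rm aff}$ is $\partial_1\uu^{\vare,h}_{\rm aff}$, which is uniformly bounded in $L^\infty(O)$ for $h\in(0,1]$ (using $\|\Phi\|_{L^\infty(\Omega)}\leq c_S$), whereas its second and third columns form the block $(0,\tfrac{1}{h}\mathbf{A}(x_1))$ with $\mathbf{A}(x_1):=(1-\tfrac{x_1}{L})(h\A_0+\K_0)+\tfrac{x_1}{L}(h\A_L+\K_L)$. The potentially singular part of this block as $h\to 0$ comes from $\tfrac{1}{h}\K_0$ and $\tfrac{1}{h}\K_L$. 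The key point is that since $\K_0,\K_L\in\R^{2\times 2}_{\rm skw}$, this singular contribution assembles into a skew-symmetric $3\times 3$ matrix (its first row and first column vanish) and is therefore annihilated by $\sym$. Hence $\sym\scaled \uu^{\vare,h}_{\rm aff}$ is uniformly bounded in $L^\infty(O)$. The correction $\tfrac{h}{L}\scaled \uu^{\vare,h}_{\rm aff}\,\B$ has only its first column non-trivial, and the explicit prefactor $h$ exactly cancels the $1/h$ blow-up in the cross-sectional columns of $\scaled \uu^{\vare,h}_{\rm aff}$, so that this correction is also uniformly bounded in $L^\infty(O)$. Combining these two facts yields the required bound on $\widehat{\E}^{\vare,h}(\omega,\uu^{\vare,h}_{\rm aff})$ independently of $\vare$ and of $h\in(0,1]$.

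For the remaining boundary quantities, note that $\uu^{\vare,h}_{\rm aff}(0,\barx)=(0,(h\A_0+\K_0)\barx)$ is an explicit affine function of $\barx$ that is independent of $\vare$ and bounded for $h\in(0,1]$; this gives the $H^1(S)$-bound required in part (b). For part (a), inserting this expression into the decomposition formulas \eqref{decomp 1}--\eqref{decomp 4} and using the cancellation relations \eqref{cancelation} for $S$ yields, after a short calculation, $\bar\uu^h(0)=0$, $\ro^h_2(0)=\ro^h_3(0)=0$ and $\ro^h_1(0)=k_0+O(h)$ as $h\to 0$, which is bounded. The only structural step is the skew-symmetric cancellation described above, which I expect to be the main (and only) conceptual obstacle; everything else is a direct computation.
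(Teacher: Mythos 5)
Your proposal is correct and follows essentially the same route as the paper: the paper's proof also takes the affine function $\uu^{\vare,h}_{\rm aff}$ (written out in its equation \eqref{def of uuh in prop finite}), exploits the skew-symmetry of $\K_0,\K_L$ to show that $\sym\scaled\uu^h$ and $h\scaled\uu^h$ stay uniformly bounded, and combines these with $\|\Phi\|_{L^\infty}\leq c_S$ and \eqref{assump on Q} to conclude. Your identification of the skew-symmetric cancellation as the single conceptual ingredient, together with the direct computation of $\ro^h(0)$ from \eqref{decomp 1}--\eqref{decomp 4}, matches the paper's argument step by step.
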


We now state the $\Gamma$-convergence results for the transformed energy functional $\widehat{\E}_{\vare, h}$. We start with the $\Gamma$-convergence result for dimension reduction ($h\to 0$) with $\vare>0$ being fixed. Similar to the results given in Proposition \ref{prop compact}, the $\Gamma$-convergence result for dimension reduction does not depend on a specific $\omega\in\Omega$ and will be formulated in a deterministic framework, where we replace
the random matrix field $\B(\tau_{\frac{x_1}{\varepsilon}}\omega)$
and the random quadratic form $Q(\tau_{\frac{x_1}{\vare}}\omega,\cdot)$ by a deterministic matrix $\widehat\B(x_1)$ and a deterministic quadratic form $\widehat Q(x_1,\cdot)$, respectively.

\begin{proposition}[Dimension reduction]\label{thm dim reduction}
  Let $\widehat Q:(0,L)\times\R^{3\times 3}\to\R$ be measurable and assume that $\widehat Q(x_1,\cdot)\in\mathcal Q(\alpha_1,\alpha_2)$ for a.e.~$x_1\in(0,L)$. Let $\widehat\B\in L^\infty(0,L;\R^{3\times 3}_0)$ satisfy $\|\widehat\B\|_{L^\infty(0,L)}\leq c_S$ where $c_S$ is chosen as in Proposition~\ref{prop compact}. Let $\widehat \E^h:H^1(O;\R^3)\to\R$ be defined by
  \begin{equation*}
    \widehat \E^h(\uu):=\frac1L\int_{O}\widehat Q(x_1,\scaled \uu(\id+\tfrac hL\widehat\B))\,dx,
  \end{equation*}
  and $\widehat\E^0((\bar u,\ro)):H^1(0,L)\times H^1(0,L;\R^3)$ be defined by
  \begin{align}
    \widehat{\E}^0(\bar u,\ro):=\fint_0^L \widehat Q^{\rm rod}\Big(
    \begin{pmatrix}
      \pt_1\bar u+\tfrac1L(-\ro_3\widehat{\B}_{21}+\ro_2\widehat{\B}_{31})\\
      \pt_1\ro
    \end{pmatrix}\Big)
    \,dx_1,\label{gamma limit dr}
  \end{align}
  where
  \begin{equation}\label{eq:def of Qel hat}
    \widehat Q^{\rm rod}(x_1,\xxi):=\inf_{\varphi\in H^1(S;\R^3)}\int_S\widehat Q\Big(x_1,
    (\xxi_1\e_1+\xxi_{234}\wedge(0,\bar x))\otimes\e_1+(0,\nabla_{\bar x}\varphi)\Big)\,d\bar x.
  \end{equation}
  Then the following $\Gamma$-convergence holds:
  \begin{enumerate}[label=(\alph*)]
  \item \label{thm dim reduction a}(Lower bound). Consider a sequence $(\uu^h)_h\subset H^1(O;\R^3)$ with
    \begin{equation}\label{eq:thm dim red a:ass}
      \limsup_{h\to 0}\,(\widehat{\E}^h(\uu^h)+|\ro^h(0)|)<\infty,
    \end{equation}
    where $\ro^h$ is associated with $\uu^h$ according to Definition~\ref{def of g conv}. Assume that $\uu^h\stackrel{\Pi}{\to} (\bar u,\ro)$ for some $(\bar{u},\ro)\in H^1(0,L)\times H^1(0,L;\R^3)$. Then
    \begin{align}
      \liminf_{h\to 0}\widehat{\E}_{h}(\uu^h)\geq \widehat{\E}_0(\bar u,\ro).
    \end{align}
  \item \label{thm dim reduction b}(Upper bound). For each $(\bar{u},\ro)\in H^1(0,L)\times H^1(0,L;\R^3)$ there exists $(\uu^h)_{h>0}\subset H^1(O;\R^3)$ such that $\uu^h\stackrel{\Pi}{\to} (\bar u,\ro)$ as $h\to 0$ and
    \begin{align}\label{upper bound dim red}
      \lim_{h\to 0}\widehat{\E}_{h}(\uu^h)= \widehat{\E}_0(\bar u,\ro).
    \end{align}
  \item \label{thm dim reduction c}(Compactness and boundary conditions).
    Let $\A_0,\A_L\in\R^{2\times 2}_{\sym}$, $\K_0,\K_L\in \R^{2\times 2}_{\rm skew}$, $\mathbf{t}\in\R^3$, and $\mathbf{c}\in\R^2$. Define $\uu^h_{\rm aff}:O\to\R^3$ by
    \begin{align}\label{3.43}
      \uu^h_{\rm aff}(x):=(1-\tfrac{x_1}{L})(0,(h\A_0+\K_0)\barx)+\tfrac{x_1}{L}\Big(\mathbf{t}+\big(0,(h\A_L+\K_L)(\barx+\mathbf{c})\big)\Big),
    \end{align}
    $(\bar u_{\mathrm{aff}},\ro_{\mathrm{aff}})$ as in \eqref{affine function def} with $k_0=K_0^{21}$ and $k_L=\K_L^{21}$. If $(\uu^h)\subset H^1(O;\R^3)$ satisfies
    $$\limsup_{h\to 0}\widehat{\E}^h(\uu^h)<\infty$$
    and
    \begin{equation}\label{bcgen3d}
      \uu^h=\uu^h_{\rm aff}\text{ on the top and bottom faces } \{0\}\times S\text{ and }\{L\}\times S,
    \end{equation}
    then $\uu^h\stackrel{\Pi}{\to} (\bar u,\ro)$ for a subsequence (not relabeled) and a limit
    \begin{equation}\label{bcgen1d}
      (\bar u,\ro)\in (\bar u_{\mathrm{aff}},\ro_{\mathrm{aff}})+H_0^1(0,L)\times H_{00}^1(0,L;\R^3).
    \end{equation}
    Moreover, for any  $(\bar u,\ro)$ satisfying \eqref{bcgen1d} there exists a recovery sequence $(\uu^h)\subset H^1(O;\R^3)$ satisfying the properties of part \ref{thm dim reduction b} and additionally the boundary condition \eqref{bcgen3d}.
\end{enumerate}
\end{proposition}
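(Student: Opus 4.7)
The plan is to handle the three parts of Proposition~\ref{thm dim reduction} in sequence, viewing the statement as a refinement of the classical linear rod $\Gamma$-convergence of Mora--M\"uller adapted to the bounded prestrain $\tfrac{h}{L}\widehat\B$, along the lines developed for rods with micro-heterogeneous prestrain in \cite{BNS2020}. The central analytic tool is Proposition~\ref{prop compact}\ref{prop compact:a}, combined with the Griso-type decomposition \cite{Griso2004} of $\uu^h$ into an effective rod configuration and a higher-order cross-sectional remainder.

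For part \ref{thm dim reduction a}, starting from $(\uu^h)$ satisfying \eqref{eq:thm dim red a:ass}, coercivity of $\widehat Q$ yields a uniform $L^2(O)$-bound of $\sym\bigl(\scaled\uu^h(\id+\tfrac{h}{L}\widehat\B)\bigr)$. Since $\|\widehat\B\|_{L^\infty}\leq c_S$ and $\id+\tfrac{h}{L}\widehat\B$ is uniformly invertible for small $h$, this transfers to a bound for $\sym\scaled\uu^h$, so that Proposition~\ref{prop compact}\ref{prop compact:a} gives \eqref{finiteness of uh} and strong $L^2$-convergence of the Griso variables $\bar\uu^h_1\to\bar u$ and $\ro^h\to\ro$. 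A two-scale expansion in the cross-section, based on the Griso decomposition and Korn's inequality for $\scaled$, produces along a subsequence a corrector $\varphi\in L^2(0,L;H^1(S;\R^3))$ with
\begin{equation*}
\sym\scaled\uu^h \rightharpoonup \sym\bigl((\pt_1\bar u\,\e_1+\pt_1\ro\wedge(0,\barx))\otimes\e_1+(0,\nabla_{\barx}\varphi)\bigr)\quad\text{weakly in }L^2(O).
\end{equation*}
The prestrain term $\tfrac{h}{L}\scaled\uu^h\widehat\B$ is analyzed as in \cite{BNS2020}: because $\widehat\B$ has vanishing first row, this product involves only the $(1,2)$- and $(1,3)$-entries of $\scaled\uu^h$, and the strong $L^2$-convergence of $\ro^h$ isolates the rigid (Bernoulli--Navier) part of those entries, identifying the limit contribution as the axial-strain correction $\tfrac{1}{L}(-\ro_3\widehat\B_{21}+\ro_2\widehat\B_{31})\,\e_1\otimes\e_1$ that appears in \eqref{gamma limit dr}. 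Weak lower semicontinuity of the convex quadratic $\widehat Q$ applied to the identified limit, together with pointwise-in-$x_1$ optimization over $\varphi$ via Fubini and the relaxation formula \eqref{eq:def of Qel hat}, completes the lower bound.

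For part \ref{thm dim reduction b}, after a density reduction to smooth $(\bar u,\ro)$, I would pick a smooth pointwise minimizer $\varphi(x_1,\cdot)$ of \eqref{eq:def of Qel hat}. The recovery sequence is built from a rod-theoretic ansatz consisting of a Bernoulli--Navier part, the corrector $h\varphi$, and a compensating term involving $\int_0^{x_1}(-\ro_3\widehat\B_{21}+\ro_2\widehat\B_{31})\,dt$ in the longitudinal component that absorbs the prestrain shift, plus $O(h^2)$ corrections so that $\scaled\uu^h(\id+\tfrac{h}{L}\widehat\B)$ converges strongly in $L^2(O)$ to the optimal strain of the lower bound. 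A direct computation then gives $\Pi(\uu^h)\to(\bar u,\ro)$ strongly in $L^2$ and \eqref{upper bound dim red}. For part \ref{thm dim reduction c}, compactness is immediate from Proposition~\ref{prop compact}\ref{prop compact:a}; evaluating $\Pi$ on $\uu^h_{\rm aff}$ from \eqref{3.43} gives $(\bar u_{\rm aff},\ro_{\rm aff})$ in the limit, and strong $L^2$ convergence together with \eqref{trace of r23} enforces the $H^1_{00}$-constraint. A recovery sequence satisfying \eqref{bcgen3d} exactly is obtained by interpolating the ansatz of part \ref{thm dim reduction b} with $\uu^h_{\rm aff}$ in boundary layers of vanishing width around $x_1\in\{0,L\}$; the transition cost is $o(1)$ by the uniform $L^2$-boundedness of $\scaled\uu^h$.

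The main technical obstacle lies in the lower-bound identification of the contribution of the prestrain $\tfrac{h}{L}\widehat\B$ to the limit strain. The product $\tfrac{h}{L}\scaled\uu^h\widehat\B$ couples the longitudinal and cross-sectional parts of $\scaled\uu^h$ in a way that does not a priori decouple from the unknown corrector $\varphi$, and showing that the resulting limit is exactly the effective axial-strain shift $\tfrac{1}{L}(-\ro_3\widehat\B_{21}+\ro_2\widehat\B_{31})$ requires a careful application of the Griso decomposition together with the strong $L^2$-compactness of the rotation variables $\ro^h$. Once this identification is achieved, the infimum over the corrector in \eqref{eq:def of Qel hat} can be performed pointwise in $x_1$, and the effective rod density $\widehat Q^{\rm rod}$ is recovered.
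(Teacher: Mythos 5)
Your overall architecture (Griso decomposition, compactness from Proposition~\ref{prop compact}\ref{prop compact:a}, convexity/lower semicontinuity, corrector ansatz for the upper bound, affine limit + $H^1_{00}$-constraint via \eqref{trace of r23} for part \ref{thm dim reduction c}) matches the paper, but the crucial algebraic step in the lower bound is wrong, and the rest of your argument rests on it. You assert that the limit contribution of $\sym\bigl(\tfrac{h}{L}\scaled\uu^h\widehat\B\bigr)$ is exactly the rank-one axial shift $\tfrac{1}{L}(-\ro_3\widehat\B_{21}+\ro_2\widehat\B_{31})\,\e_1\otimes\e_1$, obtained because ``strong $L^2$-convergence of $\ro^h$ isolates the rigid (Bernoulli--Navier) part.'' This is not what happens. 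From \eqref{long sum} one has $h\scaled\uu^h=\mathbb{K}\ro^h+O(h)+h\scaled\VV^h+h\scaled\oo^h$ with $\mathbb{K}\ro$ as in \eqref{def of K}, so strong convergence of $\ro^h$ gives the weak limit $\sym\bigl((\mathbb{K}\ro)\widehat\B\bigr)$. Since $\widehat\B$ has zero first row but is otherwise general, $(\mathbb{K}\ro)\widehat\B$ has nontrivial rows $2$ and $3$ (involving $\ro_1\widehat\B_{2j},\,\ro_1\widehat\B_{3j}$) and nontrivial entries $(1,2),(1,3),(2,2),(2,3),(3,3)$; the limit is a full symmetric matrix, not a multiple of $\e_1\otimes\e_1$. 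Your mechanism provides no reason for those components to vanish, and without them the limiting strain you feed into $\widehat Q$ is simply incorrect.

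The missing ingredient is the identity \eqref{eq:st112002}: the non-axial part of $\sym\bigl((\mathbb{K}\ro)\widehat\B\bigr)$ is exactly of the form $\sym(0,\nabla_{\bar x}\widehat\varphi)$ with $\widehat\varphi$ the explicit function in \eqref{eq:st:widehatvarphi}, linear in $\bar x$ with coefficients depending on $\ro$ and $\widehat\B$. This is what lets you absorb the extraneous entries into the corrector \emph{before} taking the infimum over $\varphi\in H^1(S;\R^3)$ in \eqref{eq:def of Qel hat}, reducing the effective limit to the asserted form. It also changes your upper-bound construction: the recovery ansatz must include $\varphi-\widehat\varphi$ (not $\varphi$ alone) as the corrector so that $\sym\bigl(\scaled\uu^h(\id+\tfrac hL\widehat\B)\bigr)$ converges to the optimal strain of the lower bound; see the choice of $\VV^h$ around \eqref{recovery seq}. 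As a secondary point, your plan for attaining \eqref{bcgen3d} via cutoff interpolation in boundary layers is not what the paper does and is not clearly cheap: the corrector $\VV^h$ in the paper's ansatz already vanishes at $x_1\in\{0,L\}$, so the traces are off only by $O(h)$, and the paper fixes them with an explicit affine displacement $\ww^h$ whose asymptotic contribution (a pure $\sym(0,\nabla_{\bar x}\widetilde\varphi)$ term, cf.\ \eqref{eq:st:110002}) is cancelled by subtracting an approximating sequence $h\WW^h$, preserving the limiting energy exactly rather than up to an $o(1)$ error.
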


We apply this proposition to treat the $\Gamma$-limit $h\to 0$ of the functionals $\E^{\vare,h}$ and $\widehat{\E}^{{\hom}}$:
  \begin{itemize}
  \item To establish the $\Gamma$-convergence of $\E^{\vare,h}(\omega)$ to $\E^{\varepsilon,{\rm rod}}(\omega)$, we shall simply set $\widehat Q(x_1,\F)=Q(\tau_{\tfrac{x_1}{\vare}}\omega_0,\F)$ and $\widehat\B(x_1)=\B(\tau_{\frac{x_1}{\vare}}\omega)$ in order to apply Proposition \ref{thm dim reduction}. Moreover, by taking $\mathbf{c}=\fint_0^L\Phi(\tau_{\frac{t}{\vare}}\omega)\,dt$ in \eqref{3.43} we see that $\uu^h$ satisfies the boundary condition \eqref{bc u}.

  \item In the same manner, the $\Gamma$-convergence of $\widehat{\E}^{{\rm hom},h}$ to $\E^\infty$ is a direct consequence of Proposition \ref{thm dim reduction} by setting $\widehat{Q}(x_1,\F)=Q^{\rm hom}(\F)$, $\widehat{\B}(x_1)=\la \B\ra$ and $\mathbf{c}=\la\Phi\ra$ therein.
  \end{itemize}

Next, we state the result for the limit $\E^{{\rm rod},\vare}(\omega)$ as $\vare\to 0$ which invokes stochastic homogenzation.
\begin{proposition}[Homogenization of $\E^{{\rm rod},\vare}$]\label{thm: 1D hom}
Let the Assumption \ref{assumption of model} and the smallness condition \eqref{eq:phic} be satisfied with $c_S=\frac{1}{2}$. Then for a.a. $\omega\in\Omega$ the following holds:
\begin{enumerate}[label=(\alph*)]
\item (Lower bound). Consider a sequence $(\bar u^\vare,\ro^\vare)_\vare\subset H^1(0,L)\times H^1(0,L;\R^3)$ with
  \begin{align}\label{finite homogenization}
    \limsup_{\vare\to 0}\,(\E^{{\rm rod},\vare}(\omega,(\bar{u}^\vare,\ro^\vare))+|(\ro_2^\vare,\ro_3^\vare)(0)|)<\infty,
  \end{align}
  and assume that
  \begin{align*}
    (\bar u^\vare,\ro^\vare)\to (\bar{u},\ro)\quad\text{strongly in $L^2(0,L)\times L^2(0,L;\R^3)$},
  \end{align*}
  for some $(\bar{u},\ro)\in H^1(0,L)\times H^1(0,L;\R^3)$. Then,
  \begin{align}
    \liminf_{\vare\to 0}\E^{{\rm rod},\vare}(\omega,(\bar{u}^\vare,\ro^\vare))\geq \E^{0}(\bar{u},\ro).
  \end{align}

\item \label{thm: 1D hom b}(Upper bound). For each $(\bar{u},\ro)\in H^1(0,L)\times H^1(0,L;\R^3)$ there exists a sequence $(\bar u^\vare,\ro^\vare)_\vare\subset H^1(0,L)\times H^1(0,L;\R^3)$ such that
\begin{align*}
(\bar{u}^\vare,\ro^\vare)\to (\bar{u},\ro)\quad\text{strongly in $L^2(0,L)\times L^2(0,L;\R^3)$},
\end{align*}
and
\begin{align}
  \lim_{\vare\to 0}\E^{{\rm rod},\vare}(\omega,(\bar{u}^\vare,\ro^\vare))= \E^{0}(\bar{u},\ro).
\end{align}

\item \label{thm: 1D hom c}(Compactness and boundary conditions).
  Let $(\bar u_{\mathrm{aff}},\ro_{\mathrm{aff}})$ be as in \eqref{affine function def}. If $(\bar u^\vare,\ro^\vare)_\vare\subset (\bar u_{\mathrm{aff}},\ro_{\mathrm{aff}})+H^1_0(0,L)\times H^1_{00}(0,L;\R^3)$  satisfies $\limsup_{\vare\to 0}\E^{\vare,\rm rod}(\omega,(\bar u^\vare,\ro^\vare))<\infty$, then $(\bar u^\vare,\ro^\vare)\to (\bar u,\ro)$ for a subsequence (not relabeled) and a limit $(\bar u,\ro)$ satisfying \eqref{bcgen1d}.
  Moreover, for any  $(\bar u,\ro)$ satisfying \eqref{bcgen1d} there exists a recovery sequence $(\bar u^\vare,\ro^\vare)_\vare\subset (\bar u_{\mathrm{aff}},\ro_{\mathrm{aff}})+H^1_0(0,L)\times H^1_{00}(0,L;\R^3)$ satisfying the properties of part \ref{thm: 1D hom b}.
\end{enumerate}
\end{proposition}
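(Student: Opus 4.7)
The plan is to apply stochastic two-scale convergence (see Appendix~\ref{appendix: two scale conv}) to the 1D rod energy, exploiting that $Q^{\rm rod}$ is quadratic and uniformly coercive by Lemma~\ref{L:quadr}\ref{lem coer a}. Introduce the shorthand $\G^\vare := (\pt_1 \bar u^\vare + L^{-1}(\ro_3^\vare \Phi_1(\tau_{\cdot/\vare}\omega) - \ro_2^\vare \Phi_2(\tau_{\cdot/\vare}\omega)), \pt_1 \ro^\vare)$ and the analogous $\G$ built from $(\bar u, \ro)$ and $\la\Phi\ra$. The key preliminary observation is that under strong $L^2$-convergence $(\bar u^\vare, \ro^\vare) \to (\bar u, \ro)$ with bounded energy, one has $\G^\vare \rightharpoonup \G$ weakly in $L^2(0,L;\R^4)$: the gradient parts $\pt_1 \bar u^\vare$ and $\pt_1 \ro^\vare$ converge weakly, and the product part $\ro^\vare \Phi^\vare$ passes to the limit $\ro \la\Phi\ra$ because $\Phi(\tau_{\cdot/\vare}\omega) \stackrel{\ast}{\rightharpoonup} \la\Phi\ra$ in $L^\infty$ by Birkhoff's ergodic theorem while $\ro^\vare \to \ro$ strongly in $L^2$.

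For the lower bound in part (a), coercivity gives an $L^2$-bound on $\G^\vare$, so up to a subsequence $\G^\vare$ stochastically two-scale converges to some $\hat\G \in L^2((0,L) \times \Omega;\R^4)$ whose $\Omega$-mean coincides with the weak limit $\G$. Thus $\chi(x_1,\omega'):=\hat\G(x_1,\omega')-\G(x_1)$ lies in $L^2(0,L;L^2_0(\Omega;\R^4))$, and standard two-scale lower semicontinuity of quadratic forms combined with the definition \eqref{def:Q0} yields
\begin{align*}
\liminf_\vare \E^{\vare,\rm rod}(\omega,\bar u^\vare,\ro^\vare) \geq \fint_0^L \int_\Omega Q^{\rm rod}(\omega',\G(x_1)+\chi(x_1,\omega'))\,d\mathbb P(\omega')\,dx_1 \geq \fint_0^L Q^0(\G(x_1))\,dx_1 = \E^0(\bar u,\ro).
\end{align*}

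For the upper bound in part (b), since $Q^{\rm rod}$ is quadratic and uniformly coercive, a Lagrange-multiplier computation shows that the optimal corrector in \eqref{def:Q0} is linear in $\xxi$: there exists a bounded random matrix $M \in L^\infty(\Omega;\R^{4\times 4})$ with $\la M\ra = 0$ and $Q^0(\xxi) = \int_\Omega Q^{\rm rod}(\omega,(\id+M(\omega))\xxi)\,d\mathbb P(\omega)$ for every $\xxi \in \R^4$. By density and continuity of $\E^0$ on $H^1\times H^1$, it suffices to treat smooth $(\bar u, \ro)$. I set $\ro^\vare(x_1) := \ro(x_1) + \int_0^{x_1} M_{2\text{--}4}(\tau_{t/\vare}\omega)\G(t)\,dt$ (last three rows of $M$) and then define $\bar u^\vare$ by integrating $\pt_1 \bar u^\vare = \pt_1 \bar u + M_1(\tau_{\cdot/\vare}\omega)\G - L^{-1}(\ro_3^\vare \Phi_1(\tau_{\cdot/\vare}\omega) - \ro_3\la\Phi_1\ra) + L^{-1}(\ro_2^\vare \Phi_2(\tau_{\cdot/\vare}\omega) - \ro_2\la\Phi_2\ra)$, so that the identity $\G^\vare[\bar u^\vare,\ro^\vare] = (\id + M(\tau_{\cdot/\vare}\omega))\G$ holds pointwise. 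Birkhoff applied to the bounded mean-zero fields $M$ and $\Phi - \la\Phi\ra$ gives $(\bar u^\vare, \ro^\vare) \to (\bar u, \ro)$ uniformly on $[0,L]$, and the energy converges to $\fint_0^L Q^0(\G(x_1))\,dx_1$ by another application of Birkhoff to the $L^1(\Omega)$ function $\omega \mapsto Q^{\rm rod}(\omega,(\id+M(\omega))\xxi)$ with $\xxi = \G(x_1)$ varying continuously.

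For part (c), compactness follows from coercivity: $\pt_1 \ro^\vare$ is $L^2$-bounded and, together with the boundary condition $\ro^\vare - \ro_{\rm aff} \in H^1_{00}(0,L;\R^3)$ (so $\ro^\vare(0)$ is prescribed) and Poincar\'e's inequality, gives an $H^1$-bound on $\ro^\vare$, hence $L^\infty$-bound by the 1D Sobolev embedding; boundedness of $\pt_1 \bar u^\vare$ in $L^2$ then follows from the first component of $\G^\vare$ being $L^2$-bounded and $\|\Phi\|_{L^\infty}\leq c_S$. A strongly $L^2$-convergent subsequence results, with boundary conditions passing to the limit by weak trace continuity. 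A recovery sequence satisfying the boundary conditions is obtained by multiplying the corrector pieces of part (b) by a cutoff vanishing on boundary layers of width $\sqrt\vare$ around $\{0,L\}$ and adjusting by a vanishing constant to enforce the mean condition $\fint_0^L(\ro_2^\vare,\ro_3^\vare)\,dx_1 = 0$; the boundary layer contribution to the energy is $O(\sqrt\vare)$ by the growth of $Q^{\rm rod}$. The hardest step is in (b): ensuring $\bar u^\vare \to \bar u$ requires controlling the cross terms $\int_0^{x_1}(\ro_j^\vare \Phi_k(\tau_{\cdot/\vare}\omega) - \ro_j \la\Phi_k\ra)\,dt$ uniformly in $x_1$, which I handle by splitting as $\ro_j(\Phi_k(\tau_{\cdot/\vare}\omega) - \la\Phi_k\ra) + (\ro_j^\vare - \ro_j)\Phi_k(\tau_{\cdot/\vare}\omega)$, integrating the first summand by parts (exploiting that the primitive of $\Phi_k(\tau_{\cdot/\vare}\omega) - \la\Phi_k\ra$ vanishes uniformly on $[0,L]$ by Birkhoff), and bounding the second via the uniform convergence of $\ro^\vare$.
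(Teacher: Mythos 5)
Your lower bound argument in part (a) is the same as the paper's: coercivity plus two-scale compactness (Lemma~\ref{bound lemma}), strong $L^2$ convergence of $\ro^\vare$ combined with strong two-scale convergence of $\Phi(\tau_{\cdot/\vare}\omega_0)$ to pass the cross term to the limit, then two-scale lower semicontinuity (Lemma~\ref{L:twoscale:quadratic}) together with the infimum in \eqref{def:Q0}. Your upper bound in part (b), however, takes a genuinely different route. The paper constructs a recovery sequence through the abstract two-scale approximation Lemma~\ref{bound lemma}(b), which produces $\varphi^\vare\in C^\infty_c(0,L;\R^4)$ with $\varphi^\vare\to 0$ uniformly and $\partial_1\varphi^\vare$ strongly two-scale converging to a corrector $\tilde\chi$ that absorbs the $\ro(\Phi-\langle\Phi\rangle)$ mismatch. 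You instead exploit the one-dimensional structure: the optimal corrector in \eqref{def:Q0} is $M(\omega)\xxi$ with $M(\omega)=\Ab(\omega)^{-1}\overline\Ab-\id$, $\overline\Ab=\langle\Ab^{-1}\rangle^{-1}$, which is bounded by uniform ellipticity, and you build the recovery sequence by direct integration so that $\G^\vare=(\id+M^\vare)\G$ pointwise. Your treatment of the cross terms by splitting and integrating by parts is correct. The payoff of your route is concreteness and independence of the abstract two-scale machinery; the cost is that the corrector piece $\psi^\vare(x_1)=\int_0^{x_1}M^\vare\G\,dt$ is \emph{not} compactly supported, which is precisely what makes boundary-condition enforcement delicate.

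That cost is where a genuine gap appears, in part (c). You claim that cutting off the corrector on boundary layers of width $\sqrt\vare$ costs energy $O(\sqrt\vare)$. Near $x_1=0$ this is fine since $|\psi^\vare(x_1)|\lesssim x_1\lesssim\sqrt\vare$ there. Near $x_1=L$, however, $\psi^\vare$ only vanishes because of Birkhoff, so $\|\psi^\vare\|_{L^\infty(L-\sqrt\vare,L)}=o(1)$ with \emph{no rate} in the qualitative ergodic setting. The cutoff derivative contributes $\int_{L-\sqrt\vare}^L|\partial_1\eta^\vare|^2|\psi^\vare|^2\,dx_1\lesssim\vare^{-1/2}\|\psi^\vare\|^2_{L^\infty}$, which could even diverge if $\|\psi^\vare\|_{L^\infty}$ decays slower than $\vare^{1/4}$; it is certainly not $O(\sqrt\vare)$. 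The paper avoids this entirely because its $\varphi^\vare$ are compactly supported by construction, so the boundary conditions at $\{0,L\}$ are automatic and only the mean condition $\fint\ro^\vare_{23}=0$ needs a fixed bump-function correction (which also cannot be a genuine additive constant, as a constant shift would spoil the Dirichlet traces — your phrase ``adjusting by a vanishing constant'' glosses over this). Your construction can be repaired, for example by subtracting the linear function $\frac{x_1}{L}\psi^\vare(L)$ from $\psi^\vare$ before cutting off (this makes the corrector vanish at both endpoints and is $o(1)$ uniformly in $C^1$), or by using a cutoff at a fixed width $\delta$ and a diagonal argument in $(\delta,\vare)$. As written, though, the $O(\sqrt\vare)$ claim does not follow from any ingredient available in the statement.
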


The next result establishes stochastic homogenization of the 3D model.

\begin{proposition}[Homogenization of the 3D-model]\label{thm:hom 3D model}
Let the Assumption \ref{assumption of model} and the smallness condition \eqref{eq:phic} be satisfied for $c_S$ as in Proposition~\ref{prop compact}. Then for a.a. $\omega\in\Omega$ and all $0<h\leq1$ (fixed) the following holds:
\begin{enumerate}[label=(\alph*)]
\item  (Lower bound). Consider a sequence $(\uu^\vare)_\vare\subset H^1(O;\R^3)$ with
\begin{align}\label{3D finite homogenization}
\limsup_{\vare\to 0}\,(\widehat{\E}^{\vare,h}(\omega,\uu^\vare)+\|\uu^\vare(0,\cdot)\|_{H^\frac{1}{2}(S)})<\infty,
\end{align}
and assume that
\begin{align*}
  \uu^\vare\to\uu\quad\text{strongly in $L^2(O;\R^3)$ as $\vare\to 0$}
\end{align*}
for some $\uu\in H^1(O;\R^3)$. Then
\begin{align}
\liminf_{\vare\to 0}\widehat{\E}^{\vare,h}(\omega,\uu^\vare)\geq \widehat{\E}^{{\rm hom},h}(\uu).
\end{align}

\item \label{thm:hom 3D model b}(Upper bound). For each $\uu\in H^1(O;\R^3)$ there exists $(\uu^\vare)_\vare\subset H^1(O;\R^3)$ such that
  \begin{align*}
    \uu^\vare\to\uu\quad\text{strongly in $L^2(O;\R^3)$},
  \end{align*}
  and
  \begin{align}
    \lim_{\vare\to 0}\widehat{\E}^{\vare,h}(\omega,\uu^\vare)= \widehat{\E}^{{\rm hom},h}(\uu).
  \end{align}

\item (Compactness and boundary conditions).
  If $(\uu^\vare)_\vare\subset H^1(O;\R^3)$ satisfies the boundary condition \eqref{bc u} and $\limsup_{\vare\to 0}\widehat{\E}^{\vare,h}(\uu^\vare)<\infty$, then for a subsequence (not relabeled) we have $\uu^\vare\to\uu$ in $L^2(O;\R^3)$ where $\uu\in H^1(O;\R^3)$ satisfies
  \begin{equation}\label{eq:bc3dhom}
    \uu(0,\cdot)=\uu^h_{\rm aff}(0,\cdot)\text{ and }\uu(L,\cdot)=\uu^h_{\rm aff}(L,\cdot)\text{ on }S,
  \end{equation}
  where $\uu^h_{\rm aff}$ is defined by \eqref{3D affine function def}.
  Moreover, for every $\uu\in H^1(O;\R^3)$ that satisfies \eqref{eq:bc3dhom} there exists a recovery sequence $(\uu^\vare)\subset H^1(O;\R^3)$ satisfying the boundary condition \eqref{bc u} and the properties of part \ref{thm:hom 3D model b}.
\end{enumerate}
\end{proposition}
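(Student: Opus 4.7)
Since both the random quadratic form $Q(\tau_{x_1/\vare}\omega,\cdot)$ and the random prestrain $\B(\tau_{x_1/\vare}\omega)$ oscillate only along the $x_1$-direction, Proposition~\ref{thm:hom 3D model} is essentially a stochastic laminate homogenization problem, and I would attack it through (stochastic) two-scale convergence in the spirit of \cite{Allaire1992,NeukammPhd,NeukammARMA2012}. Compactness of the sequence $(\uu^\vare)$ under hypothesis~\eqref{3D finite homogenization} follows directly from Proposition~\ref{prop compact}\ref{prop compact:b} applied with $\widehat\B^\vare(x):=\B(\tau_{x_1/\vare}\omega)$, whose $L^\infty$-norm is controlled by $c_S$ thanks to~\eqref{eq:phic}. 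For the boundary conditions in part~(c), the bottom trace $\uu^{\vare,h}_{\rm aff}(0,\cdot) = (0,(h\A_0+\K_0)\barx)$ is $\vare$-independent, while the top trace $\uu^{\vare,h}_{\rm aff}(L,\cdot)$ converges to $\uu^h_{\rm aff}(L,\cdot)$ in $H^1(S)$ by Birkhoff's ergodic theorem applied to $\fint_0^L\Phi(\tau_{t/\vare}\omega)\,dt \to \la\Phi\ra$. Weak $H^1$-convergence of $\uu^\vare$ to $\uu$ then passes to the traces in $H^{1/2}(S)$, yielding~\eqref{eq:bc3dhom}.

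\textbf{Lower bound.} Along a subsequence, $\scaled\uu^\vare$ stochastically two-scale converges (in the $x_1$-variable) to a limit of the form $\scaled\uu(x) + \chi(\omega',x)\otimes\e_1$ for some $\chi \in L^2(O;L^2_0(\Omega;\R^3))$; only the first column can carry a corrector because the fast variable is one-dimensional. Simultaneously $\B(\tau_{x_1/\vare}\omega)$ strongly two-scale converges to its stationary extension $\B(\omega')$, and crucially $\B$ has vanishing first row, so $(\chi\otimes\e_1)\B(\omega') = 0$. Hence
\begin{equation*}
  \scaled\uu^\vare\bigl(\id+\tfrac{h}{L}\B(\tau_{x_1/\vare}\omega)\bigr) \twos \scaled\uu\bigl(\id+\tfrac{h}{L}\B(\omega')\bigr) + \chi\otimes\e_1.
\end{equation*}
Introduce $\F(x) := \scaled\uu(x)(\id+\tfrac{h}{L}\la\B\ra)$ and $\eta(\omega',x) := \tfrac{h}{L}\scaled\uu(x)(\B(\omega')-\la\B\ra)\e_1$; the latter is a pure first-column perturbation of zero $\omega'$-mean, once again by the zero first row of $\B$. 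Standard lower semicontinuity of convex integrals along two-scale convergence then gives
\begin{equation*}
  \liminf_{\vare\to 0}\widehat{\E}^{\vare,h}(\omega,\uu^\vare) \geq \frac{1}{L}\int_O\int_\Omega Q\bigl(\omega', \F(x) + (\eta+\chi)\otimes\e_1\bigr)\,d\pbb(\omega')\,dx \geq \frac{1}{L}\int_O Q^{\rm hom}(\F(x))\,dx,
\end{equation*}
where the last inequality is the definition~\eqref{def Qhom} of $Q^{\rm hom}$, using that $\eta+\chi$ still ranges over all of $L^2_0(\Omega;\R^3)$ as $\chi$ does.

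\textbf{Upper bound and boundary matching.} For $\uu \in C^\infty(\bar O;\R^3)$ satisfying the limiting condition~\eqref{eq:bc3dhom}, I would employ the standard laminate recovery ansatz $\uu^\vare(x) := \uu(x) + \vare\,\Psi^\vare(x)$ with
\begin{equation*}
  \Psi^\vare(x) := \int_0^{x_1}\bigl(\chi^{*}_{\F(s,\barx)}(\tau_{s/\vare}\omega) - \tfrac{h}{L}\scaled\uu(s,\barx)(\B(\tau_{s/\vare}\omega)-\la\B\ra)\e_1\bigr)\,ds,
\end{equation*}
where $\chi^{*}_{\F}$ denotes the essentially unique minimizer of~\eqref{def Qhom} at matrix argument $\F$. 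Differentiating $\uu^\vare$ in $x_1$ and combining with the product identity from the lower bound, one checks that $\scaled\uu^\vare(\id+\tfrac{h}{L}\B(\tau_{x_1/\vare}\omega))$ strongly two-scale converges to the pointwise minimizer $\F(x) + \chi^{*}_{\F(x)}(\omega')\otimes\e_1$ of $Q(\omega',\cdot)$, so the energies converge to $\widehat{\E}^{{\rm hom},h}(\uu)$. To paste in the exact affine boundary data $\uu^{\vare,h}_{\rm aff}$, I would apply a standard cutoff in two thin layers $\{x_1<\delta\}\cup\{L-\delta<x_1\}$; since $\uu^{\vare,h}_{\rm aff}(L,\cdot)\to \uu^h_{\rm aff}(L,\cdot)$ in $H^1(S)$, the boundary energy defect vanishes in the order $\vare\to 0$ followed by $\delta\to 0$. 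A density argument extends the construction to every $\uu \in H^1(O;\R^3)$ satisfying~\eqref{eq:bc3dhom}.

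\textbf{Main obstacle.} The principal subtlety is the coupling between the random energy density $Q(\tau_{x_1/\vare}\omega,\cdot)$ and the random prestrain $\B(\tau_{x_1/\vare}\omega)$ inside a single quadratic expression. The algebraic fact that rescues the analysis is that $\B$ has vanishing first row: both $(\chi\otimes\e_1)\B$ vanishes and the fluctuation $\eta$ of the prestrain occupies the same subspace $\R^3\otimes\e_1$ as the laminate corrector. This lets the random part of the prestrain be reabsorbed into the corrector and produces the clean homogenization formula~\eqref{def Qhom} evaluated at the averaged prestrain $\la\B\ra$; without this compatibility an additional cross-term involving $\mathrm{Cov}(Q,\B)$ would survive and the effective energy would be considerably more involved than~\eqref{eq:3Dh}.
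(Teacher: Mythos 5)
Your overall strategy — stochastic two-scale convergence in the $x_1$ variable, with the special first-column structure of $\B$ allowing the random prestrain fluctuation to be swallowed by the first-column corrector — is exactly the paper's approach, and your lower-bound and compactness/boundary arguments match the paper's Steps~1 and~3. The "main obstacle" paragraph correctly identifies the two structural facts about $\B$ the argument relies on: vanishing first row (so $(\chi\otimes\e_1)\B=0$ in the two-scale limit of the product) and only the first column nonzero (so $\sym(\scaled\uu(\B-\la\B\ra))$ lies in $\{\sym(\chi\otimes\e_1)\}$, which is precisely the paper's~\eqref{eq:st112322329911}).

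However, your upper-bound ansatz has a scaling error that breaks the construction. You set $\uu^\vare := \uu + \vare\,\Psi^\vare$ with $\Psi^\vare(x) = \int_0^{x_1}(\chi^*_{\F(s,\barx)}(\tau_{s/\vare}\omega)-\cdots)\,ds$. Then $\partial_1(\vare\Psi^\vare) = \vare\big(\chi^*_{\F(x)}(\tau_{x_1/\vare}\omega)-\cdots\big)\to 0$, so the corrector never appears in $\scaled\uu^\vare$ at leading order and you recover only $\widehat\E^{\vare,h}(\uu^\vare)\to\frac1L\int_O Q(\omega',\F(x))\,d\mathbb P\,dx$, not $Q^{\rm hom}$. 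The "standard" ansatz would be either $\uu^\vare = \uu + \vare\,\phi(\tau_{x_1/\vare}\omega,x)$ with $\phi$ independent of $\vare$ and $\partial_\omega\phi = \chi^*$, or equivalently $\uu^\vare = \uu + \Psi^\vare$ \emph{without} the extra $\vare$ factor, since $\Psi^\vare = \vare\int_0^{x_1/\vare}(\cdots)\,dt$ is already $o(1)$ by the ergodic theorem. Even with that fix, two more points need attention and are nontrivial in your explicit integral formulation: (i) $\Psi^\vare\to 0$ in $L^2(O)$ requires some uniformity or a diagonal/density argument, as the Birkhoff convergence is pointwise in the upper limit; and (ii) since $\chi^*_{\F(s,\barx)}$ depends on $\barx$, the cross-sectional derivatives $\tfrac1h\partial_j\Psi^\vare$ ($j=2,3$) do not obviously tend to zero. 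The paper avoids all three issues at once by invoking Lemma~\ref{bound lemma}(b), which abstractly supplies $\varphi^\vare\in C_c^\infty(O;\R^3)$ with $\partial_1\varphi^\vare\twoss\tilde\chi+h\,\sym(\scaled\uu(\la\B\ra-\B))$, vanishing cross-sectional derivatives, and compact support — the last of which also makes the boundary matching of Step~3 a one-line affine shift $\uu^\vare := \uu+(\uu^{\vare,h}_{\rm aff}-\uu^h_{\rm aff})+\varphi^\vare$ rather than a layered cutoff.
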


The proof of our result on qualitative convergence is now a rather direct consequence of the previous propositions:
\begin{proof}[Proof of Theorem~\ref{main theorem qual}]
   By a scaling arugment, we may assume w.l.o.g.~that $L=1$.
  \begin{enumerate}[label=(\alph*)]
  \item The statements follow by $\Gamma$-convergence of the corresponding energy functionals and compactness properties. In particular, Propositions~\ref{thm: 1D hom} and \ref{thm:hom 3D model} yield the limits for $\vare\to0$, and Proposition~\ref{thm dim reduction} yields the limits for $h\to 0$. We only present more details for limit
    \begin{equation}\label{eq:conv eeee1231}
      \lim\limits_{h\to 0}E^{\vare,h}(\omega)=E^{{\rm rod},\vare}(\omega)\qquad\text{for $\mathbb P$-a.e. $\omega\in\Omega$},
    \end{equation}
    since the other three statements can be shown similarly. The sequence $(E^{\vare,h}(\omega))_h$ is non-negative and (thanks to Lemma~\ref{prop finite}) bounded. Choose $\uu^h\in H^1(O;\R^3)$ that satisfies the boundary condition \eqref{bc u} and $\widehat\E^{\vare, h}(\omega, \uu^h)\leq E^{\vare,h}(\omega)+h$. Since $(E^{\vare,h}(\omega))_h$ is bounded, we may appeal to the compactness and lower bound part of Proposition~\ref{thm dim reduction} and deduce $\liminf_{h\to 0}\widehat\E^{\vare,h}(\omega,\uu^h)\geq E^{{\rm rod},\vare}(\omega)$. We conclude $E^{{\rm rod},\vare}(\omega)\leq \liminf_{h\to 0}E^{\vare, h}(\omega)$. Now, let $(\bar u,\ro)\in (\bar u_{\rm aff},\ro_{\rm aff})+H^1_0(0,1)\times H^1_{00}(0,1;\R^3)$ satisfy $E^{{\rm rod},\vare}(\omega,(\bar u,\ro))=E^{{\rm rod},\vare}(\omega)$. With Proposition~\ref{thm dim reduction} we find a recovery sequenec $\uu^h\in H^1(O;\R^3)$ satisfying \eqref{bc u} and $\lim_{h\to 0}\widehat\E^{\vare,h}(\omega,\uu^h)=\E^{{\rm rod},\vare}(\omega,(\bar u,\ro))=E^{{\rm rod},\vare}(\omega)$. Hence, $\limsup_{h\to 0}E^{\vare,h}(\omega)\leq \limsup_{h\to o}\lim_{h\to 0}\widehat\E^{\vare,h}(\uu^h)=E^{{\rm rod},\vare}(\omega)$. In summary, \eqref{eq:conv eeee1231} follows.
    \item By convexity (which allows us to apply Jensen's inequality) and since $\int_0^1\ro_{23}\,dx_1=0$, we deduce that for all $(\bar u,\ro)\in (\bar u_{\rm aff},\ro_{\rm aff})+H^1_0(0,1)\times H^1_{00}(0,1;\R^3)$,
      \begin{align*}
        \E^{0}(\bar u,\ro)=&\fint_0^1Q^0\Big(                          \begin{pmatrix}
                            \pt_1\bar u+\ro_3\la\Phi_1\ra-\ro_2\la\Phi_2\ra\\
                            \pt_1\ro
                          \end{pmatrix}\Big)\,dx_1
        \geq\,Q^0\Big(\fint_0^1\begin{pmatrix}
                            \pt_1\bar u+\ro_3\la\Phi_1\ra-\ro_2\la\Phi_2\ra\\
                            \pt_1\ro
                          \end{pmatrix}\,dx_1\Big)\\
        = &Q^0\Big(\begin{pmatrix}
          \bar u(1)-\bar u(0)\\
          \ro(1)-\ro(0)
        \end{pmatrix}\Big)
        = Q^0\Big(\begin{pmatrix}
          \bar u_{\rm aff}(1)-\bar u_{\rm aff}(0)\\
          \ro_{\rm aff}(1)-\ro_{\rm aff}(0)
        \end{pmatrix}\Big)\\
        =&\E^0(\bar u_{\rm aff},\ro_{\rm aff}),
      \end{align*}
      where the last identity holds since $(\bar u_{\rm aff},\ro_{\rm aff})$ is affine and $\ro_{{\rm aff},23}=0$.
      Now the claim follows, since $\E^0(\bar u_{\rm aff},\ro_{\rm aff})=Q^{0}(\partial_1(\bar u_{\rm aff},\ro_{\rm aff}))$ and $Q^0=Q^\infty$.
  \end{enumerate}
\end{proof}

\section{Proofs of the analytical results}\label{section:proof of the main resuls}
In this section we give the proofs for our main results.
To ease notation, for an $n$-dimensional vector $\xxi=(\xxi_1,\cdots,\xxi_n)$ we simply write $\xxi_{ij}:=(\xxi_i,\xxi_j)$ for the subvector of $\xxi$ composed by its $i$-th and $j$-th components.
Particularly, for our proofs we will frequently use $\buu_{23}$ and $\ro_{23}$.
For simplicity, we also write $A\lesssim B$ (resp. $B\gtrsim A$) if there exists some $C>0$ such that $A\leq CB$.
The dependence of $C$ on the given parameters (such as $\alpha_1$ and $O$ etc.) will be made precise at the beginning of each proof.

In the proofs we often assume that w.l.o.g.~$L=1$. The general case can then be obtained by the following scaling property: For $L\geq 1$, $\vare>0$, we have
\begin{align*}
  \frac1L\int_{O}Q\bg(\tau_{\tfrac{x_1}{\vare}}\omega,\scaled \uu\big(\id+\tfrac{h}{L}\B(\tau_{\frac{x_1}{\vare}}\omega)\big)\bg)\,dx =  \int_{O_1}Q\bg(\tau_{\tfrac{x_1}{\underline\vare}}\omega,\nabla_{\!\underline h} \uu_1\big(\id+\underline h\B(\tau_{\frac{x_1}{\underline\vare}}\omega)\big)\bg)\,dx
\end{align*}
where
\begin{equation*}
  O_1=(0,1)\times S,\qquad \uu_1(x_1,\bar x)=\frac1L\uu(Lx_1,\bar x)\,\qquad \underline\vare=\vare/L,\qquad \underline h=h/L.
\end{equation*}

\subsection{Compactness: Proofs of Proposition \ref{prop compact} and Lemma \ref{prop finite}}
Our starting point is the following decomposition result due to Griso \cite{Griso2004}:

\begin{theorem}[\cite{Griso2004}, Griso's decomposition]\label{thm:griso decomp}
Let $O=(0,L)\times S$ where $L\geq 1$ and $S$ satisfies \eqref{cancelation}.
Let $\uu\in H^1(O;\R^3)$ and let $(\bar \uu,\ro)$ be associated with $\uu$ according to Definition~\ref{def of g conv}.
Define $\W$ by
\begin{align}\label{griso decomposition of u}
\uu(x)=\buu(x_1)+\ro(x_1)\wedge(0,\barx)+\W(x).
\end{align}
Then
\begin{align}\label{ineq_Griso}
\|\sym\Big(\scaled\buu+\scaled\big(\ro\wedge(0,\barx)\big)\Big) \|^2_{L^2(\OO)}+\frac{1}{h^2}\|\W\|^2_{L^2(\OO)}+\|\scaled \W\|^2_{L^2(\OO)}
\leq C_S\|\sym(\scaled \uu)\|^2_{L^2(\OO)},
\end{align}
where $C_S$ is some positive constant depending only on the cross-section $S$.
\end{theorem}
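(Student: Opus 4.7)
The plan is to establish \eqref{ineq_Griso} by combining a slice-wise two-dimensional Korn--Poincar\'e inequality on the cross-section $S$ with an $L^2$-orthogonality argument that decouples the macroscopic modes $\buu,\ro\wedge(0,\barx)$ from the fine-scale remainder $\W$.

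First, I would verify that, as a consequence of \eqref{cancelation} together with the definitions \eqref{decomp 1}--\eqref{decomp 4}, the remainder $\W$ satisfies the cross-sectional moment conditions
\begin{equation*}
  \int_S\W(x_1,\barx)\,d\barx=0,\qquad \int_S(0,\barx)\wedge\W(x_1,\barx)\,d\barx=0
\end{equation*}
for a.e.\ $x_1\in(0,L)$. In other words, on each slice $\W(x_1,\cdot)$ is $L^2(S)$-orthogonal to the space of infinitesimal rigid cross-sectional motions $\{\mathbf a+\mathbf b\wedge(0,\barx):\mathbf a,\mathbf b\in\R^3\}$. This orthogonality is what will drive all subsequent estimates.

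Next, I would apply the standard 2D Korn--Poincar\'e inequality on $S$ slice by slice to $\W(x_1,\cdot)$: the moment conditions above yield a constant $C_S>0$ depending only on $S$ with
\begin{equation*}
  \|\W(x_1,\cdot)\|_{L^2(S)}^2+\|\nabla_{\barx}\W(x_1,\cdot)\|_{L^2(S)}^2\leq C_S\|(\sym\nabla_{\barx}\W)(x_1,\cdot)\|_{L^2(S)}^2.
\end{equation*}
Integrating in $x_1$ and using the scaling $\scaled=(\partial_1,h^{-1}\partial_2,h^{-1}\partial_3)$ produces the bound on $h^{-2}\|\W\|_{L^2(O)}^2$ and on the cross-sectional part of $\|\scaled\W\|_{L^2(O)}^2$ in terms of $\|\sym\scaled\W\|_{L^2(O)}^2$. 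Then, the macroscopic contribution $\sym(\scaled\buu+\scaled(\ro\wedge(0,\barx)))$ depends affinely on $\barx$, while the microscopic contribution $\sym\scaled\W$ inherits slice-wise vanishing moments. Using these orthogonality properties one shows that, up to constants,
\begin{equation*}
  \|\sym(\scaled\buu+\scaled(\ro\wedge(0,\barx)))\|_{L^2(O)}^2+\|\sym\scaled\W\|_{L^2(O)}^2\leq C_S\|\sym\scaled\uu\|_{L^2(O)}^2,
\end{equation*}
which simultaneously produces the first summand of the left-hand side of \eqref{ineq_Griso} and closes the bound on the cross-sectional part of $\|\scaled\W\|_{L^2(O)}^2$.

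The remaining and hardest point is to control the longitudinal derivative $\partial_1\W$ in $L^2(O)$. I would extract it from the mixed components $(\sym\scaled\uu)_{1j}=\frac12(\partial_1\uu_j+h^{-1}\partial_j\uu_1)$ for $j=2,3$: inserting $\uu=\buu+\ro\wedge(0,\barx)+\W$ separates a part which is affine in $\barx$ (involving $\partial_1\buu$, $\partial_1\ro$ and $\ro$) from a zero-moment part containing $\partial_1\W_j$ and $h^{-1}\partial_j\W_1$. Projecting the slice-wise identity onto constants and first-order polynomials in $\barx$ identifies the macroscopic contribution, and the $L^2(S)$-orthogonal complement yields a slice-wise estimate of $\partial_1\W$ by the mixed components of $\sym\scaled\uu$, combined with the already established cross-sectional bounds. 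Integrating in $x_1$ completes the argument. The main obstacle is precisely this last step: the off-diagonal symmetric scaled gradient couples $\partial_1\W$ with $h^{-1}\partial_{\barx}\W_1$, and disentangling them without losing the $h$-scaling requires the moment cancellations of the first step in an essential way. The constant $C_S$ produced is uniform in the slenderness parameter $h$, which is exactly what is needed for the compactness statements in Proposition~\ref{prop compact}.
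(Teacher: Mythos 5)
The paper does not prove this theorem: it cites it verbatim from Griso's 2004 work (\cite{Griso2004}), so there is no in-paper proof to compare your sketch against. What follows is therefore a comment on your sketch on its own terms.

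Your first step is correct: the moment conditions $\int_S\W(x_1,\cdot)\,d\barx=0$ and $\int_S(0,\barx)\wedge\W(x_1,\cdot)\,d\barx=0$ do follow from \eqref{cancelation} and \eqref{decomp 1}--\eqref{decomp 4}, and this is indeed the point of the definition of $\Pi$. However, the subsequent plan has a genuine gap that appears earlier than where you locate the ``main obstacle''. Your step 2 claims that the slice-wise Korn--Poincar\'e inequality, combined with an $L^2$-orthogonality argument, yields
\begin{equation*}
  \|\sym(\scaled\buu+\scaled(\ro\wedge(0,\barx)))\|_{L^2(O)}^2+\|\sym\scaled\W\|_{L^2(O)}^2\leq C_S\|\sym\scaled\uu\|_{L^2(O)}^2,
\end{equation*}
with the macroscopic and microscopic contributions being ``orthogonal''. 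This orthogonality fails in the mixed entries: the $(1,2)$-entry of $\sym\scaled(\buu+\ro\wedge(0,\barx))$ is $\tfrac12(\partial_1\buu_2-\partial_1\ro_1 x_3-h^{-1}\ro_3)$, which is affine in $\barx$, while the $(1,2)$-entry of $\sym\scaled\W$ is $\tfrac12(\partial_1\W_2+h^{-1}\partial_2\W_1)$; the term $\partial_2\W_1$ has in general a nonzero cross-sectional mean (it reduces to a boundary integral $\int_{\partial S}\W_1\nu_2$ that the moment conditions do not annihilate), so the two are not $L^2(S)$-orthogonal slice by slice. For the same reason, you cannot bound $h^{-1}\nabla_{\barx}\W_1$ by $\|\sym\scaled\uu\|$ on a fixed slice: the off-diagonal entries $(\sym\scaled\uu)_{1j}=\tfrac12(\partial_1\uu_j+h^{-1}\partial_j\uu_1)$ couple $h^{-1}\partial_j\W_1$ with $\partial_1$-terms, and the coupling cannot be removed by cross-sectional averaging. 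So steps 2--4 do not close; the difficulty is not confined to $\partial_1\W$. Griso's actual argument is structurally different: it tiles $O^h$ (in physical scaling) by $O(1/h)$ cubes of side $\sim h$, applies Korn's second inequality with a local rigid-motion subtraction on each cube, compares the rigid motions of adjacent cubes via difference quotients to obtain $H^1$-control of the rigid-motion coefficients, and then identifies the averaged fields $\buu,\ro$ from those coefficients. This is essentially a one-dimensional homogenization/discretization argument in $x_1$, not a slice-wise decomposition, and it is what makes the $h$-uniformity of the constant work.
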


The following corollary is an immediate consequence of Theorem \ref{thm:griso decomp} and obtained by expanding the first term on the left-hand side of \eqref{ineq_Griso}.
\begin{corollary}\label{lem: griso_decomp}
Let the assumptions and notations of Theorem \ref{thm:griso decomp} be retained. Then there exists a constant $C_S$ (only depending on $S$) such that
\begin{align}
&\,\|\partial_1\bar\uu_1\|^2_{L^2(0,L)}+\|\partial_1 \ro\|^2_{L^2(0,L)}+\|\partial_1\buu_2(x_1)-\frac{1}{h}\ro_3(x_1)\|^2_{L^2(0,L)}\nonumber\\
+&\,\|\partial_1\buu_3(x_1)+\frac{1}{h}\ro_2(x_1)\|^2_{L^2(0,L)}+\frac{1}{h^2}\|\W\|^2_{L^2(\OO)}+\|\scaled \W\|^2_{L^2(\OO)}
\leq C_S\|\sym \scaled \uu\|_{L^2(O)}^2.\label{cor2}
\end{align}
\end{corollary}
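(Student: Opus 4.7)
The strategy is to directly expand the first term on the left-hand side of \eqref{ineq_Griso}, namely $\|\sym(\scaled\buu+\scaled(\ro\wedge(0,\barx)))\|_{L^2(O)}^2$, and recognize each piece as one of the summands appearing in \eqref{cor2}. The remaining summands involving $\W$ are already provided by Theorem~\ref{thm:griso decomp}, so no further argument is needed for them.

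First I would write down the matrix $M:=\scaled\buu+\scaled(\ro\wedge(0,\barx))$ column by column. Since $\ro\wedge(0,\barx)=(\ro_2 x_3-\ro_3 x_2,\,-\ro_1 x_3,\,\ro_1 x_2)$, the first column of $M$ (the $\partial_1$-column) is $(\pt_1\buu_1+\pt_1\ro_2\,x_3-\pt_1\ro_3\,x_2,\ \pt_1\buu_2-\pt_1\ro_1\,x_3,\ \pt_1\buu_3+\pt_1\ro_1\,x_2)^T$; the second and third columns (the $\tfrac1h\pt_2$ and $\tfrac1h\pt_3$ columns) only involve $\tfrac1h\ro$ and come from differentiating the rigid infinitesimal rotation. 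A quick inspection shows that the $(2,2)$, $(3,3)$, and $(2,3)$-symmetric entries of $M$ vanish, so $|\sym M|^2$ consists only of $(M^{11})^2$, $2(M_{\sym}^{12})^2$, $2(M_{\sym}^{13})^2$, where
\begin{equation*}
M_{\sym}^{12}=\tfrac12\bigl(\pt_1\buu_2-\tfrac{\ro_3}{h}-\pt_1\ro_1\,x_3\bigr),\qquad
M_{\sym}^{13}=\tfrac12\bigl(\pt_1\buu_3+\tfrac{\ro_2}{h}+\pt_1\ro_1\,x_2\bigr).
\end{equation*}

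Next I would integrate $|\sym M|^2$ over $O=(0,L)\times S$ and use the orthogonality conditions~\eqref{cancelation}, namely $\int_S x_2=\int_S x_3=\int_S x_2 x_3=0$, to kill all cross terms between the $x_1$-dependent functions ($\pt_1\buu_i$, $\pt_1\ro_j$, $\ro_j/h$) and the linear-in-$\barx$ contributions. After this cancellation, the integrated $(M^{11})^2$ yields multiples of $\|\pt_1\buu_1\|_{L^2(0,L)}^2$, $\|\pt_1\ro_2\|_{L^2(0,L)}^2$, $\|\pt_1\ro_3\|_{L^2(0,L)}^2$ with weights $|S|$, $\int_S x_3^2$, $\int_S x_2^2$; similarly the integrated $(M_{\sym}^{12})^2$ and $(M_{\sym}^{13})^2$ contribute $\|\pt_1\buu_2-\tfrac{\ro_3}{h}\|^2$ and $\|\pt_1\buu_3+\tfrac{\ro_2}{h}\|^2$ weighted by $|S|$, plus terms $\|\pt_1\ro_1\|^2$ weighted by $\int_S x_2^2+\int_S x_3^2$. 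Since $S$ is a bounded Lipschitz domain with nonempty interior, all these weights are strictly positive and depend only on $S$.

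Combining the identity obtained above with Theorem~\ref{thm:griso decomp} and adjusting the constant $C_S$ to absorb the $S$-dependent weights yields \eqref{cor2}. The only point requiring a bit of care is the accounting of cross terms in the expansion—everything is then controlled by the hypotheses~\eqref{cancelation} on $S$—but there is no analytic difficulty beyond this bookkeeping.
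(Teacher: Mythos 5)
Your proposal is correct and is precisely what the paper does; the paper itself states that the corollary is "obtained by expanding the first term on the left-hand side of \eqref{ineq_Griso}" and leaves the computation implicit. Your expansion of $\sym(\scaled\buu+\scaled(\ro\wedge(0,\barx)))$, the observation that the $(2,2)$, $(3,3)$, $(2,3)$ entries vanish, and the use of \eqref{cancelation} to eliminate cross-terms are all accurate and give exactly the weighted sum with strictly positive, $S$-dependent coefficients that is claimed.
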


With help of Corollary \ref{lem: griso_decomp} we obtain the following refinement of the decomposition \eqref{griso decomposition of u}.
We shall use it when proving the dimension-reduction results.

\begin{lemma}\label{compactness without bc}
Let $O=(0,L)\times S$ and assume \eqref{cancelation}.
Let $(\uu^h)_h$ be a sequence in $H^1(O;\R^3)$ satisfying
\begin{align}\label{finite2}
\limsup_{h\to 0}\|\sym \scaled \uu^h\|_{L^2(O)}<\infty.
\end{align}
Let $(\buu^h,\ro^h)$ be associated with $\uu^h$ according to Definition \ref{def of g conv}.
Then there exist $\VV^h,\oo^h\in H^1(O;\R^3)$ and some constant $C_O>0$ depending on $O$ such that
\begin{align}\label{decomp2}
\uu^h(x)=\begin{pmatrix}\buu^h_1(x_1)+\ro^h_2 x_3-\ro^h_3 x_2
\\ \frac{1}{h}\int_0^{x_1}\ro^h_3(t)\,dt-\ro^h_1(x_1) x_3
\\ -\frac{1}{h}\int_0^{x_1}\ro^h_2(t)\,dt+\ro^h_1(x_1) x_2\end{pmatrix}
+\VV^h(x)+\oo^h(x),
\end{align}
and the following bounds hold for all $0<h\ll 1$:
\begin{subequations}\label{eq:conv1-3}
\begin{align}
\|(\pt_1 \buu_1^h,\pt_1\ro^h)\|_{L^2(0,L)}+\|\scaled \VV^h\|_{L^2(O)}&\leq C_O\|\sym \scaled \uu^h\|_{L^2(O)},\label{conv1}\\
\lim_{h\to 0}\|\VV^h\|_{L^2(O)}&= 0,\label{conv2}\\
\lim_{h\to 0}(\|\sym \scaled \oo^h\|_{L^2(O)}+\|h\scaled \oo^h\|_{L^2(O)})&=0.\label{conv3}
\end{align}
\end{subequations}
\end{lemma}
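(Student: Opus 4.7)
The plan is to build on Griso's decomposition (Theorem~\ref{thm:griso decomp}) and Corollary~\ref{lem: griso_decomp}, which give $\uu^h=\buu^h+\ro^h\wedge(0,\barx)+\W^h$ together with the uniform bounds
$$\|\pt_1\buu^h_1\|_{L^2}+\|\pt_1\ro^h\|_{L^2}+\|\scaled\W^h\|_{L^2}+h^{-1}\|\W^h\|_{L^2}+\|\alpha^h\|_{L^2}+\|\beta^h\|_{L^2}\leq C_S\|\sym\scaled\uu^h\|_{L^2},$$
where $\alpha^h:=\pt_1\buu^h_2-\tfrac1h\ro^h_3$ and $\beta^h:=\pt_1\buu^h_3+\tfrac1h\ro^h_2$. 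Integrating these identities from $0$ and expanding $\ro^h\wedge(0,\barx)=(\ro^h_2 x_3-\ro^h_3 x_2,-\ro^h_1 x_3,\ro^h_1 x_2)$, one checks directly that the residual after subtracting the main term of \eqref{decomp2} equals
$$\uu^h(x) - (\text{main term of }\eqref{decomp2}) = \W^h(x)+\big(0,\,\buu^h_2(0)+P^h_2(x_1),\,\buu^h_3(0)+P^h_3(x_1)\big),$$
with $P^h_2(x_1):=\int_0^{x_1}\alpha^h\,dt$ and $P^h_3(x_1):=\int_0^{x_1}\beta^h\,dt$, which are bounded in $H^1(0,L)$. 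The task is to split this residual as $\VV^h+\oo^h$ satisfying \eqref{eq:conv1-3}.

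The main obstacle is that the naive choice $\oo^h=(0,\buu^h_2(0)+P^h_2,\buu^h_3(0)+P^h_3)$ satisfies $\|h\scaled\oo^h\|_{L^2}\to 0$ but has mixed sym-gradient entries $\tfrac12\alpha^h$ and $\tfrac12\beta^h$ which are only bounded, not vanishing. I would repair this by mollifying at an intermediate scale and adding a compensating first-component term. Pick $\delta_h\to 0$ with $h/\delta_h\to 0$ (e.g.\ $\delta_h=\sqrt h$), extend $P^h_j$ to $\R$ by reflection, mollify with a smooth kernel $\eta_{\delta_h}$ of width $\delta_h$, and denote the resulting smooth functions by $P^{h,\delta_h}_j$, with derivatives $\alpha^{h,\delta_h}:=\pt_1P^{h,\delta_h}_2$, $\beta^{h,\delta_h}:=\pt_1P^{h,\delta_h}_3$. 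Define
$$\oo^h(x):=\big(-h\alpha^{h,\delta_h}(x_1)x_2-h\beta^{h,\delta_h}(x_1)x_3,\,\buu^h_2(0)+P^{h,\delta_h}_2(x_1),\,\buu^h_3(0)+P^{h,\delta_h}_3(x_1)\big).$$
A direct calculation shows the $(1,2),(2,1),(1,3),(3,1)$ entries of $\sym\scaled\oo^h$ cancel exactly; only the $(1,1)$ entry $-h\big((\pt_1\alpha^{h,\delta_h})x_2+(\pt_1\beta^{h,\delta_h})x_3\big)$ survives, with $L^2$-norm bounded by $Ch\|\eta'_{\delta_h}\|_{L^1}\|\alpha^h\|_{L^2}\lesssim h/\delta_h\to 0$. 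Similarly every nonzero entry of $h\scaled\oo^h$ is either $O(h)$ in $L^2$ or $O(h^2/\delta_h)$, so $\|h\scaled\oo^h\|_{L^2}\to 0$.

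Setting $\VV^h:=\W^h+(h\alpha^{h,\delta_h}x_2+h\beta^{h,\delta_h}x_3,\,P^h_2-P^{h,\delta_h}_2,\,P^h_3-P^{h,\delta_h}_3)$ then makes \eqref{decomp2} exact by construction. The $L^2$-decay \eqref{conv2} of each summand of $\VV^h$ is straightforward: $\|\W^h\|_{L^2}\leq Ch\|\sym\scaled\uu^h\|_{L^2}\to 0$; $\|h\alpha^{h,\delta_h}x_2\|_{L^2(O)}=O(h)\to 0$; and the key estimate is
$$\|P^h_j-P^{h,\delta_h}_j\|_{L^2(0,L)}\leq C\delta_h\|\pt_1 P^h_j\|_{L^2(0,L)}=C\delta_h\|\alpha^h\|_{L^2}\to 0,$$
which is precisely why one mollifies $P^h_j\in H^1$ rather than the merely $L^2$-bounded $\alpha^h$ itself. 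Finally, the bound \eqref{conv1} on $\scaled\VV^h$ follows by inspecting its entries, which decompose into $\scaled\W^h$, $\alpha^{h,\delta_h}$, $\alpha^h-\alpha^{h,\delta_h}$, $h\pt_1\alpha^{h,\delta_h}$ (and $\beta$-analogues); each is bounded in $L^2$ by $C\|\sym\scaled\uu^h\|_{L^2}$ once $h/\delta_h\leq 1$, and the bound on $\|(\pt_1\buu^h_1,\pt_1\ro^h)\|_{L^2}$ is immediate from Corollary~\ref{lem: griso_decomp}.
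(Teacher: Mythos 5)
Your proposal is correct and takes essentially the same route as the paper's proof: both start from Griso's decomposition, observe that the residual $\RR^h$ is bounded in $H^1(0,L)$ modulo a constant, mollify it at a scale $\delta_h$ with $h\ll\delta_h\ll 1$, and split into $\VV^h$ and $\oo^h$ by inserting the compensating first-component term $-h(x_2\partial_1\tkk^h_1+x_3\partial_1\tkk^h_2)$ so that the $(1,j)$ and $(j,1)$ entries of $\sym\scaled\oo^h$ cancel and only the $(1,1)$ entry (of size $O(h/\delta_h)$) survives. The only cosmetic difference is that you normalize $\RR^h$ by its value at $x_1=0$ while the paper subtracts its average $\fint_0^L\RR^h$, a constant shift that is irrelevant to the gradient estimates.
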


\begin{proof}
We start with the decomposition $\uu^h(x)=\buu^h(x_1)+\ro^h(x_1)\wedge(0,\barx)+\W^h(x)$ where $\buu^h$ and $\ro^h$ are associated with $\uu^h$ according to Definition~\ref{def of g conv}.
Rewriting this decomposition yields
\begin{align*}
\uu^h(x)=
\begin{pmatrix}\buu^h_1(x_1)+\ro_2^h(x_1) x_3-\ro_3^h(x_1) x_2
\\ \frac{1}{h}\int_0^{x_1}\ro_3^h(t)\,dt-\ro^h_1(x_1) x_3
\\ -\frac{1}{h}\int_0^{x_1}\ro_2^h(t)\,dt+\ro^h_1(x_1) x_2\end{pmatrix}
+\W^h(x)+\RR^h(x_1),
\end{align*}
where
\begin{align*}
\RR^h(x_1)=
\begin{pmatrix}0
\\ \buu_2^h(x_1)-\frac{1}{h}\int_0^{x_1}\ro^h_3(t)\,dt
\\ \buu_3^h(x_1)+\frac{1}{h}\int_0^{x_1}\ro^h_2(t)\,dt\end{pmatrix}.
\end{align*}
In view of the estimate \eqref{cor2},  $(\RR^h-\fint_0^L \RR^h(x_1)\,dx_1)_h$ is bounded in $H^1(0,L;\R^3)$.
By mollifying $\RR^h$ on a scale $\gg h$ we obtain an approximating sequence $(\tkk^h)\subset H^2(0,L;\R^2)$ such that
\begin{gather}
\|\tkk^h\|_{H^1(0,L)}\lesssim\|\RR_{23}^h-\fint_0^L \RR_{23}^h(x_1)\,dx_1\|_{H^1(0,L)},\\
\lim_{h\to 0}\bg(\|\tkk^h-\big(\RR_{23}^h-\fint_0^L \RR_{23}^h(x_1)\,dx_1\big)\|_{L^2(0,L)}+h\|\tkk^h\|_{H^2(0,L)}\bg)=0.\label{conv of K}
\end{gather}
We now define
\begin{align}
\VV^h&:=\W^h
+\begin{pmatrix}
h(x_2\partial_1\tkk^h_1+x_3\partial_1\tkk^h_2)\\
\RR_{23}^h-\fint_0^L \RR_{23}^h(x_1)\,dx_1-\tkk^h
\end{pmatrix},\\
\nonumber\\
\oo^h&:=\begin{pmatrix}
-h(x_2\partial_1\tkk^h_1+x_3\partial_1\tkk^h_2)\\
\tkk^h
\end{pmatrix}+\fint_0^L \RR^h(x_1)\,dx_1.
\end{align}
Then \eqref{conv1} to \eqref{conv3} follow from \eqref{cor2} and \eqref{conv of K}.
\end{proof}

\begin{lemma}[Smallness condition for $\B$]\label{L:smallness}
  There exists a constant $c_S$ only depending on $S$ such that for all measurable and bounded $\widehat\B:O\to\R_0^{3\times 3}$ (recall that the set $\R_0^{3\times 3}$ is defined by \eqref{def of R0}) satisfying the smallness condition
  \begin{equation}\label{cond:smallness}
    |\widehat\B_{ij}|\leq c_S\qquad\text{for all }i,j\text{ and a.e.~in }O,
  \end{equation}
  the following estimate holds: For all $0<h\leq 1$, $L\geq 1$,  and $\uu\in H^1(O;\R^3)$ we have
  \begin{equation*}
    \frac12\|\sym\scaled\uu\|_{L^2(O)}\leq \|\sym(\scaled\uu(\id+\tfrac{h}{L}\widehat\B))\|_{L^2(O)}+L^{-\frac12}|\ro(0)|,
  \end{equation*}
  where $\ro$ is associated with $\uu$ via Definition~\ref{def of g conv}.
\end{lemma}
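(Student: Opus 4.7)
The plan is to expand the symmetric gradient and reduce the claim to a perturbative estimate. Algebraically,
\[
\sym\bigl(\scaled\uu(\id+\tfrac{h}{L}\widehat{\B})\bigr)=\sym\scaled\uu+\tfrac{h}{L}\sym(\scaled\uu\,\widehat{\B}),
\]
so by the triangle inequality the claim follows once $\tfrac{h}{L}\|\sym(\scaled\uu\,\widehat{\B})\|_{L^2(O)}$ is bounded by a small multiple of $\|\sym\scaled\uu\|_{L^2(O)}$ plus $L^{-1/2}|\ro(0)|$. The structural constraint $\widehat{\B}\in\R^{3\times 3}_0$ (i.e.\ $\widehat{\B}_{1j}=0$) is decisive here: in the matrix product $\scaled\uu\,\widehat{\B}$ the first column of $\scaled\uu$ drops out and only the columns $\tfrac{1}{h}\partial_2\uu,\tfrac{1}{h}\partial_3\uu$ contribute, giving the pointwise bound $\tfrac{h}{L}|\scaled\uu\,\widehat{\B}|\leq \tfrac{\sqrt{2}\,c_S}{L}|\bar\nabla\uu|$ (here $\bar\nabla=(\partial_2,\partial_3)$).

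Next, I use Theorem~\ref{thm:griso decomp} and write $\uu=\buu(x_1)+\ro(x_1)\wedge(0,\bar x)+\W(x)$. Applied to $\bar\nabla\uu$, the $\buu$ piece drops out, and one computes
\[
\|\bar\nabla(\ro\wedge(0,\bar x))\|_{L^2(O)}^2=|S|\bigl(2\|\ro_1\|_{L^2(0,L)}^2+\|\ro_2\|_{L^2(0,L)}^2+\|\ro_3\|_{L^2(0,L)}^2\bigr)\lesssim_S \|\ro\|_{L^2(0,L)}^2.
\]
For the remainder $\W$, Corollary~\ref{lem: griso_decomp} gives $\tfrac{1}{h}\|\bar\nabla\W\|_{L^2(O)}\leq\|\scaled\W\|_{L^2(O)}\leq C_S^{1/2}\|\sym\scaled\uu\|_{L^2(O)}$, so $\|\bar\nabla\W\|_{L^2(O)}\leq h\,C_S^{1/2}\|\sym\scaled\uu\|_{L^2(O)}$.

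To control $\|\ro\|_{L^2(0,L)}$ I apply the fundamental theorem of calculus anchored at $x_1=0$:
\[
\|\ro\|_{L^2(0,L)}^2\leq 2L|\ro(0)|^2+2L^2\|\partial_1\ro\|_{L^2(0,L)}^2,
\]
and then invoke Corollary~\ref{lem: griso_decomp} once more to bound $\|\partial_1\ro\|_{L^2(0,L)}\leq C_S^{1/2}\|\sym\scaled\uu\|_{L^2(O)}$. Assembling these ingredients and using $h\leq 1$, $L\geq 1$ yields
\[
\tfrac{h}{L}\bigl\|\sym(\scaled\uu\,\widehat{\B})\bigr\|_{L^2(O)}\leq C_0\,c_S\,\bigl(L^{-1/2}|\ro(0)|+\|\sym\scaled\uu\|_{L^2(O)}\bigr)
\]
for a constant $C_0$ depending only on $S$. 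Finally, I fix $c_S$ so small (depending only on $S$) that $C_0c_S\leq\tfrac12$; then the $\|\sym\scaled\uu\|_{L^2(O)}$ term can be absorbed into the left-hand side $\|\sym\scaled\uu\|_{L^2(O)}$, producing exactly the prefactor $\tfrac12$, while simultaneously $C_0c_S\leq 1$ ensures the $L^{-1/2}|\ro(0)|$ coefficient is at most $1$.

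The main obstacle is not any single estimate but the careful accounting of constants and $L$-powers so that $c_S$ can be picked once and for all depending only on $S$. The Poincaré-type step introduces a factor $L$ that, combined with the prefactor $L^{-1}$ from the start, delivers precisely $L^{-1/2}$ in front of $|\ro(0)|$; any less structural choice of $\widehat{\B}$ (without $\widehat{\B}_{1j}=0$) would reintroduce the uncontrolled first column $\partial_1\uu$ and break the argument, which is why the restriction $\widehat{\B}\in\R^{3\times 3}_0$ in the hypothesis is essential.
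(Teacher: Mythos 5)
Your proposal is correct and follows essentially the same route as the paper's proof: expand $\sym(\scaled\uu(\id+\tfrac hL\widehat\B))$, use the vanishing first row of $\widehat\B$ to reduce $\scaled\uu\widehat\B$ to a function of the in-plane gradient $\nabla_{\bar x}\uu$, apply Griso's decomposition so that only $\ro\wedge(0,\bar x)$ and $\W$ contribute, control $\ro$ via a Poincar\'e inequality anchored at $\ro(0)$ together with Corollary~\ref{lem: griso_decomp}, and choose $c_S$ small enough to absorb. The paper is slightly more compact (it bounds $\|\nabla_{\bar x}(\ro\wedge(0,\bar x))\|_{L^2(O)}$ by $\sqrt{2|S|}\|\ro\|_{L^2(0,L)}$ without spelling out the identity you computed, and it measures $\widehat\B$ by $\|\widehat\B\|_{L^\infty(O)}$ rather than componentwise, so the numerical constants differ harmlessly), but the structure, the use of Griso's decomposition, the $L$-bookkeeping producing the $L^{-1/2}|\ro(0)|$ term, and the absorption step are identical.
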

\begin{proof}
  Let $(\buu,\ro,\W)$ be defined as in \eqref{griso decomposition of u}. Since the first row of $\widehat\B$ vanishes, we have
  \begin{align*}
    \|\sym(h\scaled \uu\widehat\B)\|_{L^2(O)}\,\leq\,&\|\widehat\B\|_{L^\infty(O)}(\|\nabla_{\bar x}(\ro\wedge(0,\bar x))\|_{L^2(O)}+h\|\scaled\WW\|_{L^2(O)})\nonumber\\
    \leq &\,\|\widehat\B\|_{L^\infty(O)}(\sqrt2\sqrt{|S|}\|\ro\|_{L^2(0,L)}+h\|\scaled\WW\|_{L^2(O)}).
  \end{align*}
  Combined with the Poincar\'e's inequality in form of
  \begin{equation*}
    \|\ro\|_{L^2(0,L)}\leq L(\|\partial_{1}\ro\|_{L^2(0,L)}+L^{-\frac12}|\ro(0)|)
  \end{equation*}
  and \eqref{cor2} to estimate $\partial_{1}\ro$ and $\scaled\WW$,
  we obtain
  \begin{align*}
    \|\sym(\tfrac{h}{L}\scaled \uu\widehat\B)\|_{L^2(O)}\,\leq\,C\|\widehat\B\|_{L^\infty(O)}(\|\sym\scaled\uu\|_{L^2(O)}+L^{-\frac12}|\ro(0)|),
  \end{align*}
  where $C$ only depends on $S$. We conclude that
    \begin{align*}
      \big(1-C\|\widehat\B\|_{L^\infty(O)}\big)\|\sym\scaled \uu\|_{L^2(O)}\leq    \|\sym(\scaled\uu(I+\tfrac hL\widehat\B))\|_{L^2(O)} +C\|\widehat\B\|_{L^\infty(O)}L^{-\frac12}|\ro(0)|.
    \end{align*}
    Hence, the claim follows with $c_S:=\frac{1}{2C}$.
\end{proof}

We are now ready to prove Proposition \ref{prop compact}.
\begin{proof}[Proof of Proposition \ref{prop compact}]
  In this proof $\lesssim$ means $\leq$ up to a constant that only depends on $S$. W.l.o.g. we may assume that $L=1$.

\step 1 {\bf Proof of \ref{prop compact:a}}

Let $(\buu^h,\ro^h,\W^h)$ be associated with $\uu^h$ according to Theorem \ref{thm:griso decomp}. By Lemma~\ref{L:smallness}, \eqref{finite assump}, and the boundary condition for $\ro$, we get
\begin{equation*}
  \limsup\limits_{h\to 0}\|\sym(\scaled \uu^h)\|_{L^2(O)}<\infty.
\end{equation*}
Combined with \eqref{cor2} we obtain \eqref{finiteness of uh}, and thus \eqref{conv of projection minus mean} by the compact embedding of $H^1(0,1)$ in $L^2(0,1)$.
Next we show \eqref{conv of hu23} and \eqref{trace of r23}.
We shall only prove \eqref{conv of hu23} for $\mathbf{z}_1$ and \eqref{trace of r23} for $\ro_3$, the statement for $\mathbf{z}_2$ and $\ro_2$ can be shown similarly.
From \eqref{cor2} it follows
\begin{alignat*}{2}
&\partial_1(h\buu^h_2-\fint_{0}^1 h\buu^h_2(x_1)\,dx_1)\rightharpoonup \ro_3&&\quad\text{as $h\to 0$ in $L^2(0,1)$}.
\end{alignat*}
Hence by Poincar\'e's inequality and \eqref{conv of hu23} we know that, up to a subsequence, $h\bar \uu_2^h-\fint_{0}^1 h\buu^h_2(x_1)\,dx_1$ weakly converges to some $\mathbf{z}_{1}\in H^1(0,1)$ with $\partial_{1}\mathbf{z}_1=\ro_3$, which in turn implies \eqref{conv of hu23} and \eqref{trace of r23} by also combining with the compact embedding of $H^1(0,1)$ in $L^2(0,1)$ and integration by parts.

\step 2 {\bf Proof of \ref{prop compact:b}}

Let $(\buu^\vare,\ro^\vare,\W^\vare)$ be associated with $\uu^\vare$ according to Theorem \ref{thm:griso decomp}.
Then by the Sobolev trace theorem we have
\begin{equation*}
  |\ro^\vare(0)|\lesssim \|\uu^\vare(0,\cdot)\|_{L^2(S)}.
\end{equation*}
As in the proof of  \ref{prop compact:a} we obtain for all $0<h\leq 1$ the bound
\begin{equation*}
  \frac12\|\sym(\scaled \uu^\vare)\|_{L^2(O)}\leq \,\|\sym\bg(\scaled \uu^\vare(\id+h\widehat\B^\vare)\bg)\|_{L^2(O)}+\|\uu^\vare(0,\cdot)\|_{L^2(S)}.
\end{equation*}
In view of \eqref{finite assump hom2} and \eqref{cor2}, we thus obtain $\limsup_{\vare\to0}\frac12\|\sym(\scaled\uu^\vare)\|_{L^2(O)}<\infty$.
Consider the rescaled function $\vv^\vare:O_h\to\R^3, \vv^\vare(x):=\uu^\vare(x_1,\frac1h\bar x)$ where $O_h=(0,1)\times (hS)$.
Note that $\vv^\vare\in H^1(O_h;\R^3)$ and $\|\sym(\nabla\vv^\vare)\|_{L^2(O_h)}=h\|\sym(\scaled\uu^\vare)\|_{L^2(O)}$.
Then by appealing to Korn's inequality in form of $$\|\vv^\vare\|_{L^2(O_h)}\leq C(O_h)(\|\sym(\nabla\vv^\vare)\|_{L^2(O_h)}+\|\vv^\vare(0,\cdot)\|_{H^\frac12(hS)}),$$ and scaling back to $\uu^\vare$, we obtain \eqref{finite assump hom 2}.
By compact embedding of $H^1(O;\R^3)$ in $L^2(O;\R^3)$ the convergence \eqref{3D hom limit} follows.
\end{proof}

\begin{proof}[Proof of Lemma~\ref{prop finite}]
  In this proof $\lesssim$ means $\leq$ up to a constant that only depends on $\alpha_1,\alpha_2,O$, and the tensors appearing in the boundary condition \eqref{bc u}.
  W.l.o.g. we may assume that $L=1$.

\step 1 {\bf Proof of \ref{prop finite a}}

Consider
\begin{align}
\uu^h(x)&:=x_1\mathbf{t}+x_1\bg(0,\bg(h\A_1+\K_1\bg)\bg(\int_0^1\Phi(\tau_{\frac{t}{\vare}}\omega)\,dt+\barx\bg)\bg)^T
+\bg(1-x_1\bg)(0,(h\A_0+\K_0)\barx)^T.\label{def of uuh in prop finite}
\end{align}
Clearly, $\uu^h$ satisfies the boundary condition \eqref{bc u}.
Skew-symmetry of $\K_0$ and $\K_1$ and direct calculation yield
\begin{align}
&\,h\scaled \uu^h(x)\nonumber\\
=&\,h\mathbf{t}\otimes\e_1
+\bg(0,h\bg(h\A_1+\K_1\bg)\bg(\int_0^1\Phi(\tau_{\frac{t}{\vare}}\omega)\,dt+\barx\bg)\bg)^T\otimes \e_1
\nonumber\\
-&\,\bg(0,h\bg(h\A_0+\K_0\bg)\barx\bg)^T\otimes\e_1
+x_1\begin{pmatrix}
0&0\\
0&h\A_1+\K_1
\end{pmatrix}
+\bg(1-x_1\bg)\begin{pmatrix}
0&0\\
0&h\A_0+\K_0
\end{pmatrix},\label{long form 2}\\
\nonumber\\
&\,\sym\scaled \uu^h(x)\nonumber\\
=&\,\sym\bg(\mathbf{t}\otimes\e_1\bg)
+\sym\bg[\bg(0,\bg(h\A_1+\K_1\bg)\bg(\int_0^1\Phi(\tau_{\frac{t}{\vare}}\omega)\,dt+\barx\bg)\bg)^T\otimes \e_1\bg]
\nonumber\\
-&
\sym\bg[\bg(0,\bg(h\A_0+\K_0\bg)\barx\bg)^T\otimes\e_1\bg]
+x_1\begin{pmatrix}
0&0\\0&\sym\A_1
\end{pmatrix}+\bg(1-x_1\bg)\begin{pmatrix}
0&0\\0&\sym\A_0
\end{pmatrix}
\label{long form 1}.
\end{align}
Then \eqref{assump on Q}, \eqref{long form 2} and \eqref{long form 1} imply
\begin{align*}
\limsup\limits_{h\to 0}\,\widehat\E^{\vare,h}(\omega,\uu^h)\lesssim&\,\limsup_{h\to 0}\bg\|\sym \bg(\scaled \uu^h\big(\id+h\B(\tau_{\frac{x_1}{\varepsilon}}\omega)\big)\bg)\bg\|^2_{L^2(O)}\nonumber\\
\lesssim&\,\bg(|\mathbf t|+(|\A_0|+|\A_1|)+(1+c)(|\K_0|+|\K_1|)\bg)^2
\end{align*}
and
\begin{align*}
&\,\limsup_{h\to 0}|\ro^h(0)|\nonumber\\
\lesssim&\,\limsup_{h\to 0}\bg|\int_{S}(0,\barx)\wedge \buu^h(0,\barx)\,d\barx\bg|\nonumber\\
=&\,\limsup_{h\to 0}\bg|\int_{S}(0,\barx)\wedge (0,(h\A_0^{11}+\K_0^{11})x_2+(h\A_0^{12}+\K_0^{12})x_3,
(h\A_0^{21}+\K_0^{21})x_2+(h\A_0^{22}+\K_0^{22})x_3)\,d\barx\bg|\nonumber\\
<&\,\infty,
\end{align*}
from which \eqref{uniform norm indep of eps} follows.
Finally, the boundary condition of $(\bar u,\ro)$ follows immediately from the trace theorem.
To see that $\int_0^1\ro_{23}(t)\,dt=0$, direct calculation shows that $\bar\uu_{23}^h(0)$ and $\bar\uu_{23}^h(1)$ are uniformly bounded in $h$, hence
\begin{align}\label{trace to zero}
h\bar \uu^h_{23}(t)-\fint_0^1h\bar \uu^h_{23}(x_1)\,dx_1\to 0\quad\text{as $h\to 0$ for $t\in\{0,1\}$},
\end{align}
and the claim follows from \eqref{trace of r23}.

\step 2 {\bf Proof of \ref{prop finite b}}

The argument is a straightforward modification of the proof of \ref{prop finite a}: We simply replace $\uu^h$ in \eqref{def of uuh in prop finite} by $\uu^\vare$  (and consider the limit $\vare\to 0$ for $0<h\leq 1$ fixed).
The boundary conditions for $\uu^\vare$ then follow with help of the Sobolev trace theorem and Birkhoff's Theorem \ref{thm:ergodic-thm}.
We omit further details.
\end{proof}


\subsection{Dimension reduction: Proof of Proposition~\ref{thm dim reduction}}

\begin{proof}[Proof of Proposition \ref{thm dim reduction}]
  We split the proof into three parts corresponding to the lower bound, upper bound and boundary compatibility statements respectively. W.l.o.g.~we may assume that $L=1$.

\step 1 {\bf Proof of the lower bound}

Thanks to Proposition \ref{prop compact} we may apply Lemma \ref{compactness without bc}, and thus the decomposition \eqref{decomp2} holds with functions $(\buu^h,\ro^h,\VV^h,\oo^h)$ satisfying the bounds \eqref{conv1} to \eqref{conv3}.
Applying $\scaled$ to \eqref{decomp2} we see that
\begin{align}
h\scaled \uu^h
=\begin{pmatrix}
h\pt_1\buu^h_1+h\pt_1\ro^h_2 x_3-h\pt_1\ro^h_3 x_2&-\ro^h_3&\ro^h_2\\
\ro^h_3-h\pt_1\ro^h_1 x_3&0&-\ro^h_1\\
-\ro^h_2+h\pt_1\ro_1^hx_2&\ro^h_1&0
\end{pmatrix}
+h\scaled \VV^h+h\scaled \oo^h,\label{long sum}
\end{align}
and thus
\begin{align}
\sym\scaled\uu^h(x)=\sym(\pt_1\buu^h_1(x_1)\e_1+\pt_1\ro^h(x_1)\wedge(0,\barx))\otimes \e_1+\sym\scaled \VV^h(x)+\sym\scaled\oo^h(x)\label{decomp of sym scaled uh}.
\end{align}
In view of \eqref{conv1} and since $\uu^h\stackrel{\Pi}{\to} (\bar u,\ro)$ by assumption, we deduce that $(\bar{\uu}_1^h,\ro^h)\to(\bar u,\ro)$ weakly in $H^1(0,1)\times H^1(0,1;\R^3)$.
Furthermore, in view of \eqref{conv1} to \eqref{conv3}, and \eqref{long sum} we see that
\begin{align}\label{weak 1}
\sym(h\scaled \uu^h\widehat{\B})\rightharpoonup
\sym\bg(\big(\mathbb{K}\ro\big)\widehat{\B}\bg)\quad\text{in }L^2(O;\R_{\sym}^{3\times 3}),
\end{align}
where $\mathbb{K}\in\rm{Lin}(\R^3;\R_{\rm skw}^{3\times 3})$ is defined by
\begin{align}\label{def of K}
\mathbb{K}\ro =\begin{pmatrix}
0&-\ro_3&\ro_2\\
\ro_3&0&-\ro_1\\
-\ro_2&\ro_1&0
\end{pmatrix}.
\end{align}
Next, we identify the weak limit of \eqref{decomp of sym scaled uh}.
First, we note that
\begin{align*}
\scaled \VV^h=(\pt_1 \VV^h,0,0)+(0,\nabla_{\barx}\ZZ^h),
\end{align*}
where $\ZZ^h(x):=\frac{1}{h}(\VV^h(x)-\fint_{S}\VV^h(x_1,\barx)\,d\barx)$.
By \eqref{conv1} and \eqref{conv2}, $(\ZZ^h)_h$ is bounded in $L^2(0,1;H^1(S;\R^3))$ and $\VV^h\to 0$.
Hence, for a subsequence (not relabeled) we have
\begin{alignat*}{2}
\pt_1\VV^h&\rightharpoonup 0&&\quad\text{in $L^2(O;\R^3)$},\\
\nabla_{\barx}\ZZ^h&\rightharpoonup \nabla_{\barx}\ZZ&&\quad\text{in $L^2(0,1;L^2(S;\R^{3\times 2}))$},
\end{alignat*}
with $\ZZ\in L^2(0,1;H^1(S;\R^3))$.
We thus obtain
\begin{align}\label{weak 2}
\sym\scaled\uu^h(x)\rightharpoonup
\sym((\pt_1\bar{u}(x_1)\e_1+\pt_1\ro(x_1)\wedge(0,\barx))\otimes \e_1)+\sym(0,\nabla_{\barx}\ZZ(x))
\end{align}
in $L^2(0,1;L^2(S;\R^{3\times 3}_\sym))$.

Next, by weak lower semi-continuity of the functional $L^2(O;\R^{3\times 3})\ni\F\mapsto \int_O \widehat Q(x_1,\F)\,dx$ we conclude that
\begin{align}
  \nonumber
\liminf_{h\to 0}\widehat{\E}^{h}(\uu^h)
\,\geq&\,
      \int_O \widehat Q\bg(x_1,\sym((\pt_1\bar{u}(x_1)\e_1+\pt_1\ro(x_1)\wedge(0,\barx))\otimes \e_1)\\
  &\qquad\qquad +
\sym\bg(\big(\mathbb{K}\ro\big)\widehat{\B}(x_1)\bg)+
    \sym(0,\nabla_{\barx}\ZZ(x))\bg)\,dx.
\label{lower bound 1}
\end{align}
Notice that
\begin{equation}\label{eq:st112002}
  \sym\bg(\big(\mathbb{K}\ro\big)\widehat{\B}\bg)=(-\ro_3\widehat{\B}_{21}+\ro_2\widehat{\B}_{31})\e_1\otimes\e_1
+\sym(0,\nabla_{\bar x}\widehat\varphi),
\end{equation}
where
\begin{equation}\label{eq:st:widehatvarphi}
  \widehat\varphi(x):=\begin{pmatrix}
    (-\ro_1\widehat{\B}_{31}-\ro_3\widehat{\B}_{22}+\ro_2\widehat{\B}_{32})x_2+(\ro_1\widehat{\B}_{31}-\ro_3\widehat{\B}_{23}+\ro_2\widehat{\B}_{33})x_3\\
-\ro_1\widehat{\B}_{32}x_2-\ro_1\widehat{\B}_{33}x_3\\
\ro_1\widehat{\B}_{22}x_2+\ro_1\widehat{\B}_{23}x_3
\end{pmatrix}.
\end{equation}
Moreover, $\widehat\varphi(x_1,\cdot)+\ZZ(x_1,\cdot)\in H^1(S;\R^3)$ for a.e.~$x_1$.
Hence, with
\begin{equation*}
  \xxi:=
  \begin{pmatrix}
    \pt_1\bar{u}-\ro_3\widehat{\B}_{21}+\ro_2\widehat{\B}_{31}\\
    \pt_1\ro
  \end{pmatrix},
\end{equation*}
we get
\begin{align*}
  \text{[R.H.S.}&\text{\ of \eqref{lower bound 1}]}=\,\int_O\widehat Q\bg(x_1, (\xxi_1\e_1+\xxi_{234}\wedge(0,\bar x))\otimes\e_1+(0,\nabla_{\bar x}\widehat\varphi+\nabla_{\bar x}\ZZ)\bg)\,dx\\
  \geq&\int_0^1\inf_{\varphi\in H^1(S;\R^3)}\int_S\widehat Q\bg(x_1,(\xxi_1\e_1+\xxi_{234}\wedge(0,\bar x))\otimes\e_1+(0,\nabla_{\bar x}\varphi)\bg)\,d\bar x\,dx_1\\
  =&\int_0^1\widehat Q^{\rm rod}(x_1,\xxi)\,dx_1.
\end{align*}

\step 2 {\bf Proof of the upper bound}

For convenience set
\begin{equation*}
  \xxi:=
  \begin{pmatrix}
    \partial_1\bar u-\ro_3\widehat{\B}_{21}+\ro_2\widehat{\B}_{31}\\
    \partial_1\ro
  \end{pmatrix}.
\end{equation*}
Note that with this notation we have  $\widehat{\E}(\bar u,\ro)=\int_0^1\widehat Q^{\rm rod}(x_1,\xxi)\,dx_1$.
Choose $\varphi\in L^2(0,1;H^1(S;\R^3))$ such that
\begin{equation}\label{eq:12sdsdklk2399d}
  \int_0^1\widehat Q^{\rm rod}(x_1,\xxi)\,dx_1=\int_O\widehat Q\bg(x_1,(\xxi_1\e_1+\xxi_{234}\wedge(0,\bar x))\otimes\e_1+(0,\nabla_{\bar x}\varphi)\bg)\,dx,
  \end{equation}
  and let $\widehat\varphi$ be defined as in \eqref{eq:st:widehatvarphi}.
Now, let $(\VV^{h})_h\subset C_c^\infty(0,1,C^\infty(\overline S;\R^3))$ denote an approximating sequence satisfying
\begin{align*}
\lim_{h\to 0}\Big(\|\nabla_{\bar x}(\VV^{h}-(\varphi-\widehat\varphi))\|_{L^2(O)}+
\|h\pt_1\VV^h\|_{L^2(O)}+\|h\VV^h\|_{L^2(O)}\Big)=0
\end{align*}
and set
\begin{align}
\uu^{h}(x):=\begin{pmatrix}\bar u(x_1)+\ro_2(x_1) x_3-\ro_3(x_1) x_2
\\ \frac{1}{h}\int_0^{x_1}\ro_3(t)\,dt-\ro_1(x_1) x_3
\\ -\frac{1}{h}\int_0^{x_1}\ro_2(t)\,dt+\ro_1(x_1) x_2\end{pmatrix}+h\VV^{h}(x).\label{recovery seq}
\end{align}
Then $\uu^h\stackrel{\Pi}{\to} (\bar u,\ro)$ and a calculation similar to Step~1 shows that
\begin{equation*}
  \sym\big(\scaled \uu^h(\id+h\widehat\B)\big)\to \sym((\partial_1\bar u\e_1+\partial_1\ro\wedge (0,\barx))\otimes\e_1)+\sym\bg(\big(\mathbb{K}\ro\big)\widehat{\B}\bg)+\sym(0,\nabla_{\bar x}\varphi-\nabla_{\bar x}\widehat\varphi)
\end{equation*}
strongly in $L^2(O;\R^{3\times 3})$. In view of the definition of $\varphi$ and $\widehat\varphi$ we deduce that the right-hand side equals
\begin{equation*}
\sym[ (\xxi_1\e_1+\xxi_{234}\wedge(0,\bar x))\otimes\e_1]+\sym(0,\nabla_{\bar x}\varphi).
\end{equation*}
Hence, by continuity of the functional $L^2(O;\R^{3\times 3})\ni\F\mapsto \int_O\widehat Q(x_1,\F)\,dx$ we conclude that
\begin{equation*}
  \lim\limits_{h\to 0}\widehat{\E}^h(\uu^h)=\int_O\widehat Q\bg(x_1,(\xxi_1\e_1+\xxi_{234}\wedge(0,\bar x))\otimes\e_1+(0,\nabla_{\bar x}\varphi)\bg)\,dx.
  \end{equation*}
  In view of \eqref{eq:12sdsdklk2399d} this completes the proof of Step 2.

  \step 3 {\bf Proof of compactness and boundary conditions}

  Let $(\uu^h)_h\subset H^1(O;\R^3)$ satisfy $\limsup_{h\to 0}\widehat{\E}^h(\uu^h)<\infty$ and the boundary condtion \eqref{bcgen3d}.
The latter yields
  \begin{equation}\label{eq:bchh}
    \bar\uu^h(0)=0,\quad \bar\uu^h(1)=\mathbf{t}+(0,(h\A_1+\K_1)\mathbf c),\quad  \ro^h(0)=\ro_{\rm aff}(0)+O(h),\quad\ro^h(1)=\ro_{\rm aff}(1)+O(h).
  \end{equation}
  Hence $\limsup_{h\to 0}|\ro^h(0)|<\infty$ and consequently \eqref{finite assump} follows.
By Proposition~\ref{prop compact} we now get the bound \eqref{finiteness of uh}.
Since $\bar\uu^h(0)=0$, we conclude that $\uu^h\stackrel{\Pi}{\to}(\bar u,\ro)$ for a subsequence, and in view of \eqref{eq:bchh} we find that $(\bar u(x_1),\ro(x_1))=(\bar u_{\rm aff}(x_1),\ro_{\rm aff}(x_1))$ for $x_1\in\{0,1\}$.
In particular, we see that $h(\bar\uu^h_{23}(1)-\bar\uu^h_{23}(0))\to \mathbf{z}(1)-\mathbf{z}(0)$, cf.~\eqref{conv of hu23}.
Combined with \eqref{trace of r23} we get $\int_0^1\ro_{23}\,dx_1=0$.
In summary, we conclude \eqref{bcgen1d} , which completes the proof of the first claim.

  Next, we assume that $(\bar u,\ro)$ satisfies \eqref{bcgen1d}.
  We need to construct a recovery sequence satisfying additionally \eqref{bcgen3d}.
  To that end let $\uu^h$ and $\VV^h$ be as in Step~2.
  Since $\VV^h(x_1,\cdot)$ vanishes for $x_1\in\{0,1\}$ and in view of \eqref{bcgen1d}, we already have $\uu^h(0,\bar x)=(0,\K_0\bar x)^T$ and $\uu^h(1,\bar x)=(\mathbf{t}_1,\K_1\bar x)^T$.
In order to achieve the asserted boundary conditions, we add an appropriate correction to $\uu^h$.
For our purpose we introduce the affine displacement
\begin{equation*}
  \ww^h(x):=(1-x_1)(0,h\A_0\barx)+x_1(0,\mathbf{t}_{23}+h\A_1(\bar x+\mathbf c)+\K_1\mathbf{c}).
\end{equation*}
Note that $\uu^h+\ww^h$ satisfies the boundary conditions \eqref{bcgen3d} and converges to the same limit as $\uu^h$.
A direct calculation shows that
\begin{equation}\label{eq:st:110002}
  \sym\bg(\scaled\ww^h
(\id+h\widehat\B)\bg)\to \sym\begin{pmatrix}
 0&0\\
 (\mathbf{t}_{23}+\K_1\mathbf{c})&\A_0+x_1(\A_1-\A_0)
 \end{pmatrix}=:\sym(0,\nabla_{\bar x}\widetilde\varphi)
\end{equation}
strongly in $L^2(O;\R^{3\times 3})$ for a suitbale function $\widetilde\varphi\in L^2(0,1;H^1(S;\R^3))$.
Analogously to Step~2, let $(\WW^{h})_h\subset C_c^\infty(0,1,C^\infty(\overline S;\R^3))$ denote an approximating sequence satisfying
\begin{align*}
\lim_{h\to 0}\Big(\|\nabla_{\bar x}(\WW^{h}-\widetilde\varphi)\|_{L^2(O)}+
\|h\pt_1\WW^h\|_{L^2(O)}+\|h\WW^h\|_{L^2(O)}\Big)=0,
\end{align*}
and consider
\begin{equation*}
  \tilde\uu^h:=\uu^h+\ww^h-h\WW^h.
\end{equation*}
Then $\tilde\uu^h$ satisfies the claimed boundary conditions \eqref{bcgen3d} and $\tilde\uu^h\stackrel{\Pi}{\to}(\bar u,\ro)$.
Furthermore, by \eqref{eq:st:110002} and the construction of $\WW^h$, we have
\begin{equation*}
  \|  \sym\bg(\scaled\tilde\uu^h
(\id+h\widehat\B)\bg)-  \sym\bg(\scaled\uu^h
(\id+h\widehat\B)\bg)\|_{L^2(O)}=0,
\end{equation*}
and thus (by Step~2),
\begin{equation*}
  \lim\limits_{h\to 0}\widehat{\E}^h(\tilde\uu^h)=  \lim\limits_{h\to 0}\widehat{\E}^h(\uu^h)=\widehat{\E}^0(\bar u,\ro).
\end{equation*}
\end{proof}

\subsection{Homogenization: Proofs of Lemma \ref{L:quadr}, Propositions~\ref{thm: 1D hom} and \ref{thm:hom 3D model}}\label{sec homogenization}
In this section we present the proofs for the stochastic homogenization results.
We appeal to stochastic two-scale convergence, see Appendix~\ref{appendix: two scale conv} for definitions and auxiliary lemmas.

We begin with the proof of the auxiliary Lemma \ref{L:quadr}.

\begin{proof}[Proof of Lemma \ref{L:quadr}]
We split our proof into two steps.

\step 1 {\bf Proofs of \ref{lem coer a} and \ref{lem coer b}}

Since the proofs of \ref{lem coer a} and \ref{lem coer b} are almost identical, we shall give here only the proof of \ref{lem coer a}. Let $\omega\in \Omega$ such that $Q(\omega,\cdot)\in\mathcal Q(\alpha_1,\alpha_2)$ (which holds $\mathbb{P}$-a.s. by assumption). Then we know that for all $\xxi\in\R^4$ and $\varphi\in H^1(S;\R^3)$
\begin{align}
&\,\int_SQ\Big(\omega,
 (\xxi_1\e_1+\xxi_{234}\wedge (0,\barx))\otimes\e_1+(0,\nabla_{\bar x}\varphi)\Big)\,d\bar x \nonumber\\
\geq&\, \alpha_1\|\sym[(\xxi_1\e_1+\xxi_{234}\wedge (0,\barx))\otimes\e_1]+\sym(0,\,\nabla_x\varphi)\|^2_{L^2(S)}.
\end{align}
Hence the lower bound will follow, as long as we can prove that there exists some positive constant $C_S$, depending only on $S$, such that for all $\xxi\in \R^4$ and $\varphi\in H^1(S;\R^3)$ we have
\begin{align}
\|\sym((\xxi_1\e_1+\xxi_{234}\wedge (0,\barx))\otimes\e_1)+\sym(0,\,\nabla_{\bar x}\varphi)\|^2_{L^2(S)}\geq C_S(|\xxi|^2 +\|\sym(0,\,\nabla_{\bar x}\varphi)\|^2_{L^2(S)}).\label{ineq:coercive1}
\end{align}
Assume therefore that \eqref{ineq:coercive1} does not hold. Then we could find a sequence $(\xxi^n,\varphi^n)_{n\in\N}\subset
\R^4\times H^1(S;\R^3)$ such that $|\xxi^n|^2+\|\Theta^n\|^2_{L^2(S)}=1$ with $\Theta^n=\sym(0,\,\nabla_{\bar x}\varphi^n)$ and
\begin{align}\label{ineq:less than 1/n}
\|\sym((\xxi_1^n\e_1+\xxi_{234}^n\wedge (0,\barx))\otimes\e_1)+\Theta^n\|^2_{L^2(S)}\leq n^{-1}.
\end{align}
By direct calculation we already see that \eqref{ineq:less than 1/n} implies
\begin{align}
|\xxi^n_{134}|&\to 0\quad\text{in $\R^3$},\\
\sym\nabla_{\bar x}\varphi^n_{23}&\to 0\quad\text{in $L^2(S;\R_{\sym}^{2\times 2})$},\\
\pt_2\varphi^n_1-\xxi^n_2 x_3,\pt_3\varphi_1^n+\xxi^n_2 x_2&\to 0\quad\text{in $L^2(S)$}\label{ineq:vanishing}
\end{align}
as $n\to \infty$. We now take the distributional derivative $\pt_3$ and $\pt_2$ on $\pt_2\varphi^n_1-\xxi^n_2 x_3$ and $\pt_3\varphi_1^n+\xxi^n_2 x_2$ respectively and then subtract the latter from the former, then \eqref{ineq:vanishing} implies
$$\xxi_2:=\lim_{n\to\infty}\xxi_2^n=0.$$
Combining with \eqref{ineq:vanishing} and the Poincar\'e's inequality we infer that $\nabla_{\bar x}\varphi^n_1\to 0$ in $L^2(S;\R^2)$ and consequently
\begin{align*}
|\xxi^n|^2+\|\Theta^n\|^2_{L^2(S)}\to 0\quad\text{as $n\to \infty$}.
\end{align*}
This contradicts the fact that $|\xxi^n|^2+\|\Theta^n\|^2_{L^2(S)}\equiv 1$ and \eqref{ineq:coercive1} follows. On the other hand, the upper bound follows immediately by setting a test function $\varphi$ equal to zero and the fact that $Q(\omega,\cdot)\in\mathcal Q(\alpha_1,\alpha_2)$. This completes the proof of Step 1.

\step 2 {\bf Proof of \ref{lem coer c}}

It is clear by definition that if $Q$ is independent of $\omega$, then so is $Q^{\rm rod}$. Let now $\mathbb{L}^{\rm rod}$ be the symmetric matrix of the bilinear form associated with $Q^{\rm rod}$. Then
$$Q^{\rm rod}(\xxi+\chi)=Q^{\rm rod}(\xxi)+Q^{\rm rod}(\chi)+2\mathbb{L}^{\rm rod}\xxi\cdot\chi.$$
Since $\chi\in L_0^2(\Omega;\R^4)$, we know that $\int_\Omega \mathbb{L}^{\rm rod}\xxi\cdot\chi\,d\mathbb{P}(\omega)=0$. Thus
\begin{align}
Q^0(\xxi)=\inf_{\chi\in L^2_0(\Omega;\R^4)}\int_\Omega Q^{\rm rod}(\xxi+\chi)\,d\mathbb P(\omega)
=Q^{\rm rod}(\xxi)+\inf_{\chi\in L^2_0(\Omega;\R^4)} Q^{\rm rod}(\chi)=Q^{\rm rod}(\xxi),
\end{align}
from which the first identity in \ref{lem coer c} follows. On the other hand, by definition we have $Q=Q^{\rm hom}$ when $Q$ is independent of $\omega$. Then the second identity in \ref{lem coer c} follows directly from the fact that $Q^0$ and $Q^\infty$ are defined identically.
\end{proof}

We are now ready to prove Proposition \ref{thm: 1D hom}.

\begin{proof}[Proof of Proposition \ref{thm: 1D hom}]
  In this proof $\lesssim$ means $\leq$ up to a constant that only depends on $\alpha_1$ and $O$.
W.l.o.g.~we assume that the set $\sD_\Omega$ in the definition of the space  of two-scale test-functions $\sD$ contains $\Phi_1$ and $\Phi_2$.
Furthermore, we fix $\omega_0\in\Omega_Q$, where $\Omega_Q$ denotes the set of full measure obtained by Lemma~\ref{L:twoscale:quadratic} applied to $Q^{\rm rod}$.
Furthermore, we simply write $\twos$ and $\twoss$ instead of $\twos_{\omega_0}$ and $\twoss_{\omega_0}$ to denote weak and strong two-scale convergence, respectively (since the sample $\omega_0$ under consideration is always chosen fixed).
W.l.o.g. we assume that $L=1$.

  \step 1 {\bf Proof of the lower bound}

Set
\begin{equation*}
  \vv^\vare:=
  \begin{pmatrix}
    \pt_1\bar u^\vare+\ro_3^\vare\Phi_1(\tau_{\frac{x_1}{\vare}}\omega_0)-\ro_2^\vare\Phi_2(\tau_{\frac{x_1}{\vare}}\omega_0)\\
    \pt_1\ro^\vare
  \end{pmatrix},
\end{equation*}
so that
\begin{align*}
\E^{\vare,{\rm rod}}(\omega_0,(\bar{u}^\vare,\ro^\vare))&:=\int_0^1
                                    Q^{\rm rod}\Big(\tau_{\frac{x_1}{\vare}}\omega_0,\vv^\vare\Big)\,dx_1.
\end{align*}
We first identify the weak two-scale limit of $(\vv^\vare)_\vare$.
In view of Lemma~\ref{L:quadr} and with help of the bound $\|\Phi\|_{L^\infty(\Omega)}\leq c_S$, see \eqref{eq:phic}, and Poincar\'e's inequality in form of $\|\ro^\vare_{23}\|_{L^2(0,1)}\leq \|\partial_1\ro^\vare_{23}\|_{L^2(0,1)}+|\ro_{23}(0)|$, we get
\begin{align*}
  \frac{1}{\beta_1}\E^{\vare,{\rm rod}}(\omega,(\bar{u}^\vare,\ro^\vare))\geq& \|\pt_1\ro^\vare\|^2_{L^2(0,1)}+\frac{1}{2}\|\pt_1\bar u^\vare\|^2_{L^2(0,1)}-c_S^2\|\ro^\vare_{23}\|^2_{L^2(0,1)}\\
  \geq& \|\pt_1\ro^\vare\|^2_{L^2(0,1)}+\frac{1}{2}\|\pt_1\bar u^\vare\|^2_{L^2(0,1)}-2c_S^2\|\pt_1\ro^\vare_{23}\|^2_{L^2(0,1)}-2c_S^2|\ro_{23}^\vare(0)|^2.
\end{align*}
Since $c_S^2\leq\frac14$ by assumption, we may absorb the third term on the right-hand side into the first term.
Hence, the assumption  $\limsup_{\vare\to 0} (\E^{\vare,{\rm rod}}(\omega,(\bar{u}^\vare,\ro^\vare))+|\ro_{23}^\vare(0)|^2)<\infty$ yields the bound
\begin{align*}
\limsup_{\vare\to 0} (\|\pt_1(\bar u^\vare,\ro_1^\vare)\|_{L^2(0,1)}+\|\ro^\vare_{23}\|_{H^1(0,1)})<\infty.
\end{align*}
We conclude that $(\bar u^\vare,\ro^\vare)$ weakly converges to $(\bar u,\ro)$ in $H^1(0,1)\times H^1(0,1;\R^3)$ (and not only strongly in $L^2$ as assumed).
Hence, by Lemma \ref{bound lemma} we may pass to a subsequence such that $\pt_1(\bar u^\vare,\ro^\vare)\twos\pt_1(\bar u,\ro)+\chi$ weakly two-scale with $\chi\in L^2(0,1;L^2_{0}(\Omega;\R^4))$.
Furthermore, since $\ro^\vare\to \ro$ strongly in $L^2(0,1;\R^3)$ (and $\Phi_1,\Phi_2\in\sD_\Omega$ by assumption), we have
\begin{equation*}
  \ro^\vare_3(x_1)\Phi_1(\tau_{\frac{x_1}{\vare}}\omega_0)-\ro^\vare_2(x_1)\Phi_2(\tau_{\frac{x_1}{\vare}}\omega_0)\twoss
\ro_3\Phi_1-\ro_2\Phi_2\quad\text{weakly two-scale},
\end{equation*}
see Proposition~\ref{P:twoscale} \ref{P:twoscale:strongimpliestrong}. Thus, we conclude that
\begin{equation*}
  \vv^\vare\twos
  \underbrace{\begin{pmatrix}
    \pt_1\bar u+\ro_3\langle\Phi_1\rangle-\ro_2\langle\Phi_2\rangle\\
    \pt_1\ro
  \end{pmatrix}}_{=:\vv}+\widehat\chi
\end{equation*}
for some $\widehat\chi\in L^2(0,1;L^2_{0}(\Omega;\R^4))$.
Hence, by weak two-scale lower semicontinuity of convex functionals, cf.~Lemma \ref{L:twoscale:quadratic}, we have
\begin{align*}
  \liminf\limits_{\vare\to0}\E^{\vare,{\rm rod}}(\omega_0,(\bar{u}^\vare,\ro^\vare))&=  \liminf\limits_{\vare\to0}\int_0^1
                                                                           Q^{\rm rod}\Big(\tau_{\frac{x_1}{\vare}}\omega_0,\vv^\vare\Big)\,dx_1\,\geq\,  \int_0^1\int_\Omega
         Q^{\rm rod}\Big(\omega',\vv+\widehat\chi\Big)\,d\mathbb P(\omega') dx_1\\
  &\geq \int_0^1\inf_{\chi\in L^2_{\rm pot}(\Omega;\R^4)}\int_\Omega
         Q^{\rm rod}\Big(\omega,\vv+\chi\Big)\,d\mathbb P(\omega)\,dx_1
         = \int_0^1Q^{0}(\vv)\,dx_1\\
  &=\E^0(\bar u,\ro).
\end{align*}
This completes the proof of the lower bound.

\step 2 {\bf Proof of the upper bound}

Set
\begin{equation*}
  \vv:=
  \begin{pmatrix}
    \pt_1\bar u+\ro_3\la\Phi_1\ra-\ro_2\la\Phi_2\ra\\
    \pt_1\ro
  \end{pmatrix},
\end{equation*}
so that $\E^0(\bar u,\ro)=\int_0^1Q^{0}(\vv)\,dx_1$.
Choose $\widehat\chi\in L^2(0,1;L^2_{0}(\Omega;\R^4))$ such that
\begin{equation*}
  \E^0(\bar u,\ro)=\int_0^1\int_\Omega Q^{\rm rod}(\omega,\vv+\widehat\chi\,)\,d\mathbb P(\omega) dx_1.
\end{equation*}
Note that
\begin{equation*}
  \tilde\chi:=\chi+\Big(\ro_3(\Phi_1-\langle\Phi_1\rangle)-\ro_2(\Phi_2-\langle\Phi_2\rangle)\Big)\e_1\in L^2(0,1;L^2_{0}(\Omega;\R^4)).
\end{equation*}
Thanks to Lemma~\ref{bound lemma} (applied with $O=(0,1)$) there exists a sequence $(\varphi^\vare)_\vare\subset C^\infty_c(0,1;\R^4)$ such that $\varphi^\vare\to 0$ uniformly and $\varphi^\vare\twoss \tilde\chi$ strongly two-scale.
Set
\begin{equation*}
  \bar u^\vare:=\bar u+\varphi^\vare_1,\qquad \ro^\vare:=\ro+\varphi_{234}^\vare, \qquad\vv^\vare:=  \begin{pmatrix}
    \pt_1\bar u^\vare+\ro_3^\vare\Phi_1(\tau_{\tfrac{x_1}{\vare}}\omega_0)-\ro_2^\vare\Phi_2(\tau_{\tfrac{x_1}{\vare}}\omega_0)\\
    \pt_1\ro^\vare
  \end{pmatrix}.
\end{equation*}
Then by construction, and since $\Phi_1,\Phi_2\in\sD_\Omega$, we have $\vv^\vare\twoss \vv+\chi$ strongly two-scale.
Hence, by Lemma~\ref{L:twoscale:quadratic} we get
\begin{equation*}
  \lim\limits_{\vare\to 0}\E^{\vare,{\rm rod}}(\omega_0,(\bar u^\vare,\ro^\vare))=  \lim\limits_{\vare\to 0}\int_0^1Q^{\rm rod}(\tau_{\tfrac{x_1}{\vare}}\omega,\vv^\vare)\\
=\int_0^1\int_\Omega Q^{\rm rod}(\omega,\vv+\chi)\,d\mathbb P(\omega)\,dx_1=\E^0(\bar u,\ro).
\end{equation*}

\step 3 {\bf Compactness and boundary conditions}

Consider $(\bar u,\ro)\in(\bar u_{\mathrm{aff}},\ro_{\mathrm{aff}})+H_0^1(0,1)\times H_{00}^1(0,1;\R^3)$ and assume that
$\limsup_{\vare\to 0}\E^{\vare,{\rm rod}}(\omega, (\bar u^\vare,\ro^\vare))<\infty$. Combined with Lemma~\ref{L:quadr}, the boundary condition for $(\bar u^\vare,\ro^\vare)$ and Poincar\'e's inequality we get  $\limsup_{\vare\to0}(\|\bar u^\vare\|_{H^1(0,1)}+\|\ro^\vare\|_{H^1(0,1)})<\infty$. Hence, we can pass to a subsequence that converges claimed.

Next, we argue that we can construct a recovery sequence that additionally satisfies the boundary condition.
In fact this only requires a minor modification of the sequence of Step~2, since the sequence $(\bar u^\vare,\ro^\vare)$  of Step~3 already satisfies all boundary conditions except for the condition $\int_0^1\ro^\vare_{23}=0$.
To correct this let $\theta\in C^\infty_c(0,1)$ satisfy $\int_0^1\theta(t)\,dt=1$.
Then the modified sequence $(\bar u^\vare,\tilde\ro^\vare)_\vare$ with
\begin{align*}
\tilde\ro^\vare=\big(\ro^\vare_1,\ro^\vare_{23}-\theta\int_0^1(0,\ro^\vare_{23}(t))\,dt\big)
\end{align*}
is a recovery sequence and satisfies all conditions.
\end{proof}

\begin{proof}[Proof of Proposition \ref{thm:hom 3D model}]
W.l.o.g.~we assume that the set $\sD_\Omega$ in the definition of the space  of two-scale test-functions $\sD$ contains $\Phi_1$ and $\Phi_2$.
Furthermore, we fix $0<h\leq 1$ and $\omega_0\in\Omega_Q$, where $\Omega_Q$ denotes the set of full measure obtained by Lemma~\ref{L:twoscale:quadratic} applied to $Q$.
Furthermore, we simply write $\twos$ and $\twoss$ instead of $\twos_{\omega_0}$ and $\twoss_{\omega_0}$ to denote weak and strong two-scale convergence, respectively.
W.l.o.g.~we assume that $L=1$.

\step 1 {\bf Proof of the lower bound}

By Proposition \ref{prop compact}, \eqref{finite assump hom 2}, we know that $\uu$ is the weak $H^1$-limit of $(\uu^\vare)_\vare$.
From Lemma~\ref{bound lemma} we thus deduce that for a subsequence and $\tilde\chi\in L^2(O;L^2_0(\Omega;\R^3))$,
\begin{align*}
\scaled \uu^\vare\twos\scaled\uu+\sym(\tilde\chi\otimes\e_1)\quad\text{weakly two-scale in }L^2(\Omega\times O;\R^{3\times 3}_\sym).
\end{align*}
Hence, in view of Proposition~\ref{P:twoscale} \ref{P:twoscale:strongimpliestrong} and the special form of $\B$, cf.~\eqref{the form of B}, we get
\begin{align*}
\sym(\scaled\uu^\vare(\id+h\B(\tau_{\frac{x_1}{\vare}}\omega)))\twos\sym(\scaled\uu(\id+h\B))+\sym(\tilde\chi\otimes\e_1)
\end{align*}
weakly two-scale.
Now, Lemma~\ref{L:twoscale:quadratic} implies
\begin{equation}\label{eq:st1123223299}
  \begin{aligned}
    &\liminf_{\vare\to 0}\widehat{\E}^{\vare,h}(\omega,\uu^\vare)\\
    \geq&
  \int_{\Omega\times O} Q\bg(\omega,\sym\big(\scaled \uu(\id+h\B(\omega))\big)+\sym(\tilde\chi\otimes\e_1)\,\bg)\,d\mathbb P(\omega) dx\\
  \geq&\inf_{\chi\in L^2(O;L^2_0(\Omega;\R^3))}
  \int_{\Omega\times O} Q\bg(\omega,\sym\big(\scaled \uu(\id+h\langle\B\rangle)\big)+\sym(\chi\otimes\e_1)\,\bg)\,d\mathbb P(\omega) dx,
\end{aligned}
\end{equation}
where in the last estimate we used that
\begin{equation}\label{eq:st112322329911}
  \sym(\scaled\uu(\B-\la\B\ra))\in \{\sym(\chi\otimes\e_1)\,:\,\chi\in L^2(O;L^2_0(\Omega;\R^3))\},
\end{equation}
which holds thanks to \eqref{the form of B}.
In view of \eqref{def Qhom}, the right-hand side of \eqref{eq:st1123223299} equals $\widehat\E^{h,\hom}(\uu)$.

\step 2 {\bf Proof of the upper bound}

Choose $\tilde\chi\in L^2(O;L^2_0(\Omega;\R^3))$ such that
\begin{align*}
\widehat\E^{{\rm hom},h}(\uu)=&\int_{\Omega \times O} Q(\omega,\sym(\scaled\uu(\id+h\la \B\ra))+\sym(\tilde\chi\otimes\e_1))\,d\mathbb P(\omega) dx.
\end{align*}
In view of \eqref{eq:st112322329911} and Lemma~\ref{bound lemma} we can find a sequence $(\varphi^\vare)_\vare\subseteq C^\infty_c(O;\R^3)$ such that $\varphi^\vare$ and $\pt_{23}\varphi^\vare$ uniformly converge to $0$, and
\begin{equation*}
  \sym(\nabla_h\varphi^\vare)\twoss \sym(\tilde\chi\otimes\e_1)+h\,\sym(\scaled\uu\,(\la \B\ra-\B))\qquad\text{strongly two-scale.}
\end{equation*}
Now, consider the sequence $\uu^{\vare}:=\uu+\varphi^\vare$.
It converges to $\uu$ strongly in $L^2(O;\R^3)$, and by Lemma \ref{L:twoscale:quadratic} we have
\begin{align*}
  \lim_{\vare\to 0}\E^{\vare,h}(\omega,\uu^{\vare})=\widehat\E^{{\rm hom},h}(\uu).
\end{align*}

\step 3 {\bf Compactness and boundary conditions}

Recall the definitions of $\uu^{\vare,h}_{\rm aff}$ and $\uu^h_{\rm aff}$, see \eqref{def aff vare h} and \eqref{3D affine function def}, and note that
\begin{equation}\label{eq:conv aff}
  (\uu^{\vare,h}_{\rm aff},  \scaled\uu^{\vare,h}_{\rm aff})\to (\uu^h_{\rm aff},\scaled\uu^h_{\rm aff})\text{ uniformly as }\vare\to 0,
\end{equation}
since $\Phi_1,\Phi_2\in\sD_\Omega$.
Let $(\uu^\vare)_\vare\subset H^1(O;\R^3)$ satisfy \eqref{bc u} and $\limsup_{\vare\to 0}\widehat{\E}^{\vare,h}(\uu^\vare)<\infty$. Then $\uu^\vare-\uu^{\vare,h}_{\rm aff}=0$ for $x\in\{0,1\}\times S$ (in the sense of trace). With Proposition~\ref{prop compact} we conclude that $(\uu^\vare)_\vare$ is bounded in $H^1(O;\R^3)$. Thus, for a subsequence we have $\uu^\vare\to\uu$ weakly in $H^1(O;\R^3)$ and strongly in $L^2(O;\R^3)$. From the continuity of traces and \eqref{eq:conv aff}, we conclude that $\uu$ satisfies \eqref{eq:bc3dhom}.

Now let $\uu\in H^1(O;\R^3)$ satisfy \eqref{eq:bc3dhom}, and let $(\uu^\vare)_\vare$ denote the recovery sequence of Step~2. We consider
\begin{align*}
\uu^\vare:=\uu+(\uu^{\vare,h}_{\rm aff}-\uu^h_{\rm aff})+\varphi^\vare,
\end{align*}
where $(\varphi^\vare)_\vare\subset C^\infty_c(O;\R^3)$ is defined as in Step~2.
By construction we have $\uu^\vare=\uu^{\vare,h}_{\rm aff}$ on $\{0,1\}\times S$, and $\uu^\vare\to \uu$ uniformly. Furthermore, $\scaled\uu^\vare -\scaled(\uu+\varphi^\vare)\to 0$ uniformly. Hence, by Step 2 we conclude that  $(\uu^\vare)_\vare$ is a recovery sequence.
This completes the proof of Step 3 and also the desired proof.
\end{proof}

\subsection{Isotropic case: Proof of Proposition~\ref{precise q1D}}\label{sec precise q1D}
\begin{proof}[Proof of Proposition \ref{precise q1D}]
As in Section \ref{sec homogenization}, we simply neglect the dependence of $\mu$ and $\ld$ on $\omega$.
We aim to find $\varphi\in H^1(S;\R^3)$ such that
\begin{equation}\label{eq:form}
Q^{\rm rod}(\xxi)=\int_S Q\bg(\sym[(\xxi_1\e_1+\xxi_{234}\wedge (0,\barx))\otimes \e_1]+\sym(0,\nabla_{\barx}\varphi)\bg)\,d\barx.
\end{equation}
The Euler-Lagrange equation w.r.t.
$\varphi$ reads
\begin{align}
0&=\int_{S}4\mu\,\sym[(\xxi_1\e_1+\xxi_{234}\wedge (0,\barx))\otimes \e_1]:\sym(0,\nabla_{\barx}\bar{\varphi})\nonumber\\
&\quad+2\ld\,\trace\big((\xxi_1\e_1+\xxi_{234}\wedge (0,\barx))\otimes \e_1+(0,\nabla_{\barx}\varphi)\big)\cdot\trace\big((0,\nabla_{\barx}\bar{\varphi})\big)\,d\barx \label{euler lagrange isotropic}
\end{align}
for all $\bar{\varphi}\in H^1(S;\R^3)$.
Direct calculation yields
\begin{align}
&\sym\big((\xxi_1\e_1+\xxi_{234}\wedge (0,\barx))\otimes \e_1
+(0,\nabla_{\barx}\varphi)\big)\nonumber\\
=&\frac{1}{2}\begin{pmatrix}
2(\xxi_1+\xxi_3x_3-\xxi_4x_2)&\pt_2\varphi^1-\xxi_2 x_3&\pt_3\varphi^1+\xxi_2 x_3\\
\pt_2\varphi^1-\xxi_2 x_3&2\pt_2\varphi^2&\pt_2\varphi^3+\pt_3\varphi^2\\
\pt_3\varphi^1+\xxi_2 x_3&\pt_2\varphi^3+\pt_3\varphi^2&2\pt_3\varphi^3\\
\end{pmatrix},\label{calculation 1}\\
\nonumber\\
&\sym(0,\nabla_{\barx}\varphi)=\frac{1}{2}\begin{pmatrix}
0&\pt_2\varphi^1&\pt_3\varphi^1\\
\pt_2\varphi^1&2\pt_2\varphi^2&\pt_2\varphi^3+\pt_3\varphi^2\\
\pt_3\varphi^1&\pt_2\varphi^3+\pt_3\varphi^2&2\pt_3\varphi^3\\
\end{pmatrix}.\label{calculation 2}
\end{align}
Inserting \eqref{calculation 1} and \eqref{calculation 2} into \eqref{euler lagrange isotropic}, we obtain the following PDE:
\begin{itemize}
\item[(i)]For $\varphi^1$ we have
\begin{equation*}
\left\{
\begin{array}{ll}
-\Delta_{S}\varphi^1=0&\text{in $S$},\\
\\
(\pt_2\varphi^1,\pt_3\varphi^1)\cdot\nu=\xxi_2(x_3,-x_2)\cdot\nu&\text{on $\pt S$}.
\end{array}
\right.
\end{equation*}
We infer that $\varphi^1=\xxi_2\varphi_{\mathrm{aff}}$, where $\varphi_{\mathrm{aff}}$ is a representative solution of \eqref{non-normalized} with $\xxi_2=1$.

\item[(ii)]For $(\varphi^2,\varphi^3)$ we have
\begin{equation}
\left\{
\begin{array}{rl}
-\diver[(2\mu+\ld)\pt_2\varphi^2+\ld\pt_3\varphi^3,\mu(\pt_3\varphi^2+\pt_2\varphi^3)]=-\ld\xxi_4&\text{in $S$},\\
\nonumber\\
-\diver[\mu(\pt_3\varphi^2+\pt_2\varphi^3),(2\mu+\ld)\pt_3\varphi^3+\ld\pt_2\varphi^2]=\ld\xxi_3&\text{in $S$},\\
\nonumber\\
\bg(2\mu\pt_2\varphi^2+\ld\big(\xxi_1+\xxi_3x_3-\xxi_4x_2+(\pt_2\varphi^2+\pt_3\varphi^3)\big),\mu(\pt_2\varphi^3+\pt_3\varphi^2)\bg)\cdot\nu=0&\text{on $\pt S$},\\
\nonumber\\
\bg(\mu(\pt_2\varphi^3+\pt_3\varphi^2),2\mu\pt_3\varphi^3+\ld\big(\xxi_1+\xxi_3x_3-\xxi_4x_2+(\pt_2\varphi^2+\pt_3\varphi^3)\big)\bg)\cdot\nu=0&\text{on $\pt S$}.
\end{array}
\right.
\end{equation}
A representative solution is
\begin{align*}
\varphi^2&=-\frac{1}{4}\frac{\ld}{\ld+\mu}(2\xxi_1 x_2-\xxi_4x^2_2+\xxi_4x_3^2+2\xxi_3 x_2x_3),\\
\varphi^3&=-\frac{1}{4}\frac{\ld}{\ld+\mu}(2\xxi_1 x_3-\xxi_3x^2_2+\xxi_3x_3^2-2\xxi_4 x_2x_3).
\end{align*}
\end{itemize}
Simplifying we conclude
\begin{align}
  &\sym\big((\xxi_1\e_1+\xxi_{234}\wedge (0,\barx))\otimes \e_1
+(0,\nabla_{\barx}\varphi)\big)\\
&=\sym[(\xxi_1\e_1+\xxi_{234}\wedge (0,\barx))\otimes \e_1]+\xxi_2\sym(\pt_2\varphi_{\rm aff}\e_1\otimes \e_2)
+\xxi_2\sym(\pt_3\varphi_{\rm aff}\e_1\otimes \e_3)\nonumber\\
&\quad-\frac{1}{2}\frac{\ld}{\ld+\mu}\begin{pmatrix}
0&0\\
0&(\xxi_1-\xxi_4x_2+\xxi_3x_3)\id_2
\end{pmatrix}.\label{calculation 3}
\end{align}
Now \eqref{form of q1D isotropic} follows by inserting \eqref{calculation 3} into \eqref{eq:def of Qel}.
Finally, if $S$ is a disc, by \eqref{cancelation} it must be centered at zero and therefore $(x_3,-x_2)\cdot\nu=0$ on $S$.
In this case, we see that $\varphi_{\rm aff}=0$ is always a solution of \eqref{non-normalized}.
\end{proof}


\subsection{Quantitative homogenization: Proof of Theorem \ref{thm conv rate}}\label{sec:quant}
Without loss of generality we may assume that $L=1$. To shorten the notation we write $\E^\vare(\omega,\cdot)$ instead of $\E^{{\rm rod},\vare}(\omega,\cdot)$.
Moreover, we define $\Ab(\omega),\Ab^0\in\R^{4\times 4}_{\sym}$ via the identities
\begin{equation*}
  \Ab(\omega)\xxi\cdot\xxi:=Q^{\rm rod}(\omega,\xxi)\quad\text{and}\quad\Ab^0\xxi\cdot\xxi:=Q^{0}(\xxi)\qquad\text{for all }\xxi\in\R^4.
\end{equation*}
We set $  \Ab^\vare(\omega,s):=\Ab(\tau_{\frac{s}{\vare}}\omega)$ and
\begin{equation}\label{eq:defBb}
  \Bb^\vare(\omega,s):=(0,0,\Phi_2(\tau_{\frac{s}{\vare}}\omega),-\Phi_1(\tau_{\frac{s}{\vare}}\omega))\otimes\e_1,\qquad
  \Bb^0:=(0,0,\langle\Phi_2\rangle,-\langle\Phi_1\rangle)\otimes\e_1.
\end{equation}
With the above notation we have for all $\vv=(\bar u,\ro)\in H^1(0,1;\R^4)$,
\begin{align*}
  \E^\vare(\omega,\vv)=&\fint_0^1\Ab^\vare(\partial_s+\Bb^\vare)\vv\cdot(\partial_s+\Bb^\vare)\vv\,ds,\\
  \E^0(\vv)=&\fint_0^1\Ab^0(\partial_s+\Bb^0)\vv\cdot(\partial_s+\Bb^0)\vv\,ds.
\end{align*}
For the proof it is convenient to introduce the functional $\overline{\E}^\vare(\omega,\cdot):H^1(0,1;\R^4)\to\R$,
\begin{equation*}
  \overline{\E}^\vare(\omega,\vv):=\fint_0^1\overline\Ab^\vare(\omega)(\partial_s+\overline\Bb^\vare(\omega))\vv\cdot(\partial_s+\overline\Bb^\vare(\omega))\vv\,ds.\label{eq:proxy}
\end{equation*}
Here, $\overline\Ab^\vare(\omega),\overline\Bb^\vare(\omega)\in\R^{4\times 4}_{\sym}$ are representative volume elemenet (RVE) approximations of the homogenized coefficients $\A^0$ and $\B^0$. They are defined as follows:
\begin{align*}
  \overline\Bb^\vare(\omega):=&\fint_0^1\Bb^\vare(\omega,s)\,ds,\\
  \overline\Ab^\vare(\omega)\xxi:=&\fint_0^1\Ab^\vare(\omega,s)(\xxi+\partial_s\phi_{\xxi}^\vare(\omega,s))\,ds,\qquad\text{for all }\xxi\in\R^4,
\end{align*}
where $\phi_{\xxi}^\vare(\omega,\cdot)\in H^1_0(0,1;\R^4)$ is the unique, weak solution to
\begin{equation*}
-\partial_s\Big(\Ab^\vare(\omega,s)(\xxi+\partial_s\phi_{\xxi}^\vare(\omega,s))\Big)=0\qquad\text{in }(0,1),
\end{equation*}
and has the meaning of a Dirichlet corrector.
Note that by construction we have
\begin{equation*}
  \overline\Ab^\vare(\omega)\xxi\cdot\xxi=\min_{\varphi\in H^1_0(0,L;\R^4)}\fint_0^1\Ab^\vare(\omega,s)(\xxi+\partial_s\varphi(s))
  \cdot (\xxi+\partial_s\varphi(s))\,ds
\end{equation*}
and
\begin{equation}\label{eqdefbeta}
  \beta_1|\xxi|^2\leq\left\{
  \begin{aligned}
    \Ab^\vare(\omega)\xxi\cdot \xxi\\
    \overline\Ab^\vare(\omega)\xxi\cdot \xxi\\
    \Ab^0\xxi\cdot \xxi
  \end{aligned}\right\}
  \leq \beta_2|\xxi|^2\qquad\text{for all }\xxi\in\R^4
\end{equation}
with some constants $0<\beta_1\leq\beta_2$ that only depend on $\alpha_1,\alpha_2$ and $S$, cf. Lemma~\ref{L:quadr}.
Since we are in the one-dimensional case, a direct calculation yields
\begin{align}\label{harmonic_mean}
  \overline\Ab^\vare(\omega)=&\left(\fint_0^1(\Ab^\vare(\omega,s))^{-1}\,ds\right)^{-1},\\
  \label{eq:dircorr}
  \phi^\vare_\xxi(\omega,\cdot)=&\Phib^\vare(\omega,s)\xxi,\qquad \Phib^\vare(\omega,s):=\int_0^s\big((\Ab^\vare(\omega,s))^{-1}\overline\Ab^\vare(\omega)-\id\big)\,ds,
\end{align}
where $\id$ denotes the idenity matrix in $\R^{4\times 4}$. In view of the explicit formulas for $\overline\Ab^\vare$ and $\overline\Bb^\vare$, we see that ergodicity directly implies
\begin{equation}\label{eq:convergence}
  \overline\Ab^\vare(\omega)\to \Ab^0\text{ and }\overline\Bb^\vare(\omega)\to\Bb^0\text{ for $\mathbb P$-a.a.~$\omega\in\Omega$}.
\end{equation}
This means that $\overline\E^\vare$ is an approximation of $\E^0$ (in a sense that can be made precise via $\Gamma$-convergence).
For the upcoming argument it is usefull to represent $\Bb^\vare(\omega,\cdot)-\overline\Bb^\vare(\omega)$ with help of an auxiliary corrector
\begin{equation}\label{eq:auxcorr}
  \Psib^\vare(\omega,\cdot):(0,1)\to\R^4,\qquad\Psib^\vare(s):=\int_0^s\Bb^\vare(t)-\overline\Bb^\vare\,dt.
\end{equation}
\medskip

From now on we drop the dependence on $\omega$ in our notation if there is no danger of confusion. We tacitly assume that $\omega$ is chosen such that the maps $s\mapsto\Ab(\tau_s\omega)$ and $s\mapsto \Bb(\tau_s\omega)$ are measurable on $\R$, and that \eqref{eq:convergence} holds; note that this is true $\mathbb P$-a.s.
\medskip

To conveniently describe the boundary conditions we set $\vv_{\rm bd}=(\bar u_{\rm aff},\ro_{\rm aff})$ (cf.~\ref{affine function def}), and note that
\begin{equation}\label{estim:vvbd}
  \|\vv_{\rm bd}\|_{W^{1,\infty}(0,1)}\leq 2({\bf{t}}_1|+|k_0|+|k_L|).
\end{equation}
We seek minimizers in the space $\vv_{\rm bd}+\mathcal{H}$, where
\begin{equation}\label{defH}
  \mathcal{H}=H^1_0(0,1)\times H^1_{00}(0,1;\R^3).
\end{equation}
More precisely, let $\vv^\vare$, $\bar\vv^\vare$ and $\vv^0$ denote the minimizers in $\vv_{\rm bd}+\mathcal{H}$ of $\E^\vare$, $\overline\E^\vare$ and $\E^0$, respectively. Now, the idea of the proof is to split the estimate for $|\E^\vare(\vv^\vare)-\E^0(\vv^0)|$ into two parts:
\begin{equation}\label{error:decomp}
  |\E^\vare(\vv^\vare)-\E^0(\vv^0)|\leq |\E^\vare(\vv^\vare)-\overline\E^\vare(\bar\vv^\vare)|+|\overline\E^\vare(\bar\vv^\vare)-\E^0(\vv^0)|.
\end{equation}
To estimate the first term on the right-hand side, we appeal to a two-scale expansion $\hat\vv^\vare$ for the minimizer of $\overline\E^\vare$. It invokes the Dirichlet corrector $\Phib^\vare$ and leads to an error of the order $\sqrt{\vare}$, which is mainly due to the scaling of the Dirichlet corrector. As a side product we also obtain an $H^1$-estimate for the error of the two-scale expansion.
To estimate the second term on the right-hand side of \eqref{error:decomp} we  quantify the rate of convergence in \eqref{eq:convergence} with help of the spectral gap assumption. This error scales as $\sqrt{\vare}$ and is determined by the speed of convergence of spatial averages.

Before we present the proof of Theorem~\ref{thm conv rate} in detail, we state estimates on the rate of convergence of spatial averages. These results determine the scaling w.r.t.~$\vare$ and are the only places where the spectral gap assumption is used. The proofs of the following three results are postponed to the end of this section.

\begin{lemma}[Rate of convergence of spatial averages]\label{L:scaling1}
  Let $F\in L^1(\Omega)$ be a 1-local Lipschitz random variable in the sense of \eqref{locallip}. For $\ell>0$ consider the random variable
  \begin{equation*}
    \mathscr G_\ell(\omega):=\fint_0^\ell F(\tau_t\omega)\,dt-\mathbb E[F].
  \end{equation*}
  Then the spectral estimate with constant $\rho$ of Assumption~\ref{sg assumption} implies that
  \begin{equation*}
    |\mathscr G_\ell|\leq \mathscr C_\ell\,C_{F}(\ell+1)^{-\frac12},
  \end{equation*}
  where $\mathscr C_\ell$ denotes a random variable satisfying
  \begin{equation*}
    \mathbb E\Big[\exp\Big(\frac{\mathscr C_\ell}{C}\Big)\Big]\leq 2,
  \end{equation*}
  for some $C$ only depending on $\rho$.
\end{lemma}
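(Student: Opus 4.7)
The plan is to estimate the functional derivative of $\mathscr G_\ell$ using the 1-local Lipschitz property of $F$, apply the spectral gap inequality of Assumption~\ref{sg assumption} to obtain an $L^2$ bound of the claimed order $(\ell+1)^{-1/2}$, and then upgrade this variance estimate to the sub-exponential moment bound by a standard iteration of Poincar\'e.

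First I would compute the functional derivative pointwise in $s$. Fix $s\in\R$ and a measurable perturbation $\delta\omega$ with $\operatorname{supp}\delta\omega\subset[s-1,s+1]$ and $\|\delta\omega\|_{L^\infty}\leq 1$. Writing
\begin{equation*}
  \mathscr G_\ell(\omega+t\delta\omega) - \mathscr G_\ell(\omega) = \fint_0^\ell\bigl[F(\tau_u\omega+t\,\tau_u\delta\omega) - F(\tau_u\omega)\bigr]\,du
\end{equation*}
and invoking the 1-local Lipschitz bound \eqref{locallip} for the integrand, one has $|F(\tau_u\omega+t\,\tau_u\delta\omega)-F(\tau_u\omega)|\leq t\,C_F\,\|\tau_u\delta\omega\|_{L^\infty(-1,1)}$, which vanishes unless $|u-s|<2$ and is otherwise bounded by $tC_F$. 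Taking $\limsup_{t\to 0}$ and the supremum over admissible $\delta\omega$ yields the \emph{deterministic} bound
\begin{equation*}
  \int_{s-1}^{s+1}\Big|\frac{\partial\mathscr G_\ell}{\partial\omega}\Big| \leq \frac{C_F}{\ell}\bigl|\{u\in(0,\ell):|u-s|<2\}\bigr| \leq \frac{4C_F}{\ell}\,\chi_{[-2,\ell+2]}(s).
\end{equation*}
Squaring, integrating in $s$ and plugging into Assumption~\ref{sg assumption} gives $\Var(\mathscr G_\ell)\leq 16C_F^2(\ell+4)/(\rho^2\ell^2)\lesssim C_F^2/(\rho^2(\ell+1))$ for $\ell\geq 1$; the range $\ell<1$ is handled by applying the same computation directly to $F$ and using $\Var(\mathscr G_\ell)\leq \Var(F)$ via Jensen's inequality. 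Since $\mathbb E[\mathscr G_\ell]=0$ by stationarity of $F\circ\tau$, this is already the desired $L^2$ rate.

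To upgrade the variance bound to the exponential moment, I would exploit that the above pointwise control on $\int_{s-1}^{s+1}|\partial\mathscr G_\ell/\partial\omega|$ is completely \emph{deterministic}. Hence the Carr\'e du champ $\omega\mapsto\int_\R(\int_{s-1}^{s+1}|\partial\mathscr G_\ell/\partial\omega|)^2\,ds$ is bounded in $L^\infty(\Omega)$ by a multiple of $C_F^2/(\ell+1)$. The classical Aida--Masuda--Shigekawa iteration of the Poincar\'e inequality (see Bobkov--Ledoux) then propagates this to the moment estimate $\|\mathscr G_\ell\|_{L^{2p}(\Omega)}\leq C\,p\,C_F/\sqrt{\ell+1}$ for every $p\geq 1$, with $C$ depending only on $\rho$. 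Expanding the exponential as a power series and setting $\mathscr C_\ell:=(\ell+1)^{1/2}|\mathscr G_\ell|/C_F$ then simultaneously produces the identity $|\mathscr G_\ell|=\mathscr C_\ell\,C_F(\ell+1)^{-1/2}$ and the bound $\mathbb E[\exp(\mathscr C_\ell/C')]\leq 2$ for a suitably enlarged constant $C'$ depending only on $\rho$. The only subtle step is this concentration bootstrap; the computation of the functional derivative and the direct application of the spectral gap inequality are routine.
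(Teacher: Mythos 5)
Your proposal is correct and follows essentially the same route as the paper: compute the deterministic functional-derivative bound for $\mathscr G_\ell$ (supported in $[-2,\ell+2]$, of size $\sim C_F/\ell$), observe the resulting Carr\'e du champ bound is $\lesssim C_F^2/(\ell+1)$, feed it into a $p$-moment version of the spectral gap (the paper uses its Lemma~\ref{lem p sg} from \cite{pSGDuerGloria}, which is the same Aida--Masuda--Shigekawa/Bobkov--Ledoux iteration you invoke), and convert $\|\mathscr G_\ell\|_{L^{2p}}\lesssim p\,C_F(\ell+1)^{-1/2}$ into the exponential moment. The only cosmetic difference is that the paper bounds the overlap measure by $\min\{4,\ell\}$ and so treats all $\ell>0$ uniformly, whereas you bound it by $4$ and handle $\ell<1$ by a separate Jensen argument.
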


\begin{corollary}[Rate of convergence of RVE-approximation]\label{C:RVE}
  We have
  \begin{equation*}
    |\overline\A^\vare-\A^0|+    |\overline\B^\vare-\B^0|\leq \mathscr C(C_{\A}+C_{\B})\,\sqrt\vare
  \end{equation*}
  where $\mathscr C$ denotes a random variable satisfying
  \begin{equation*}
    \mathbb E\Big[\exp\Big(\frac{\mathscr C}{C}\Big)\Big]\leq 2,
  \end{equation*}
  for some $C$ only depending on $\rho$, $\beta_1$ and $\beta_2$ (cf.~\eqref{eqdefbeta}).
\end{corollary}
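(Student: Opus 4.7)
The plan is to reduce Corollary~\ref{C:RVE} to an entrywise application of Lemma~\ref{L:scaling1} via the change of variables $t=s/\vare$, handling the two terms separately.

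For $\overline\Bb^\vare-\Bb^0$, the change of variables in \eqref{eq:defBb} gives $\overline\Bb^\vare(\omega)=\fint_0^{1/\vare}\Bb(\tau_t\omega)\,dt$, and ergodicity identifies $\Bb^0=\la\Bb\ra$. Since the nontrivial entries of $\Bb$ are $\pm\Phi_1,\pm\Phi_2$, which by hypothesis are 1-local Lipschitz with constant $C_{\Ab,\Phi}$, Lemma~\ref{L:scaling1} applied componentwise with $\ell=1/\vare$ yields a bound of order $(1/\vare+1)^{-1/2}\lesssim\sqrt\vare$ (using $\vare\leq L=1$), with the desired exponential moment control. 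Taking the maximum over the finitely many matrix entries only enlarges the deterministic constant in the exponential moment inequality.

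For $\overline\Ab^\vare-\Ab^0$ a short detour through inversion is required, since both quantities are defined via inversion. The explicit one-dimensional formula \eqref{harmonic_mean} combined with the change of variables gives $\overline\Ab^\vare=\bg(\fint_0^{1/\vare}\Ab(\tau_t\omega)^{-1}\,dt\bg)^{-1}$, and a direct computation of the minimization problem \eqref{def:Q0} in the one-dimensional laminate setting yields $\Ab^0=\la\Ab^{-1}\ra^{-1}$. To apply Lemma~\ref{L:scaling1} to $\Ab^{-1}$ I first verify that its entries are still 1-local Lipschitz: by the ellipticity bound \eqref{eqdefbeta}, $\Ab(\omega)\in[\beta_1\id,\beta_2\id]$, a set on which matrix inversion is deterministically Lipschitz with constant $\beta_1^{-2}$; hence $\Ab^{-1}$ is 1-local Lipschitz with constant $\lesssim C_{\Ab,\Phi}$. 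Lemma~\ref{L:scaling1} then bounds $\bg|\fint_0^{1/\vare}\Ab(\tau_t\omega)^{-1}\,dt-\la\Ab^{-1}\ra\bg|$ by $\mathscr C\, C_{\Ab,\Phi}\sqrt\vare$. A second use of the Lipschitz continuity of inversion on $[\beta_2^{-1}\id,\beta_1^{-1}\id]$ (with constant $\beta_2^{2}$) transfers this to a bound on $|\overline\Ab^\vare-\Ab^0|$ of the same order.

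The main technical subtlety I expect is bookkeeping for the exponential moment bound \eqref{upper bound random constant}, which must survive both operations: taking a maximum over the finitely many matrix entries, and composing with the deterministic Lipschitz bounds coming from matrix inversion. The standard fact that the maximum of finitely many random variables satisfying an exponential moment inequality again satisfies such an inequality (up to enlarging the constant) handles the former, while the latter only multiplies $\mathscr C_\ell$ by a deterministic constant depending on $\beta_1,\beta_2$. Summing the two estimates and collecting the random prefactors into a single $\mathscr C$ concludes the argument with $C$ depending on $\rho,\beta_1,\beta_2$ as claimed.
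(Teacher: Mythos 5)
Your proposal is correct and follows essentially the same route as the paper: the $\Bb$-estimate is a direct application of Lemma~\ref{L:scaling1}, and the $\Ab$-estimate proceeds by observing that $\Ab^{-1}$ inherits the 1-local Lipschitz property (with constant $C_{\Ab}/\beta_1^2$), applying Lemma~\ref{L:scaling1} to $\fint_0^{1/\vare}\Ab^{-1}(\tau_t\omega)\,dt-\la\Ab^{-1}\ra$, and then transferring back via the Lipschitz continuity of matrix inversion on $[\beta_2^{-1}\id,\beta_1^{-1}\id]$. The paper packages that last transfer step as the algebraic identity $\overline\Ab^\vare-\Ab^0=-\Ab^0\mathscr A_{L/\vare}\overline\Ab^\vare$ together with $|\Ab^0|,|\overline\Ab^\vare|\leq\beta_2$, which is the same computation you describe.
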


\begin{corollary}[Scaling of the correctors]\label{scaling2}
  Let $\Phib^\vare$ and $\Psib^\vare$ be defined by \eqref{eq:dircorr} and \eqref{eq:auxcorr}, respectively.
  Then $\Phib^\vare,\Psib^\vare\in W^{1,\infty}_0((0,1);\R^4)$ with
\begin{equation*}
  \|(\Phib^\vare,\Psib^\vare)\|_{W^{1,\infty}(0,1)}\leq C,
\end{equation*}
where $C$ only depends on $\beta_1$ and $\beta_2$. Moreover, we have
\begin{equation*}
  \left(\fint_0^1|(\Psib^\vare(s),\Phib^\vare(s))|^2\,ds\right)^\frac12\leq \mathscr C (C_{\A}+C_{\B})\sqrt{\vare}\qquad\mathbb P\text{-a.s.},
\end{equation*}
where $\mathscr C$ denotes a random variable satisfying
\begin{equation*}
  \mathbb E\Big[\exp\Big(\frac{\mathscr C}{C}\Big)\Big]\leq 2,
\end{equation*}
where $C$ only depends on $\rho$, $\beta_1$ and $\beta_2$.
\end{corollary}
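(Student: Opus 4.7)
The plan is to split the corollary into two independent statements: the uniform $W^{1,\infty}_0$ bound, which comes directly from the explicit formulas for the correctors, and the $L^2$-decay of order $\sqrt{\vare}$, which requires rewriting each corrector as a spatial average of a centered stationary random field and invoking Lemma~\ref{L:scaling1} and Corollary~\ref{C:RVE}. For the uniform bound, $\partial_s\Psib^\vare = \Bb^\vare - \overline\Bb^\vare$ is controlled by $2\|\Phi\|_{L^\infty(\Omega)}\leq 2c_S$ (absorbed into $\beta_2$ via the dependence of $\beta_1,\beta_2$ on $S$), while $\partial_s\Phib^\vare = (\Ab^\vare)^{-1}\overline\Ab^\vare - \id$ is bounded by $\beta_2/\beta_1+1$ using \eqref{eqdefbeta}. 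Both correctors vanish at $s=0$ by construction, and at $s=1$ because $\int_0^1\Bb^\vare\,dt = \overline\Bb^\vare$ and (by the harmonic-mean formula \eqref{harmonic_mean}) $\bigl(\fint_0^1 (\Ab^\vare)^{-1}\,dt\bigr)\overline\Ab^\vare = \id$.

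For the $L^2$ bound I will introduce the centered averages $\mathscr{G}^\Bb_\ell(\omega) := \fint_0^\ell(\Bb(\tau_\tau\omega)-\la\Bb\ra)\,d\tau$ and $\mathscr{G}^f_\ell(\omega) := \fint_0^\ell f(\tau_\tau\omega)\,d\tau$, where $f(\omega) := \Ab(\omega)^{-1}\Ab^0 - \id$ satisfies $\mathbb{E}[f]=0$ (ergodicity combined with \eqref{harmonic_mean} gives $\Ab^0 = \la\Ab^{-1}\ra^{-1}$). Changing variable $t=\vare\tau$ in \eqref{eq:auxcorr} and adding and subtracting $s\la\Bb\ra$ yields $\Psib^\vare(s) = s(\mathscr{G}^\Bb_{s/\vare} - \mathscr{G}^\Bb_{1/\vare})$; splitting $\partial_s\Phib^\vare = f(\tau_{s/\vare}\omega) + (\Ab^\vare)^{-1}(\overline\Ab^\vare - \Ab^0)$ and integrating yields $\Phib^\vare(s) = s\,\mathscr{G}^f_{s/\vare} + \bigl(\int_0^s(\Ab^\vare)^{-1}\,dt\bigr)(\overline\Ab^\vare - \Ab^0)$. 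Noting that the identity $\Ab^{-1}-(\Ab')^{-1} = -\Ab^{-1}(\Ab-\Ab')(\Ab')^{-1}$ together with \eqref{eqdefbeta} makes $f$ a $1$-local Lipschitz random variable with constant $\lesssim C_\Ab$, entrywise application of Lemma~\ref{L:scaling1} to $\Bb - \la\Bb\ra$ and $f$, together with Corollary~\ref{C:RVE} to estimate $|\overline\Ab^\vare - \Ab^0|$ and the elementary inequality $s^2(s/\vare+1)^{-1}\leq s\vare$, gives the pointwise estimate
\begin{equation*}
|\Psib^\vare(s)|^2 + |\Phib^\vare(s)|^2 \lesssim (C_\Ab + C_\Bb)^2\,\vare\,\bigl(\mathscr{C}_{s/\vare}^2\,s + (\mathscr{C}_{1/\vare}^2 + \mathscr{C}_{\mathrm{rve}}^2)\,s^2\bigr),
\end{equation*}
where $\mathscr C_\ell$ collects the random constants from Lemma~\ref{L:scaling1} at scale $\ell$ and $\mathscr{C}_{\mathrm{rve}}$ is the one from Corollary~\ref{C:RVE}.

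Integrating in $s \in (0,1)$ produces $\fint_0^1 |(\Psib^\vare,\Phib^\vare)|^2\,ds \leq \mathscr{C}^2(C_\Ab + C_\Bb)^2\,\vare$ with $\mathscr{C}^2 \sim \int_0^1\mathscr{C}_{s/\vare}^2\,s\,ds + \mathscr{C}_{1/\vare}^2 + \mathscr{C}_{\mathrm{rve}}^2$. The main obstacle is then verifying the stretched-exponential moment bound \eqref{upper bound random constant} for this aggregated $\mathscr{C}$: because $\mathscr{C}_\ell$ in Lemma~\ref{L:scaling1} may genuinely depend on the scale $\ell$, one cannot pull a single sup-in-$\ell$ random constant out of the $s$-integral. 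I will handle this by expanding $\mathbb{E}[\exp(\mathscr{C}/C')]$ into its Taylor series, bounding $\mathbb{E}\bigl[(\int_0^1\mathscr{C}_{s/\vare}^2 s\,ds)^k\bigr]$ by applying Jensen's inequality on the probability measure $2s\,ds$ and Fubini to reach $\int_0^1 s\,\mathbb{E}[\mathscr{C}_{s/\vare}^{2k}]\,ds$, and then using the uniform-in-$\ell$ moment bound $\mathbb{E}[\mathscr{C}_\ell^{2k}] \leq 2 C^{2k}(2k)!$ extracted from $\mathbb{E}[\exp(\mathscr{C}_\ell/C)]\leq 2$. Reassembling the series yields $\mathbb{E}[\exp(\mathscr{C}/C'')]\leq 2$ for some $C''$ depending only on $\rho,\beta_1,\beta_2$, which completes the proof.
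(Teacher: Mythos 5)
Your proposal is correct but takes two detours the paper avoids. For $\Phib^\vare$, the paper's Step~2 uses the cleaner identity $\Phib^\vare(s)=s\bigl(\mathscr A_{s/\vare}-\mathscr A_{1/\vare}\bigr)\overline\Ab^\vare$ with $\mathscr A_\ell:=\fint_0^\ell\Ab^{-1}(\tau_t\omega)\,dt-\mathbb E[\Ab^{-1}]$ (using that $(\fint_0^1(\Ab^\vare)^{-1})\overline\Ab^\vare=\id$), and then simply bounds $|\overline\Ab^\vare|\leq\beta_2$ deterministically via \eqref{eqdefbeta} — no appeal to Corollary~\ref{C:RVE} is needed. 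Your decomposition $\Phib^\vare(s)=s\mathscr G^f_{s/\vare}+(\int_0^s(\Ab^\vare)^{-1})(\overline\Ab^\vare-\Ab^0)$ with $f=\Ab^{-1}\Ab^0-\id$ is valid (and $f$ is indeed $1$-local Lipschitz with constant $\lesssim C_\Ab/\beta_1^2$), but it forces you to carry the RVE error and its random constant $\mathscr C_{\mathrm{rve}}$ through the estimate, which the paper's route eliminates by absorbing $\overline\Ab^\vare$ as a bounded multiplier.

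The second, more substantive difference concerns the exponential moment bound. The paper never forms a pointwise-in-$s$ estimate with an $s$-dependent random constant $\mathscr C_{s/\vare}$: instead it estimates $\mathbb E\bigl[(\fint_0^1|\Psib^\vare(s)|^2\,ds)^p\bigr]^{1/p}$ for all $p\geq 1$ by applying Minkowski's integral inequality to move the $L^p(\Omega)$-norm inside the $s$-integral, then invokes Lemma~\ref{L:scaling1} at each scale $\ell=s/\vare$ (and $1/\vare$) to get the deterministic bound $\lesssim C_\Bb^2 p^2\vare$; since this bound holds uniformly in $p$ with the right $p$-scaling, the stretched-exponential bound \eqref{upper bound random constant} is immediate. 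Your route — pointwise bound in $s$, $L^2(0,1)$-integration, then Taylor series in $\mathbb E[\exp(\mathscr C/C')]$ together with Jensen on the measure $2s\,ds$ and Fubini — is workable (the moments $\mathbb E[\mathscr C^{2k}]\lesssim 3^k(2k)!C^{2k}$ are summable against $1/((2k)!C'^{2k})$ for $C'$ large, and the odd moments are handled by Cauchy--Schwarz interpolation), but it amounts to reproving by hand the elementary equivalence between $\|X\|_{L^p}\lesssim p$ for all $p$ and $\mathbb E[e^{X/C}]\leq 2$; keeping the argument at the level of $p$-moments, as the paper does, avoids this entirely. Your treatment of the $W^{1,\infty}_0$ bound (derivative bounds from the explicit formulas, vanishing at $s=0$ by construction and at $s=1$ from $\fint_0^1\Bb^\vare=\overline\Bb^\vare$ and $\bigl(\fint_0^1(\Ab^\vare)^{-1}\bigr)\overline\Ab^\vare=\id$) is correct and matches what the paper leaves implicit.
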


We are now ready to prove Theorem~\ref{thm conv rate}.
\begin{proof}[Proof of Theorem~\ref{thm conv rate}]
In the following we write $a\lesssim b$, if $a\leq Cb$ for a constant $C$ that only depends on $\rho$, $\alpha_1$ and $\alpha_2$. We split our proof into six steps.
\step 1 {\bf Identification of the minimizers $\bar\vv^\vare$ and $\vv^0$}

We claim that
\begin{equation*}
  \vv^0=\bar\vv^\vare=\vv_{\rm bd}.
\end{equation*}
For the argument note that we have
\begin{equation}\label{eq:specialproperty}
  \Bb^0\vv_{\rm bd}=  \Bb^\vare\vv_{\rm bd}=0\quad\text{and}\quad\Psib^\vare\vv_{\rm bd}=0,
\end{equation}
which follows from the structural properties of $\Bb^\vare$ (cf.~\eqref{eq:defBb}) and $\vv_{\rm bd}$ (cf.~\eqref{affine function def}), and the definition of $\Psib^\vare$.
Furthermore, since $\vv_{\rm bd}$ is affine, we have for all $\varphi\in \mathcal{H}$ (cf.~\eqref{defH}),
\begin{equation*}
  \fint_0^1\Ab^0(\partial_s+\Bb^0)\vv_{\rm bd}\cdot(\partial_s+\Bb^0)\varphi\,ds=
  \Ab^0\partial_s\vv_{\rm bd}\cdot  \fint_0^1(\partial_s+\Bb^0)\varphi\,ds=0.
\end{equation*}
We conclude that $\vv_{\rm bd}$ minimizes $\E^0$ and thus $\vv^0=\vv_{\rm bd}$. By the same argument we see that $\bar\vv^\vare=\vv_{\rm bd}$.

\step 2 {\bf Definition of the two-scale expansion}

Define
\begin{equation*}
  \hat\vv^\vare:=\vv_{\rm bd}+\Phib^\vare\partial_s\vv_{\rm bd}-\eta c^\vare,
\end{equation*}
where $\eta(s):=s(1-s)/(\int_0^1t(1-t)\,dt)$ and $c^\vare\in\R^4$ is chosen such that $\fint_0^L(\hat\vv^\vare-\vv_{\rm bd})_{34}=0$.
Then it is easy to check that $\hat\vv^\vare\in \vv_{\rm bd}+\mathcal{H}$. Moreover, a direct calculation that exploits \eqref{eq:dircorr} and \eqref{eq:specialproperty} shows that
\begin{equation}\label{flux}
  \Ab^\vare(\partial_s+\Bb^\vare)\hat\vv^\vare= \overline\Ab^\vare(\partial_s+\overline\Bb^\vare)\vv_{\rm bd}+\rho^\vare,\qquad \rho^\vare=\Ab^\vare\Bb^\vare(\Phib^\vare\partial_s\vv_{\rm bd}-\eta c^\vare)-\Ab^\vare\partial_s\eta c^\vare.
\end{equation}
We note that
\begin{equation}\label{estimrho}
  \|\rho^\vare\|_{L^2(0,1)}\lesssim \|\Phib^\vare\|_{L^2(0,1)}\|\partial_s\vv_{\rm bd}\|_{L^\infty(0,1)}.
\end{equation}

\step 3 {\bf Estimate of the first two-scale expansion error}

We claim that
\begin{equation}\label{eq:two-scale-expansion}
  \|\partial_s(\vv^\vare-\hat\vv^\vare)\|_{L^2(0,1)}^2\lesssim |\E^\vare(\hat\vv^\vare)-\E^\vare(\vv^\vare)|\lesssim \|\partial_s\vv_{\rm bd}\|^2_{L^\infty(0,1)}\|(\Phib^\vare,\Psib^\vare)\|_{L^2(0,1)}^2.
\end{equation}
For the argument note that we have for all $\varphi\in \mathcal{H}$,
\begin{equation*}
  \int\Ab^\vare(\partial_s+\Bb^\vare)\vv^\vare\cdot(\partial_s+\Bb^\vare)\varphi=0,\qquad
  \int\overline\Ab^\vare(\partial_s+\overline\Bb^\vare)\vv_{\rm bd}\cdot(\partial_s+\overline\Bb^\vare)\varphi=0,
\end{equation*}
since $\vv^\vare$ and $\vv_{\rm bd}$ are minimizers of $\E^\vare$ and $\overline\E^\vare$.
By expanding squares we thus deduce that
\begin{align*}
  &\E^\vare(\hat\vv^\vare-\vv^\vare)=  \E^\vare(\hat\vv^\vare)-\E^\vare(\vv^\vare)\\
  =&\int \Ab^\vare(\partial_s+\Bb^\vare)\hat\vv^\vare\cdot(\partial_s+\Bb^\vare)\hat\vv^\vare-\int\Ab^\vare(\partial_s+\Bb^\vare)\vv^\vare\cdot(\partial_s+\Bb^\vare)\vv^\vare\\
  =&\int\Ab^\vare(\partial_s+\Bb^\vare)\hat\vv^\vare\cdot (\partial_s+\Bb^\vare)(\hat\vv^\vare-\vv^\vare)+\underbrace{\int \Ab^\vare(\partial_s+\Bb^\vare)\vv^\vare\cdot(\partial_s+\Bb^\vare)(\hat\vv^\vare-\vv^\vare)}_{=0}\\
  =&\int\overline\Ab^\vare(\partial_s+\overline\B^\vare)\vv_{\rm bd}\cdot (\partial_s+\Bb^\vare)(\hat\vv^\vare-\vv^\vare)+\int\rho^\vare\cdot (\partial_s+\Bb^\vare)(\hat\vv^\vare-\vv^\vare),
\end{align*}
where the last identity holds thanks to Step~2. By \eqref{eq:specialproperty} and the fact that $\overline\Ab^\vare\partial_s\vv_{\rm bd}$ is a constant vector, we have
\begin{align*}
  &\int\overline\Ab^\vare(\partial_s+\overline\Bb^\vare)\vv_{\rm bd}\cdot (\partial_s+\Bb^\vare)(\hat\vv^\vare-\vv^\vare) = \int\overline\Ab^\vare\partial_s \vv_{\rm bd}\cdot \partial_s\Psib^\vare(\hat\vv^\vare-\vv^\vare) =-\int\overline\Ab^\vare\partial_s \vv_{\rm bd}\cdot \Psib^\vare\partial_s(\hat\vv^\vare-\vv^\vare)
\end{align*}
From the previous two estimates and \eqref{estimrho} we deduce that
\begin{align*}
  &\,\|\partial_s(\vv^\vare-\hat\vv^\vare)\|_{L^2(0,1)}^2\nonumber\\
  \lesssim&\,  \|(\partial_s+\Bb^\vare)(\vv^\vare-\hat\vv^\vare)\|_{L^2(0,1)}^2
  \lesssim\E^\vare(\hat\vv^\vare-\vv^\vare)=\E^\vare(\hat\vv^\vare)-\E^\vare(\vv^\vare)\\
\lesssim &\, \|\Phib^\vare\|_{L^2(0,1)}\|\partial_s\vv_{\rm bd}\|_{L^\infty(0,1)}  \|(\partial_s+\Bb^\vare)(\vv^\vare-\hat\vv^\vare)\|_{L^2(0,1)}\nonumber\\
&\quad+\|\partial_s\vv_{\rm bd}\|_{L^\infty(0,1)}\|\Psib^\vare\|_{L^2(0,1)}  \|\partial_s(\vv^\vare-\hat\vv^\vare)\|_{L^2(0,1)}.
\end{align*}
and thus \eqref{eq:two-scale-expansion} follows.

\step 4 {\bf Estimate of the second two-scale expansion error}

We claim that
\begin{equation}\label{eq:two-scale-expansion2}
  |\E^\vare(\hat\vv^\vare)-\overline\E^\vare(\vv_{\rm bd})|\lesssim
  \|(\Phib^\vare,\Psib^\vare)\|_{L^2(0,1)}\|\vv_{\rm bd}\|_{W^{1,\infty}(0,1)}^2.
\end{equation}
Indeed, thanks to \eqref{flux}, we have
\begin{align*}
  &\E^\vare(\hat\vv^\vare)-\overline\E^\vare(\vv_{\rm bd})\\
  =&\int\overline\Ab^\vare(\partial_s+\overline\Bb^\vare)\vv_{\rm bd}\cdot(\partial_s+\Bb^\vare)\hat\vv^\vare
     +\int\rho^\vare\cdot(\partial_s+\Bb^\vare)\hat\vv^\vare
     -\int\overline\Ab^\vare(\partial_s+\overline\Bb^\vare)\vv_{\rm bd}\cdot (\partial_s+\overline\Bb^\vare)\vv_{\rm bd}\\
  =&\underbrace{\int\overline\Ab^\vare(\partial_s+\overline\Bb^\vare)\vv_{\rm bd}\cdot(\partial_s+\overline\Bb^\vare)(\hat\vv^\vare-\vv_{\rm bd})}_{=0}
     +\int\overline\Ab^\vare(\partial_s+\overline\Bb^\vare)\vv_{\rm bd}\cdot\partial_s\Psib^\vare(\hat\vv^\vare-\vv_{\rm bd})
  +\int\rho^\vare\cdot(\partial_s+\Bb^\vare)\hat\vv^\vare.
\end{align*}
Note that
\begin{align*}
  \int\overline\Ab^\vare(\partial_s+\overline\Bb^\vare)\vv_{\rm bd}\cdot\partial_s\Psib^\vare(\hat\vv^\vare-\vv_{\rm bd})=-\int\overline\Ab^\vare\partial_s\vv_{\rm bd}\cdot\Psib^\vare\partial_s\Phib^\vare\partial_s\vv_{\rm bd}
\end{align*}
We conclude that
\begin{equation*}
  |\E^\vare(\hat\vv^\vare)-\overline\E^\vare(\vv_{\rm bd})|\lesssim
  \|(\Phib^\vare,\Psib^\vare)\|_{L^2(0,1)}\|\vv_{\rm bd}\|_{W^{1,\infty}(0,1)}^2.
\end{equation*}

\step 5 {\bf Proof of \eqref{conv rate energy} and \eqref{conv rate function}}

Note that
\begin{equation*}
  |\overline\E^\vare(\vv_{\rm bd})-\E^0(\vv_{\rm bd})|\leq |\overline\Ab^\vare-\Ab^0|\|\partial_s\vv_{\rm bd}\|_{L^\infty(0,1)}^2.
\end{equation*}
Since $\vv^0=\bar\vv^\vare=\vv_{\rm bd}$, we conclude from the previous steps, Corollaries~\ref{C:RVE} and \ref{scaling2} that

\begin{align*}
  |\E^\vare(\vv^\vare)-\E^0(\vv^0)|&\leq   |\E^\vare(\vv^\vare)-\E^\vare(\hat\vv^\vare)|+|\E^\vare(\hat\vv^\vare)-\overline{\E}^\vare(\vv_{\rm bd})|+|\overline{\E}^\vare(\vv_{\rm bd})-\E^0(\vv_{\rm bd})|\\
  &\leq\Big(\mathscr C^2(C_{\Ab}+C_{\Bb})^2\vare+2\mathscr C(C_{\Ab}+C_{\Bb})\sqrt{\vare}\Big)\|\vv_{\rm bd}\|_{W^{1,\infty}(0,1)}^2.
\end{align*}
Combined with \eqref{estim:vvbd}, the claimed estimate \eqref{conv rate energy} follows.

To prove \eqref{conv rate function}, we first note that
\begin{equation*}
  \|\hat\vv^\vare-\vv_{\rm bd}\|_{L^2(0,1)}\leq\|\Phib^\vare\partial_s\vv_{\rm bd}-\eta c^\vare\|_{L^2(0,1)}\lesssim \|\partial_s\vv_{\rm bd}\|_{L^\infty(0,1)}\|\Phib^\vare\|_{L^2(0,1)}.
\end{equation*}
On the other hand, by \eqref{eq:two-scale-expansion}
\begin{equation*}
  \|\vv^\vare-\hat\vv^\vare\|_{L^2(0,1)}\leq\|\partial_s(\vv^\vare-\hat\vv^\vare)\|_{L^2(0,1)}\leq \|\partial_s\vv_{\rm bd}\|_{L^\infty(0,1)}\|(\Phib^\vare,\Psib^\vare)\|_{L^2(0,1)}
\end{equation*}
and thus by the triangle inequality and Corollary~\ref{scaling2},
\begin{equation*}
  \|\vv^\vare-\vv_{\rm bd}\|_{L^2}\leq \|\partial_s\vv_{\rm bd}\|_{L^\infty}\|(\Phib^\vare,\Psib^\vare)\|_{L^2}\lesssim \mathscr C(C_{\Ab}+C_{\Bb})\sqrt{\vare}\|\partial_s\vv_{\rm bd}\|_{L^\infty},
\end{equation*}
as claimed.

\step 6 {\bf The constant coefficient case}

Assume the $Q^{\rm rod}$ is independent of $\omega$. Then $\Ab^\vare=\bar\Ab^\vare=\Ab^0$ and $\Phib^\vare=0$. As a consequence, the two-scale expansion simplifies and we conclude that $\hat\vv^\vare=\vv_{\rm bd}$.
In view of \eqref{eq:specialproperty} we further have
\begin{align*}
  &\,\E^\vare(\hat\vv^\vare)=  \E^\vare(\vv_{\rm bd})\nonumber\\
  =&\,\int\Ab^0(\partial_s+\Bb^\vare)\vv_{\rm bd}\cdot(\partial_s+\Bb^\vare)\vv_{\rm bd}\,ds\nonumber\\
  =&\,\int\Ab^0(\partial_s+\Bb^0)\vv_{\rm bd}\cdot(\partial_s+\Bb^0)\vv_{\rm bd}\,ds=\E^0(\vv_{\rm bd}).
\end{align*}
We thus conclude from Step~3 that
\begin{equation*}
  |\E^\vare(\vv^\vare)-\E^0(\vv^0)|=  |\E^\vare(\vv^\vare)-\E^\vare(\hat\vv^\vare)|\lesssim\mathscr C^2 C_{\Bb}^2\vare\|\partial_s\vv_{\rm bd}\|_{L^\infty(0,1)}^2,
\end{equation*}
and the desired estimate follows.
\end{proof}

It remains to prove Lemma~\ref{L:scaling1}, Corollaries~\ref{C:RVE} and \ref{scaling2}.
To that end, we also need the following $p$-version of the spectral gap estimate. We refer to \cite{pSGDuerGloria} for a proof.

\begin{lemma}[$p$-spectral gap, \cite{pSGDuerGloria}]\label{lem p sg}
Suppose that the probability space $(\Omega,\mathbb{P},\mathcal{F})$ satisfies Assumption \ref{sg assumption}. Then there exists some $C=C(\rho)>0$ such that for any random variable $F : \Omega\to\R$ and all $p\in[1,\infty)$ we have
\begin{align}
\mathbb{E}\big[ |F-\mathbb{E}[F]|^{2p}\big]^{\frac{1}{2p}}\leq Cp\,\mathbb{E}\bg[\,\bg|\int_{\R}\bg(\int_{s-1}^{s+1}\bg|\frac{\partial F}{\partial \omega}\bg|\,\bg)^{2}\,ds\bg|^p\bg]^{\frac{1}{2p}}.
\end{align}
\end{lemma}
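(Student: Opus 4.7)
The plan is to bootstrap the $L^2$ spectral gap inequality (Assumption~\ref{sg assumption}) to arbitrary $L^{2p}$ moments by applying it to a nonlinear test functional of $F$ and iterating in $p$.

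First, set $\tilde F := F - \mathbb{E}[F]$. For integer $p \geq 1$, apply Assumption~\ref{sg assumption} to $G_p := \tilde F\,|\tilde F|^{p-1}$. Since the functional derivative $\int_{s-1}^{s+1}|\partial/\partial\omega|$ is defined as a supremum of limsups of finite differences over uniformly bounded perturbations, a standard chain rule gives
\begin{equation*}
\int_{s-1}^{s+1}\left|\frac{\partial G_p}{\partial\omega}\right| \leq p\,|\tilde F|^{p-1}\int_{s-1}^{s+1}\left|\frac{\partial F}{\partial\omega}\right|,
\end{equation*}
where $|\tilde F|^{p-1}$ is pulled out since it is independent of the localization parameter $s$. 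Using $\mathbb{E}[G_p^2] = \mathbb{E}[\tilde F^{2p}]$ together with the Cauchy-Schwarz bound $(\mathbb{E}[G_p])^2 \leq \mathbb{E}[\tilde F^2]\,\mathbb{E}[\tilde F^{2(p-1)}]$, the variance inequality for $G_p$ becomes
\begin{equation*}
\mathbb{E}[\tilde F^{2p}] \leq \mathbb{E}[\tilde F^2]\,\mathbb{E}[\tilde F^{2(p-1)}] + \frac{p^2}{\rho^2}\,\mathbb{E}\!\left[|\tilde F|^{2(p-1)}\, Y\right],\qquad Y := \int_\R\!\left(\int_{s-1}^{s+1}\!\left|\tfrac{\partial F}{\partial\omega}\right|\right)^{\!2} ds.
\end{equation*}

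Next, apply Hölder with conjugate exponents $p/(p-1)$ and $p$ to bound $\mathbb{E}[|\tilde F|^{2(p-1)}Y] \leq \mathbb{E}[\tilde F^{2p}]^{(p-1)/p}\,\mathbb{E}[Y^p]^{1/p}$, and then Young's inequality to absorb a fraction of $\mathbb{E}[\tilde F^{2p}]$ to the left-hand side. Setting $M_p := \mathbb{E}[\tilde F^{2p}]^{1/(2p)}$ and $N_p := \mathbb{E}[Y^p]^{1/(2p)}$, the resulting recursion is of the form $M_p^{2p} \leq 2\,M_1^2\,M_{p-1}^{2(p-1)} + (Cp/\rho)^{2p}\,N_p^{2p}$. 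Induct on integer $p$: the base case $p=1$ is Assumption~\ref{sg assumption} itself, giving $M_1 \leq \rho^{-1}N_1 \leq \rho^{-1}N_p$ by Jensen (monotonicity of $L^q$-norms), and the inductive step propagates $M_p \leq Cp\rho^{-1}N_p$. The non-integer case follows by Hölder interpolation between consecutive integers.

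The main obstacle is tracking the $p$-dependence sharply through the Young absorbing step: a naive application would yield a superlinear prefactor in $p$, whereas the claimed linear dependence arises from the scaling $(p^2/\rho^2)^p$ that appears on the $N_p^{2p}$-term after Young and reduces to exactly $Cp/\rho$ upon taking the $(2p)$-th root — so care in choosing the Young exponent is essential. A secondary technical point is the justification of the chain rule for $t\mapsto t|t|^{p-1}$ at $t=0$ when $p$ is close to $1$, but since $p|t|^{p-1}$ is continuous (and vanishes at $t=0$ for $p>1$) this poses no real difficulty; for $p=1$ the statement reduces to Assumption~\ref{sg assumption} directly.
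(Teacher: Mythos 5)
The paper itself gives no proof of this lemma: it is cited as known from \cite{pSGDuerGloria}, so I assess your attempt on its own and against the standard route in that reference, which is exactly the nonlinear bootstrap you describe.

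Your main chain is correct and tracks the $p$-dependence properly: applying the $L^2$ spectral gap to $G_p:=\tilde F|\tilde F|^{p-1}$ with $\tilde F=F-\mathbb{E}[F]$, the chain rule to extract $p|\tilde F|^{p-1}$ (which commutes with $\int_{s-1}^{s+1}$ since it does not depend on $s$), Cauchy--Schwarz on $(\mathbb{E}[G_p])^2$, and then H\"older/Young gives precisely
\begin{equation*}
  M_p^{2p}\ \leq\ 2\,M_1^{2}\,M_{p-1}^{2(p-1)}\ +\ \bigl(\sqrt{2}\,p/\rho\bigr)^{2p}N_p^{2p},
  \qquad M_q:=\mathbb{E}[\tilde F^{2q}]^{1/(2q)},\ N_q:=\mathbb{E}[Y^q]^{1/(2q)},
\end{equation*}
because the Young split puts a $(p^2/\rho^2)^p$ weight on the $N_p^{2p}$-term, which reduces to $\sqrt2\,p/\rho$ after the $(2p)$-th root; the integer induction then closes (e.g.\ with $C'=2$) since $((p-1)/p)^{2(p-1)}\leq1$. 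A small but real omission is that the Young absorbing step implicitly requires $\mathbb{E}[\tilde F^{2p}]<\infty$; one should first run the argument with $F$ replaced by a bounded truncation and pass to the limit.

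The genuine gap is the closing sentence ``the non-integer case follows by H\"older interpolation between consecutive integers.'' That argument does not work. For $p\in(k,k+1)$, $L^q(\mathbb P)$-interpolation gives $M_p\le M_k^{1-\theta}M_{k+1}^\theta$ and, with the \emph{same} $\theta$, $N_p\le N_k^{1-\theta}N_{k+1}^\theta$; the second inequality goes the wrong way, so from $M_p\lesssim (k+1)\rho^{-1}N_k^{1-\theta}N_{k+1}^\theta$ you cannot get $M_p\lesssim p\rho^{-1}N_p$. The loss $N_k^{1-\theta}N_{k+1}^\theta/N_p$ depends on the law of $Y$ (for a log-normal $Y$ it equals $\exp\bigl((p-k)(k+1-p)\sigma^2/(4p)\bigr)$, unbounded in the variance parameter $\sigma^2$), and since the lemma must hold with a constant depending only on $\rho$ and uniformly over $F$ (hence over $Y$), no universal $C=C(\rho)$ can absorb it. The correct repair avoids interpolation entirely: nothing in your Cauchy--Schwarz/H\"older/Young derivation used integrality of $p$, so run the displayed recursion for all real $p\geq1$ and induct on $\lfloor p\rfloor$. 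For $p\in[1,2)$ use $M_{p-1}\leq M_1$ (monotonicity of $L^q$-norms, as $2(p-1)<2$) together with $M_1\leq\rho^{-1}N_1\leq\rho^{-1}N_p$ to close the base case; the inductive step is then literally your integer argument.
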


\begin{proof}[Proof of Lemma~\ref{L:scaling1}]
  It suffices to show that that there exists a constant $c'$ only depending on $\rho$ such that for all $\ell>0$ and $p\geq 1$ we have
 \begin{equation*}
   \mathbb E[|\mathscr G^{2p}_\ell|]^{\frac{1}{2p}}\leq c'C_{F}p(\ell+1)^{-\frac12}.
 \end{equation*}
 Since $F$ is a 1-local Lipschitz random field, we find that for any perturbation $\delta\omega$ with support in $s+(-1,1)$ and $|\delta\omega|\leq 1$, we have
  \begin{equation*}
    |F(\tau_t(\omega+\delta\omega))-F(\tau_t\omega)|\leq C_{F}\|\delta\omega\|_{L^\infty(t-1,t+1)}\leq C_{F}\mathbf 1(\{t\in(s-2,s+2)\}).
  \end{equation*}
  Thus
  \begin{equation*}
    |\mathscr G_\ell(\omega+\delta\omega)-\mathscr G_\ell(\omega)|\leq C_F\ell^{-1}|(0,\ell)\cap(s-2,s+2)|\leq C_F\ell^{-1}
    \begin{cases}
      \min\{4,\ell\}&\text{if }s\in(-2,\ell+2)\\
      0&\text{else.}
    \end{cases}
  \end{equation*}
  We conclude that for some universal constant $c\geq 1$ and $\mathbb P$-a.a.~$\omega\in\Omega$ we have
  \begin{equation*}
    \int_\R(\int_{s-1}^{s+1}|\frac{\partial\mathscr G_\ell(\omega)}{\partial\omega}|\,dt)^2\,ds\leq cC_{F}^2\frac{1}{\ell+1}.
  \end{equation*}
  Hence, the claim follows from the $p$-version of the spectral gap estimate, cf.~Lemma~\ref{lem p sg}.
\end{proof}

\begin{proof}[Proof of Corollary~\ref{C:RVE}]
  The argument for $\overline{\B}^\vare$ is immediate. For $\overline\A^\vare$ consider the random variable
  \begin{equation*}
    \mathscr A_\ell(\omega):=\fint_0^\ell\A^{-1}(\tau_t\omega)\,dt-\mathbb E[\A^{-1}].
  \end{equation*}
  Since  $\A^{-1}(\omega)-\A^{-1}(\omega')=\A(\omega)^{-1}(\A(\omega')-\A(\omega))\A(\omega')^{-1}$, we have
  \begin{equation*}
    |\A^{-1}(\omega)-\A^{-1}(\omega')|\leq \frac{C_{\A}}{\beta_2^2}\|\omega'-\omega\|_{L^\infty(-1,1)},
  \end{equation*}
  and thus $\A^{-1}$ is 1-local and Lipschitz. With Lemma~\ref{L:scaling1} we conclude that
  \begin{equation*}
    |\mathscr A_\ell|\leq \mathscr C\frac{C_{\A}}{\beta_1^2}(\ell+1)^{-\frac12}.
  \end{equation*}
  By a direct calculation we have $\overline\A^\vare-\A^0=-\A^0\mathscr A_{L/\vare}\overline\A^\vare$, and thus the claimed bound follows.
\end{proof}

\begin{proof}[Proof of Corollary~\ref{scaling2}]
We split the proof into two steps.
  \step 1 {\bf Estimate of  $\Psib^\vare$}
  For $\ell$ consider the mean free random variable
  \begin{equation*}
    \mathscr G_\ell(\omega):=\fint_0^\ell\B(\tau_t\omega)\,dt-\mathbb E[\B].
  \end{equation*}
  Note that
  \begin{equation*}
    \fint_0^1 |\Psib^\vare(s)|^2\,ds=\fint_0^1 s^2|\mathscr G_{s/\vare}-\mathscr G_{1/\vare}|^2\,ds.
  \end{equation*}
  Hence, by Jensen's inequality and Lemma~\ref{L:scaling1}, there exists $c'$ (only depending on the constant $\rho$ of the spectral gap inequality) such that for any $p\geq 1$,
  \begin{align*}
    \mathbb E\Big[\big(\fint_0^1 |\Psib^\vare(s)|^2\,ds\big)^{p}\Big]^{\frac1{p}}
    &\leq 2\fint_0^1 s^2\mathbb E[|\mathscr G_{s/\vare}|^{2p}]^{1/p}+s^2\mathbb E[|\mathscr G_{1/\vare}|^{2p}]^{1/p}\,ds\\
    &\leq 2\fint_0^1 s^2\mathbb E[|\mathscr G_{s/\vare}|^{2p}]^{1/p}\,ds+\frac{2}{3}\mathbb E[|\mathscr G_{1/\vare}|^{2p}]^{1/p}\\
    &\leq 2c'^2C_{\B}^2p^2\Big(\fint_0^1 s^2(s/\vare+1)^{-1}\,ds+\frac{1}{3}(\frac{1}{\vare}+1)^{-1}\Big)\\
    &\leq 2c'^2C_{\B}^2p^2\,\vare,
  \end{align*}
  and thus the exponential moment bound for $\|\Psib^\vare\|_{L^2(0,1)}$ follows.

  \step 2 {\bf Estimate of  $\Phib^\vare$}

  Consider
  \begin{equation*}
    \mathscr A_\ell(\omega):=\fint_0^\ell\A^{-1}(\tau_t\omega)\,dt-\mathbb E[\A^{-1}].
  \end{equation*}
  Since  $\A^{-1}(\omega)-\A^{-1}(\omega')=\A(\omega)^{-1}(\A(\omega')-\A(\omega))\A(\omega')^{-1}$, we have
  \begin{equation*}
    |\A^{-1}(\omega)-\A^{-1}(\omega')|\leq \frac{C_{\A}}{\beta_2^2}\|\omega'-\omega\|_{L^\infty(-1,1)},
  \end{equation*}
  and thus the argument of Step~1 applied to $\mathscr A_{\ell}$ yields
   \begin{equation*}
    \mathbb E[|\mathscr A^{2p}_\ell|]^{\frac{1}{2p}}\leq \frac{c'}{\beta^2_2}C_{\A}p(\ell+1)^{-\frac12}.
  \end{equation*}
  Note that
  \begin{align*}
    \fint_0^s\partial_t\Phib^\vare(t)\,dt=&\big(\fint_0^s(\A^\vare)^{-1}\,dt-\fint_0^1(\A^{\vare})^{-1}\,dt\big)\overline\A^\vare\\
    =&(\mathscr A_{s/\vare}-\mathscr A_{1/\vare})\overline\A^\vare.
  \end{align*}
  Therefore, we conclude that
  \begin{align*}
    \int |\Phib^\vare|^2\,ds=    \int s^2\left|\fint_0^s\partial_t\Phib^\vare\,dt\right|^2\,ds\\
    \leq |\overline\A^\vare|^2   \int s^2|\mathscr A_{s/\vare}-\mathscr A_{1/\vare}|^2)\,ds.
  \end{align*}
  Since $|\overline\A^\vare|$ is bounded by a constant only depending on $\beta_1,\beta_2$, the claimed estimate follows as in Step~2.
\end{proof}


\section*{Acknowledgements}
We gratefully acknowledge the support of the Deutsche Forschungsgemeinschaft (German Research Foundation) as part of the Priority Program SPP1886 ``Polymorphic uncertainty modelling for the numerical design of structures'' under project 428470437.

\appendix

\section{Appendix: Stochastic two-scale convergence}\label{appendix: two scale conv}
In this section, we recall the concept of stochastic two-scale convergence in the quenched sense as introduced and discussed in~\cite{Zhikov2006,heida2011extension,HNV22}. We present the notion in a form adapted to our needs, namely, for homogenization problems with coefficients that only feature random oscillations in the $x_1$-direction. In the literature, there are various, slightly different notions of stochastic two-scale convergence. In the following, we give a self-contained introduction closely following \cite{HNV22}.
\bigskip

Throughout this section, we assume that $(\Omega,\mathcal F,\mathbb P,\tau)$ satisfies Assumption~\ref{assumption of prob}. Moreover, we assume that $O\subset\R^d$, $d\geq 1$ is an open and bounded Lipschitz domain. As in the periodic case, stochastic two-scale convergence is based on oscillatory test-functions. In the stochastic case the construction of the oscillatory test-functions invokes the stationary extension:
\begin{lemma}[Stationary extension, {see \cite[Lemma~2.2]{HNV22}}]\label{L:stat}
  Let $\varphi:\Omega\to\R$ be $\mathcal F$-measurable. Let $I\subset \R$ be open and denote by $\mathcal{L}(I)$ the corresponding Lebesgue $\sigma$-algebra. Then $S\varphi:\Omega\times I\to\R$, $S\varphi(\omega,x_1):=\varphi(\tau_{x_1}\omega)$ defines a $\mathcal F\otimes\mathcal L(I)$-measurable function -- called the stationary extension of $\varphi$. Moreover, if $I$ is bounded, then for all $1\leq p<\infty$ the map $S:L^p(\Omega)\to L^p(\Omega\times I)$ is a linear injection satisfying
  \begin{equation*}
    \|S\varphi\|_{L^p(\Omega\times I)}=|I|^\frac{1}{p}\|\varphi\|_{L^p(\Omega)}.
  \end{equation*}
\end{lemma}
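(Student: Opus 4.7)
The plan is to decompose the argument into three small pieces, corresponding to the three assertions: joint measurability, the explicit norm identity, and linearity together with injectivity. None of these is deep; the proof is essentially a careful application of the shift hypotheses (P2) and (P3) combined with Tonelli's theorem.

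First, I would establish the $\mathcal{F}\otimes\mathcal{L}(I)$-measurability of $S\varphi$ by writing it as a composition $S\varphi=\varphi\circ T$ where $T:\Omega\times I\to\Omega$ denotes the restriction of the global shift map $(\omega,x_1)\mapsto\tau_{x_1}\omega$. By hypothesis (P3) this map is $(\mathcal{F}\otimes\mathcal{L}(\R),\mathcal{F})$-measurable, and passing to the open subset $\Omega\times I$ is harmless since $\mathcal{L}(I)$ is the trace $\sigma$-algebra of $\mathcal{L}(\R)$ on $I$. Composing with the $\mathcal{F}$-measurable function $\varphi$ then gives measurability of $S\varphi$.

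Second, for bounded $I$ and $\varphi\in L^p(\Omega)$, the key step is to apply Tonelli's theorem to the non-negative measurable function $|S\varphi|^p$, yielding
\begin{equation*}
\|S\varphi\|_{L^p(\Omega\times I)}^p=\int_I\int_\Omega|\varphi(\tau_{x_1}\omega)|^p\,d\mathbb{P}(\omega)\,dx_1.
\end{equation*}
Now the measure-preservation hypothesis (P2) enters decisively: for every fixed $x_1\in\R$ the push-forward of $\mathbb{P}$ under $\tau_{x_1}$ coincides with $\mathbb{P}$, so by the change-of-variables formula the inner integral equals $\|\varphi\|_{L^p(\Omega)}^p$ independently of $x_1$. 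Integrating over the bounded interval $I$ gives the asserted norm identity, which as a byproduct shows $S\varphi\in L^p(\Omega\times I)$ whenever $\varphi\in L^p(\Omega)$.

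Finally, linearity of $S$ is immediate from the pointwise definition, and injectivity on $L^p$ is a direct consequence of the norm identity: if $S\varphi=0$ in $L^p(\Omega\times I)$, then $|I|^{1/p}\|\varphi\|_{L^p(\Omega)}=0$ and hence $\varphi=0$ in $L^p(\Omega)$. I do not foresee a genuine obstacle; the only point requiring any care is ensuring that (P3), which is stated for the full real line, passes to the restriction on $\Omega\times I$ so that Tonelli applies, and that the exceptional $\mathbb{P}$-null set in the change of variables is harmless once one integrates against $dx_1$ on $I$.
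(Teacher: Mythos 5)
The paper does not give its own proof of this lemma; it simply cites \cite[Lemma~2.2]{HNV22}. Your argument is correct and is the standard one for this statement: factor $S\varphi$ as the composition of $\varphi$ with the shift map to get joint measurability from (P3), then apply Tonelli together with measure preservation (P2) to obtain the norm identity, from which linearity (pointwise) and injectivity (from the identity applied to a difference) follow immediately. One minor remark: for each fixed $x_1$ the change of variables under $\tau_{x_1}$ is an exact equality of integrals, so no exceptional null set actually arises there; the caution you flag at the end is harmless but unnecessary.
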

Another key ingredient of the quenched stochastic two-scale convergence is  Birkhoff's ergodic theorem:
\begin{theorem}[{Birkhoff's ergodic theorem \cite[Theorem 10.2.II]{Daley1988}}]
\label{thm:ergodic-thm} Let Assumption~\ref{assumption of prob} be satisfied and let $\varphi\in L^1(\Omega)$. Then the following holds for $\mathbb P$-a.a.~$\omega\in\Omega$: $ S\varphi(\omega,\cdot)$ is locally integrable and for all open, bounded intervals $I\subset\R$ we have
\begin{equation}
  \lim_{\vare\rightarrow0}\int_{I}S\varphi(\omega,\tfrac{x}{\vare})\,dx=|I|\int_\Omega\varphi\,d\mathbb P(\omega)\,.\label{eq:ergodic-thm}
\end{equation}
\end{theorem}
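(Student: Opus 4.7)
The statement is essentially the classical continuous-parameter Birkhoff ergodic theorem, so the plan is to reduce it to a standard result and then pass from one fixed interval to all bounded intervals simultaneously.

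First, I would handle local integrability. By Lemma~\ref{L:stat}, for any bounded open interval $J\subset\R$ one has $S\varphi\in L^1(\Omega\times J)$, so Fubini's theorem gives $S\varphi(\omega,\cdot)\in L^1(J)$ for $\mathbb P$-a.a.~$\omega$. Choosing an exhaustion $J_n=(-n,n)$ and intersecting the corresponding full-measure sets yields a single set $\Omega_0\subset\Omega$ of full measure on which $S\varphi(\omega,\cdot)$ is locally integrable on all of $\R$.

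Next, I would invoke the continuous-parameter Birkhoff ergodic theorem for the measure-preserving flow $\{\tau_t\}_{t\in\R}$ under Assumptions (P1)--(P4): for $\mathbb P$-a.a.~$\omega\in\Omega$,
\begin{equation*}
\lim_{T\to+\infty}\frac{1}{T}\int_0^T S\varphi(\omega,t)\,dt=\int_\Omega\varphi\,d\mathbb P,\qquad \lim_{T\to+\infty}\frac{1}{T}\int_{-T}^0 S\varphi(\omega,t)\,dt=\int_\Omega\varphi\,d\mathbb P,
\end{equation*}
where ergodicity (P4) identifies the a.s. limit (which a priori is the conditional expectation with respect to the $\sigma$-algebra of $\tau$-invariant sets) with the expectation. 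A reference for the continuous-time ergodic theorem is \cite{Daley1988}. Call the exceptional set $N$; then on $\Omega_0\setminus N$ (still a set of full measure, which I will again denote $\Omega_0$) both averages converge.

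For a bounded open interval $I=(a,b)$, the change of variables $x=\vare t$ gives
\begin{equation*}
\int_I S\varphi(\omega,\tfrac{x}{\vare})\,dx=\vare\int_{a/\vare}^{b/\vare}S\varphi(\omega,t)\,dt,
\end{equation*}
and splitting the right-hand side at $0$ and applying the two one-sided Birkhoff averages above (multiplied by $|b|$ or $|a|$ respectively) yields the desired limit $|I|\int_\Omega\varphi\,d\mathbb P$ for this particular interval.

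The only real point requiring care is that the null set $N$ on which the ergodic averages fail may \emph{a priori} depend on the endpoints $a,b$; however, once one has the two one-sided limits, the identity above yields convergence for \emph{every} bounded open interval on the same full-measure set $\Omega_0$, so no additional countable-dense argument is needed. This is the only minor subtlety; otherwise the result is a direct corollary of the continuous Birkhoff theorem and Fubini.
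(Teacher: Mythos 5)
The paper does not prove this theorem; it is stated as a cited external result (Daley and Vere-Jones, Theorem~10.2.II), so there is no in-paper proof to compare against. Your derivation is correct and fills in a clean route: local integrability via Lemma~\ref{L:stat} and Fubini on an exhaustion $J_n=(-n,n)$; the one-sided limits $\tfrac1T\int_0^T S\varphi(\omega,t)\,dt\to\la\varphi\ra$ and $\tfrac1T\int_{-T}^0 S\varphi(\omega,t)\,dt\to\la\varphi\ra$ from the continuous-time Birkhoff theorem, with ergodicity (P4) collapsing the conditional expectation on the invariant $\sigma$-algebra to $\la\varphi\ra$; and the change of variables $\int_I S\varphi(\omega,x/\vare)\,dx=\vare\int_{a/\vare}^{b/\vare}S\varphi(\omega,t)\,dt$ split at $0$, which turns the two one-sided limits into the interval-averaged form $|I|\la\varphi\ra$. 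You also correctly identify and dispatch the one real subtlety, namely that the exceptional null set must be independent of the interval $I$: once the two one-sided ergodic limits hold on a single full-measure set $\Omega_0$, the telescoping identity yields \eqref{eq:ergodic-thm} for every bounded open interval on that same $\Omega_0$, without any countable-density step. This is exactly the a.s.-uniform-in-$I$ version the paper needs for Lemma~\ref{L:defomega_0} and the quenched two-scale machinery in Appendix~\ref{appendix: two scale conv}.
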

As a rather direct consequence of Theorem \ref{thm:ergodic-thm} we obtain:
\begin{corollary}\label{C:weak11}
  Let $\omega_0\in\Omega$ and $\varphi:\Omega\to\R$ be measurable and essentially bounded. Assume that for the given sample $\omega_0$ and function $\varphi$, \eqref{eq:ergodic-thm} holds for any open, bounded interval $I\subset\R$. Then for any open, bounded set $O\subset\R^d$ and $u\in L^1(O)$, we have
  \begin{equation}\label{eq:ergodic-thm2}
    \lim\limits_{\vare\to 0}\int_O u(x)S\varphi(\omega_0,\tfrac{x_1}{\vare})\,dx\to \int_\Omega\int_O u(x)\varphi(\omega)\,dx\,d\mathbb P(\omega).
  \end{equation}
\end{corollary}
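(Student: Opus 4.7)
\textbf{Proof proposal for Corollary~\ref{C:weak11}.} The plan is a standard density argument that reduces the $d$-dimensional statement to the one-dimensional statement \eqref{eq:ergodic-thm} supplied by hypothesis. The key observation is that for test functions of product form $u(x)=\mathbf 1_I(x_1)\mathbf 1_B(\bar x)$, where $I\subset\R$ is an open bounded interval and $B\subset\R^{d-1}$ is a bounded measurable set with $I\times B\subset O$, Fubini's theorem yields
\begin{equation*}
  \int_O u(x)\,S\varphi(\omega_0,\tfrac{x_1}{\vare})\,dx
  \;=\;|B|\int_I S\varphi(\omega_0,\tfrac{x_1}{\vare})\,dx_1,
\end{equation*}
and the right-hand side converges to $|B|\,|I|\int_\Omega\varphi\,d\pp=\int_O u\,dx\cdot\int_\Omega\varphi\,d\pp$ by the hypothesis. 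By linearity, \eqref{eq:ergodic-thm2} therefore holds for every $u$ in the vector space $\mathcal V$ of finite linear combinations of such product indicators.

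Next, I would invoke density: since $O$ is a bounded Lipschitz domain, the space $\mathcal V$ is dense in $L^1(O)$ (any measurable subset of $O$ can be approximated from inside by finite disjoint unions of axis-aligned boxes, whence simple functions built from boxes are $L^1$-dense). Given $u\in L^1(O)$ and $\delta>0$, choose $u_\delta\in\mathcal V$ with $\|u-u_\delta\|_{L^1(O)}\leq\delta$. Using the uniform pointwise bound $\|S\varphi(\omega_0,\tfrac{\cdot}{\vare})\|_{L^\infty(\R)}\leq\|\varphi\|_{L^\infty(\Omega)}$ (which holds independently of $\vare$), one estimates
\begin{equation*}
  \bg|\int_O u\,S\varphi(\omega_0,\tfrac{x_1}{\vare})\,dx-\int_O u_\delta\,S\varphi(\omega_0,\tfrac{x_1}{\vare})\,dx\bg|
  \leq \|\varphi\|_{L^\infty(\Omega)}\,\delta,
\end{equation*}
and the analogous bound holds for the target $\int_\Omega\int_O(u-u_\delta)\varphi\,dx\,d\pp$.

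Combining the two bounds with the already-established convergence for $u_\delta\in\mathcal V$ via a standard $\varepsilon/3$ argument yields
\begin{equation*}
  \limsup_{\vare\to 0}\bg|\int_O u\,S\varphi(\omega_0,\tfrac{x_1}{\vare})\,dx-\int_\Omega\int_O u\varphi\,dx\,d\pp\bg|
  \leq 2\|\varphi\|_{L^\infty(\Omega)}\,\delta,
\end{equation*}
and sending $\delta\to 0$ concludes the proof. No step here is genuinely hard: the only point that requires a bit of care is ensuring that the density class $\mathcal V$ is chosen so that each element splits along the $x_1$-direction, so that the hypothesis on $\omega_0$ (which only concerns one-dimensional intervals) suffices. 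The essential boundedness of $\varphi$ is used in a crucial way to make the approximation uniform in $\vare$; without it, one would need additional equi-integrability control on $S\varphi(\omega_0,\tfrac{\cdot}{\vare})$.
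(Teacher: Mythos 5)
Your proof is correct; since the paper states Corollary~\ref{C:weak11} without supplying a proof (calling it ``a rather direct consequence'' of Theorem~\ref{thm:ergodic-thm}), your Fubini-plus-density argument is precisely the canonical realization of that assertion. Two small points worth recording: the $\vare$-uniform bound $\|S\varphi(\omega_0,\tfrac{\cdot}{\vare})\|_{L^\infty(\R)}\leq\|\varphi\|_{L^\infty(\Omega)}$ tacitly uses a pointwise bounded representative of $\varphi$ (for a merely essentially bounded $\varphi$ it holds only for $\mathbb P$-a.e.\ $\omega_0$, by measure preservation and Fubini), which is harmless here since every $\varphi\in\sD_\Omega$ to which the corollary is applied is genuinely bounded --- and indeed your closing sentence correctly identifies boundedness as the ingredient making the approximation uniform in $\vare$; also, the density of your class $\mathcal V$ in $L^1(O)$ requires only that $O$ be open (exhaust any open subset by disjoint dyadic boxes and truncate), not Lipschitz, which matches the corollary's hypothesis of an arbitrary open, bounded $O$.
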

Another ingredient that we need, in particular for analyzing the two-scale limits of gradients, is the \textit{stochastic derivative}. To that end we note that $\{U_{x_1}\}_{x_1\in\R}$, $U_{x_1}:L^2(\Omega)\to L^2(\Omega)$, $U_{x_1}\varphi(\omega):=\varphi(\tau_{x_1}\omega)$ defines a strongly continuous group of unitary operators. We denote by ${\mathscr H^1}(\Omega)$ the space of functions $\varphi\in L^2(\Omega)$ for which the limit
\begin{align}\label{limit in L2}
  \pt_{\omega}\varphi(\omega):=\lim_{h\to 0}\frac{\varphi(\tau_{h}\omega)-\varphi(\omega)}{h}
\end{align}
exists in $L^2(\Omega)$. For $\varphi\in{\mathscr H^1}(\Omega)$ we call $\pt_\omega\varphi$ the stochastic derivative of $\varphi$. Note that $\pt_\omega$ is the generator of the group $\{U_{x_1}\}_{x_1\in\R}$. It is a closed operator, and thus ${\mathscr H^1}(\Omega)$ with the norm
\begin{equation*}
  \|\varphi\|_{{\mathscr H^1}(\Omega)}:=\left(\int_\Omega |\varphi|^2+|\partial_\omega\varphi|^2\,d\mathbb{P}(\omega)\right)^\frac12
\end{equation*}
is a Hilbert space. By ergodicity we have (e.g.~see \cite{PapaVaradhan})
\begin{equation*}
  L^2_0(\Omega)=\operatorname{closure}\{\partial_\omega\varphi\,:\,\varphi\in{\mathscr H^1}(\Omega)\},
\end{equation*}
where the closure is taken in $L^2(\Omega)$, and $L^2_0(\Omega)$ denotes the space of functions in $L^2(\Omega)$ with mean zero. Note that we have this simple characterization, since we are in the one-dimensional case (i.e., $\{U_{x_1}\}_{x_1\in\R}$ is a one-parameter semigroup).
\medskip

\paragraph{Definition of stochastic two-scale convergence and two-scale test-functions.}
For the definition of two-scale convergence we need to specify a set of test-functions $\sD$ that is dense in $L^2(\Omega\times O)$ and that is the span of a countable set.
We use the countability to fix a common set $\Omega_0$ with $\mathbb P(\Omega_0)=1$ of samples $\omega_0$, for which the two-scale convergence and compactness results apply.

\begin{remark}
\normalfont
In the special case where $\Omega$ is a compact metric space, different constructions are possible and the space of test-functions can be extended.
\end{remark}

As we shall see, it is convenient to consider random variables $\varphi$ on $\Omega$, whose stationary extension $S\varphi(\omega_0,x_1)$ is smooth in $x_1$:
\begin{lemma}\label{L:sDinfty}
  There exists a countable set $\sD_\Omega^\infty$ consisting of bounded, measurable functions $\varphi:\Omega\to\R$ such that $\sD_\Omega^\infty$ is dense in $L^2(\Omega)$. In addition, for all $\varphi\in\sD^\infty_\Omega$ and $\mathbb P$-a.a.~$\omega_0\in\Omega$ we have
  \begin{equation}\label{eq:st110202002003433}
    S\varphi(\omega_0,\cdot)\in C^\infty(\R^d),\qquad \mathop{\operatorname{ess\,sup}}_{\omega_0\in\Omega}\sup_{x_1\in\R}(|\partial_{x_1}^nS\varphi(\omega_0,x_1)|)<\infty\text{ for all }n=0,1,2,\ldots.
  \end{equation}
  Furthermore,   $\sD^\infty_\Omega$ is also dense in ${\mathscr H^1}(\Omega)$ and for all $\varphi\in\sD_\Omega^\infty$ we have $\partial_\omega\varphi=\partial_{x_1}S\varphi(\cdot,0)$ $\mathbb P$-a.s..
\end{lemma}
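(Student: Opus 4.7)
\medskip

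\noindent\textbf{Plan of proof.} The idea is the standard construction via mollification of the stationary extension along the $x_1$-direction. Since $(\Omega,\mathcal F,\mathbb P)$ is separable, $L^2(\Omega)$ is separable; pick a countable family $\{\psi_j\}_{j\in\N}\subset L^\infty(\Omega)\cap L^2(\Omega)$ that is dense in $L^2(\Omega)$ (e.g.\ truncate a countable $L^2$-dense family by integer levels). Fix a nonnegative mollifier $\rho\in C_c^\infty(\R)$ with $\int\rho=1$ and set $\rho_k(t):=k\rho(kt)$. For each $j,k\in\N$ define
\begin{equation*}
\varphi_{j,k}(\omega):=\int_\R \rho_k(t)\,\psi_j(\tau_t\omega)\,dt,
\end{equation*}
and let $\sD_\Omega^\infty$ be the collection of all $\varphi_{j,k}$ together with their rational linear combinations. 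This set is countable.

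\medskip

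\noindent\textbf{Smoothness and boundedness.} By a direct change of variables, the stationary extension satisfies
\begin{equation*}
S\varphi_{j,k}(\omega,x_1)=\int_\R\rho_k(s-x_1)\psi_j(\tau_s\omega)\,ds,
\end{equation*}
so that, formally, $\partial_{x_1}^n S\varphi_{j,k}(\omega,x_1)=(-1)^n\int_\R \rho_k^{(n)}(s-x_1)\psi_j(\tau_s\omega)\,ds$, with uniform bound $\|\rho_k^{(n)}\|_{L^1(\R)}\|\psi_j\|_{L^\infty(\Omega)}$. By Fubini, for each $(j,k)$ there is $\Omega_{j,k}\subset\Omega$ of full measure on which $s\mapsto\psi_j(\tau_s\omega_0)$ is locally integrable, and on $\Omega_{j,k}$ the above integrals define a genuinely $C^\infty$ function of $x_1$. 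Intersecting the countably many full-measure sets produces a single $\Omega_0$ of full measure on which \eqref{eq:st110202002003433} holds for every element of $\sD_\Omega^\infty$.

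\medskip

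\noindent\textbf{Density in $L^2(\Omega)$ and in $\mathscr H^1(\Omega)$.} Since $\{U_{x_1}\}$ is a strongly continuous unitary group on $L^2(\Omega)$, the map $t\mapsto U_t\psi_j$ is continuous, and standard semigroup mollification gives $\varphi_{j,k}\to\psi_j$ in $L^2(\Omega)$ as $k\to\infty$. Combined with the $L^2$-density of $\{\psi_j\}$, this proves $L^2$-density of $\sD_\Omega^\infty$. For the $\mathscr H^1$-statement, one checks that for any $\psi\in L^2(\Omega)$ the mollification $\psi_{\rho_k}:=\int\rho_k(t)U_t\psi\,dt$ lies in $\mathscr H^1(\Omega)$, with stochastic derivative
\begin{equation*}
\partial_\omega\psi_{\rho_k}=-\int_\R\rho_k'(t)U_t\psi\,dt,
\end{equation*}
obtained by differentiating under the integral (the difference quotients converge in $L^2$ by dominated convergence, since $t\mapsto U_t\psi$ is continuous in $L^2$). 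If in addition $\psi\in\mathscr H^1(\Omega)$, then $\partial_\omega$ commutes with $U_t$, so $\partial_\omega\psi_{\rho_k}=(\partial_\omega\psi)_{\rho_k}\to\partial_\omega\psi$ in $L^2(\Omega)$; together with $\psi_{\rho_k}\to\psi$ in $L^2(\Omega)$ this gives convergence in $\mathscr H^1$. Using density of $\{\psi_j\}$ in $\mathscr H^1(\Omega)$ (which follows from $L^2$-density together with the fact that $\mathscr H^1$ is the domain of the generator and hence is the closure, in the graph norm, of any $L^2$-dense family closed under mollification), the $\mathscr H^1$-density of $\sD_\Omega^\infty$ follows. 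The identity $\partial_\omega\varphi=\partial_{x_1}S\varphi(\cdot,0)$ holds $\mathbb P$-a.s.\ for any $\varphi\in\sD_\Omega^\infty$ by direct differentiation of the integral formula for $S\varphi_{j,k}$ and the formula for $\partial_\omega\varphi_{j,k}$ just derived.

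\medskip

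\noindent\textbf{Main technical point.} The only delicate step is bookkeeping the exceptional sets so that the single full-measure set $\Omega_0$ makes \eqref{eq:st110202002003433} hold simultaneously for all countably many $\varphi\in\sD_\Omega^\infty$; this is routine once one works with the explicit integral representation of $S\varphi_{j,k}$ and invokes Fubini. Verifying $\mathscr H^1$-density (rather than mere $L^2$-density) is the other point that requires some care, but it reduces, as above, to the classical fact that mollification by the semigroup approximates in the graph norm of its generator.
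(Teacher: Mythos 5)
Your construction is essentially the paper's: you take a countable, $L^2$-dense family of bounded random variables, mollify along the group orbit, collect the countably many mollifications, and read the smoothness, uniform bounds, and $L^2$-density off the explicit integral representation. The $L^2$-density argument, the smoothness/boundedness bookkeeping, and the identity $\partial_\omega\varphi=\partial_{x_1}S\varphi(\cdot,0)$ are all correct.

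The $\mathscr H^1$-density step as written has a genuine error. You claim ``density of $\{\psi_j\}$ in $\mathscr H^1(\Omega)$,'' but $\{\psi_j\}$ was chosen only to be $L^2$-dense and bounded; there is no reason the $\psi_j$ belong to $\mathscr H^1(\Omega)$ at all, let alone form a dense subset of it, and the parenthetical justification does not apply to $\{\psi_j\}$ since that family is not closed under mollification. The fact you want to invoke applies to $\sD^\infty_\Omega$, not to $\{\psi_j\}$, and even there the clean argument is the two-parameter one: given $\varphi\in\mathscr H^1(\Omega)$ and $\delta>0$, first choose $k$ so that $\|\varphi-\varphi_{\rho_k}\|_{\mathscr H^1}\le \delta/2$ (using your observation that mollification converges in graph norm on $\mathscr H^1$); with this $k$ now fixed, use the estimate $\|\psi_{\rho_k}\|_{\mathscr H^1}\le\bigl(\|\rho_k\|_{L^1}+\|\rho_k'\|_{L^1}\bigr)\|\psi\|_{L^2}$ to pick $j$ with $\|\varphi_{\rho_k}-(\psi_j)_{\rho_k}\|_{\mathscr H^1}\le\bigl(\|\rho_k\|_{L^1}+\|\rho_k'\|_{L^1}\bigr)\|\varphi-\psi_j\|_{L^2}\le\delta/2$. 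You already have all the ingredients for this (the formula $\partial_\omega\psi_{\rho_k}=-\int\rho_k'\,U_t\psi\,dt$ gives exactly the needed $L^2\to\mathscr H^1$ bound), so the repair is local: replace the incorrect density claim for $\{\psi_j\}$ by this order-of-quantifiers argument, which is precisely what the paper does.
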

\begin{proof}
  Let $\eta\in C^\infty_c(\R)$ denote the standard mollifier, i.e.,
  \begin{equation*}
    \eta(t):=
    \begin{cases}
      C\exp(1/(t^2-1))&|t|<1,\\
      0&\text{else},
    \end{cases}
  \end{equation*}
  where $C$ is chosen such that $\int_\R\eta\,dt=1$. For each $k\in\N$ set $\eta_k(t):=k\eta(kt)$.

  \begin{enumerate}
  \item Let $\varphi\in L^2(\Omega)$. By Theorem~\ref{thm:ergodic-thm} there exists $\Omega_\varphi\subset\Omega$ with $\mathbb P(\Omega_\varphi)=1$ such that for all $\omega\in\Omega_{\varphi}$, the function $S\varphi(\omega_0,\cdot):\R\to\R$ is locally integrable. Hence, for any $\psi\in C^\infty_c(\R)$ the convolution
  \begin{equation*}
    \varphi*\psi:\Omega\to\R,\qquad     \varphi*\psi(\omega):=\begin{cases}
      \int_\R S\varphi(\omega,t)\psi(-t)\,dt&\text{if }\omega\in\Omega_\varphi,
      \\0&\text{else},
    \end{cases}
  \end{equation*}
  is well-defined and defines a measurable function with the property $S(\varphi*\psi)(\omega,\cdot)\in C^\infty(\R)$ for all $\omega\in\Omega_\varphi$.   Moreover, if $\varphi$ is bounded, then $\varphi*\psi$ satisfies \eqref{eq:st110202002003433}.

  We have $\partial_\omega(\varphi*\psi)=\varphi*\partial_{x_1}\psi$ on $\Omega_\varphi$, and thus
  \begin{equation*}
        \|\varphi*\psi\|_{L^2(\Omega)}\leq\|\psi\|_{L^1(\R)}\|\varphi\|_{L^2(\Omega)},\qquad         \|\partial_\omega(\varphi*\psi)\|_{L^2(\Omega)}\leq\|\partial_{x_1}\psi\|_{L^1(\R)}\|\varphi\|_{L^2(\Omega)}.
  \end{equation*}
This also implies that $\varphi*\psi\in {\mathscr H^1} (\Omega)$. Note that with $\psi=\eta_k$ we have
  \begin{equation}\label{eq:st11023398712}
    \|\varphi*\eta_k-\varphi\|_{L^2(\Omega)}^2\leq\int_\R\int_\Omega|\varphi(\tau_{x_1}\omega)-\varphi(\omega)|^2d\mathbb P(\omega)\eta_k(-x_1)\,dx_1.
  \end{equation}
  By the continuity of the shift on $L^2(\Omega)$ and since $(\eta_k)_k$ is a sequence of mollifiers, we deduce that $\varphi*\eta_k\to\varphi$ in $L^2(\Omega)$.

\item In this step we construct the set $\sD^\infty_\Omega$. Since $L^2(\Omega)$ is separable, there exist countably many bounded and measurable functions $\varphi^j:\Omega\to\R$, $j\in\N$ which form a dense subset of $L^2(\Omega)$. By mollifying each of these functions as described above, we obtain the countable family $\sD^\infty_\Omega:=\{\varphi^{j}_k:=\varphi^j*\eta_k\,:\,j,k\in\N\}$. By construction $\sD^\infty_\Omega$ is dense in $L^2(\Omega)$ and each $\varphi^j_k$ satisfies \eqref{eq:st110202002003433}.
\item We argue that $\sD^\infty_\Omega$ is dense in ${\mathscr H^1} (\Omega)$.   To that end let $\varphi\in {\mathscr H^1} (\Omega)$ and $\delta>0$. Choose $k\in\N$ large enough such that
  \begin{equation*}
    \|\varphi-\varphi*\eta_k\|_{L^2(\Omega)}+\|\partial_\omega\varphi-\partial_\omega(\varphi*\eta_k)\|_{L^2(\Omega)}\leq\delta/2.
  \end{equation*}
  Note that
  \begin{equation*}
    \begin{aligned}
      &\|\varphi-\varphi^j_k\|_{L^2(\Omega)}+    \|\partial_\omega\varphi-\partial_\omega\varphi^j_k\|_{L^2(\Omega)}\\
      \leq\,&     \|\varphi-\varphi*\eta_k\|_{L^2(\Omega)}+\|(\varphi-\varphi^j)*\eta_k\|_{L^2(\Omega)}\\
        &+\|\partial_\omega\varphi-\partial_\omega(\varphi*\eta_k)\|_{L^2(\Omega)}+
        \|\partial_\omega(\varphi*\eta_k)-\partial_\omega\varphi^j_k\|_{L^2(\Omega)}\\
        \leq\,&\delta/2+\|\varphi-\varphi^j\|_{L^2(\Omega)}(\|\eta_k\|_{L^1(\R)}+\|\partial_{x_1}\eta_k\|_{L^1(\R)}).
    \end{aligned}
  \end{equation*}
  Since $\{\varphi^j\}_{j\in\N}$ is dense in $L^2(\Omega)$, there exists $j\in\N$ such that the right-hand side is smaller then $\delta$. We conclude that $\sD^\infty_\Omega$ is dense in ${\mathscr H^1} (\Omega)$.
  \end{enumerate}
\end{proof}

For the definition of $\sD$ we introduce the sets $\sD_\Omega$ and $\sD_O$ with the following properties:
\begin{itemize}
\item $\sD_\Omega$ is a countable set of bounded, measurable functions on $(\Omega,\mathcal F,\mathbb P)$ that is dense in $L^2(\Omega)$.
\item $\sD_O\subset C(\overline O)$ is a countable set such that $\sD_O\cap C_c^\infty(O)$ is dense in $L^2(O)$ and $\sD_O$ contains the identity $\mathbf 1_O\equiv 1$.
\end{itemize}
We now define the set $\sD$ as the span of the $\mathbb{Q}$-linear span of simple tensor products of functions in $\sD_\Omega$ and $\sD_O$, i.e.,
\begin{gather*}
  \sD:=\operatorname{span}\sD_0=\operatorname{span}\sA,\qquad\text{where}\\
\sA:=\Big\{\varphi(\omega,x)=\varphi_\Omega(\omega)\varphi_O(x):\varphi_\Omega\in\sD_\Omega,\,\varphi^j_O\in\sD_O\Big\},\qquad\sD_0:=\Big\{\sum_{j=1}^m \lambda_j \varphi_j,\,\lambda_j\in \mathbb{Q},\,\varphi_j\in\sA\,\Big\}.
\end{gather*}
We note that by construction, $\sD$ is a dense subset of $L^2(\Omega\times O)$. We use $\sD$  as the space of two-scale test functions in our definition of stochastic two-scale convergence. In particular, we note that for any $\varphi\in \sD$ the oscillatory functions \def\unf{\mathcal T_{\vare}}
\begin{align*}
  (\unf^*\varphi)(\omega,x):=\varphi(\tau_{\frac{x_1}{\vare}}\omega,x),
\end{align*}
is measurable on $\Omega\times O$.

A slightly delicate point in stochastic two-scale convergence is the construction of a set $\Omega_0\subset\Omega$ with $\mathbb P(\Omega_0)=1$ on which the two-scale statements hold. In particular, we require that for all $\varphi\in\sD$ and $\omega_0\in\Omega_0$ the oscillatory function $\unf^*\varphi(\omega_0,\cdot)$ is well-defined and weakly convergent. To achieve this, we define  $\Omega_0$ according to the following lemma:
\begin{lemma}[The set $\Omega_0$]\label{L:defomega_0}
  There exists a measurable set $\Omega_0\subset\Omega$ with $\mathbb P(\Omega_0)=1$ s.t. for all $\omega_0\in\Omega_0$, all open, bounded intervalls $I\subset\R$, all $\varphi^1,\ldots,\varphi^N\in\sD_\Omega$ with $N\in\N$ we have
  \begin{equation}\label{eq:ergodic44}
    \lim\limits_{\vare\to 0}\int_IS(\varphi^1\varphi^2\cdots\varphi^N)(\omega_0,\tfrac{x_1}{\vare})\,dx_1=\int_\Omega\varphi^1\varphi^2\cdots\varphi^N\,d\mathbb P.
  \end{equation}
\end{lemma}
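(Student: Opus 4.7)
The plan is to reduce the lemma to a countable application of Birkhoff's ergodic theorem (Theorem~\ref{thm:ergodic-thm}), exploiting that $\sD_\Omega$ is countable and consists of bounded measurable random variables.

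First I would introduce the auxiliary set
\[
\mathcal{P} := \bigcup_{N\in\N}\Big\{\varphi^1\varphi^2\cdots\varphi^N : \varphi^j\in\sD_\Omega\text{ for }j=1,\ldots,N\Big\}.
\]
Since $\sD_\Omega$ is countable, $\mathcal{P}$ is a countable union of countable sets, hence countable. Moreover, each $\psi\in\mathcal{P}$ is a finite product of bounded measurable functions on $\Omega$, so $\psi$ is bounded and measurable, in particular $\psi\in L^1(\Omega)$. Note that the reason we must pass from $\sD_\Omega$ to the larger family $\mathcal{P}$ is that $\sD_\Omega$ need not be closed under pointwise products; but the closure under finite products preserves countability, which is all we need.

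Next, for each $\psi\in\mathcal{P}$, Theorem~\ref{thm:ergodic-thm} yields a measurable set $\Omega_\psi\subset\Omega$ with $\mathbb{P}(\Omega_\psi)=1$ such that for all $\omega_0\in\Omega_\psi$ and all open bounded intervals $I\subset\R$,
\[
\lim_{\vare\to 0}\int_I S\psi(\omega_0,\tfrac{x_1}{\vare})\,dx_1 = |I|\int_\Omega \psi\,d\mathbb{P}.
\]
Defining
\[
\Omega_0 := \bigcap_{\psi\in\mathcal{P}} \Omega_\psi,
\]
we obtain a measurable set with $\mathbb{P}(\Omega_0)=1$ as a countable intersection of sets of full measure.

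Finally, for any $\omega_0\in\Omega_0$ and any $N\in\N$, $\varphi^1,\ldots,\varphi^N\in\sD_\Omega$, the product $\psi:=\varphi^1\cdots\varphi^N$ lies in $\mathcal{P}$, so $\omega_0\in\Omega_\psi$ and the desired convergence~\eqref{eq:ergodic44} follows immediately from the displayed identity above. There is no real obstacle in this argument; the only point requiring care is to perform the countable intersection over $\mathcal{P}$ rather than over $\sD_\Omega$, so that products of test functions encountered later (in particular in the proofs of two-scale compactness) are automatically covered by the same universal set $\Omega_0$.
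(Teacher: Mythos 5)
Your proof is correct and follows essentially the same route as the paper's: both arguments observe that the set of finite products of elements of $\sD_\Omega$ is countable, apply Birkhoff's ergodic theorem (Theorem~\ref{thm:ergodic-thm}) to each such product, and take the countable intersection of the resulting full-measure sets. (Note in passing that your intermediate display, which carries the factor $|I|$ coming directly from Theorem~\ref{thm:ergodic-thm}, is the correct formula; the statement of the lemma as written in the paper omits this factor, which appears to be a typo.)
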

\begin{proof}
  Since $\sD_\Omega$ is countable, there are only countably many function of the from $\varphi=\varphi^1\varphi^2\cdots\varphi^N$ with $\varphi^i\in\sD_\Omega$. For each such $\varphi$ the limt \eqref{eq:ergodic44} holds for all $\omega\in\Omega\setminus E_\varphi$ where $E_\varphi$ is a null-set. Since the countable union of null-sets is again a null-set, the statement follows.
\end{proof}
\begin{lemma}\label{L:admis}
  Let $\Omega_0$ be as in Lemma~\ref{L:defomega_0}. Let $\varphi,\varphi'\in\sD$. Then for all $\omega_0\in\Omega_0$ we have
  \begin{align*}
    \lim\limits_{\vare\to 0}\int_O\unf^*(\varphi\varphi')(\omega_0,x)dx&=\int_\Omega\int_O(\varphi\varphi')(\omega,x)\,dx\,d\mathbb P(\omega).
  \end{align*}
\end{lemma}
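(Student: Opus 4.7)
The plan is to reduce the claim to the one-scale ergodic statement encoded in Lemma~\ref{L:defomega_0} by means of Corollary~\ref{C:weak11}, exploiting the tensor-product structure of $\sD$.

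First I would linearize: since $\varphi,\varphi'\in\sD=\operatorname{span}\sA$ are finite $\mathbb Q$-linear combinations of simple tensors $\psi_\Omega(\omega)\psi_O(x)$ with $\psi_\Omega\in\sD_\Omega$ and $\psi_O\in\sD_O$, the product $\varphi\varphi'$ expands as a finite sum of terms of the form $(\psi_\Omega^1\psi_\Omega^2)(\omega)(\psi_O^1\psi_O^2)(x)$. Both sides of the identity to be proved are bilinear in $(\varphi,\varphi')$, so it suffices to verify the statement for a single such summand.

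Next I would fix one summand and set $\chi:=\psi_\Omega^1\psi_\Omega^2$ and $u:=\psi_O^1\psi_O^2$. Then $\chi$ is bounded and measurable as a product of bounded measurable functions, while $u\in C(\overline O)\subset L^1(O)$ because $\sD_O\subset C(\overline O)$ and $O$ is bounded. For $\omega_0\in\Omega_0$ as in Lemma~\ref{L:defomega_0}, the lemma applied to the $2$-fold product $\chi$ guarantees that the ergodic identity \eqref{eq:ergodic-thm} holds for the pair $(\omega_0,\chi)$ on every open bounded interval $I\subset\R$. This is exactly the hypothesis of Corollary~\ref{C:weak11}, which therefore yields
\begin{equation*}
  \lim_{\vare\to 0}\int_O u(x)\,\chi(\tau_{\frac{x_1}{\vare}}\omega_0)\,dx=\int_\Omega\int_O u(x)\chi(\omega)\,dx\,d\mathbb P(\omega).
\end{equation*}
Since $\unf^*(\varphi\varphi')(\omega_0,x)$ on the chosen summand equals precisely $u(x)\,\chi(\tau_{x_1/\vare}\omega_0)$, this is the desired convergence; summing the finitely many contributions closes the argument.

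No serious obstacle arises, because Lemma~\ref{L:defomega_0} was crafted with exactly this application in mind: its common null set is chosen to accommodate all finite products of elements of $\sD_\Omega$, which is what makes pointwise multiplication of two-scale test functions compatible with quenched two-scale testing. The only point to watch is that one does not step outside the countable admissible family $\sD_\Omega$ when forming the product; since we only need the $N=2$ case of that lemma, this is automatic, and the proof is essentially bookkeeping.
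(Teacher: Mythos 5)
Your proof is correct and follows essentially the same route as the paper: expand $\varphi$ and $\varphi'$ as finite linear combinations of simple tensors, reduce by bilinearity to a single product $(\psi_\Omega^1\psi_\Omega^2)(\psi_O^1\psi_O^2)$, and conclude via Lemma~\ref{L:defomega_0} (applied with $N=2$) together with Corollary~\ref{C:weak11}. The bookkeeping and the identification of the needed hypotheses (boundedness of the $\Omega$-factor, integrability of the $O$-factor) match the paper's argument exactly.
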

\begin{proof}
  By definition of $\sD$ there exists $N\in\N$, $c_j,c_j'\in\R$, $\varphi_{\Omega,j},\varphi_{\Omega',j}\in\sD_\Omega$, $\varphi_{O,j},\varphi_{O,j}'\in\sD_O$ such that
  \begin{equation*}
    \varphi=\sum_{j=1}^Nc_j\varphi_{\Omega,j}\varphi_{O,j}\text{ and }\varphi'=\sum_{j=1}^Nc_j'\varphi_{\Omega,j}'\varphi_{O,j}',
  \end{equation*}
  and thus
  \begin{align*}
    \int_O\unf^*(\varphi\varphi')(\omega_0,x)dx
    =
    \sum_{j,j'=1}^Nc_jc_{j'}'\int_O\varphi_{O,j}(x)\varphi_{O,j'}(x)S(\varphi_{\Omega,j}\varphi_{\Omega,j'}')(\omega_0,\tfrac{x_1}{\vare})\,dx
  \end{align*}
  By Lemma~\ref{L:defomega_0} and Corollary~\ref{C:weak11}, each of the finitely many integrals converge, i.e.,
  \begin{equation*}
    \int_O\varphi_{O,j}(x)\varphi_{O,j'}(x)S(\varphi_{\Omega,j}\varphi_{\Omega,j'}')(\omega_0,\tfrac{x_1}{\vare})\,dx\to
    \int_\Omega\int_O\varphi_{O,j}(x)\varphi_{O,j'}(x)\varphi_{\Omega,j}(\omega)\varphi_{\Omega,j'}'(\omega)\,dx\,d\mathbb P(\omega),
  \end{equation*}
  and thus the claim follows.
\end{proof}

\begin{definition}[Stochastic two-scale convergence, cf.~\cite{Zhikov2006,Heida2017b} and {\cite[Definition 3.6]{HNV22}}]
\label{def:two-scale-conv}
Let $(u^\vare)_\vare$ be a sequence in $L^{2}(O)$, and let $\omega_0\in\Omega_0$ be fixed.  We say that $u^\vare$ converges weakly $\omega_0$-two-scale to $u\in L^2(\Omega\times O)$, and write
\begin{equation*}
  u^\vare\twos_{\omega_0}u\text{ in }L^2(\Omega\times O),
\end{equation*}
if the sequence $u^\vare$ is bounded in $L^2(O)$,
and for all $\varphi\in \sD$ we have
\begin{equation}
  \lim_{\vare\to0}\int_{O}u^\vare(x)(\unf^*\varphi)(\omega_0,x)\,dx=\int_\Omega\int_{O}u(x,\omega)\varphi(\omega,x)\,dx\,d\mathbb P(\omega).\label{eq:def-quenched-two-scale}
\end{equation}
Furthermore, we say that $u^\vare$ converges strongly $\omega_0$-two-scale to $u\in L^2(\Omega\times O)$, and write
write
\begin{equation*}
  u^\vare\twoss_{\omega_0}u\text{ in }L^2(\Omega\times O),
\end{equation*}
if $u^\vare\twos_{\omega_0} u$ in $L^2(\Omega\times O)$ and
\begin{equation}
  \lim_{\vare\to0}\int_{O}u^\vare\varphi^\vare\,dx=\int_\Omega\int_{O}u(x,\omega)\varphi(\omega,x)\,dx\,d\mathbb P(\omega),\label{eq:def-quenched-two-scale-strong}
\end{equation}
for any sequence $(\varphi^\vare)_\vare\subset L^2(\Omega\times O)$ with $\varphi^\vare\twos_{\omega_0}\varphi$ in $L^2(\Omega\times O)$. For sequences of functions with values in $\R^n$ we define weak and strong two-scale convergence componentwise.
\end{definition}

\begin{proposition}\label{P:twoscale}
  The following holds for all $\omega_0\in\Omega_0$.
  \begin{enumerate}[label=(\alph*)]
  \item \label{P:twoscale:comp} (Compactness). Let $(u^\vare)_\vare$ be a bounded sequence in $L^2(O)$. Then there exists a subsequence (still denoted by $\vare$) and $u\in L^2(\Omega\times O)$ such that $u^\vare\twos_{\omega_0}u$ and
    \begin{equation}
      \| u\|_{L^2(\Omega\times O)}\leq \liminf_{\vare\to0}\|u^\vare\|_{L^{2}(O)}.\,\label{eq:two-scale-limit-estimate}
    \end{equation}
  \item (Oscillating test-functions strongly two-scale converge). Let $\varphi\in\sD$. Then the sequence $(\varphi^\vare)_\vare$ with $\varphi^\vare(x):=\unf^*\varphi(\omega_0,x)$ strongly two-scale converges to $\varphi$ in $L^2(\Omega\times O)$ and $\|\varphi^\vare\|_{L^2(O)}\to\|\varphi\|_{L^2(\Omega\times\Omega)}$.
  \item \label{P:twoscale:weakmean} (Weak two-scale convergence implies weak convergence). $u^\vare\twos_{\omega_0}u$ two-scale in $L^2(\Omega\times O)$ implies $u^\vare\rightharpoonup \int_\Omega u(\omega,\cdot)\,d\mathbb P(\omega)$ weakly in $L^2(O)$.
  \item \label{P:twoscale:strong}(Characterization of strong two-scale). $u^\vare\twoss_{\omega_0}u$ strongly two-scale in $L^2(\Omega\times O)$ holds, if and only if $u^\vare\twos_{\omega_0}u$ two-scale in $L^2(\Omega\times O)$ and $\|u^\vare\|_{L^2(O)}\to \|u\|_{L^2(\Omega\times O)}$.
  \item \label{P:twoscale:strongimpliestrong}(Strong convergence implies strong two-scale convergence). Let $u^\vare\to u$ strongly in $L^2(O)$ and let $\varphi\in\sD$. Set $v^\vare(x):=u^\vare(x)\unf^*\varphi(\omega_0,x)$. Then $v^\vare\twoss_{\omega_0}u\varphi$ strongly two-scale in $L^2(\Omega\times O)$.
  \end{enumerate}
\end{proposition}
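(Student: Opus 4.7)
\textbf{Proof proposal for Proposition~\ref{P:twoscale}.}

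For part~\ref{P:twoscale:comp} I would define, for each fixed $\omega_0\in\Omega_0$, the family of linear functionals $L^\vare\colon\sD\to\R$ by $L^\vare(\varphi):=\int_O u^\vare(x)(\unf^*\varphi)(\omega_0,x)\,dx$. Applying Cauchy--Schwarz together with Lemma~\ref{L:admis} (with $\varphi=\varphi'$) gives $|L^\vare(\varphi)|\le \|u^\vare\|_{L^2(O)}\,\|\unf^*\varphi\|_{L^2(O)}$ with $\|\unf^*\varphi\|_{L^2(O)}\to\|\varphi\|_{L^2(\Omega\times O)}$. Since the countable $\mathbb Q$-linear set $\sD_0$ is dense in $L^2(\Omega\times O)$, a Cantor diagonal extraction produces a subsequence along which $L^\vare(\varphi)\to L(\varphi)$ for every $\varphi\in\sD_0$. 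Linearity and the estimate extend $L$ by continuity to a bounded functional on $L^2(\Omega\times O)$; Riesz representation produces $u\in L^2(\Omega\times O)$ with $L(\varphi)=\int_{\Omega\times O}u\varphi$, and the lower semicontinuity bound \eqref{eq:two-scale-limit-estimate} is immediate from the passage to the limit in the estimate on $|L^\vare(\varphi)|$.

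Part~(b) is obtained from Lemma~\ref{L:admis}: weak $\omega_0$-two-scale convergence $\unf^*\varphi\twos_{\omega_0}\varphi$ is exactly what the lemma asserts when tested against arbitrary $\psi\in\sD$, while applying the lemma with $\varphi'=\varphi$ gives the norm convergence $\|\unf^*\varphi\|_{L^2(O)}\to\|\varphi\|_{L^2(\Omega\times O)}$; together these yield strong two-scale convergence via part~(d). For part~\ref{P:twoscale:weakmean}, the weak $L^2(O)$ limit is identified by testing against $\psi\in\sD_O\cap C_c^\infty(O)$: the simple two-scale test function $\mathbf 1_\Omega\otimes\psi$ lies in $\sD$, so the definition yields $\int_O u^\vare\psi\to\int_O\psi(x)\bigl(\int_\Omega u(\omega,x)\,d\mathbb P\bigr)\,dx$, which extends by density to all $\psi\in L^2(O)$.

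The core of the argument is part~\ref{P:twoscale:strong}. One direction (strong $\Rightarrow$ norm convergence) follows by taking $\varphi^\vare:=u^\vare$ as admissible test sequence, giving $\|u^\vare\|_{L^2(O)}^2\to\|u\|_{L^2(\Omega\times O)}^2$. For the converse I plan the standard ``expand the square" trick: for arbitrary $\varphi\in\sD$, by Lemma~\ref{L:admis} one has
\begin{equation*}
  \lim_{\vare\to 0}\|u^\vare-\unf^*\varphi\|_{L^2(O)}^2
  =\|u\|_{L^2(\Omega\times O)}^2-2\langle u,\varphi\rangle+\|\varphi\|_{L^2(\Omega\times O)}^2
  =\|u-\varphi\|_{L^2(\Omega\times O)}^2.
\end{equation*}
Given any test sequence $\psi^\vare\twos_{\omega_0}\psi$ with $M:=\sup_\vare\|\psi^\vare\|_{L^2(O)}<\infty$, split $\int u^\vare\psi^\vare=\int(u^\vare-\unf^*\varphi)\psi^\vare+\int\unf^*\varphi\cdot\psi^\vare$; the second term converges to $\int_{\Omega\times O}\varphi\psi$ by the weak two-scale convergence of $\psi^\vare$, while Cauchy--Schwarz bounds the first term by $\|u-\varphi\|_{L^2(\Omega\times O)}M+o(1)$. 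Since $\sD$ is dense in $L^2(\Omega\times O)$, choosing $\varphi$ close to $u$ yields $\int u^\vare\psi^\vare\to\int_{\Omega\times O}u\psi$. Finally, part~\ref{P:twoscale:strongimpliestrong} follows by combining weak two-scale convergence of $v^\vare$ (which reduces, via strong $L^2$-convergence of $u^\vare$, to showing $\unf^*(\varphi\psi)(\omega_0,\cdot)\rightharpoonup\int_\Omega\varphi\psi\,d\mathbb P$ weakly in $L^2(O)$ for all $\psi\in\sD$) with the norm convergence $\|v^\vare\|_{L^2(O)}^2=\int(u^\vare)^2\unf^*(\varphi^2)\to\|u\varphi\|_{L^2(\Omega\times O)}^2$, then invoking~\ref{P:twoscale:strong}.

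The main technical obstacle will be part~\ref{P:twoscale:strongimpliestrong}: although $\varphi,\psi\in\sD$, the pointwise product $\varphi\psi$ need not belong to $\sD$, so I cannot directly apply Lemma~\ref{L:admis} to $\unf^*(\varphi\psi)$. The workaround is to expand $\varphi=\sum c_j\varphi_{\Omega,j}\varphi_{O,j}$ and $\psi$ analogously, use bilinearity to reduce to a finite sum of integrals of the form $\int u^\vare\,\varphi_{O,j}\varphi_{O,j'}\,S(\varphi_{\Omega,j}\varphi_{\Omega,j'})(\omega_0,x_1/\vare)\,dx$, and invoke Corollary~\ref{C:weak11} on each summand (the products $\varphi_{\Omega,j}\varphi_{\Omega,j'}$ are bounded, and the limit in \eqref{eq:ergodic-thm} holds $\mathbb P$-a.s.~for every such product by the construction of $\Omega_0$ in Lemma~\ref{L:defomega_0}). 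All other density/approximation issues are routine given that $\sD$ is countable-spanned and dense in $L^2(\Omega\times O)$.
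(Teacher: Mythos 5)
Your proposal is correct and follows essentially the same route as the paper: density of the countable test-function class $\sD$, Lemma~\ref{L:admis} and Corollary~\ref{C:weak11} for the oscillating integrals, and the expand-the-square argument for the strong two-scale characterization in part~\ref{P:twoscale:strong}. The only differences are cosmetic — for part~\ref{P:twoscale:comp} the paper simply cites \cite[Lemma~3.7]{HNV22} whereas you supply the standard diagonal/Riesz argument, and for part~\ref{P:twoscale:strongimpliestrong} you spell out the tensor-decomposition step that the paper only implicitly relies on via Lemma~\ref{L:defomega_0} and Corollary~\ref{C:weak11}.
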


\begin{proof}
  \begin{enumerate}[label=(\alph*)]
  \item See \cite[Lemma 3.7]{HNV22}.
  \item This directly follows from Lemma~\ref{L:admis} and the definition of two-scale convergence.
  \item Since $(u^\vare)_\vare$ is bounded in $L^2(O)$ and $\sD_O\subset L^2(O)$ is dense, it suffices to show that $\int_Ou^\vare\varphi\,dx\to \int_O\big(\int_\Omega u(\omega,x)\,d\mathbb P(\omega)\big)\varphi(x)\,dx$ for all $\varphi\in\sD_O$. The latter follows since any $\varphi\in\sD_O$ is also an element of $\sD$ (the identity function is assumed to belong to $\sD_\Omega$) and satisfies $\varphi=\unf^*\varphi$.
  \item The direction ``$\Rightarrow$'' is trivial. The argument for ``$\Leftarrow$'' is as follows:    By density there exists $u^\delta\in\sD$ such that $\|u^\delta-u\|_{L^2(\Omega\times O)}\leq\delta$. Set $u^{\delta,\vare}(x):=\unf^*u^\delta(\omega_0,\cdot)$. Then $u^{\delta,\vare}$ strongly two-scale converges to $u^\delta$. Moreover, Lemma~\ref{L:admis} implies that $\|u^{\delta,\vare}\|_{L^2(O)}\to \|u^\delta\|_{L^2(\Omega\times O)}$. Let $v^\vare$ converge weakly two-scale to $v$. Then
    \begin{align*}
      \int_O u^\vare v^\vare\,dx=      &\int_O u^{\delta,\vare} v^\vare-\int_O(u^{\delta,\vare}-u^\vare)v^\vare\\
      =&\int_O v^\vare\unf^* u^\delta(\omega_0,x)\,dx-\int_O(u^{\delta,\vare}-u^\vare)v^\vare.
    \end{align*}
    As $\vare\to0$, the first term on the right-hand side converges to $\int_\Omega\int_O vu^\delta\,dx\,d\mathbb P(\omega)$. Since $(v^\vare)_\vare$ is bounded in $L^2(O)$, we conclude that for some $C>0$ we have
    \begin{equation}\label{eq:st10043488}
      \limsup\limits_{\vare\to 0}\big|\int_O u^\vare v^\vare\,dx-\int_\Omega\int_O uv\,dx\,d\mathbb P|\leq C      \limsup\limits_{\vare\to 0}\|u^{\delta,\vare}-u^\vare\|_{L^2(O)}.
    \end{equation}
    By expanding the square we have
    \begin{equation*}
      \|u^{\delta,\vare}-u^\vare\|_{L^2(O)}^2=\|u^{\delta,\vare}\|_{L^2(O)}^2+\|u^\vare\|_{L^2(O)}^2-2\int_O u^{\delta,\vare} u^\vare\,dx.
    \end{equation*}
    In all three terms we can pass to the limit $\vare\to 0$ and obtain
    \begin{equation*}
      \lim\limits_{\vare\to 0}\|u^{\delta,\vare}-u^\vare\|_{L^2(O)}=\|u^{\delta}-u\|_{L^2(\Omega\times O)}\leq \delta.
    \end{equation*}
    We may combine this with \eqref{eq:st10043488} and pass to the limit $\delta\to 0$. The claim follows.
  \item First note that $v^\vare\twos_{\omega_0} v:=u\varphi$ weakly two-scale. To prove strong two-scale convergence, we argue that $\|v^\vare\|_{L^2(O)}\to \|v\|_{L^2(\Omega\times O)}$. Note that
    \begin{equation*}
      \|v^\vare\|_{L^2(O)}^2=\int_O |u^\vare|^2\unf^*(\varphi^2)(\omega_0,x)\,dx=\int_O |u|^2\unf^*(\varphi^2)(\omega_0,x)\,dx+\int_O(|u^\vare|^2-|u|^2)\unf^*(\varphi^2)(\omega_0,x)\,dx.
    \end{equation*}
    Since $|u|^2\in L^1(O)$, the first term on the right-hand side converges to $\int_\Omega\int_O|u\varphi|^2\,dx\,d\mathbb P(\omega)$ thanks to Corollary~\ref{C:weak11}, Lemma~\ref{L:defomega_0}, and Lemma~\ref{L:admis}. On the other hand the we have (since $\varphi$ is bounded)
    \begin{equation*}
      |\int_O(|u^\vare|^2-|u|^2)\unf^*(\varphi^2)(\omega_0,x)\,dx|\leq \|\varphi^2\|_{L^\infty(\Omega\times O)}\|u^\vare+u\|_{L^2(O)}\|u^\vare-u\|_{L^2(O)}\to 0.
    \end{equation*}
  \end{enumerate}
\end{proof}

\begin{lemma}[Approximation w.r.t.~strong two-scale convergence]\label{L:approximation}
  For all $\omega_0\in\Omega_0$ and every $u\in L^2(\Omega\times O)$ there exists a sequence $(u^\vare)_\vare\subset C^\infty_c(O)$ such that $u^\vare\twoss_{\omega_0}u$ strongly two-scale in $L^2(\Omega\times O)$.
\end{lemma}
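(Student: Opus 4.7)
The plan is to prove the lemma by first establishing the result on a well-chosen countable dense subset of $L^2(\Omega\times O)$ whose elements have smooth and compactly supported realizations, and then extending to general $u\in L^2(\Omega\times O)$ by a diagonal extraction. Throughout, $\omega_0\in\Omega_0$ is fixed.

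The first step is to exhibit the dense subset. By Lemma~\ref{L:sDinfty}, there is a countable family $\sD_\Omega^\infty$ of bounded measurable random variables that is dense in $L^2(\Omega)$ and such that, for every $\varphi_\Omega\in\sD_\Omega^\infty$ and every $\omega_0\in\Omega$ outside an exceptional null set, the stationary extension $S\varphi_\Omega(\omega_0,\cdot)$ is smooth on $\R$ with all derivatives uniformly bounded. Since the choice of $\sD_\Omega$ underlying Definition~\ref{def:two-scale-conv} is free up to density, I may enlarge it so that $\sD_\Omega^\infty\subset\sD_\Omega$ (this only shrinks $\Omega_0$ by a null set). Define
\[
\sD_*:=\operatorname{span}_{\mathbb Q}\Big\{\varphi_\Omega(\omega)\varphi_O(x)\,:\,\varphi_\Omega\in\sD_\Omega^\infty,\ \varphi_O\in\sD_O\cap C^\infty_c(O)\Big\}.
\]
Because $\sD_\Omega^\infty$ is dense in $L^2(\Omega)$ and $\sD_O\cap C^\infty_c(O)$ is dense in $L^2(O)$, tensor density implies that $\sD_*$ is dense in $L^2(\Omega\times O)$.

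The second step handles $\varphi\in\sD_*$. For such $\varphi=\sum_{j=1}^N c_j\varphi_{\Omega,j}\varphi_{O,j}$, set
\[
u^\vare_\varphi(x):=\sum_{j=1}^N c_j\,\varphi_{O,j}(x)\,S\varphi_{\Omega,j}(\omega_0,\tfrac{x_1}{\vare}).
\]
By the smoothness and uniform-bound property from Lemma~\ref{L:sDinfty}, each summand belongs to $C^\infty_c(O)$, hence $u^\vare_\varphi\in C^\infty_c(O)$. Viewing each $\varphi_{\Omega,j}$ as an element of $\sD$ via the constant factor $\mathbf 1_O\in\sD_O$, Proposition~\ref{P:twoscale}\ref{P:twoscale:strongimpliestrong} (applied to the strongly $L^2$-convergent constant sequence $\varphi_{O,j}$) yields $\varphi_{O,j}\,\unf^*\varphi_{\Omega,j}(\omega_0,\cdot)\twoss_{\omega_0}\varphi_{O,j}\varphi_{\Omega,j}$ strongly two-scale. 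Summing, $u^\vare_\varphi\twoss_{\omega_0}\varphi$ in $L^2(\Omega\times O)$.

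The third step is the diagonal extraction. Given arbitrary $u\in L^2(\Omega\times O)$, pick $(\varphi^k)_{k\in\N}\subset\sD_*$ with $\varphi^k\to u$ in $L^2(\Omega\times O)$, and let $(u^{k,\vare})_\vare\subset C^\infty_c(O)$ be the sequence of Step~2 for $\varphi^k$. Using the characterization of strong two-scale convergence in Proposition~\ref{P:twoscale}\ref{P:twoscale:strong}, for each $k$ we have both $\|u^{k,\vare}\|_{L^2(O)}\to\|\varphi^k\|_{L^2(\Omega\times O)}$ and $\int_O u^{k,\vare}\unf^*\psi(\omega_0,\cdot)\to\int_\Omega\!\int_O\varphi^k\psi$ for every $\psi$ in the countable set $\sD_0$. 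Enumerating $\sD_0=\{\psi^\ell\}_{\ell\in\N}$, standard diagonal extraction furnishes a sequence $\vare\mapsto k(\vare)$ with $k(\vare)\to\infty$ as $\vare\to 0$ such that $u^\vare:=u^{k(\vare),\vare}$ satisfies $\|u^\vare\|_{L^2(O)}\to\|u\|_{L^2(\Omega\times O)}$ and $\int_O u^\vare \unf^*\psi^\ell(\omega_0,\cdot)\,dx\to\int_\Omega\!\int_O u\psi^\ell$ for every $\ell$; together with the uniform $L^2$-bound inherited from $\|u^\vare\|_{L^2(O)}\to\|u\|_{L^2(\Omega\times O)}$, density of $\sD$ in $L^2(\Omega\times O)$ promotes this to weak two-scale convergence $u^\vare\twos_{\omega_0}u$, and the norm convergence then gives $u^\vare\twoss_{\omega_0}u$ by Proposition~\ref{P:twoscale}\ref{P:twoscale:strong}.

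There is no serious analytic obstacle: the smooth-stationary-extension set $\sD_\Omega^\infty$ from Lemma~\ref{L:sDinfty} does the work that in the periodic case is done by trigonometric polynomials, and the rest is a textbook diagonal-approximation argument. The only mildly delicate point is the bookkeeping of the diagonal: one must simultaneously track the countable family of two-scale test functions in $\sD_0$ and the convergence of $L^2$-norms, which is why the metrizable (on bounded sets) characterization of two-scale convergence through $\sD_0$ together with Proposition~\ref{P:twoscale}\ref{P:twoscale:strong} is the natural tool.
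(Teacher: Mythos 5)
Your proof is correct and follows essentially the same route as the paper: approximate $u$ by tensor products of smooth stationary extensions from $\sD^\infty_\Omega$ and $C^\infty_c(O)$ test functions, observe that the oscillating versions $\unf^*\varphi^k(\omega_0,\cdot)$ strongly two-scale converge to $\varphi^k$, and then diagonalize. The paper's diagonalization is phrased through the explicit metric $\mathrm{d}$ on $\sD^*$ (the metrizable characterization you allude to at the end), but this is the same device you invoke, so the two arguments coincide in substance.
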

\begin{proof}
  For all $\delta>0$ choose $u^\delta$ in the span of $\{\varphi_\Omega\varphi_O\,:\,\varphi_\Omega\in\sD^\infty_\Omega,\varphi_O\in\sD_O\cap C^\infty_c(O)\}$ with $\|u^\delta-u\|_{L^2(\Omega\times O)}<\delta$.
  This is possible, since $\sD^\infty_\Omega$ and $\sD_O\cap C^\infty_c(O)$ are dense in $L^2(\Omega)$ and $L^2(O)$, respectively. Set $u^{\delta,\vare}(x):=\unf^* u^\delta(\omega_0,x)$ and note that $u^{\delta,\vare}\in C^\infty_c(O)$. Then for all $\delta>0$ we have $u^{\delta,\vare}\twoss_{\omega_0} u^\delta$ as $\vare\to 0$.

  In the following we shall deduce the existence of $(u^\vare)_\vare=(u^{\delta(\vare),\vare})_\vare$ by a diagonal sequence argument. In order to do so, we recall the metric characterization of weak two-scale convergence from \cite[Lemma 3.8]{HNV22}: Consider $\sD$ as a normed vector space with norm $\|\cdot\|_{L^2(\Omega\times O)}$ and denote by $\sD^*$ its dual. Note that the operators
  \begin{align*}
    &J^\vare_{\omega_0}:L^2(O)\to \sD^*,\qquad (J^\vare_{\omega_0}v)(\varphi):=\int_O v(x)\unf^*\varphi(\omega_0,x)\,dx\\
    &J^0:L^2(\Omega\times O)\to \sD^*,\qquad (J^0v)(\varphi):=\int_\Omega\int_O v(\omega,x)\varphi(\omega,x)\,dx\,d\mathbb P(\omega)
  \end{align*}
  are linear, bounded and injective. We observe that a bounded sequence $(v^\vare)_\vare\subset L^2(O)$ weakly two-scale converges to $v$ if and only if $J^\vare_{\omega_0}v^\vare\to J_0v$ pointwise. Let $(\varphi_j)_{j\in\N}$ be an enumeration of the countable set $\{\frac{\varphi}{\|\varphi\|_{L^2(\Omega\times O)}}\,:\,\varphi\in\sD_0\}$ and define for $U,V\in\sD^*$ the metric
  \begin{equation*}
    {\mathrm d}(U,V):=\sum_{j\in\N}2^{-j}\frac{|U(\varphi_j)-V(\varphi_j)|}{|U(\varphi_j)-V(\varphi_j)|+1}.
  \end{equation*}
  Then we see that for any bounded sequence $(v^\vare)_\vare\subset L^2(O)$ and $v\in L^2(\Omega\times O)$ we have
  \begin{equation*}
    v^\vare\twos_{\omega_0}v\qquad\Leftrightarrow\qquad {\mathrm d}(J^\vare_{\omega_0}v^\vare, J^0v)\to 0.
  \end{equation*}
  Furthermore, for any $v,v'\in L^2(\Omega\times O)$ we have
  \begin{equation*}
    {\mathrm d}(J^0v, J^0v')\leq 2\|v-v'\|_{L^2(\Omega\times O)}.
  \end{equation*}
  After these preparations we may consider
  \begin{equation*}
    c^{\delta,\vare}:=\big|\|u^{\delta,\vare}\|_{L^2(O)}-\|u\|_{L^2(\Omega\times O)}\big|+{\mathrm d}(J^\vare_{\omega_0}u^{\delta,\vare}, J^0u).
  \end{equation*}
  Then we have $      \limsup\limits_{\vare\to 0}c^{\delta,\vare}\leq\big|\|u^{\delta}\|_{L^2(\Omega\times O)}-\|u\|_{L^2(\Omega\times O)}\big|+{\mathrm d}(J^0u^{\delta}, J^0u)$, and thus
  \begin{equation*}
    \limsup\limits_{\delta\to 0}\limsup\limits_{\vare\to 0}c^{\delta,\vare}=0.
  \end{equation*}
  By a standard diagonalization argument there exists $\delta(\vare)$ with $\lim_{\vare\to 0}\delta(\vare)=0$ such that $c^{\delta(\vare),\vare}\to 0$. Thus the sequence $u^\vare:=u^{\delta(\vare),\vare}$ satisfies
  \begin{equation*}
    \|u^\vare\|_{L^2(O)}\to \|u\|_{L^2(\Omega\times O)},\qquad u^\vare\twos_{\omega_0}u.
  \end{equation*}
  In view of Proposition~\ref{P:twoscale} \ref{P:twoscale:strong} we conclude that we even have strong two-scale convergence and the proof is complete.
\end{proof}

\begin{lemma}[Continuity and lower semicontinuity of quadratic, convex functionals]\label{L:twoscale:quadratic}
Let $Q:\Omega\times\R^n\to\R$ be measurable and assume that for all $\omega\in\Omega$ the map $\xxi\mapsto Q(\omega,\xxi)$ is quadratic and satisfies
  \begin{equation*}
    0\leq Q(\omega,\xxi)\leq C|\xxi|^2\qquad\text{for all }\xxi\in\R^n,
  \end{equation*}
  where $C$ is some positive constant independent of $\omega$. Then there exists $\Omega_Q\subset\Omega_0$ with $\mathbb P(\Omega_Q)=1$ such that for all $\omega_0\in\Omega_Q$ the following holds:
  \begin{enumerate}[label=(\alph*)]
  \item Suppose that $(\xxi^\vare)_\vare\subset L^2(O;\R^n)$ weakly two-scale converges to $\xxi\in L^2(\Omega\times O;\R^n)$. Then
    \begin{equation*}
      \liminf\limits_{\vare\to 0}\int_OQ(\tau_{\frac{x_1}{\vare}}\omega_0,\xxi^\vare(x))\,dx\geq \int_\Omega\int_O Q(\omega,\xxi(\omega,x))\,dx\,d\mathbb P(\omega).
    \end{equation*}
  \item Suppose that $(\xxi^\vare)_\vare\subset L^2(O;\R^n)$ strongly two-scale converges to $\xxi\in L^2(\Omega\times O;\R^n)$. Then
    \begin{equation*}
      \lim\limits_{\vare\to 0}\int_OQ(\tau_{\frac{x_1}{\vare}}\omega_0,\xxi^\vare(x))\,dx=\int_\Omega\int_O Q(\omega,\xxi(\omega,x))\,dx\,d\mathbb P(\omega).
    \end{equation*}
  \end{enumerate}
\end{lemma}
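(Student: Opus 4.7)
The plan is to exploit the bilinear structure of $Q$. First I would write $Q(\omega,\xxi)=\mathbb L(\omega)\xxi\cdot\xxi$ with $\mathbb L:\Omega\to\R^{n\times n}_{\mathrm{sym}}$ measurable, positive semi-definite, and essentially bounded by $C$. Since $\mathbb L$ itself need not lie in the test class $\sD_\Omega$, I would fix a countable family $(\mathbb L^k)_{k\in\N}$ of matrix-valued random variables whose entries belong to $\sD_\Omega$, such that $\mathbb L^k\to\mathbb L$ componentwise in $L^2(\Omega)$ and $\|\mathbb L^k\|_{L^\infty(\Omega)}\leq 2C$ (by truncation if necessary). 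The set $\Omega_Q\subseteq\Omega_0$ is then defined as the intersection of $\Omega_0$ with the full-measure set produced by Birkhoff's theorem (Theorem~\ref{thm:ergodic-thm}) applied to every member of the countable family $\{|\mathbb L-\mathbb L^k|^2,\ \mathbb L^k_{ij}a_1a_2,\ |\mathbb L^k_{ij}a_1|^2\}$ with $i,j\in\{1,\dots,n\}$, $k\in\N$, $a_1,a_2\in\sD_\Omega$; this is countable, so $\mathbb P(\Omega_Q)=1$.

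For (a), fix $\omega_0\in\Omega_Q$ and write $\omega_\vare:=\tau_{x_1/\vare}\omega_0$. For any $\psi\in\sD^n$ (an $n$-tuple with components in $\sD$), non-negativity of $Q$ applied to $\xxi^\vare-\psi(\omega_\vare,\cdot)$ and expansion give
\begin{equation*}
\int_O Q(\omega_\vare,\xxi^\vare)\,dx\;\geq\;2\int_O\mathbb L(\omega_\vare)\psi(\omega_\vare,x)\cdot\xxi^\vare(x)\,dx-\int_O Q\bigl(\omega_\vare,\psi(\omega_\vare,x)\bigr)\,dx.
\end{equation*}
Passing to the limit in the cross term is the key step. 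When $\mathbb L$ is replaced by $\mathbb L^k$, the function $\mathbb L^k(\omega_\vare)\psi(\omega_\vare,\cdot)$ is a finite linear combination of products of $\sD_\Omega$-factors (which oscillate in $x_1$) and $\sD_O$-factors (spatial). By Birkhoff's theorem, which holds on $\Omega_Q$ for every such product, this oscillatory sequence strongly two-scale converges to $\mathbb L^k(\omega)\psi(\omega,x)$. Combined with the weak two-scale convergence $\xxi^\vare\twos_{\omega_0}\xxi$, we deduce
\begin{equation*}
\int_O\mathbb L^k(\omega_\vare)\psi(\omega_\vare,\cdot)\cdot\xxi^\vare\,dx\longrightarrow\int_\Omega\int_O\mathbb L^k(\omega)\psi(\omega,x)\cdot\xxi(\omega,x)\,dx\,d\mathbb P(\omega).
\end{equation*}
The remainder $(\mathbb L-\mathbb L^k)$ is absorbed via Cauchy--Schwarz and the Birkhoff-based bound $\int_O|(\mathbb L-\mathbb L^k)(\omega_\vare)|^2\,dx\to |O|\,\|\mathbb L-\mathbb L^k\|_{L^2(\Omega)}^2$, which vanishes as $k\to\infty$. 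The quadratic term $\int_O Q(\omega_\vare,\psi(\omega_\vare,\cdot))\,dx$ is handled analogously. Finally, density of $\sD^n$ in $L^2(\Omega\times O;\R^n)$ together with the Legendre-type identity $Q(\omega,\xxi)=\sup_{\eta\in\R^n}[2\mathbb L(\omega)\eta\cdot\xxi-Q(\omega,\eta)]$ yields the desired lower bound after taking the supremum over $\psi$.

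For (b), I would approximate $\xxi$ by $\psi^m\in\sD^n$ with $\psi^m\to\xxi$ in $L^2(\Omega\times O;\R^n)$; the oscillatory versions $\psi^{m,\vare}(x):=\psi^m(\omega_\vare,x)$ strongly two-scale converge to $\psi^m$, and the argument of (a) applied with $\psi^m$ in place of $\xxi$ gives $\int_O Q(\omega_\vare,\psi^{m,\vare})\,dx\to\int_\Omega\int_O Q(\omega,\psi^m)\,dx\,d\mathbb P(\omega)$. Since strong two-scale convergence is equivalent to weak two-scale convergence together with norm convergence (Proposition~\ref{P:twoscale}\ref{P:twoscale:strong}), one has $\|\xxi^\vare-\psi^{m,\vare}\|_{L^2(O)}\to\|\xxi-\psi^m\|_{L^2(\Omega\times O)}$. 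Writing
\begin{equation*}
Q(\omega_\vare,\xxi^\vare)-Q(\omega_\vare,\psi^{m,\vare})=\mathbb L(\omega_\vare)(\xxi^\vare+\psi^{m,\vare})\cdot(\xxi^\vare-\psi^{m,\vare})
\end{equation*}
and estimating by Cauchy--Schwarz yields an error bounded, in the limit $\vare\to 0$, by $2C\|\xxi\|_{L^2(\Omega\times O)}\,\|\xxi-\psi^m\|_{L^2(\Omega\times O)}$; sending $m\to\infty$ closes the argument.

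The main technical obstacle is the cross term in (a): because $\mathbb L$ is only in $L^\infty(\Omega)$ and need not belong to the dense test class $\sD_\Omega$, one cannot feed $\mathbb L\psi$ directly into the definition of weak two-scale convergence. The $L^2$-approximation $\mathbb L^k$ combined with Birkhoff-based control of $|\mathbb L-\mathbb L^k|^2$ circumvents this, but the subtle point is the bookkeeping required to guarantee that a single full-measure set $\Omega_Q$ simultaneously controls all $\mathbb L^k$, all relevant products $\mathbb L^k_{ij}a_1 a_2$, and the oscillatory test functions entering the definition of two-scale convergence.
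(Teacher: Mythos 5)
Your argument is correct but takes a genuinely different route from the paper's. The paper does not approximate $\mathbb L$: it directly augments the countable ergodic family with the products $\mathbb L_{ij}\mathbb L_{kl}\varphi_\Omega\varphi'_\Omega$ (with $\varphi_\Omega,\varphi'_\Omega\in\sD_\Omega$), which yields on $\Omega_Q$ a single cross-term lemma: if $\xxi^\vare$ converges weakly and $\hat\xxi^\vare$ strongly two-scale, then $\int_O\mathbb L(\tau_{x_1/\vare}\omega_0)\xxi^\vare\cdot\hat\xxi^\vare\,dx\to\int_\Omega\int_O\mathbb L\,\xxi\cdot\hat\xxi\,dx\,d\mathbb P$. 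Part~(b) is an immediate consequence, and part~(a) is obtained by taking $\hat\xxi^\vare\twoss_{\omega_0}\xxi$ via Lemma~\ref{L:approximation}, writing $Q(\xxi^\vare)=Q(\xxi^\vare-\hat\xxi^\vare)+2\mathbb L(\xxi^\vare-\hat\xxi^\vare)\cdot\hat\xxi^\vare+Q(\hat\xxi^\vare)$, and discarding the non-negative first term. Your Legendre--Fenchel version of the same convexity trick uses $\vare$-independent competitors $\psi\in\sD^n$, so it bypasses Lemma~\ref{L:approximation} in part~(a), but it pays with the extra approximation layer $\mathbb L^k\to\mathbb L$; the paper's enlargement of the ergodic family avoids that layer and lets both parts draw on the same cross-term lemma, which is somewhat tighter bookkeeping. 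Two small repairs in your write-up: truncation does not keep a function inside the fixed countable set $\sD_\Omega$, so you should drop the ``$\|\mathbb L^k\|_{L^\infty}\le 2C$ by truncation'' clause --- a $k$-uniform $L^\infty$ bound is never used, boundedness of each individual $\mathbb L^k_{ij}\in\sD_\Omega$ suffices; and your countable ergodic family should also contain $(\mathbb L^k_{ij})^2 a_1 a_2$ with $a_1\neq a_2\in\sD_\Omega$, not only $|\mathbb L^k_{ij}a_1|^2$, since a general $\psi\in\sD^n$ is a finite sum of tensor products and the norm convergence needed to show $\mathbb L^k(\tau_{x_1/\vare}\omega_0)\psi(\tau_{x_1/\vare}\omega_0,x)$ strongly two-scale converges produces off-diagonal terms. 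Both are trivial enlargements of the family and do not affect the soundness of the approach.
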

\begin{proof}
  Define $\mathbb L:\Omega\to\R^{n\times n}_{\sym}$ by the identity $Q(\omega,\xxi)=\mathbb L(\omega)\xxi\cdot \xxi$ and note that $\mathbb L_{ij}$ are essentially bounded. Thanks to Theorem \ref{thm:ergodic-thm} we can find a set of full-measure $\Omega_Q\subset\Omega_0$ such that for all $i,j,k,l=1,\ldots,n$ and all $\varphi,\varphi'\in\sD$ we have
  \begin{equation}\label{eq:st00227474}
    \int_O\unf^*(\mathbb L_{ij}\mathbb L_{kl}\varphi\varphi')(\omega_0,x)\,dx\to \int_\Omega\int_O\mathbb L_{ij}(\omega)\mathbb L_{kl}(\omega)\varphi(\omega,x)\varphi'(\omega,x)\,dxd\mathbb P(\omega)
  \end{equation}
  for all $\omega_0\in\Omega_Q$. For the rest of the proof we assume that $\omega_0\in\Omega_Q$.

  As a preliminary step we claim the following: Let $(\xxi^\vare)_\vare,(\hat\xxi^\vare)_\vare\in L^2(O;\R^n)$ and assume that $\xxi^\vare\twos_{\omega_0}\xxi$ and $\hat\xxi^\vare\twoss_{\omega_0}\hat\xxi$ weakly and strongly two-scale, respectively. Then
  \begin{equation}\label{eq:st:st:123}
    \int_O\mathbb L(\tau_{\frac{x_1}{\vare}}\omega_0)\xxi^\vare(x)\cdot \hat\xxi^\vare(x)\,dx\to\int_\Omega\int_O\mathbb L(\omega)\xxi\cdot \hat \xxi\,dx\,d\mathbb P(\omega).
  \end{equation}
  To see this, we first note that by \eqref{eq:st00227474} for all $\varphi\in\sD$ the sequence $\unf^*(\mathbb L_{ij}\varphi)(\omega_0,\cdot)$ strongly two-scale converges to $\mathbb L_{ij}\varphi$. Hence, since $\eta^\vare(x):=\mathbb L(\tau_{\frac{x_1}{\vare}}\omega_0)\xxi^\vare(x)$ is bounded in $L^2(O;\R^n)$, we conclude that $\eta^\vare$ weakly two-scale converges to $\mathbb L\xxi$. Now the claim follows from the definition of strong two-scale convergence.

  Note that \eqref{eq:st:st:123} directly implies part (b) of the lemma. To prove part (a), we proceed as follows: By Lemma~\ref{L:approximation} we can find a sequence $(\hat\xxi^\vare)_\vare$ that strongly two-scale converges to $\xxi$. By expanding the square, we see that
    \begin{align*}
      \int_OQ(\tau_{\frac{x_1}{\vare}}\omega_0,\xxi^\vare)\,dx-      \int_OQ(\tau_{\frac{x_1}{\vare}}\omega_0,\hat\xxi^\vare)\,dx
      =&\int_OQ(\tau_{\frac{x_1}{\vare}}\omega_0,\xxi^\vare-\hat\xxi^\vare)\,dx+2\int_O\mathbb L(\tau_{\frac{x_1}{\vare}}\omega_0)(\xxi^\vare-\hat\xxi^\vare)\cdot\hat \xxi^\vare\,dx\\
                                                                                                                                \geq&2\int_O\mathbb L(\tau_{\frac{x_1}{\vare}}\omega_0)(\xxi^\vare-\hat\xxi^\vare)\cdot\hat \xxi^\vare\,dx
    \end{align*}
  In view of \eqref{eq:st:st:123} we can pass to the limit on the right-hand side. Since $\xxi^\vare-\hat\xxi^\vare$ weakly two-scale converges to $0$, we deduce that
  \begin{equation*}
    \liminf\limits_{\vare\to 0}\int_OQ(\tau_{\frac{x_1}{\vare}}\omega_0,\xxi^\vare)\,dx\geq\limsup_{\vare\to 0}\int_OQ(\tau_{\frac{x_1}{\vare}}\omega_0,\hat\xxi^\vare)\,dx=\int_\Omega\int_OQ(\omega,\xxi)\,dxd\mathbb P(\omega),
  \end{equation*}
  where we also used part (b) of the lemma in the last step .
\end{proof}

\begin{lemma}[Two-scale limits of gradients]\label{bound lemma}
  For all $\omega_0\in\Omega_0$ the following holds:
  \begin{enumerate}[label=(\alph*)]
  \item   Let $(u^\vare)_\vare$ be a sequence that weakly converges in $H^1(O)$ to a limit $u\in H^1(O)$. Then there exists $\chi\in L^2(O;L^2_0(\Omega))$ and a subsequence of $(u^\vare)_\vare$ (still denoted by $\vare$) such that
    \begin{equation*}
      \partial_j u^\vare\twos_{\omega_0}
      \begin{cases}
        \partial_1u+\chi&\text{if }i=1,\\
        \partial_ju&\text{if }j\in\{2,\ldots,d\}
      \end{cases}\qquad\text{weakly two-scale in }L^2(\Omega\times O).
    \end{equation*}
  \item For any $\chi\in L^2(O;L^2_0(\Omega))$ there exists a sequence $(\varphi^\vare)_\vare\subset C^\infty_c(O)$ such that
    \begin{align*}
      &|\varphi^\vare|+\sum_{j=2}^d|\partial_j\varphi^\vare|\to 0\text{ uniformly, and}\\
      &\partial_1\varphi^\vare\twoss_{\omega_0} \chi\text{ strongly two-scale in $L^2(\Omega\times O)$.}
    \end{align*}
  \end{enumerate}
\end{lemma}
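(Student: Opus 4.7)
The plan is to treat the two statements independently. For part (a) I would apply two-scale compactness from Proposition \ref{P:twoscale} \ref{P:twoscale:comp} component-wise to $\partial_{x_j}u^\vare$ and then identify the $\omega$-dependence of each two-scale limit by testing against smooth oscillatory test functions. For part (b) I would construct an explicit ansatz using the stochastic derivative $\partial_\omega$ and close the argument by a diagonal procedure in the spirit of Lemma \ref{L:approximation}. Throughout I will fix $\omega_0$ in a full-measure subset of $\Omega_0$ (enlarging it if necessary to intersect with the null sets from Lemma \ref{L:sDinfty} for each $\psi\in\sD^\infty_\Omega$) so that $S\psi(\omega_0,\cdot)$ is a smooth bounded function on $\R$ for every $\psi\in\sD^\infty_\Omega$ simultaneously.

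\textbf{Part (a).} First I would extract, via Proposition \ref{P:twoscale} \ref{P:twoscale:comp} and a diagonal subsequence, two-scale limits $\eta_j\in L^2(\Omega\times O)$ of $\partial_{x_j}u^\vare$ for $j=1,\dots,d$. Since $H^1(O)$ embeds compactly into $L^2(O)$, the sequence $u^\vare$ converges strongly, and Proposition \ref{P:twoscale} \ref{P:twoscale:weakmean} combined with the weak convergence $\partial_{x_j}u^\vare\rightharpoonup\partial_{x_j}u$ in $L^2(O)$ identifies the $\omega$-averages as $\int_\Omega\eta_j\,d\mathbb{P}=\partial_{x_j}u$. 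For $j=1$ this immediately yields the decomposition $\eta_1=\partial_1 u+\chi$ with $\chi\in L^2(O;L^2_0(\Omega))$. For $j\geq 2$ I still need to show that $\eta_j$ is $\omega$-independent, and this is where the one-dimensional character of $\tau$ enters crucially: the oscillating factor $S\varphi_\Omega(\omega_0,x_1/\vare)$ depends only on $x_1$, so for any $\psi\in C^\infty_c(O)$ and $\varphi_\Omega\in\sD^\infty_\Omega$ an integration by parts in $x_j$ gives
\begin{equation*}
\int_O\partial_{x_j}u^\vare\,\psi(x)\,S\varphi_\Omega(\omega_0,x_1/\vare)\,dx=-\int_O u^\vare\,\partial_{x_j}\psi(x)\,S\varphi_\Omega(\omega_0,x_1/\vare)\,dx
\end{equation*}
without producing any factor $1/\vare$. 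Passing to the limit on both sides (weak two-scale limit on the left, strong $L^2$-convergence of $u^\vare$ together with Corollary \ref{C:weak11} on the right) and using density of tensor products $\psi\otimes\varphi_\Omega$ in $L^2(\Omega\times O)$ will give $\eta_j=\partial_{x_j}u$.

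\textbf{Part (b).} The construction will rely on density, in $L^2(O;L^2_0(\Omega))$, of finite sums of the form $\sum_if_i(x)\,\partial_\omega\psi_i(\omega)$ with $f_i\in C^\infty_c(O)$ and $\psi_i\in\sD^\infty_\Omega$. This density follows from ergodicity of $\tau$ (which gives that $\mathrm{Ran}(\partial_\omega)$ is dense in $L^2_0(\Omega)$) together with the approximation property in Lemma \ref{L:sDinfty}. For $\delta>0$ I would choose such an approximation $\chi^\delta=\sum_{i=1}^{N(\delta)}f_i^\delta\,\partial_\omega\psi_i^\delta$ with $\|\chi-\chi^\delta\|_{L^2(\Omega\times O)}\leq\delta$, and set
\begin{equation*}
\varphi^{\delta,\vare}(x):=\vare\sum_{i=1}^{N(\delta)}f_i^\delta(x)\,S\psi_i^\delta(\omega_0,x_1/\vare).
\end{equation*}
Since $S\psi_i^\delta(\omega_0,\cdot)$ is smooth and uniformly bounded, $\varphi^{\delta,\vare}\in C^\infty_c(O)$ with $|\varphi^{\delta,\vare}|\leq C_\delta\vare$ and, for $j\geq2$, $|\partial_{x_j}\varphi^{\delta,\vare}|\leq C_\delta\vare$. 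Differentiation in $x_1$ gives
\begin{equation*}
\partial_{x_1}\varphi^{\delta,\vare}(x)=\sum_i f_i^\delta(x)\,S(\partial_\omega\psi_i^\delta)(\omega_0,x_1/\vare)+\vare\sum_i\partial_{x_1}f_i^\delta(x)\,S\psi_i^\delta(\omega_0,x_1/\vare),
\end{equation*}
where the second sum vanishes uniformly and the first sum strongly two-scale converges to $\chi^\delta$ by Proposition \ref{P:twoscale} \ref{P:twoscale:strongimpliestrong} (applied summand-wise, using $\partial_\omega\psi_i^\delta\in L^2(\Omega)$ and the fact that the oscillation stays inside the admissible test-function class). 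A diagonal argument modeled exactly on the proof of Lemma \ref{L:approximation}, combining the metric characterization of weak two-scale convergence with Proposition \ref{P:twoscale} \ref{P:twoscale:strong}, will then yield $\delta(\vare)\to0$ such that $\varphi^\vare:=\varphi^{\delta(\vare),\vare}$ has all required properties.

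\textbf{Main obstacle.} The conceptually hardest point is the identification step in part (a) for transverse indices $j\geq2$; technically it is not difficult once the right test functions are chosen, but it is the place where the one-dimensional structure of the shift group $\tau$ must be used, and it is not a formal consequence of compactness. The second somewhat delicate issue is ensuring that $\partial_\omega\psi_i^\delta$ in part (b) is approximated by an admissible oscillatory test function in the sense of Definition \ref{def:two-scale-conv}, which is exactly the role of working with $\sD^\infty_\Omega$ rather than arbitrary $L^2(\Omega)$-functions.
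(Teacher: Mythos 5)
Your proposal is correct and follows essentially the same path as the paper's proof: in part (a), compact embedding and two-scale compactness give $\partial_j u^\vare\twos_{\omega_0}\partial_j u+\xxi_j$ with $\xxi_j$ mean-zero in $\omega$, and the identification $\xxi_j=0$ for $j\geq 2$ is obtained via exactly the integration-by-parts argument you describe (the paper phrases it equivalently by testing against mean-zero random parts from $\mathring\sD$ and concluding $\int\xxi_j\varphi=0$); in part (b), the paper's construction $\varphi^{\vare,\delta}=\vare\,\unf^*\varphi^\delta(\omega_0,\cdot)$ with $\varphi^\delta$ a tensor sum from $\sD^\infty_\Omega\times(\sD_O\cap C^\infty_c(O))$, approximating $\chi$ by $\partial_\omega\varphi^\delta$, followed by the metric diagonalization of Lemma~\ref{L:approximation}, is precisely your ansatz. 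The two delicate points you flag — the role of the one-dimensional shift in killing the $1/\vare$ factor for transverse derivatives, and the admissibility of $\partial_\omega\psi$ as an oscillatory test function — are the same ones the paper handles via Lemma~\ref{L:sDinfty} and $\mathring\sD$.
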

\begin{proof}
  For the proof it is convenient to define
  \begin{equation*}
    \sD^\infty_{\Omega,0}:=\{\varphi-\int_\Omega\varphi\,d\mathbb P\,:\,\varphi\in\sD^\infty_{\Omega}\}.
  \end{equation*}
  Note that $\sD^\infty_{\Omega,0}$ is dense in $L^2_0(\Omega)$ and contained in $\sD_\Omega$ (since $\boldsymbol{1}_\Omega\in\sD_\Omega$ by assumption).
  Hence, the set
  \begin{equation*}
    \mathring{\sD}:=\operatorname{span}\big\{\varphi=\varphi_\Omega\varphi_O\,:\,\varphi_\Omega\in \sD^\infty_{\Omega,0},\,\varphi_O\in C^\infty_c(O)\,\Big\}
  \end{equation*}
  is dense in $L^2_0(\Omega)\otimes L^2(O)$.

  \begin{enumerate}[label=(\alph*)]
  \item By compact embedding we have $u^\vare\to u$ strongly in $L^2(O)$ and thus also strongly two-scale. By Proposition~\ref{P:twoscale} \ref{P:twoscale:comp} and Proposition~\ref{P:twoscale} \ref{P:twoscale:weakmean}, we may pass to a subsequence and find $\xxi\in L^2(O;L^2_0(\Omega;\R^d))$ such that $\partial_j u^\vare\twos_{\omega_0}\partial_j u+\xxi_j$ for $j=1,\ldots,d$. This already proves the claim for $j=1$. In the following we prove the claim in the case $j=2,\ldots,d$. Let $\varphi\in\mathring\sD$ and consider $\varphi^\vare(x):=\unf^*\varphi(\omega,x)$. Then $\varphi^\vare\twoss \varphi$ strongly two-scale (by Proposition~\ref{P:twoscale} \ref{P:twoscale:strongimpliestrong}) and thus
    \begin{align}
      \int_O\partial_ju^\vare\varphi^\vare\,dx\to &\int_\Omega\int_O(\partial_ju+\xxi_j)\varphi\,dx\,d\mathbb P(\omega)=\int_\Omega\int_O\xxi_j\varphi\,dx\,d\mathbb P(\omega).\label{eq:st:11883362626}
    \end{align}
    On the other hand, by construction we have $\varphi^\vare\in C^\infty_c(O)$ with $\partial_j\varphi^\vare(x)=\unf^*(\partial_j\varphi)(\omega_0,x)$ and $\partial_j\varphi^\vare\twoss \partial_j\varphi$. Hence,
    \begin{align*}
      \int_O\partial_ju^\vare\varphi^\vare\,dx = &-\int_Ou^\vare(x)\partial_j\varphi^\vare\,dx\,\to\,-\int_\Omega\int_Ou\partial_j\varphi_O\varphi_\Omega\,dx\,d\mathbb P(\omega)=0,
    \end{align*}
    where in the last identity we used the fact that $\int_\Omega\partial_j\varphi\,d\mathbb P=0$. We conclude that the right-hand side of \eqref{eq:st:11883362626} is zero. Since $\mathring{\sD}$ is dense in $L^2_0(\Omega)\otimes L^2(O)$ and since $\xxi_j$ belongs to the latter space, we conclude that $\xxi_j=0$.
  \item  Since $\sD^\infty_\Omega$ and $\sD_O\cap C^\infty_c(O)$ are dense in ${\mathscr H^1}(\Omega)$ and $L^2(O)$ respectively, for all $\delta>0$ we can find a function of the form
    \begin{equation*}
      \varphi^\delta(\omega,x)=\sum_{j=1}^Nc_j\varphi_{\Omega,j}(\omega)\varphi_{O,j}(x),\qquad N\in\N,\,c_j\in\R,\,\varphi_{\Omega,j}\in\sD^\infty_\Omega, \varphi_{O,j}\in\sD_O\cap C^\infty_c(O),
    \end{equation*}
    such that $\|\partial_\omega\varphi^\delta-\chi\|_{L^2(\Omega\times O)}\leq\delta$. Consider
    \begin{equation*}
      \varphi^{\vare,\delta}(x):=\vare\unf^*\varphi^\delta(\omega_0,x).
    \end{equation*}
    Then $\varphi^\vare\in C^\infty_c(O)$ and
    \begin{equation*}
      \partial_j\varphi^{\vare,\delta}(x)=
      \begin{cases}
        \unf^*(\partial_\omega\varphi^\delta)(\omega_0,x)+\vare\unf^*(\partial_j\varphi^\delta)(\omega_0,x)&\text{if }j=1,\\
        \vare\unf^*(\partial_j\varphi^\delta)(\omega_0,x)&\text{if }j=2,\ldots,d.
      \end{cases}
    \end{equation*}
    We conclude that
    \begin{gather*}
        |\varphi^{\vare,\delta}|+\sum_{j=2}^d|\partial_j\varphi^{\vare,\delta}|\to 0\text{ uniformly and }\\
        \partial_1\varphi^{\vare,\delta}\twoss \partial_\omega\varphi^\delta\text{ strongly two-scale as $\vare\to 0$.}
    \end{gather*}
    By passing to a diagonal sequence as in the proof of Lemma~\ref{L:approximation}, we obtain a sequence $(\varphi^\vare)_\vare\subset C^\infty_c(O)$ satisfying
    \begin{gather*}
      |\varphi^{\vare}|+\sum_{j=2}^d|\partial_j\varphi^{\vare}|\to 0\text{ uniformly and }\\
      \partial_1\varphi^{\vare}\twoss \chi\qquad\text{strongly two-scale as $\vare\to 0$.}
    \end{gather*}
  \end{enumerate}
\end{proof}


\addcontentsline{toc}{section}{References}
\bibliography{hom_quant}
\bibliographystyle{acm}
\end{document}